\newtheorem{lemma}{Lemma}[section]
\newtheorem{thm}[lemma]{Theorem}
\newtheorem{rem}[lemma]{Remark}
\newtheorem{prop}[lemma]{Proposition}
\newtheorem{cor}[lemma]{Corollary}
\newtheorem{oss}[lemma]{Observation}
\newtheorem{example}[lemma]{Example}
\newtheorem{defn}[lemma]{Definition}
\newcommand\matR{{\mathbb{R}}}
\newcommand\matN{{\mathbb{N}}}
\newcommand\calD{{\mathcal D}}
\begin{document}

\title{Diffeological connections on diffeological vector pseudo-bundles}

\author{Ekaterina~{\textsc Pervova}}

\maketitle

\begin{abstract}
\noindent We consider one possible definition of a diffeological connection on a diffeological vector pseudo-bundle. It is different from the one proposed in [7] and is in fact simpler, since it is obtained
by a straightforward adaption of the standard definition of a connection as an operator on the space of all smooth sections. One aspect prominent in the diffeological context has to do with the choice of
an appropriate substitute for tangent vectors and smooth vector fields, since there are not yet standard counterparts for these notions. In this respect we opt for the simplest possibility; since there is an
established notion of the (pseudo-)bundle of differential forms on a diffeological space, we take the corresponding dual pseudo-bundle to play the role of the tangent bundle. Smooth vector fields are
then smooth sections of this dual pseudo-bundle; this is one reason why we devote a particular attention to the space of smooth sections of an arbitrary diffeological vector pseudo-bundle (one curiosity
is that it might easily turn out to be infinite-dimensional, even when the pseudo-bundle itself has a trivial finite-dimensional vector bundle as the underlying map). We concentrate a lot on how this space
interacts with the gluing construction for diffeological vector pseudo-bundles (described in [10]). We then deal with the same question for the proposed notion of a diffeological connection.

\noindent MSC (2010): 53C15 (primary), 57R35, 57R45 (secondary).
\end{abstract}

\section*{Introduction}

Diffeology can be seen as a way to extend the field of application of differential geometry (or of differential calculus, according to some). There have been, and are, other attempts to do this; some of
these approaches are summarized in \cite{St}. Diffeological spaces first appeared in \cite{So1}, \cite{So2}; a lot of fundamental concepts, such as the underlying topology, called \emph{D-topology},
and the counterpart of the fibre bundle, among others, were developed in \cite{iglFibre}. A recent and comprehensive source on the field of diffeology is \cite{iglesiasBook}.

From a certain (necessarily simplistic, but still interesting) point of view, diffeology can be seen as a way to consider any given function as a smooth one --- and then see what happens. This is essentially
the notion of a \emph{diffeology generated by a given plot}; what becomes for instance of the usual $\matR$ if we consider the modulus $|x|$ as a smooth function \emph{into} it? One immediate answer
(there would be of course more intricate ones) is that no linear function on it is smooth then (except the zero one); and this is just the most basic of examples. This is the kind of a straightforward (it can be
said, naive) approach that we opt for in this paper.

\paragraph{The notion of a connection} A certain preliminary notion of a diffeological connection is sketched out in \cite{iglesiasBook}. Our approach is different from one therein, but it is very much 
straightforward. A usual connection on a smooth vector bundle $E\to M$ over a smooth manifold $M$ can be defined as a smooth operator $C^{\infty}(M,E)\to C^{\infty}(M,T^*M\otimes E)$, that is linear 
and obeys the Leibniz rule. For all objects that appear in its definition, there are well-established diffeological counterparts, with the bundle of (values of) \emph{diffeological differential $1$-forms} 
$\Lambda^1(X)$ over a diffeological space $X$ (see \cite{iglesiasBook} again, although it is not the original source) taking the place of the cotangent bundle. Thus, the definition-by-analogy of a diffeological 
connection on a \emph{diffeological vector pseudo-bundle} $\pi:V\to X$ is an obvious matter; it suffices to substitute $X$ for $M$, $V$ for $E$, and consider diffeological forms instead of sections of the 
cotangent bundle. A few minor details need to be explained (which we do), and it also should be specified that the covariant derivatives are taken with respect to sections of the dual pseudo-bundle 
$(\Lambda^1(X))^*$, which for us plays the role of the tangent bundle (of which there is not yet a standard theory in diffeology). However, covariant derivatives is the only place where we need tangent 
vectors.

Most of what we do is devoted to constructing connections on pseudo-bundles obtained by \emph{diffeological gluing} (see \cite{pseudobundles}). To this end we first dedicate significant attention 
to the behavior of the spaces of sections under gluing. Thus, if $\pi_1:V_1\to X_1$ and $\pi_2:V_2\to X_2$ are two pseudo-bundles, and $\pi_1\cup_{(\tilde{f},f)}\pi_2:V_1\cup_{\tilde{f}}V_2\to X_1\cup_f X_2$ 
is the result of their gluing (see below for the precise definition), the space of sections $C^{\infty}(X_1\cup_f X_2,V_1\cup_{\tilde{f}}V_2)$ is a smooth surjective image of a subset of the direct product 
$C^{\infty}(X_1,V_1)\times C^{\infty}(X_2,V_2)$ (Section 2). We use this to show that if $V_1$ and $V_2$ are both endowed with connections, and these connections satisfy a specified \emph{compatibility 
condition}, then there is an induced connection on $V_1\cup_{\tilde{f}}V_2$. If, finally, $V_1$ and $V_2$ are endowed with \emph{pseudo-metrics} $g_1$ and $g_2$ (these are diffeological counterparts of 
Riemannian metrics) that are well-behaved with respect to each other, and the two connections on $V_1$ and $V_2$ are compatible with these pseudo-metrics, then they, again, induce a connection on 
$V_1\cup_{\tilde{f}}V_2$; this resulting connection is compatible with $\tilde{g}$, a pseudo-metric determined by $g_1$ and $g_2$.

\paragraph{Diffeological gluing} A large part of our approach consists in establishing how the above-listed components behave with respect to the operation of \emph{diffeological gluing}. On the level
of underlying sets, this is the standard operation of topological gluing; the resulting space is endowed with a diffeology that is probably the finest sensible one: it naturally includes the diffeologies on the
factors, and not much else. One disadvantage of this notion is that this is a pretty weak diffeology, that loses (or risks losing) sight of some natural aspects of the underlying space; for instance, the
obvious gluing diffeology on the union of the coordinate axes in $\matR^2$ is weaker than the subset diffeology relative to its inclusion into $\matR^2$, see \cite{watts} (on the other hand, the concept
of gluing may provide a natural framework for treating objects such as manifolds with corners, see below for more detail). As of now, we view this notion of the gluing diffeology as more of a precursor
to a coarser one, with more involved properties --- but still as a useful testing ground for the constructions that we are considering.

\paragraph{Acknowledgements} Without any trace of doubt, my gratitude goes to Prof. Riccardo Zucchi for providing a precious and, most of all, consistent support which allowed this paper to be
completed.

\section{The main notions}

Here we briefly recall the notions of diffeology that appear throughout this paper, in the form in which they appear in \cite{iglesiasBook}.

\subsection{Diffeological spaces and vector spaces}

The central object for diffeology is a set $X$ endowed with a \emph{diffeological structure}, which is a collection of maps from usual domains to $X$; three natural conditions must be satisfied.

\begin{defn} \emph{(\cite{So2})} A \textbf{diffeological space} is a pair $(X,\calD_X)$ where $X$ is a set and $\calD_X$ is a specified collection, also called the \textbf{diffeology} of $X$ or its
\textbf{diffeological structure}, of maps $U\to X$ (called \textbf{plots}) for each open set $U$ in $\matR^n$ and for each $n\in\matN$, such that for all open subsets $U\subseteq\matR^n$ and
$V\subseteq\matR^m$ the following three conditions are satisfied:
\begin{enumerate}
\item (The covering condition) Every constant map $U\to X$ is a plot;
\item (The smooth compatibility condition) If $U\to X$ is a plot and $V\to U$ is a smooth map (in the usual sense) then the composition $V\to U\to X$ is also a plot;
\item (The sheaf condition) If $U=\cup_iU_i$ is an open cover and $U\to X$ is a set map such that each restriction $U_i\to X$ is a plot then the entire map $U\to X$ is a plot as well.
\end{enumerate}
\end{defn}

A standard example of a diffeological space is a standard manifold whose diffeology consists of all usual smooth maps into it, but many others, and quite exotic, examples can be found. A map $f:X\to Y$ 
between diffeological spaces $X$ and $Y$ is \textbf{smooth} if for every plot $p$ of $X$ the composition $f\circ p$ is a plot of $Y$. Suppose now that only $X$ is endowed with a diffeology; then $Y$ can be 
endowed with the so-called \textbf{pushforward diffeology} with respect to $f$, which is the minimal diffeology for which $f$ is smooth (the map $f$ is then called a \textbf{subduction}). 

Every subset $Y\subseteq X$ of a diffeological space $X$ carries a natural diffeology, called \textbf{the subset diffeology}, which consists of all plots of $X$ whose image is wholly contained in $Y$. Likewise,
the quotient of $X$ by any equivalence relation $\sim$ carries the \textbf{quotient diffeology}, defined as the pushforward of the diffeology of $X$ by the natural projection $X\to X/\sim$.

The \textbf{disjoint union diffeology} on the disjoint union of diffeological spaces $X_1,\ldots,X_n$ is the smallest diffeology such that for each $i=1,\ldots,n$ the natural injection $X_i\to X$; is smooth; the 
\textbf{product diffeology} on their direct product is the coarsest diffeology such that for each $i=1,\ldots,n$ the natural projection $\pi_i:X=X_1\times\ldots\times X_n\to X_i$ is smooth. If $X$ and $Y$ are two 
diffeological spaces, $C^{\infty}(X,Y)$ stands for the set of all smooth maps $X\to Y$ and is endowed with a natural diffeology called the \textbf{functional diffeology}. It is defined as the largest diffeology such 
that the \emph{evaluation map}, $\mbox{\textsc{ev}}:C^{\infty}(X,Y)\times X\to Y$, given by $\mbox{\textsc{ev}}(f,x)=f(x)$, is smooth.

A \textbf{diffeological vector space} (over $\matR$) is a vector space $V$ endowed with a \textbf{vector space diffeology}, that is, any diffeology for which the following two maps are smooth: the addition map 
$V\times V\to V$, where $V\times V$ carries the product diffeology, and the scalar multiplication map $\matR\times V\to V$, where $\matR$ has the standard diffeology and $\matR\times V$ carries the product 
diffeology. Any usual vector subspace of a diffeological vector space is naturally a diffeological vector space for the subset diffeology. The same is true for any quotient of a vector space, which is automatically 
assumed to carry the quotient diffeology. The \textbf{diffeological dual} $V^*$ of a diffeological vector space $V$ (\cite{vincent}, \cite{wu}) is the space of all smooth linear maps with values into the standard 
$\matR$, endowed with the corresponding functional diffeology. 

A scalar product on a diffeological vector space is a smooth non-degenerate symmetric bilinear form $V\times V\to\matR$ (for the product diffeology on $V\times V$ and the standard diffeology on $\matR$). 
However, a scalar product in this sense rarely exists on diffeological vector spaces; in particular, among the finite-dimensional ones, scalar products exist only on those diffeomorphic to the standard $\matR^n$ 
(see \cite{iglesiasBook}). In general, the maximal rank of a smooth symmetric bilinear form on $V$ is equal to the dimension if its diffeological dual $V^*$; a smooth symmetric semidefinite positive bilinear form 
that achieves this rank (there is always one) is called a \textbf{pseudo-metric} on $V$.

The direct sum of diffeological vector spaces is endowed with the \emph{product} diffeology. Given a finite collection $V_1,\ldots,V_n$ of diffeological vector spaces, their usual tensor product 
$V_1\otimes\ldots\otimes V_n$ is endowed with the \textbf{tensor product diffeology} (see \cite{vincent}, \cite{wu}). The tensor product diffeology is defined as the quotient diffeology corresponding to the usual 
representation of $V_1\otimes\ldots\otimes V_n$ as the quotient of the free product $V_1\times\ldots\times V_n$ (that is endowed with the smallest vector space diffeology on the free product containing the 
product, \emph{i.e.} the direct sum, diffeology on $V_1\oplus\ldots\oplus V_n$) by the kernel of the universal map onto $V_1\otimes\ldots\otimes V_n$.

\subsection{Diffeological vector pseudo-bundles and pseudo-metrics on them}

The notion of a diffeological vector pseudo-bundle appeared initially in \cite{iglFibre} as a partial instance of \emph{diffeological fibre bundle} (see also \cite{iglesiasBook}, Chapter 8), then in \cite{vincent}
under the name of a \emph{regular vector bundle}, and finally in \cite{CWtangent} under the name of a \emph{diffeological vector space over $X$}. We use the term \emph{diffeological vector
pseudo-bundle}, in order to emphasize that frequently it is not really a bundle (it is not required to be locally trivial), and also to avoid confusion with individual diffeological vector spaces, something
which might happen with the term adopted in \cite{CWtangent} when both concepts appear simultaneously.

\begin{defn}
A \textbf{diffeological vector pseudo-bundle} is a smooth surjective map $\pi:V\to X$ between two diffeological spaces $V$ and $X$ such that for each $x\in X$ the pre-image $\pi^{-1}(x)$ carries a vector space 
structure, and the corresponding addition and scalar multiplication maps, as well as the zero section $s_0:X\to V$, are smooth for the appropriate diffeologies, that is, the addition map $V\times_X V\to V$ 
is smooth for the subset diffeology on $V\times_X V$ as a subset of $V\times V$, which itself is considered with the product diffeology, the scalar multiplication map $\matR\times V\to V$ is smooth for the 
standard diffeology on $\matR$ and the corresponding product diffeology on $\matR\times V$.
\end{defn}

All usual operations on smooth vector bundles (direct sum, tensor product, and taking duals) are defined for diffeological vector pseudo-bundles as well (see \cite{vincent}, \cite{pseudobundles}). In particular, 
the diffeology on the dual pseudo-bundle $\pi:V\to X$ is described by the following condition: a map $p:\matR^l\supset U\to V^*$ is a plot for the dual bundle diffeology on $V^*$ if and only if for every plot 
$q:\matR^{l'}\supset U'\to V$ of $V$ the map $Y'\to\matR$ defined on $Y'=\{(u,u')|\pi^*(p(u))=\pi(q(u'))\in X\}\subset U\times U'$ and acting by $(u,u')\mapsto p(u)(q(u'))$, is smooth for the subset diffeology of 
$Y'\subset\matR^{l+l'}$ and the standard diffeology of $\matR$. The corresponding subset diffeology on each fibre $V_x^*$ is its usual functional diffeology as the dual space of the diffeological vector space 
$V_x$.

A \textbf{pseudo-metric} on a diffeological vector pseudo-bundle $\pi:V\to X$ is a smooth section of the pseudo-bundle $\pi^*\otimes\pi^*:V^*\otimes V^*\to X$, \emph{i.e.}, a smooth map
$g:X\to V^*\otimes V^*$ such that for all $x\in X$ the value $g(x)$ is a symmetric form of rank $\dim((\pi^{-1}(x))^*)$, with all the eigenvalues non-negative; in other words, it is a pseudo-metric on
the diffeological vector space $\pi^{-1}(x)$. Such a pseudo-metric obviously exists on any trivial diffeological pseudo-bundle, but in general its existence is not guaranteed.

\subsection{Diffeological gluing}

This operation, considered in some detail in \cite{pseudobundles}, mimics the usual topological gluing, with which it coincides on the level of underlying topological spaces. The space comes with the 
standard choice of diffeology, called the \textbf{gluing diffeology}. As we mentioned in the Introduction, this is in some sense the finest diffeology that it makes sense to consider.

Let $X_1$ and $X_2$ be two diffeological spaces, and let $f:X_1\supset Y\to X_2$ be a map smooth for the subset diffeology of $Y$. Then there is a usual topological gluing of $X_1$ to $X_2$ along $f$ 
(symmetric if $f$ is injective), defined as
$$X_1\cup_f X_2=(X_1\sqcup X_2)/\sim,\mbox{ where }X_1\supset Y\ni y\sim f(y)\in X_2.$$ The space $X_1\cup_f X_2$ is endowed with the quotient diffeology of the disjoint union diffeology on 
$X_1\sqcup X_2$ and is said to be the result of a \textbf{diffeological gluing of $X_1$ to $X_2$}.

There are two natural inclusions into the space $X_1\cup_f X_2$, whose ranges cover it. These are given by the maps $i_1:(X_1\setminus Y)\hookrightarrow(X_1\sqcup X_2)\to X_1\cup_f X_2$, where the 
second arrow stands for the natural projection onto the quotient space, and $i_2:X_2\hookrightarrow(X_1\sqcup X_2)\to X_1\cup_f X_2$. They are clearly bijective; furthermore, they are inductions (see 
\cite{pseudometric-pseudobundle}). The images $i_1(X_1\setminus Y)$ and $i_2(X_2)$ are disjoint and yield a covering of $X_1\cup_f X_2$, which is useful for constructing maps on/into $X_1\cup_f X_2$.

The plots of $X_1\cup_f X_2$ admit the following local description. Let $p:U\to X_1\cup_f X_2$ be a plot; then for every $u\in U$ there is
a neighborhood $U'\subset U$ of $u$ such that the restriction of $p$ on $U'$ lifts to a plot $p':U'\to(X_1\sqcup X_2)$. Since locally every plot of $X_1\sqcup X_2$ is a plot of either $X_1$ or $X_2$, up
to restricting it to a connected component $U''$ of $U'$ , there exists either a plot $p_1:U''\to X_1$ or a plot $p_2:U''\to X_2$ such that $p|_{U''}$ lifts to, respectively, $p_1$ or $p_2$. Furthermore, if
$p|_{U''}$ lifts to $p_2$ then its actual form is $p|_{U''}=i_2\circ p_2$, whereas if it lifts to $p_1$, its actual form is then as follows:
$$p|_{U''}(u'')=\left\{\begin{array}{ll} i_1(p_1(u'')) & \mbox{if }p_1(u'')\in X_1\setminus Y,\\ i_2(f(p_1(u''))) & \mbox{if }p_1(u'')\in Y. \end{array}\right.$$

The operation of gluing of two pseudo-bundles consists in performing twice the gluing of diffeological spaces, once for the total spaces, and the second time for the base spaces; the two gluing maps must be 
consistent with each other for the result to be a pseudo-bundle. Specifically, let $\pi_1:V_1\to X_1$ and $\pi_2:V_2\to X_2$ be two diffeological vector pseudo-bundles, let $f:X_1\supset Y\to X_2$ be a smooth 
map, and let $\tilde{f}:\pi_1^{-1}(Y)\to\pi_2^{-1}(f(Y))$ be any smooth lift of $f$ such that the restriction of $\tilde{f}$ on each fibre $\pi_1^{-1}(y)$ for $y\in Y$ is linear. Consider the diffeological spaces 
$V_1\cup_{\tilde{f}}V_2$ and $X_1\cup_f X_2$; since $\tilde{f}$ is a lift of $f$, the pseudo-bundle projections $\pi_1$ and $\pi_2$ yield a well-defined map, denoted by $\pi_1\cup_{(\tilde{f},f)}\pi_2$, from 
$V_1\cup_{\tilde{f}}V_2$ to $X_1\cup_f X_2$. Furthermore, by the linearity assumption on $\tilde{f}$ (see \cite{pseudobundles}), the map 
$$\pi_1\cup_{(\tilde{f},f)}\pi_2:V_1\cup_{\tilde{f}}V_2\to X_1\cup_f X_2$$ is itself a diffeological vector pseudo-bundle. The gluing is usually well-behaved with respect to the usual operations on vector 
pseudo-bundles (see \cite{pseudobundles}, \cite{pseudometric-pseudobundle}), with the one exception being the operation of taking the dual pseudo-bundle.

\begin{defn}
Let $\pi_1:V_1\to X_1$ and $\pi_2:V_2\to X_2$ be two diffeological vector pseudo-bundles, and let $(\tilde{f},f)$ be a gluing of the former to the latter such that $f$ is a diffeomorphism. Suppose that each of 
$V_1$, $V_2$ admits a pseudo-metric; let $g_i$ be a chosen pseudo-metric on $V_i$ for $i=1,2$. We say that $g_1$ and $g_2$ are \textbf{compatible} (with the gluing along $(\tilde{f},f)$) if for every $y\in Y$ 
and for all $v_1,v_2\in\pi_1^{-1}(y)$ we have
$$g_1(y)(v_1,v_2)=g_2(f(y))(\tilde{f}(v_1),\tilde{f}(v_2)).$$ Let $g_1$ and $g_2$ be compatible; the \textbf{induced pseudo-metric} on $V_1\cup_{\tilde{f}}V_2$ is the map
$\tilde{g}:X_1\cup_f X_2\to(V_1\cup_{\tilde{f}}V_2)^*\otimes(V_1\cup_{\tilde{f}}V_2)^*$ defined by 
$$\tilde{g}(x)(\cdot,\cdot)=\left\{\begin{array}{ll}
g_1(i_1^{-1}(x))(j_1^{-1}(\cdot),j_1^{-1}(\cdot)) & \mbox{for }x\in i_1(X_1\setminus Y) \\
g_2(i_2^{-1}(x))(j_2^{-1}(\cdot),j_2^{-1}(\cdot)) & \mbox{for }x\in i_2(X_2). \end{array}\right.$$ 
\end{defn}

The total and the base space of the new pseudo-bundle are both the results of a diffeological gluing, so everything we have said about it applies to each of them. In particular, there are the two pairs 
of standard inductions, which are denoted, as before, by $i_1,i_2$ for the base space $X_1\cup_f X_2$ and by $j_1,j_2$ for the total space $V_1\cup_{\tilde{f}}V_2$, that is, 
$j_1: (V_1\setminus\pi_1^{-1}(Y))\hookrightarrow V_1\sqcup V_2\to V_1\cup_{\tilde{f}}V_2$  and $j_2:V_2\hookrightarrow V_1\sqcup V_2\to V_1\cup_{\tilde{f}}V_2$.

\begin{rem}
Although we will not be able to really consider this in the present paper, we briefly mention how the gluing procedure provides a natural context for notions such as
$\delta$-functions. What we mean by this is the following. Let $X_1\subset\matR^2$ be the $x$-axis, let $X_2=\{(0,1)\}$, and let the gluing map $f:Y=\{(0,0)\}\to\{(0,1)\}$ be the obvious map. Let
$p:\matR\to X_1\cup_f X_2$ be the map defined by $p(x)=i_1(x,0)$ for $x\neq 0$ and $p(0)=i_2(0,1)$. By definition of the gluing diffeology, this is a plot of $X_1\cup_f X_2$ (and so an instance of a
smooth function into it). We now can observe that $p$ can be seen as the $\delta$-function $\matR\to\matR$, by projecting both $X_1$ and $X_2$ onto the $y$-axis of their ambient $\matR^2$. More
precisely, let $\mbox{pr}_y:\matR^2\to\matR$ be the projection of $\matR^2$ onto the $y$-axis, that is, $\mbox{pr}_y(x,y)=y$, and let $h:X_1\cup_f X_2\to\matR$ be given by
$$h(\tilde{x})=\left\{\begin{array}{ll} \mbox{pr}_y(i_1^{-1}(\tilde{x})) & \mbox{if }\tilde{x}\in i_1(X_1\setminus Y),\\ \mbox{pr}_y(i_2^{-1}(\tilde{x})) & \mbox{if }\tilde{x}\in i_2(X_2). \end{array}\right.$$ 
As follows from the definitions of $i_1$ and $i_2$, this defines $h$ on the entire $X_1\cup_f X_2$. Observe finally that the composition $h\circ p$ is indeed the usual $\delta$-function, \emph{i.e.}, 
the function $\delta$ given by $\delta(x)=0$ if $x\neq 0$ and $\delta(0)=1$.
\end{rem}

\subsection{Diffeological $1$-forms and gluing}

For diffeological spaces, there exists a rather well-developed theory of differential $k$-forms on them (see \cite{iglesiasBook}, Chapter 6, for a detailed exposition); we recall the main notions for the
case $k=1$.

\subsubsection{The pseudo-bundle of differential $1$-forms $\Lambda^1(X)$}

A \textbf{differential $1$-form on a diffeological space $X$} is defined by assigning to each plot  $p:\matR^k\supset U\to X$ a (usual) differential $1$-form such that this assignment satisfies the following smooth
compatibility condition. If $q:U'\to X$ is another plot of $X$ such that there exists a usual smooth map $F:U'\to U$ with $q=p\circ F$ then $\omega(q)(u')=F^*\left(\omega(p)(u)\right)$. Let now $f:X\to\matR$ be 
a diffeologically smooth function on it; recall that  this means that for every plot $p:U\to X$ the composition $f\circ p:U\to\matR$ is smooth in the usual sense, therefore $d(f\circ p)$ is a differential form on $U$. 
It is quite easy to see that the assignment $p\mapsto d(f\circ p)=:\omega(p)$ is a differential $1$-form on $X$; it is called, as usual, the \textbf{differential of $f$}.

The set of all differential $1$-forms on $X$ is denoted by $\Omega^1(X)$; it has an obvious vector space structure, with pointwise addition and scalar multiplication, and carries a natural functional diffeology 
with respect to which it becomes a diffeological vector space and that is characterized by the following condition: a map $q:U'\to\Omega^1(X)$ is a plot of $\Omega^1(X)$ if and only if for every plot $p:U\to X$ 
the map $U'\times U\to\Lambda^1(\matR^n)$ given by $(u',u)\mapsto(q(u')(p))(u)$ is smooth, where $U\subset\matR^n$. 

Let $X$ be a diffeological space, and let $x$ be a point of it. Denote by $\Omega_x^1(X)$ the subspace of all forms \textbf{vanishing at $x$}, namely, precisely the forms $\omega$ satisfying the following 
condition: for every plot $p:U\to X$ such that $U\ni 0$ and $p(0)=x$, the form $\omega(p)$ is the zero section of $\Lambda^1(U)$. The union
$$\bigcup_{x\in X}\{x\}\times\Omega_x^1(X)$$ is a (diffeological) sub-bundle of the trivial pseudo-bundle $X\times\Omega^1(X)$.  The \textbf{pseudo-bundle $\Lambda^1(X)$} is the corresponding quotient 
pseudo-bundle:
$$\Lambda^1(X):=\left(X\times\Omega^1(X)\right)/\left(\bigcup_{x\in X}\{x\}\times\Omega_x^1(X)\right).$$ 

The corresponding quotient projection is denoted by  
$$\pi^{\Omega,\Lambda}:X\times\Omega^1(X)\to\Lambda^1(X).$$ The pseudo-bundle projection of $\Lambda^1(X)$ is denoted by
$$\pi^{\Lambda}:\Lambda^1(X)\to X.$$ is locally represented by a pair, consisting of a plot of $X$ and a plot of $\Omega^1(X)$ (with the same domain of definition). The fibre at $x\in X$ of the pseudo-bundle 
$\Lambda^1(X)$ is denoted by $\Lambda_x^1(X)$, and we have
$$\Lambda_x^1(X)\cong\Omega^1(X)/\Omega_x^1(X).$$

\subsubsection{The behavior under gluing}

Let $X_1$ and $X_2$ be two diffeological spaces, and let $f:X_1\supseteq Y\to X_2$ be a smooth map. Let 
$$\pi:X_1\sqcup X_2\to X_1\cup_f X_2$$ be the quotient projection. We recall first the image of the pullback map (see 6.38 in \cite{iglesiasBook}) 
$$\pi^*:\Omega^1(X_1\cup_f X_2)\to\Omega^1(X_1\sqcup X_2)\cong\Omega^1(X_1)\times\Omega^1(X_2).$$ Note that $\pi^*$ is injective but typically not surjective.

\paragraph{The characteristic maps $\tilde{\rho}_1^{\Lambda}$ and $\tilde{\rho}_2^{\Lambda}$} In addition to the induction $i_2:X_2\hookrightarrow X_1\cup_f X_2$, consider the map 
$\tilde{i}_1:X_1\to X_1\cup_f X_2$ given as a composition of the inclusion $X_1\hookrightarrow X_1\sqcup X_2$ with the quotient projection $X_1\sqcup X_2\to X_1\cup_f X_2$. The corresponding 
pullback maps $\tilde{i}_1^*$ and $i_2^*$ induce well-defined, smooth, and linear maps
\begin{flushleft}
$\tilde{\rho}_1^{\Lambda}:\Lambda^1(X_1\cup_f X_2)\supset(\pi^{\Lambda})^{-1}(\tilde{i}_1(X_1))\to\Lambda^1(X_1)$ and
\end{flushleft}
\begin{flushright}
$\tilde{\rho}_2^{\Lambda}:\Lambda^1(X_1\cup_f X_2)\supset(\pi^{\Lambda})^{-1}(i_2(X_2))\to\Lambda^1(X_2)$.
\end{flushright}
See \cite{forms-gluing} for details.

\paragraph{The extendibility conditions $i^*(\Omega^1(X_1))=(f^*j^*)(\Omega^1(X_2))$ and $\calD_1^{\Omega}=\calD_2^{\Omega}$} Let $i:Y\hookrightarrow X_1$ and $j:f(Y)\hookrightarrow X_2$ be the 
natural inclusions. Denote by $\calD_1^{\Omega}$ the diffeology on $\Omega^1(Y)$ obtained as the pushforward of the standard diffeology of $\Omega^1(X_1)$ by the pullback map $i^*$. Likewise, let 
$\calD_2^{\Omega}$ be the pushforward of the standard diffeology on $\Omega^1(X_2)$ by the map $f^*j^*$. The conditions $i^*(\Omega^1(X_1))=(f^*j^*)(\Omega^1(X_2))$ and 
$\calD_1^{\Omega}=\calD_2^{\Omega}$ will be needed throughout the paper (a lot of statements depend on them). Notice that the latter condition is stronger than the former.

\paragraph{Compatibility of elements of $\Lambda^1(X_1)$ and $\Lambda^1(X_2)$} We will also need a certain compatibility notion for elements of $\Lambda^1(X_1)$ and
$\Lambda^1(X_2)$. This compatibility is relative to the map $f$ and applies to elements of fibres over the domain of gluing.

\begin{defn}\label{comp:elements:of:lambda:defn}
Two forms $\omega_1\in\Omega^1(X_1)$ and $\omega_2\in\Omega^1(X_2)$ are \textbf{compatible} if for any plot $p_1$ of $X_1$ whose range is contained in $Y$ we have that 
$$\omega_1(p_1)=\omega_2(f\circ p_1).$$ Let now $y\in Y$. Two cosets $\omega_1+\Omega_y^1(X_1)$ and $\omega_2+\Omega_{f(y)}^1(X_2)$ are said to be \textbf{compatible} if for any 
$\omega_1'\in\Omega_y^1(X_1)$ and for any $\omega_2'\in\Omega_{f(y)}^1(X_2)$ the forms $\omega_1+\omega_1'$ and $\omega_2+\omega_2'$ are compatible.
\end{defn}

It is useful to note (\cite{forms-gluing}) that $\omega_1\in\Omega^1(X_1)$ and $\omega_2\in\Omega^1(X_2)$ are compatible if and only if 
$$i^*(\omega_1)=(f^*j^*)(\omega_2).$$

\paragraph{The individual fibres of $\Lambda^1(X_1\cup_f X_2)$} Assuming that $f$ is a diffeomorphism, the fibres of $\Lambda^1(X_1\cup_f X_2)$ can be fully described. It turns out that any of them
is diffeomorphic to either a fibre of one of the factors or to the direct sum of such. This distinction depends on whether the fibre is at a point of the domain of gluing or outside of it.

\begin{thm}\label{fibres:lambda:f:diffeo:thm} \emph{(\cite{forms-gluing})}
Let $X_1$ and $X_2$ be two diffeological spaces, let $f:X_1\supseteq Y\to X_2$ be a diffeomorphism of its domain with its image such that $\calD_1^{\Omega}=\calD_2^{\Omega}$, and let 
$x\in X_1\cup_f X_2$. Then:
$$\Lambda_x^1(X_1\cup_f X_2)\cong\left\{\begin{array}{ll}
\Lambda_{i_1^{-1}(x)}^1(X_1) & \mbox{if }x\in i_1(X_1\setminus Y)\\ 
\Lambda_{\tilde{i}^{-1}(x)}^1(X_1)\oplus_{comp}\Lambda_{i_2^{-1}(x)}^1(X_2) & \mbox{if }x\in i_2(f(Y)) \\
\Lambda_{i_2^{-1}(x)}^1(X_2) & \mbox{if }x\in i_2(X_2\setminus f(Y)).
\end{array}\right.$$
\end{thm}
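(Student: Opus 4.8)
The plan is to compute each fibre through the identification $\Lambda_x^1(X)\cong\Omega^1(X)/\Omega_x^1(X)$, combined with the injectivity of $\pi^*$, which identifies $\Omega^1(X_1\cup_f X_2)$ with the space $\calC$ of compatible pairs $(\omega_1,\omega_2)\in\Omega^1(X_1)\times\Omega^1(X_2)$, i.e.\ those with $i^*(\omega_1)=(f^*j^*)(\omega_2)$. The whole statement then splits into (a) identifying, for each of the three types of point $x$, the subspace $\Omega_x^1(X_1\cup_f X_2)\subset\calC$ of forms vanishing at $x$, and (b) upgrading the resulting linear isomorphisms to diffeomorphisms. Step (a) is governed by the local plot description of $X_1\cup_f X_2$: any plot through $x$ restricts, on each connected piece of a small enough neighborhood, to (the image under $\tilde{i}_1$ or $i_2$ of) a plot of $X_1$ or of $X_2$, so the pullback of the glued form $\omega\leftrightarrow(\omega_1,\omega_2)$ along it agrees with the pullback of $\omega_1$ or of $\omega_2$. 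Reading off which factor contributes gives the vanishing condition.

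First I would dispose of the two outer cases. For $x\in i_1(X_1\setminus Y)$, put $x_1=i_1^{-1}(x)$; since $i_1(X_1\setminus Y)$ is disjoint from $i_2(X_2)$, every plot through $x$ lifts locally to a plot of $X_1$ through $x_1$, so $(\omega_1,\omega_2)\in\calC$ vanishes at $x$ exactly when $\omega_1\in\Omega_{x_1}^1(X_1)$, with no constraint on $\omega_2$. Thus $(\omega_1,\omega_2)\mapsto\omega_1+\Omega_{x_1}^1(X_1)$ descends to a map $\Lambda_x^1(X_1\cup_f X_2)\to\Lambda_{x_1}^1(X_1)$ that is well-defined and injective by this description, and surjective because the extendibility condition $i^*(\Omega^1(X_1))=(f^*j^*)(\Omega^1(X_2))$ lets one complete any $\omega_1$ to a compatible pair. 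This map is $\tilde{\rho}_1^{\Lambda}$ on the fibre, hence smooth; and because $\tilde{i}_1$ is an induction that realizes a neighborhood of $x$ as a neighborhood in $X_1$, the inverse lift is smooth too. The case $x\in i_2(X_2\setminus f(Y))$ is symmetric, with $\tilde{\rho}_2^{\Lambda}$ in place of $\tilde{\rho}_1^{\Lambda}$.

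The substantive case is $x\in i_2(f(Y))$. Writing $y=\tilde{i}_1^{-1}(x)$ and $f(y)=i_2^{-1}(x)$, plots through $x$ now lift, piece by piece, both to plots of $X_1$ through $y$ and to plots of $X_2$ through $f(y)$; hence $(\omega_1,\omega_2)\in\calC$ vanishes at $x$ if and only if $\omega_1\in\Omega_y^1(X_1)$ and $\omega_2\in\Omega_{f(y)}^1(X_2)$ simultaneously. Consequently the pair of characteristic maps $(\tilde{\rho}_1^{\Lambda},\tilde{\rho}_2^{\Lambda})$, sending the class of $\omega$ to $(\omega_1+\Omega_y^1(X_1),\,\omega_2+\Omega_{f(y)}^1(X_2))$, has trivial kernel and is therefore injective on the fibre. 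Its image is the set of coset-pairs admitting a compatible pair of representatives, which is precisely the compatible direct sum $\Lambda_y^1(X_1)\oplus_{comp}\Lambda_{f(y)}^1(X_2)$ in the sense of Definition \ref{comp:elements:of:lambda:defn}; surjectivity onto it is again the content of the extendibility hypothesis. This produces the required linear isomorphism.

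The hard part, and the only place where the \emph{diffeological} hypothesis $\calD_1^{\Omega}=\calD_2^{\Omega}$ (as opposed to the mere equality of images) is genuinely needed, is to see that the bijections above are diffeomorphisms. Smoothness in the forward direction is free, since the $\tilde{\rho}_i^{\Lambda}$ are smooth. For the inverse in the gluing case one must take a plot into $\Lambda_y^1(X_1)\oplus_{comp}\Lambda_{f(y)}^1(X_2)$, lift it to a compatible pair of plots of $\Omega^1(X_1)$ and $\Omega^1(X_2)$, and glue these into a single plot of $\Omega^1(X_1\cup_f X_2)$; it is exactly the coincidence of the two pushforward diffeologies $\calD_1^{\Omega}=\calD_2^{\Omega}$ on $\Omega^1(Y)$ that makes this gluing of plots smooth. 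I expect this smooth-lifting step, rather than any of the underlying linear algebra, to be the real obstacle; once it is settled, the three fibrewise identifications combine into the stated description.
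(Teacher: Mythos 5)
This theorem is quoted from \cite{forms-gluing}, and the present paper contains no proof of it, so there is nothing in the text to compare your argument against; I can only assess it on its own terms. Your overall route --- identify $\Omega^1(X_1\cup_f X_2)$ with the space of compatible pairs via the injective pullback $\pi^*$, compute $\Omega_x^1(X_1\cup_f X_2)$ in each of the three cases from the local lifting description of plots of the gluing, and pass to quotients --- is the natural one, and the fibrewise linear algebra (injectivity of the induced maps, the use of $i^*(\Omega^1(X_1))=(f^*j^*)(\Omega^1(X_2))$ for surjectivity in the outer cases) is correct. The genuine gap is the one you yourself flag and then do not close: the statement asserts \emph{diffeomorphisms} of fibres for their subset diffeologies, and the smoothness of the inverse bijections is essentially the entire diffeological content of the theorem. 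Note moreover that this difficulty is not confined to the middle case: even for $x\in i_1(X_1\setminus Y)$, a plot of $\Lambda_{x_1}^1(X_1)$ lifts locally only to a plot of $\Omega^1(X_1)$, and to map it smoothly into $\Lambda_x^1(X_1\cup_f X_2)$ --- a quotient of a subset of $(X_1\cup_f X_2)\times\Omega^1(X_1\cup_f X_2)$ --- one must complete it to a \emph{plot-wise} compatible pair of plots of $\Omega^1(X_1)$ and $\Omega^1(X_2)$; so your appeal to ``$\tilde{i}_1$ is an induction'' does not suffice there either. That completion step is exactly where the full strength of $\calD_1^{\Omega}=\calD_2^{\Omega}$ (equality of pushforward diffeologies, not merely of their underlying images) enters, and it must actually be carried out for the proof to be complete.

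A secondary point: in the middle case you describe the image of $(\tilde{\rho}_1^{\Lambda},\tilde{\rho}_2^{\Lambda})$ as the coset pairs \emph{admitting} a compatible pair of representatives, and then assert this is ``precisely'' the compatible direct sum of Definition \ref{comp:elements:of:lambda:defn}. Your description of the image is the correct one, but it does not literally coincide with that definition, which demands that \emph{every} pair of representatives be compatible; taking $\omega_2'=0$ this would force $i^*(\Omega_y^1(X_1))=0$, which already fails for $X_1=X_2=Y=\matR$ with $f=\mathrm{id}$ (the form $x\,dx$ vanishes at $0$ but has nonzero restriction to $Y$). So either $\oplus_{comp}$ must be read in your ``exists compatible representatives'' sense for the theorem to be true, or you would have to prove the two notions agree under the standing hypotheses (they do not in general). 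This is arguably an imprecision inherited from the source rather than an error of yours, but it should be addressed explicitly rather than elided.
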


\begin{example}
Let $X_1$ and $X_2$ be two diffeological spaces, and let $x_i\in X_i$ be a point, for $i=1,2$; let $f:\{x_1\}\to\{x_2\}$ be the obvious map. Then $X_1\cup_f X_2$ is the usual wedge $X_1\vee X_2$ of
$X_1$ and $X_2$. Since any diffeological form assigns the zero value to any constant plot, any two forms on, respectively, $X_1$ and $X_2$, are automatically compatible. Therefore
$$\Omega^1(X_1\cup_f X_2)=\Omega^1(X_1\vee X_2)\cong\Omega^1(X_1)\times\Omega^1(X_2).$$ On the other hand, the fibre of $\Lambda^1(X_1\vee X_2)$ at any point
$x_i'\in X_i\subset X_1\vee X_2$ is $\Lambda_{x_i'}^1(X_i)$, except for the wedge point $x=[x_1]=[x_2]$, where it is the direct product $\Lambda_{x_1}^1(X_1)\times\Lambda_{x_2}^1(X_2)$.
\end{example}

\paragraph{On the diffeology of $\Lambda^1(X_1\cup_f X_2)$} There is first of all the following characterization of the diffeology of $\Lambda^1(X_1\cup_f X_2)$.

\begin{thm} \emph{(\cite{forms-gluing})}
If $\calD_1^{\Omega}=\calD_2^{\Omega}$ then the diffeology of $\Lambda^1(X_1\cup_f X_2)$ is the coarsest one such that both $\tilde{\rho}_i^{\Lambda}$ are smooth. Furthermore, the maps 
$\tilde{\rho}_1^{\Lambda}$ and $\tilde{\rho}_2^{\Lambda}$ are subductions.
\end{thm}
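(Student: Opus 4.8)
The plan is to prove the two assertions together, the real engine being a lifting property at the level of $\Omega^1$ that is forced by the hypothesis $\calD_1^\Omega=\calD_2^\Omega$. Write $\calD^\Lambda$ for the actual diffeology of $\Lambda^1(X_1\cup_f X_2)$ and $\widehat{\calD}$ for the coarsest diffeology making both $\tilde{\rho}_1^\Lambda$ and $\tilde{\rho}_2^\Lambda$ smooth, and set $D_1=(\pi^\Lambda)^{-1}(\tilde{i}_1(X_1))$ and $D_2=(\pi^\Lambda)^{-1}(i_2(X_2))$, so that $D_1\cup D_2=\Lambda^1(X_1\cup_f X_2)$ and $D_1\cap D_2=(\pi^\Lambda)^{-1}(i_2(f(Y)))$. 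Since the two characteristic maps are already smooth for $\calD^\Lambda$, and $\widehat{\calD}$ is by definition the largest diffeology for which they are smooth, the inclusion $\calD^\Lambda\subseteq\widehat{\calD}$ is immediate; all the content lies in the reverse inclusion and in the subductivity. It is also convenient to record the explicit form of $\widehat{\calD}$: a map $p$ is a $\widehat{\calD}$-plot precisely when, on every open set where $p$ takes values in $D_i$, the composite $\tilde{\rho}_i^\Lambda\circ p$ is a plot of $\Lambda^1(X_i)$.

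First I would isolate the $\Omega^1$-level statement that does the work: under $\calD_1^\Omega=\calD_2^\Omega$ the pullback maps $\tilde{i}_1^*:\Omega^1(X_1\cup_f X_2)\to\Omega^1(X_1)$ and $i_2^*:\Omega^1(X_1\cup_f X_2)\to\Omega^1(X_2)$ are subductions. Identifying, via the injective $\pi^*$ and as in \cite{forms-gluing}, the space $\Omega^1(X_1\cup_f X_2)$ with the subspace of compatible pairs $\{(\omega_1,\omega_2):i^*\omega_1=(f^*j^*)\omega_2\}$ carrying the subset diffeology of $\Omega^1(X_1)\times\Omega^1(X_2)$, the map $\tilde{i}_1^*$ becomes the first projection. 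Given a plot $\sigma_1$ of $\Omega^1(X_1)$, the composite $i^*\sigma_1$ is a plot of $\calD_1^\Omega$, hence of $\calD_2^\Omega$, hence locally lifts along $f^*j^*$ to a plot $\sigma_2$ of $\Omega^1(X_2)$ with $(f^*j^*)\sigma_2=i^*\sigma_1$; then $(\sigma_1,\sigma_2)$ is the desired local lift. Surjectivity on points is the set-level extendibility $i^*(\Omega^1(X_1))=(f^*j^*)(\Omega^1(X_2))$, which is implied by $\calD_1^\Omega=\calD_2^\Omega$; the same argument with the roles of the two factors exchanged treats $i_2^*$.

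With this in hand I would prove that $\tilde{\rho}_1^\Lambda$ (and symmetrically $\tilde{\rho}_2^\Lambda$) is a subduction. Surjectivity is read off Theorem~\ref{fibres:lambda:f:diffeo:thm}: over $i_1(X_1\setminus Y)$ the map $\tilde{\rho}_1^\Lambda$ is a fibrewise isomorphism, and over $i_2(f(Y))$ it is the projection of the compatible direct sum onto its $\Lambda^1(X_1)$-summand. For the lifting of plots, take a plot $q$ of $\Lambda^1(X_1)$; since $\Lambda^1(X_1)$ is the pseudo-bundle quotient of $X_1\times\Omega^1(X_1)$ by $\pi^{\Omega,\Lambda}$, the plot $q$ lifts locally to a pair $(p_1,\sigma_1)$. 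Lifting $\sigma_1$ to $\sigma\in\Omega^1(X_1\cup_f X_2)$ by the previous paragraph and taking $\tilde{i}_1\circ p_1$ as base, the element $\pi^{\Omega,\Lambda}(\tilde{i}_1\circ p_1,\sigma)$ is a plot of $\Lambda^1(X_1\cup_f X_2)$ lying in $D_1$; and because $\tilde{\rho}_1^\Lambda$ is the map induced by $\tilde{i}_1^*$ on cosets, it sends this element to $[p_1,\tilde{i}_1^*\sigma]=[p_1,\sigma_1]=q$. This exhibits the required local lift, so $\tilde{\rho}_1^\Lambda$ is a subduction.

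Finally, for $\widehat{\calD}\subseteq\calD^\Lambda$, let $p:U\to\Lambda^1(X_1\cup_f X_2)$ be a $\widehat{\calD}$-plot; as both diffeologies satisfy the sheaf condition it suffices to argue locally, and I would show $p$ locally factors through $\pi^{\Omega,\Lambda}$ as a pair $(\pi^\Lambda\circ p,\sigma)$. On any open set where $p$ maps into a single $D_i$ one has $\pi^\Lambda\circ p=\tilde{i}_1\circ(\text{plot of }X_1)$ (resp. $i_2\circ(\text{plot of }X_2)$), so $\pi^\Lambda\circ p$ is a plot of $X_1\cup_f X_2$ there; applying the local description of plots of $X_1\cup_f X_2$, one then reconstructs the form part $\sigma$ from $\tilde{\rho}_1^\Lambda\circ p$ and $\tilde{\rho}_2^\Lambda\circ p$ by lifting them to forms on $X_1$, resp. $X_2$, and assembling a compatible pair as in the second paragraph, obtaining $\pi^{\Omega,\Lambda}(\pi^\Lambda\circ p,\sigma)=p$. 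The main obstacle is precisely the overlap $D_1\cap D_2=(\pi^\Lambda)^{-1}(i_2(f(Y)))$, where by Theorem~\ref{fibres:lambda:f:diffeo:thm} the fibre is the compatible direct sum $\Lambda^1(X_1)\oplus_{comp}\Lambda^1(X_2)$: there $p$ is determined by the \emph{pair} $(\tilde{\rho}_1^\Lambda\circ p,\tilde{\rho}_2^\Lambda\circ p)$, and one must check that the two lifted forms match on $Y$ so as to form a single \emph{compatible} — hence smooth — plot $\sigma$ of $\Omega^1(X_1\cup_f X_2)$, which is exactly what $\calD_1^\Omega=\calD_2^\Omega$ guarantees via Definition~\ref{comp:elements:of:lambda:defn}. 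A secondary point to dispatch is that the open sets on which $p$ maps into a single $D_i$ cover $U$, i.e. that $\pi^\Lambda\circ p$ is globally a plot; this again follows from the local structure of plots of the glued base together with the characterization of $\widehat{\calD}$ above. Once the overlap is handled, $p=\pi^{\Omega,\Lambda}(\pi^\Lambda\circ p,\sigma)$ is visibly a $\calD^\Lambda$-plot, which yields $\widehat{\calD}\subseteq\calD^\Lambda$ and completes the identification of the diffeology.
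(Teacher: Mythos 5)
First, a point of reference: this theorem is one of the statements the paper merely recalls from \cite{forms-gluing}; no proof of it appears here, so your argument can only be measured on its own terms rather than against an in-paper proof.

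The subduction half of your argument is essentially sound and uses what is surely the right engine. Identifying $\Omega^1(X_1\cup_f X_2)$ via $\pi^*$ with the compatible pairs in $\Omega^1(X_1)\times\Omega^1(X_2)$, the hypothesis $\calD_1^{\Omega}=\calD_2^{\Omega}$ does let you lift a plot $\sigma_1$ of $\Omega^1(X_1)$ locally to a compatible pair $(\sigma_1,\sigma_2)$, so the two projections are subductions at the $\Omega^1$ level; combining this with the local lifting of plots of $\Lambda^1(X_1)$ through $\pi_1^{\Omega,\Lambda}$ and the fact that $\tilde{\rho}_1^{\Lambda}$ is induced on cosets by $\tilde{i}_1^*$ gives the claimed local lifts, hence that each $\tilde{\rho}_i^{\Lambda}$ is a subduction. (The only caveats are routine: you implicitly use that $\pi^*$ is an induction onto the compatible pairs, and that the set-level extendibility follows from $\calD_1^{\Omega}=\calD_2^{\Omega}$; both are available from \cite{forms-gluing}.)

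The genuine gap is in the reverse inclusion $\widehat{\calD}\subseteq\calD^{\Lambda}$, and it sits exactly at the point you defer as ``a secondary point to dispatch'': that the open sets on which a $\widehat{\calD}$-plot $p:U\to\Lambda^1(X_1\cup_f X_2)$ maps into a single $D_i$ (your notation for the domains of the $\tilde{\rho}_i^{\Lambda}$) cover $U$. Because the $\tilde{\rho}_i^{\Lambda}$ are only partially defined, the coarsest diffeology making them smooth for the subset diffeologies on their domains imposes \emph{no condition at all} on a map $p$ for which neither $p^{-1}(D_1)$ nor $p^{-1}(D_2)$ contains a nonempty open set. Concretely: pick $\lambda_1$ in a fibre over $i_1(X_1\setminus Y)$ and $\lambda_2$ in a fibre over $i_2(X_2\setminus f(Y))$, and let $p$ take the value $\lambda_1$ on the rationals and $\lambda_2$ on the irrationals; this $p$ belongs vacuously to $\widehat{\calD}$ as you have characterized it, yet it is not a plot of $\calD^{\Lambda}$ (its composition with $\pi^{\Lambda}$ is not even a plot of the base). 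Your appeal to ``the local structure of plots of the glued base'' to rule this out is circular, since that description applies only once one knows $\pi^{\Lambda}\circ p$ is a plot, which is precisely what is in question. So either the phrase ``coarsest diffeology such that both $\tilde{\rho}_i^{\Lambda}$ are smooth'' must be read inside a restricted class (for instance, diffeologies for which $\pi^{\Lambda}$ remains smooth, or vector pseudo-bundle diffeologies over the fixed base), or a further argument excluding such interleaved plots is required; as written, the reverse inclusion does not go through. The difficulty you do flag over the overlap $(\pi^{\Lambda})^{-1}(i_2(f(Y)))$ --- where $p$ is determined by the pair of components but $\widehat{\calD}$ only controls $\tilde{\rho}_2^{\Lambda}\circ p$ on open sets inside $p^{-1}(D_2)$ --- is a symptom of the same problem and is likewise not resolved by invoking $\calD_1^{\Omega}=\calD_2^{\Omega}$ alone.
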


More technical details (that later on we will make use of) appear in the following statement.

\begin{thm}\label{decomposition:lambda:f:diffeo:thm}
Let $X_1$ and $X_2$ be two diffeological spaces, and let $f:X_1\supseteq Y\to X_2$ be a diffeomorphism of its domain with its image such that $\calD_1^{\Omega}=\calD_2^{\Omega}$. Let 
$\pi^{\Lambda}:\Lambda^1(X_1\cup_f X_2)\to X_1\cup_f X_2$, $\pi_1^{\Lambda}:\Lambda^1(X_1)\to X_1$, and $\pi_2^{\Lambda}:\Lambda^1(X_2)\to X_2$ be the pseudo-bundle projections. Then;
\begin{itemize}
\item the restriction of $\tilde{\rho}_1^{\Lambda}$ to $(\pi^{\Lambda})^{-1}(i_1(X_1\setminus Y))$ is a diffeomorphism
$$\Lambda^1(X_1\cup_f X_2)\supseteq(\pi^{\Lambda})^{-1}(i_1(X_1\setminus Y))\to(\pi_1^{\Lambda})^{-1}(X_1\setminus Y)\subset\Lambda^1(X_1)$$
for the appropriate subset diffeologies;

\item the restriction of $\tilde{\rho}_2^{\Lambda}$ to $(\pi^{\Lambda})^{-1}(i_2(X_2\setminus f(Y)))$ is a diffeomorphism 
$$\Lambda^1(X_1\cup_f X_2)\supseteq(\pi^{\Lambda})^{-1}(i_2(X_2\setminus f(Y)))\to(\pi_2^{\Lambda})^{-1}(X_2\setminus f(Y));$$ 

\item the direct sum of the restrictions of $\tilde{\rho}_1^{\Lambda}$ and $\tilde{\rho}_2^{\Lambda}$ to $i_2(f(Y))$, written as $\tilde{\rho}_1^{\Lambda}\oplus\tilde{\rho}_2^{\Lambda}$, is a diffeomorphism
$$\Lambda^1(X_1\cup_f X_2)\supseteq(\pi^{\Lambda})^{-1}(i_2(f(Y)))\to(\pi_1^{\Lambda})^{-1}(Y)\oplus_{comp}(\pi_2^{\Lambda})^{-1}(f(Y)).$$
\end{itemize}
\end{thm}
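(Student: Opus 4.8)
The plan is to prove each of the three items as a separate diffeomorphism claim, and in each case to split the work into bijectivity, smoothness of the stated map, and smoothness of its inverse. Bijectivity in all three cases is essentially the fibrewise content of Theorem \ref{fibres:lambda:f:diffeo:thm}: over $i_1(X_1\setminus Y)$ the map $\tilde{\rho}_1^{\Lambda}$ restricts on each fibre to the linear isomorphism $\Lambda^1_x(X_1\cup_f X_2)\cong\Lambda^1_{i_1^{-1}(x)}(X_1)$ furnished by that theorem, over $i_2(X_2\setminus f(Y))$ the map $\tilde{\rho}_2^{\Lambda}$ plays the same role, and over $i_2(f(Y))$ the pair $(\tilde{\rho}_1^{\Lambda},\tilde{\rho}_2^{\Lambda})$ realizes the decomposition $\Lambda^1_x(X_1\cup_f X_2)\cong\Lambda^1_{\tilde{i}_1^{-1}(x)}(X_1)\oplus_{comp}\Lambda^1_{i_2^{-1}(x)}(X_2)$. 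Since $i_1,i_2,\tilde{i}_1$ are injective (here using that $f$ is a diffeomorphism), these fibrewise isomorphisms assemble into the asserted set-theoretic bijections. Smoothness of the three maps is immediate from the smoothness of $\tilde{\rho}_1^{\Lambda}$ and $\tilde{\rho}_2^{\Lambda}$ recalled above: restricting to a subset and corestricting to the image (with subset diffeologies) preserves smoothness, and in the third item the pairing of two smooth maps is smooth into the product and lands in the compatible fibred product. The one genuine point in each item is the smoothness of the inverse.

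For the first two items I would argue via a restriction trick for subductions. The restriction of a subduction to the preimage of a subset is again a subduction onto that subset, for the respective subset diffeologies: a plot of the subset is in particular a plot of the whole target, hence lifts locally through the subduction, and any such lift automatically has image in the preimage. Applying this to the subduction $\tilde{\rho}_1^{\Lambda}:(\pi^{\Lambda})^{-1}(\tilde{i}_1(X_1))\to\Lambda^1(X_1)$ and to the subset $(\pi_1^{\Lambda})^{-1}(X_1\setminus Y)$, and noting that the corresponding preimage is exactly $(\pi^{\Lambda})^{-1}(i_1(X_1\setminus Y))$ (the base map covered by $\tilde{\rho}_1^{\Lambda}$, namely $\tilde{i}_1^{-1}$, sends this region onto $X_1\setminus Y$, while it sends the remaining part $i_2(f(Y))$ of its domain into $Y$), I obtain that the restriction in the first item is a subduction. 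A bijective subduction is a diffeomorphism, so combined with the bijectivity established above this finishes the first item; the second is identical with the roles of the indices interchanged.

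For the third item the restriction trick does not apply directly, and I would instead invoke the characterization of the diffeology of $\Lambda^1(X_1\cup_f X_2)$ as the coarsest one making both $\tilde{\rho}_1^{\Lambda}$ and $\tilde{\rho}_2^{\Lambda}$ smooth. A plot of $(\pi_1^{\Lambda})^{-1}(Y)\oplus_{comp}(\pi_2^{\Lambda})^{-1}(f(Y))$ is, by the subset diffeology inside $\Lambda^1(X_1)\times\Lambda^1(X_2)$, a pair $(r_1,r_2)$ of plots that are pointwise compatible. Composing with the inverse of $\tilde{\rho}_1^{\Lambda}\oplus\tilde{\rho}_2^{\Lambda}$ produces a map $w$ into $(\pi^{\Lambda})^{-1}(i_2(f(Y)))$ with $\tilde{\rho}_1^{\Lambda}\circ w=r_1$ and $\tilde{\rho}_2^{\Lambda}\circ w=r_2$. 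Since the image of $w$ lies in the region $i_2(f(Y))$, which belongs to the domains of both characteristic maps, the coarsest-diffeology characterization yields precisely that $w$ is a plot of $\Lambda^1(X_1\cup_f X_2)$ if and only if both $\tilde{\rho}_i^{\Lambda}\circ w$ are plots; as these are $r_1$ and $r_2$, the map $w$ is a plot, so the inverse is smooth and the third item follows.

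The main obstacle I anticipate is the third item, and specifically making the last argument airtight. Two things need care. First, the characteristic maps are only partially defined, so the phrase ``coarsest diffeology making both $\tilde{\rho}_i^{\Lambda}$ smooth'' must be read as a statement about plots whose image meets the relevant domains; I must check that for $w$ with image entirely inside the overlap $i_2(f(Y))$ the clean ``plot if and only if both compositions are plots'' criterion is legitimately available from the recalled theorem. Second, everything here rests on identifying the diffeology of the compatible fibred product $\oplus_{comp}$ with the subset diffeology it inherits from $\Lambda^1(X_1)\times\Lambda^1(X_2)$, and on the fact that a \emph{compatible} pair of plots genuinely assembles into a single plot over the glued base; these are exactly the places where the hypotheses that $f$ is a diffeomorphism and $\calD_1^{\Omega}=\calD_2^{\Omega}$ enter, through Theorem \ref{fibres:lambda:f:diffeo:thm} and the diffeology characterization, and I would trace their use carefully rather than take the assembly for granted.
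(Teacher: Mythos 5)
The paper does not actually prove this theorem: it is stated in the preliminaries as a recollection from \cite{forms-gluing}, alongside Theorem \ref{fibres:lambda:f:diffeo:thm} and the subduction/coarsest-diffeology characterization of the diffeology of $\Lambda^1(X_1\cup_f X_2)$, so there is no internal proof to compare yours against. Judged as a derivation of the statement from those two recalled results, your proposal is essentially sound. The treatment of the first two items --- a subduction restricted to the preimage of a subset is a subduction onto that subset, and a bijective subduction is a diffeomorphism --- is clean and correct, and your identification of $(\tilde{\rho}_1^{\Lambda})^{-1}((\pi_1^{\Lambda})^{-1}(X_1\setminus Y))$ with $(\pi^{\Lambda})^{-1}(i_1(X_1\setminus Y))$ is right, since $\tilde{\rho}_1^{\Lambda}$ covers $\tilde{i}_1^{-1}$ and the remaining part of its domain sits over $i_2(f(Y))$, which is sent into $Y$.

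Two caveats. First, bijectivity in all three items is not literally contained in Theorem \ref{fibres:lambda:f:diffeo:thm} as stated: that theorem asserts only abstract isomorphisms of fibres, whereas you need that these isomorphisms are realized by the restrictions of $\tilde{\rho}_1^{\Lambda}$ and $\tilde{\rho}_2^{\Lambda}$ (and, in the third item, that the pair $(\tilde{\rho}_1^{\Lambda},\tilde{\rho}_2^{\Lambda})$ maps the fibre bijectively \emph{onto} the set of compatible pairs). That is established in \cite{forms-gluing}, but it is an input you are importing rather than deriving, and it should be flagged as such. Second, in the third item your criterion that a parametrization $w$ with image in $(\pi^{\Lambda})^{-1}(i_2(f(Y)))$ is a plot if and only if both $\tilde{\rho}_i^{\Lambda}\circ w$ are plots does follow from the coarsest-diffeology characterization (adjoining such a $w$ to the diffeology generates one that still makes both characteristic maps smooth, so $w$ already belongs to the coarsest such diffeology), but this is precisely the verification you defer; it needs to be written out, together with the --- definitional, but nowhere stated in this paper --- fact that $\oplus_{comp}$ carries the subset diffeology inherited from the fibred product inside $\Lambda^1(X_1)\times\Lambda^1(X_2)$. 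With those two points made explicit, the argument goes through.
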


\section{Sections of diffeological pseudo-bundles}

In this section we consider the space $C^{\infty}(X,V)$ of smooth sections of a given finite-dimensional diffeological vector pseudo-bundle. For non-standard diffeologies on one or both of $X$ and $V$, 
this space may easily be of infinite dimension; immediately below we provide an example of this. On the other hand, when it is the spaces themselves that are non-standard (say, they are not 
topological manifolds), the space of sections might be finite-dimensional, as we illustrate via the study of the behavior of the space of sections under gluing of pseudo-bundles, 
where most of the effort has to be spent on the case when the gluing is performed along a non-diffeomorphism $f$. In this regard, we obtain the answer in the most general case, showing that the space 
of sections of the result of gluing is always a smooth surjective imageof a subspace of the direct product of the spaces of sections of the factors (in particular, the finiteness of the dimension is preserved, 
\emph{i.e.}, the existence of local bases, meaning that if the spaces of sections of the factors are finite-dimensional then so is the space of sections of the result of gluing).

Observe that we discuss in fact only the case of global sections, because this is not really different from restricting ourselves to the local case. Indeed, in the natural topology underlying a diffeological 
structure, the so-called D-topology (introduced in \cite{iglFibre}), the open sets can be of any form. What this implies at the moment is that there is no fixed local shape for diffeological pseudo-bundle, or, 
said differently, any diffeological vector pseudo-bundle can appear as the restriction of a larger pseudo-bundle to a D-open (open in D-topology) neighborhood of a fibre.

The final conclusion of this section is that there is a natural smooth surjective map (a subduction, in fact)
$$\mathcal{S}:C_{(f,\tilde{f})}^{\infty}(X_1,V_1)\times_{comp}C^{\infty}(X_2,V_2)\to C^{\infty}(X_1\cup_f X_2,V_1\cup_{\tilde{f}}V_2),$$ where 
$C_{(f,\tilde{f})}^{\infty}(X_1,V_1)\leqslant C^{\infty}(X_1,V_1)$ is the subspace of the so-called \emph{$(f,\tilde{f})$-invariant} sections (these are sections $s$ such that having $f(y)=f(y')$ implies that 
$\tilde{f}(s(y))=\tilde{f}(s(y'))$), and $C_{(f,\tilde{f})}^{\infty}(X_1,V_1)\times_{comp}C^{\infty}(X_2,V_2)$ is the subset of the direct product $C_{(f,\tilde{f})}^{\infty}(X_1,V_1)\times C^{\infty}(X_2,V_2)$ 
that consists of all \emph{compatible} pairs (a pair $(s_1,s_2)$ is compatible if $\tilde{f}\circ s_1=s_2\circ f$ wherever defined). The map $\mathcal{S}$ is an instance of a more general procedure of 
gluing compatible maps concurrently with gluing of their domains and their ranges (see \cite{pseudometric-pseudobundle} for the general case; the map $\mathcal{S}$ is described in detail below). Notice 
also that
$$C_{(f,\tilde{f})}^{\infty}(X_1,V_1)\times_{comp}C^{\infty}(X_2,V_2)=C^{\infty}(X_1,V_1)\times_{comp} C^{\infty}(X_2,V_2),$$ that is, if $(s_1,s_2)$ is a compatible pair then $s_1$ is necessarily 
$(f,\tilde{f})$-invariant. Also, $\mathcal{S}$ is a diffeomorphism if $f$ and $\tilde{f}$ are so.

\subsection{The space $C^{\infty}(X,V)$ over $C^{\infty}(X,\matR)$}

In the case of diffeological pseudo-bundles, the space of all smooth sections $C^{\infty}(X,V)$ may have infinite dimension over $C^{\infty}(X,\matR)$ when normally we would not expect it. To begin our 
consideration of the subject, we provide a simple example of this.

\begin{example}
Let $\pi:V\to X$ be the projection of $V=\matR^3$ onto its first coordinate; thus, $X$ is $\matR$, which we endow with the standard diffeology. Endow $V$ with the pseudo-bundle diffeology
generated by the plot $\matR^2\ni(u,v)\mapsto(u,0,|v|)$; this diffeology is a product diffeology for the decomposition $\matR^3=\matR\times\matR^2$ into the direct product of the standard
$\matR$ with $\matR^2$ carrying the vector space diffeology generated by the plot $v\mapsto(0,|v|)$. In this case the space $C^{\infty}(X,V)$ of smooth sections of the pseudo-bundle $\pi$
has infinite dimension over $C^{\infty}(X,\matR)=C^{\infty}(\matR,\matR)$; let us explain why.
\end{example}

\begin{proof}
Since the diffeology of $X$ is standard, the ring $C^{\infty}(X,\matR)$ includes the usual smooth functions $\matR\to\matR$ only. The diffeology of $V$ is actually a vector space diffeology
generated by the plot $(u,v)\mapsto(u,0,|v|)$; an arbitrary plot of it has therefore the form
$$\matR^{l+m+n}\supseteq U\ni(x,y,z)\mapsto(f_1(x),f_2(y),g_0(z)+g_1(z)|h_1(z)|+\ldots+g_k(z)|h_k(z)|),$$
where $U$ is a domain, and $f_1:\matR^l\subseteq U_x\to\matR$, $f_2:\matR^m\supseteq U_y\to\matR$ and $g_0,g_1,\ldots,g_k,h_1,\ldots,h_k:\matR^n\supseteq U_z\to\matR$ are some ordinary 
smooth functions. Hence any smooth section $s\in C^{\infty}(X,V)$ has (at least locally) form
$$s(x)=(x,f(x),g_0(x)+g_1(x)|h_1(x)|+\ldots+g_k(x)|h_k(x)|)$$ for some ordinary smooth functions $f,g_0,g_1,\ldots,g_k,h_1,\ldots,h_k:\matR\supseteq U\to\matR$; and \emph{vice versa} every such 
expression corresponds locally to a smooth section $X\to V$ (and can be extended, by a standard partition-of-unity argument, to a section in $C^{\infty}(X,V)$). Since $g_i$ and $h_i$ are any smooth 
functions at all, and they can be in any finite number, for any finite arbitrarily long collection $x_1,\ldots,x_k\in\matR$ there is a diffeologically smooth section $s$ that, seen as a usual map 
$\matR\to\matR^3$, is non-differentiable precisely at the points $x_1,\ldots,x_k$ (and smooth outside of them). Thus, it is impossible that all such sections be linear combinations over 
$C^{\infty}(\matR,\matR)$ of the same finite set of (at least continuous) functions $\matR\to\matR^3$.
\end{proof}

Our main interest thus is when the space of sections turns out to be finite-dimensional. We thus concentrate, in the sections that follow, on the behavior of this space under the operation of gluing.

\subsection{The space $C^{\infty}(X_1\cup_f X_2,V_1\cup_{\tilde{f}}V_2)$ relative to $C_{(f,\tilde{f})}^{\infty}(X_1,V_1)$ and $C^{\infty}(X_2,V_2)$}

Let $\pi_1:V_1\to X_1$ and $\pi_2:V_2\to X_2$ be two finite-dimensional diffeological vector pseudo-bundles, let $(\tilde{f},f)$ be a pair of smooth maps that defines a gluing between them, and let 
$Y\subset X_1$ be the domain of definition of $f$. Consider the three corresponding spaces of smooth sections, \emph{i.e.}, the spaces $C^{\infty}(X_1,V_1)$, $C^{\infty}(X_2,V_2)$, and 
$C^{\infty}(X_1\cup_f X_2,V_1\cup_{\tilde{f}}V_2)$. The latter space can be reconstructed from the former two by using the notion of gluing of compatible smooth maps, as it  appears in
 \cite{pseudometric-pseudobundle}.

\subsubsection{Compatible sections}

Consider a pair $\varphi_1:X_1\to Z_1$ and $\varphi_2:X_2\to Z_2$ of smooth maps between some diffeological spaces that are, in turn, endowed with fixed smooth maps $f:X_1\supset Y\to X_2$ 
and $g:\varphi_1(Y)\to Z_2$. We say that $\varphi_1$ and $\varphi_2$ are \textbf{$(f,g)$-compatible} if $g\circ\varphi_1=\varphi_2\circ f$ wherever defined. This allows to define an obvious map
$$\varphi_1\cup_{(f,g)}\varphi_2:X_1\cup_f X_2\to Z_1\cup_g Z_2,$$ which is smooth for the gluing diffeologies on $X_1\cup_f X_2$ and $Z_1\cup_g Z_2$ (see \cite{pseudometric-pseudobundle}, 
Proposition 4.4). A pair of sections $s_1,s_2$ of two diffeological pseudo-bundles is then a particular instance of maps $\varphi_i$, with $Z_i$ being $V_i$, with the role of $g$ being played by $\tilde{f}$. 
Two such sections are \textbf{compatible} if $\tilde{f}\circ s_1=s_2\circ f$ on the whole domain of definition.

\subsubsection{Compatibility and $(f,\tilde{f})$-invariant sections}

Let $s_1\in C^{\infty}(X_1,V_1)$ be such that there exists a section $s_2\in C^{\infty}(X_2,V_2)$ compatible with it, that is, for all $y\in Y$ we have $\tilde{f}(s_1(y))=s_2(f(y))$; let $y'\in Y$ be a point 
such that $f(y)=f(y')$. The compatibility condition implies then that
$$\tilde{f}(s_1(y))=s_2(f(y))=s_2(f(y'))=\tilde{f}(s_1(y'));$$ thus, although $s_1(y)$ and $s_1(y')$ do not have to coincide, their images under $\tilde{f}$ necessarily do. This justifies the following definition, 
and an easy lemma that follows it.

\begin{defn}\label{f-tilde_f:invariant:sections:defn}
Let $\pi_1:V_1\to X_1$ be a diffeological vector pseudo-bundle, let $W$ and $Z$ be any two diffeological spaces, let $f:Y\to Z$ be a smooth map defined on a subset $Y$ of $X_1$,
and let $\tilde{f}:\pi_1^{-1}(Y)\to W$ be a lift of $f$ to $V_1$. A section $s_1\in C^{\infty}(X_1,V_1)$ of this pseudo-bundle is called \textbf{$(f,\tilde{f})$-invariant} if for any $y,y'\in Y$ such
that $f(y)=f(y')$ (in $Z$) we have that $\tilde{f}(s_1(y))=\tilde{f}(s_1(y'))$ (in $W$). A function $h\in C^{\infty}(X_1,\matR)$ is called \textbf{$f$-invariant} if $h(y)=h(y')$ for all $y,y'\in Y$
such that $f(y)=f(y')$.
\end{defn}

The lemma below follows immediately from what has been said prior to the definition.

\begin{lemma}\label{s_1:in:compatible:is:ff-invt:lem}
Let $\pi_1:V_1\to X_1$ and $\pi_2:V_2\to X_2$ be two diffeological vector pseudo-bundles, let $(\tilde{f},f)$ be a gluing between them, and let $s_1\in C^{\infty}(X_1,V_1)$
be such that there exists $s_2\in C^{\infty}(X_2,V_2)$ compatible with $s_1$. Then $s_1$ is $(f,\tilde{f})$-invariant.
\end{lemma}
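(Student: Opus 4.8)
The plan is to derive the conclusion directly from the two definitions involved, so the argument amounts to nothing more than unwinding the compatibility condition; indeed, this is exactly what the paragraph preceding Definition \ref{f-tilde_f:invariant:sections:defn} already carries out, and the lemma records it formally. First I would fix two arbitrary points $y,y'\in Y$ satisfying $f(y)=f(y')$; these are precisely the points to which the definition of $(f,\tilde{f})$-invariance applies, and the goal is to establish that $\tilde{f}(s_1(y))=\tilde{f}(s_1(y'))$.

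Next I would invoke the hypothesis that $s_1$ is compatible with some $s_2\in C^{\infty}(X_2,V_2)$. By the definition of compatible sections, namely that $\tilde{f}\circ s_1=s_2\circ f$ on the whole domain of definition $Y$, applying this identity at $y$ and separately at $y'$ yields $\tilde{f}(s_1(y))=s_2(f(y))$ and $\tilde{f}(s_1(y'))=s_2(f(y'))$. Then, since $f(y)=f(y')$ by assumption, the two right-hand sides coincide, and the chain
$$\tilde{f}(s_1(y))=s_2(f(y))=s_2(f(y'))=\tilde{f}(s_1(y'))$$
is exactly the $(f,\tilde{f})$-invariance of $s_1$ required by Definition \ref{f-tilde_f:invariant:sections:defn}.

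There is, honestly, no real obstacle here: the statement is a purely formal, set-theoretic consequence of the two definitions, and no smoothness or diffeological structure enters the argument at all. The only point worth a moment's verification is that the domains align correctly, namely that $\tilde{f}\circ s_1$ and $s_2\circ f$ are both defined on all of $Y$ (the former because $s_1$ restricts to $\pi_1^{-1}(Y)$ over $Y$ and $\tilde{f}$ is defined there, the latter because $f$ maps $Y$ into $X_2$ and $s_2$ is a global section of $\pi_2$), so that the compatibility identity may legitimately be evaluated at both $y$ and $y'$. Once this is noted, the conclusion is immediate, which is why the remark preceding the statement can assert that the lemma "follows immediately."
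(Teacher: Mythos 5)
Your proof is correct and is precisely the argument the paper itself gives: the paper states the lemma "follows immediately from what has been said prior to the definition," and that preceding paragraph is exactly your chain $\tilde{f}(s_1(y))=s_2(f(y))=s_2(f(y'))=\tilde{f}(s_1(y'))$. Nothing further is needed.
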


Thus, we only need to take into consideration $(f,\tilde{f})$-invariant sections $X_1\to V_1$. The set of all of them is denoted by
$$C_{(f,\tilde{f})}^{\infty}(X_1,V_1)=\{s\in C^{\infty}(X_1,V_1)\,|\,s\mbox{ is }(f,\tilde{f})-\mbox{invariant}\}.$$ Let us now consider some properties of this set.

\begin{prop}\label{prod:by:f-invt:is:ff:invt:prop}
The set $C_{(f,\tilde{f})}^{\infty}(X_1,V_1)$ is closed with respect to the summation and multiplication by $f$-invariant functions.
\end{prop}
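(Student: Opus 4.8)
The plan is to verify the two closure properties separately, noting that in both cases the operation involved is performed fibrewise and that the decisive structural fact is the fibrewise linearity of $\tilde{f}$ built into the definition of a gluing. Before checking invariance, I would first confirm that the resulting objects are genuine smooth sections: for $s_1,s_1'\in C_{(f,\tilde{f})}^{\infty}(X_1,V_1)$ the pointwise sum $s_1+s_1'$ is smooth because it factors as $x\mapsto(s_1(x),s_1'(x))$ followed by the smooth fibrewise addition $V_1\times_{X_1}V_1\to V_1$; similarly, for $h\in C^{\infty}(X_1,\matR)$ the pointwise product $h\cdot s_1$ is smooth because it factors through the smooth scalar multiplication $\matR\times V_1\to V_1$. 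Each of these projects to the identity on $X_1$, so both are elements of $C^{\infty}(X_1,V_1)$.

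For closure under summation, fix $y,y'\in Y$ with $f(y)=f(y')$. The essential observation is that $s_1(y)$ and $s_1'(y)$ both lie in the single fibre $\pi_1^{-1}(y)$, on which $\tilde{f}$ is linear; hence $\tilde{f}((s_1+s_1')(y))=\tilde{f}(s_1(y))+\tilde{f}(s_1'(y))$, and likewise at $y'$. Applying the $(f,\tilde{f})$-invariance of $s_1$ and of $s_1'$ term by term then yields $\tilde{f}((s_1+s_1')(y))=\tilde{f}((s_1+s_1')(y'))$, which is exactly the invariance required of $s_1+s_1'$.

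For closure under multiplication by an $f$-invariant function $h$, I would argue in the same way: at a point $y\in Y$ the value $h(y)s_1(y)$ again lies in $\pi_1^{-1}(y)$, so linearity of $\tilde{f}$ on that fibre gives $\tilde{f}((h\cdot s_1)(y))=h(y)\,\tilde{f}(s_1(y))$, and analogously at $y'$. When $f(y)=f(y')$, the $f$-invariance of $h$ gives $h(y)=h(y')$ while the $(f,\tilde{f})$-invariance of $s_1$ gives $\tilde{f}(s_1(y))=\tilde{f}(s_1(y'))$; multiplying these two equalities produces $\tilde{f}((h\cdot s_1)(y))=\tilde{f}((h\cdot s_1)(y'))$, as desired.

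There is no genuine obstacle here: the only ingredient beyond the definitions is the fibrewise linearity of $\tilde{f}$, which is part of the hypotheses on a gluing, together with the smoothness of the fibre operations, which is part of the definition of a diffeological vector pseudo-bundle. The content of the statement is essentially that both operations respect the single condition ``$f(y)=f(y')\Rightarrow\tilde{f}(s(y))=\tilde{f}(s(y'))$'', and this holds because $\tilde{f}$ converts the fibrewise algebraic operation into the corresponding operation in the target, where the invariance hypotheses apply directly.
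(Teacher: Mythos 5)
Your proof is correct and follows essentially the same route as the paper's: both arguments reduce to the fibrewise linearity of $\tilde{f}$, which converts the sum (resp.\ the product by $h$) inside the fibre $\pi_1^{-1}(y)$ into the corresponding operation on the images, where the $(f,\tilde{f})$-invariance of the sections and the $f$-invariance of $h$ apply directly. Your preliminary verification that $s_1+s_1'$ and $h\cdot s_1$ are actually smooth sections is a small addition the paper leaves implicit, but it does not change the substance of the argument.
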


\begin{proof}
Given two $(f,\tilde{f})$-invariant sections $s_1,s_2\in C^{\infty}(X_1,V_1)$ and an $f$-invariant function $h\in C^{\infty}(X_1,\matR)$, we need to show that $s_1+s_2$ and $hs_1$ are again
$(f,\tilde{f})$-invariant. Let $y,y'\in Y$ be such that $f(y)=f(y')$. Then it follows from the linearity of $\tilde{f}$ on each fibre in its domain of definition that
$\tilde{f}((hs)(y))=\tilde{f}(h(y)s(y))=h(y)\tilde{f}(s(y))=h(y')\tilde{f}(s(y'))=\tilde{f}((hs)(y'))$, as wanted. Furthermore, by assumption $\tilde{f}(s_1(y))=\tilde{f}(s_1(y'))$ and $\tilde{f}(s_2(y))=\tilde{f}(s_2(y'))$. 
Since the restriction of $\tilde{f}$ on any fibre is linear, we have
$$\tilde{f}((s_1+s_2)(y))=\tilde{f}(s_1(y)+s_2(y))=\tilde{f}(s_1(y))+\tilde{f}(s_2(y))=\tilde{f}(s_1(y'))+\tilde{f}(s_2(y'))=\tilde{f}((s_1+s_2)(y')),$$ which completes the proof.
\end{proof}

\subsubsection{The map $\mathcal{S}:C_{(f,\tilde{f})}^{\infty}(X_1,V_1)\times_{comp}C^{\infty}(X_2,V_2)\to C^{\infty}(X_1\cup_f X_2,V_1\cup_{\tilde{f}}V_2)$}

The notion of compatibility of sections allows us to define the (partial) operation of gluing for smooth sections of the pseudo-bundles $\pi_1$ and $\pi_2$, through which we define the map announced in 
the title of the section.

\paragraph{The section $s_1\cup_{(f,\tilde{f})}s_2$ of $\pi_1\cup_{(\tilde{f},f)}\pi_2$} Let $s_1$ and $s_2$ be two compatible smooth sections of the pseudo-bundles $\pi_1$ and $\pi_2$ respectively. 
We define a section $s_1\cup_{(f,\tilde{f})}s_2:X_1\cup_f X_2\to V_1\cup_{\tilde{f}}V_2$ of the pseudo-bundle $\pi_1\cup_{(\tilde{f},f)}\pi_2$ by imposing
$$(s_1\cup_{(f,\tilde{f})}s_2)(x)=\left\{\begin{array}{ll}
s_1(i_1^{-1}(x)) & \mbox{for }x\in i_1(X_1\setminus Y),\mbox{ and}\\
s_2(i_2^{-1}(x)) & \mbox{for }x\in i_2(X_2).
\end{array}\right.$$
This turns out to be a smooth section of $\pi_1\cup_{(\tilde{f},f)}\pi_2$, as follows from Proposition 4.4 in \cite{pseudometric-pseudobundle}.

\paragraph{The induced map $\mathcal{S}$ into $C^{\infty}(X_1\cup_f X_2,V_1\cup_{\tilde{f}}V_2)$} Consider the direct product $C^{\infty}(X_1,V_1)\times C^{\infty}(X_2,V_2)$; let
$$C^{\infty}(X_1,V_1)\times_{comp}C^{\infty}(X_2,V_2)=\{(s_1,s_2)\,|\,s_i\in C^{\infty}(X_i,V_i),\,\,s_1,s_2\mbox{ are }(f,\tilde{f})-\mbox{compatible}\}.$$ The latter set is endowed with the subset diffeology 
relative to its inclusion into $C^{\infty}(X_1,V_1)\times C^{\infty}(X_2,V_2)$ (which in turn has the product diffeology coming from the functional diffeologies on each $C^{\infty}(X_i,V_i)$). Notice that 
by Lemma \ref{s_1:in:compatible:is:ff-invt:lem} 
$$C^{\infty}(X_1,V_1)\times C^{\infty}(X_2,V_2)=C_{(f,\tilde{f})}^{\infty}(X_1,V_1)\times_{comp}C^{\infty}(X_2,V_2).$$ The map
$$\mathcal{S}:C_{(f,\tilde{f})}^{\infty}(X_1,V_1)\times_{comp}C^{\infty}(X_2,V_2)\to C^{\infty}(X_1\cup_f X_2,V_1\cup_{\tilde{f}}V_2)$$ is defined by
$$\mathcal{S}(s_1,s_2)=s_1\cup_{(f,\tilde{f})}s_2;$$ it has the following property.

\begin{thm}\label{mathcal:S:is:smooth:thm} \emph{(\cite{pseudometric-pseudobundle})}
The map $\mathcal{S}$ is smooth, for the subset diffeology on $C_{(f,\tilde{f})}^{\infty}(X_1,V_1)\times_{comp}C^{\infty}(X_2,V_2)$ and the functional diffeology on 
$C^{\infty}(X_1\cup_f X_2,V_1\cup_{\tilde{f}}V_2)$.
\end{thm}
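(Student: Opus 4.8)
The plan is to verify smoothness directly from the definitions of the functional diffeologies involved, reducing everything to the local description of plots of $X_1\cup_f X_2$ recalled in Section 1.3. First I would unwind what a plot of each side is. A plot of the domain $C_{(f,\tilde{f})}^{\infty}(X_1,V_1)\times_{comp}C^{\infty}(X_2,V_2)$ is, by the subset diffeology inside the product of the two functional diffeologies, nothing but a pair $r=(r_1,r_2)$ with $r_i:U'\to C^{\infty}(X_i,V_i)$ a plot for $i=1,2$, subject to the requirement that $r_1(u')$ and $r_2(u')$ be compatible for every $u'\in U'$. On the other side, by the standard characterization of the functional diffeology, the composite $\mathcal{S}\circ r$ is a plot of $C^{\infty}(X_1\cup_f X_2,V_1\cup_{\tilde{f}}V_2)$ if and only if for every plot $p:U\to X_1\cup_f X_2$ the map $(u',u)\mapsto\big(r_1(u')\cup_{(f,\tilde{f})}r_2(u')\big)(p(u))$, defined on $U'\times U$, is a plot of $V_1\cup_{\tilde{f}}V_2$. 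So the entire statement reduces to checking that this evaluation map is smooth.

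Since smoothness is a local condition (the sheaf axiom), I would then invoke the local description of plots of $X_1\cup_f X_2$: up to passing to a neighborhood and a connected component, $p$ lifts either to a plot $p_2:U''\to X_2$ with $p|_{U''}=i_2\circ p_2$, or to a plot $p_1:U''\to X_1$ whose image in $X_1\cup_f X_2$ is $i_1(p_1(u))$ where $p_1(u)\in X_1\setminus Y$ and $i_2(f(p_1(u)))$ where $p_1(u)\in Y$. The first case is immediate: from the definition of $s_1\cup_{(f,\tilde{f})}s_2$ the evaluation equals $j_2\big(r_2(u')(p_2(u))\big)$, and $(u',u)\mapsto r_2(u')(p_2(u))$ is a plot of $V_2$ (again by the functional diffeology, since $r_2$ is a plot and $p_2$ is a plot of $X_2$), so composing with the smooth map $j_2$ yields a plot of $V_1\cup_{\tilde{f}}V_2$.

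The real content is the second case, where the glued section switches branches along the locus on which $p_1(u)\in Y$. Here I would introduce the map $\tilde{j}_1:V_1\to V_1\cup_{\tilde{f}}V_2$ obtained as the composite of the inclusion $V_1\hookrightarrow V_1\sqcup V_2$ with the quotient projection (the analogue on total spaces of the map $\tilde{i}_1$ used for base spaces); it is smooth on all of $V_1$ and satisfies $\tilde{j}_1=j_2\circ\tilde{f}$ on $\pi_1^{-1}(Y)$ by the very definition of the gluing identification. Using the definition of the glued section, the evaluation equals $\tilde{j}_1\big(r_1(u')(p_1(u))\big)$ at those $u$ with $p_1(u)\in X_1\setminus Y$, while it equals $j_2\big(r_2(u')(f(p_1(u)))\big)$ at those $u$ with $p_1(u)\in Y$. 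The compatibility of the pair $(r_1(u'),r_2(u'))$ gives $\tilde{f}\big(r_1(u')(p_1(u))\big)=r_2(u')(f(p_1(u)))$, so $j_2\big(r_2(u')(f(p_1(u)))\big)=j_2\big(\tilde{f}(r_1(u')(p_1(u)))\big)=\tilde{j}_1\big(r_1(u')(p_1(u))\big)$. Hence the two branches agree, and the evaluation is uniformly $\tilde{j}_1\big(r_1(u')(p_1(u))\big)$ on all of $U''$. Since $(u',u)\mapsto r_1(u')(p_1(u))$ is a plot of $V_1$ and $\tilde{j}_1$ is smooth, this is a plot of $V_1\cup_{\tilde{f}}V_2$, completing the local check and hence the proof.

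The main obstacle is exactly this branch-switching reconciliation: a priori the formula for $\mathcal{S}(s_1,s_2)$ is given piecewise by two different data ($s_1$ and $s_2$) over the two pieces of the natural covering of $X_1\cup_f X_2$, and one must verify that on a plot $p_1$ straddling $Y$ the two resulting expressions glue into a single smooth plot of the total space. The compatibility hypothesis, together with the relation $\tilde{j}_1=j_2\circ\tilde{f}$ over $\pi_1^{-1}(Y)$ (and, implicitly, the $(f,\tilde{f})$-invariance from Lemma \ref{s_1:in:compatible:is:ff-invt:lem}, which makes the target value independent of the chosen preimage in $Y$), is precisely what is needed to make this work. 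One could alternatively deduce the statement by invoking the general smoothness of the gluing-of-compatible-maps operation from \cite{pseudometric-pseudobundle}, of which $\mathcal{S}$ is a special case.
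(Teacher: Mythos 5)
Your argument is correct and complete. Note that the paper itself does not prove this theorem: it is imported by citation from \cite{pseudometric-pseudobundle}, where it follows from the general smoothness of the gluing of compatible maps (Proposition 4.4 there), and your direct verification --- reducing to the local lifting of plots of $X_1\cup_f X_2$ and using compatibility together with $\tilde{j}_1=j_2\circ\tilde{f}$ on $\pi_1^{-1}(Y)$ to collapse the two branches of the evaluation into the single smooth expression $\tilde{j}_1\bigl(r_1(u')(p_1(u))\bigr)$ --- is precisely that standard argument specialized to sections. The only cosmetic remark is that the appeal to $(f,\tilde{f})$-invariance is not actually needed for the smoothness check, since the glued section is defined via $s_2$ over all of $i_2(X_2)$ and no choice of preimage in $Y$ ever arises.
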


\subsubsection{Pseudo-bundles operations and the map $\mathcal{S}$}

Let $\pi_1:V_1\to X_1$ and $\pi_2:V_2\to X_2$ be two finite-dimensional diffeological vector pseudo-bundles, with $(\tilde{f},f)$ being a gluing between them, and let $\pi_1':V_1'\to X_1$ and 
$\pi_2':V_2'\to X_2$ be two other pseudo-bundles, with the same base spaces, with $(\tilde{f}',f)$ also a gluing between them.
\begin{itemize}
\item (\cite{pseudometric-pseudobundle}, Proposition 4.7) Let $s_i\in C^{\infty}(X_i,V_i)$ for $i=1,2$ be $(f,\tilde{f})$-compatible sections, and let $h_i\in C^{\infty}(X_i,\matR)$ be such that $h_2(f(y))=h_1(y)$ 
for all $y\in Y$. Then $$\left(h_1\cup_f h_2\right)\left(s_1\cup_{(f,\tilde{f})}s_2\right)=(h_1s_1)\cup_{(f,\tilde{f})}(h_2s_2).$$
\item (\cite{pseudometric-pseudobundle}, Proposition 4.8) Let $s_i\in C^{\infty}(X_i,V_i)$ and $s_i'\in C^{\infty}(X_i,V_i')$ be such that $s_1,s_2$ are $(f,\tilde{f})$-compatible, while $s_1',s_2'$ are 
$(f,\tilde{f}')$-compatible. Then $$\left(s_1\cup_{(f,\tilde{f})}s_2\right)\otimes\left(s_1'\cup_{(f,\tilde{f}')}s_2'\right)=(s_1\otimes s_1')\cup_{(f,\tilde{f}\otimes\tilde{f}')}(s_2\otimes s_2').$$
\end{itemize}
In addition to these, we now prove that $\mathcal{S}$ is additive with respect to the direct sum structure on $C^{\infty}(X_1,V_1)\times C^{\infty}(X_2,V_2)$, of which 
$C^{\infty}(X_1,V_1)\times_{comp}C^{\infty}(X_2,V_2)$ is then a subspace.

\begin{lemma}
If $s_1,t_1\in C^{\infty}(X_1,V_1)$ and $s_2,t_2\in C^{\infty}(X_2,V_2)$ are such that both $(s_1,s_2)$ and $(t_1,t_2)$ are $(f,\tilde{f})$-compatible pairs, then also $(s_1+t_1,s_2+t_2)$ is a 
$(f,\tilde{f})$-compatible pair, and $$(s_1+t_1)\cup_{(f,\tilde{f})}(s_2+t_2)=s_1\cup_{(f,\tilde{f})}s_2+t_1\cup_{(f,\tilde{f})}t_2.$$
\end{lemma}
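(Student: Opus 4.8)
The plan is to verify the two assertions separately, each by a direct unwinding of the definitions: the compatibility of the new pair will follow from the fibrewise linearity of $\tilde{f}$, and the additivity of the gluing from the piecewise definition of $s_1\cup_{(f,\tilde{f})}s_2$ together with the fibrewise linearity of the inductions $j_1,j_2$.

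First I would check that $(s_1+t_1,s_2+t_2)$ is $(f,\tilde{f})$-compatible, i.e.\ that $\tilde{f}\circ(s_1+t_1)=(s_2+t_2)\circ f$ on $Y$. Fixing $y\in Y$, I observe that $s_1(y)$ and $t_1(y)$ both lie in the fibre $\pi_1^{-1}(y)$, on which $\tilde{f}$ is linear by the gluing hypothesis; hence $\tilde{f}((s_1+t_1)(y))=\tilde{f}(s_1(y))+\tilde{f}(t_1(y))$. Applying the compatibility of the two given pairs to each summand turns this into $s_2(f(y))+t_2(f(y))=(s_2+t_2)(f(y))$, which is exactly the required identity. (As a consistency check, $s_1+t_1$ is then $(f,\tilde{f})$-invariant, in agreement with Proposition~\ref{prod:by:f-invt:is:ff:invt:prop} and Lemma~\ref{s_1:in:compatible:is:ff-invt:lem}.) Since $s_1+t_1$ and $s_2+t_2$ are smooth as sums of smooth sections, the glued section $(s_1+t_1)\cup_{(f,\tilde{f})}(s_2+t_2)$ is well defined.

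Next I would prove the equality of sections by evaluating both sides at an arbitrary $x\in X_1\cup_f X_2$ and using that $i_1(X_1\setminus Y)$ and $i_2(X_2)$ are disjoint and cover the base. On $i_1(X_1\setminus Y)$, writing $x_1=i_1^{-1}(x)$, the left-hand side is $j_1\bigl((s_1+t_1)(x_1)\bigr)=j_1\bigl(s_1(x_1)+t_1(x_1)\bigr)$ by the definition of the glued section, while the right-hand side is the fibrewise sum $j_1(s_1(x_1))+j_1(t_1(x_1))$ of the values of the two glued sections. These agree because $j_1$ restricts to a linear isomorphism of $\pi_1^{-1}(x_1)$ onto the fibre of $\pi_1\cup_{(\tilde{f},f)}\pi_2$ over $x$. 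The computation on $i_2(X_2)$ is identical with $j_2$ in place of $j_1$. As the two images cover $X_1\cup_f X_2$, the two sections coincide everywhere.

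The only point requiring care—and thus the main (minor) obstacle—is the bookkeeping of where the additions are performed: the sum on the left is taken in the fibres of $V_1$ (or $V_2$), whereas the sum on the right is taken in the fibres of the glued bundle $V_1\cup_{\tilde{f}}V_2$. Reconciling the two is precisely the fibrewise linearity of the inductions $j_1,j_2$, which I would invoke from the construction that makes the gluing a diffeological vector pseudo-bundle in \cite{pseudobundles}. Everything else is a routine evaluation of the piecewise formula for $\mathcal{S}$.
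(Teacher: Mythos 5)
Your proof is correct and follows essentially the same route as the paper's: fibrewise linearity of $\tilde{f}$ gives the compatibility of $(s_1+t_1,s_2+t_2)$, and the additivity identity is checked pointwise over the disjoint cover $i_1(X_1\setminus Y)\sqcup i_2(X_2)$ using the piecewise definition of the glued section. Your explicit remark that the two sums live in different fibres and are reconciled by the linearity of $j_1,j_2$ is a small clarification the paper leaves implicit, but it does not change the argument.
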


\begin{proof}
Let $y\in Y$; then
$$\tilde{f}(s_1(y)+t_1(y))=\tilde{f}(s_1(y))+\tilde{f}(t_1(y))=s_2(f(y))+t_2(f(y)),$$ so $s_1+t_1$ and $s_2+t_2$ are $(f,\tilde{f})$-compatible. Now, by definition
\begin{flushleft}
$\left((s_1+t_1)\cup_{(f,\tilde{f})}(s_2+t_2)\right)(x)=\left\{\begin{array}{l}(s_1+t_1)(i_1^{-1}(x)) \\ (s_2+t_2)(i_2^{-1}(x)) \end{array}\right.=
\left\{\begin{array}{l} s_1(i_1^{-1}(x))+t_1(i_1^{-1}(x)) \\ s_2(i_2^{-1}(x))+t_2(i_2^{-1}(x)) \end{array}\right.=$
\end{flushleft}
\begin{flushright}
$=\left\{\begin{array}{l}s_1(i_1^{-1}(x)) \\ s_2(i_2^{-1}(x)) \end{array}\right. +\left\{\begin{array}{l}t_1(i_1^{-1}(x)) \\ t_2(i_2^{-1}(x))\end{array}\right.=
(s_1\cup_{(f,\tilde{f})}s_2)(x)+(t_1\cup_{(f,\tilde{f})}t_2)(x)$,
\end{flushright} where in each two-part formula the first line applies to $x\in i_1(X_1\setminus Y)$ and the second line, to $x\in i_2(X_2)$. The final equality that we obtain is precisely the first item in the 
statement of the lemma, so we are done.
\end{proof}

\subsection{The map $\mathcal{S}$ is a subduction}

In this section we show that $\mathcal{S}$ is a subduction of the space $C_{(f,\tilde{f})}^{\infty}(X_1,V_1)\times_{comp} C^{\infty}(X_2,V_2)$ onto the space $C^{\infty}(X_1\cup_f X_2,V_1\cup_{\tilde{f}}V_2)$.

\subsubsection{Gluing along diffeomorphisms}

Clearly in this case $C^{\infty}(X_1,V_1)=C_{(f,\tilde{f})}^{\infty}(X_1,V_1)$. Furthermore, it is rather easy to show that $\mathcal{S}$ is actually a diffeomorphism; we need a preliminary statement first.

\begin{lemma}
The maps $\tilde{i}_1:X_1\to X_1\cup_f X_2$ and $\tilde{j}_1:V_1\to V_1\cup_{\tilde{f}}V_2$ defined as the compositions of the natural inclusions into $X_1\sqcup X_2$ and $V_1\sqcup V_2$ with the 
corresponding quotient projections, are diffeomorphisms with their images.
\end{lemma}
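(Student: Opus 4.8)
The plan is to treat the two maps in parallel, since they are instances of the very same construction --- the inclusion of one factor into a gluing, composed with the passage to the quotient --- so that the argument for $\tilde{j}_1$ will be a verbatim transcription of the one for $\tilde{i}_1$, with $X_i$, $f$, $i_1$, $i_2$ replaced by $V_i$, $\tilde{f}$, $j_1$, $j_2$. Throughout I work under the standing hypothesis of this subsection, namely that $f$ is a diffeomorphism onto its image (and, for the statement about $\tilde{j}_1$, that the fibrewise linear lift $\tilde{f}$ is likewise a diffeomorphism onto its image, as required for $\mathcal{S}$ to be a diffeomorphism).

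First I would record injectivity. The image is $\tilde{i}_1(X_1)=i_1(X_1\setminus Y)\cup i_2(f(Y))$, with $\tilde{i}_1$ coinciding with the induction $i_1$ on $X_1\setminus Y$ and acting by $y\mapsto i_2(f(y))$ on $Y$. Since the ranges $i_1(X_1\setminus Y)$ and $i_2(X_2)$ are disjoint, two points with equal image lie either both outside $Y$, where injectivity of $i_1$ settles it, or both in $Y$, where $f(y)=f(y')$ forces $y=y'$ because $f$ is injective. Smoothness of $\tilde{i}_1$ is immediate from its definition: it is the composition of the smooth inclusion $X_1\hookrightarrow X_1\sqcup X_2$ with the smooth quotient projection onto $X_1\cup_f X_2$.

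The substance of the proof is the smoothness of the set-theoretic inverse $\tilde{i}_1^{-1}:\tilde{i}_1(X_1)\to X_1$, where the source carries the subset diffeology inherited from $X_1\cup_f X_2$. Here I would invoke the local description of plots of a gluing recalled in Section 1: given a plot $p:U\to\tilde{i}_1(X_1)$, every point of $U$ has a neighborhood on a connected piece $U''$ of which $p$ lifts either to a plot $p_1:U''\to X_1$ or to a plot $p_2:U''\to X_2$. In the first case the explicit form of the lift gives $\tilde{i}_1^{-1}\circ p|_{U''}=p_1$ directly, a plot of $X_1$. In the second case $p|_{U''}=i_2\circ p_2$ takes values in $i_2(X_2)\cap\tilde{i}_1(X_1)=i_2(f(Y))$, so $p_2$ lands in $f(Y)$ and $\tilde{i}_1^{-1}\circ p|_{U''}=f^{-1}\circ p_2$, which is a plot of $X_1$ precisely because $f^{-1}$ is smooth. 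Since being a plot is a local (sheaf) condition, $\tilde{i}_1^{-1}\circ p$ is a plot of $X_1$ on all of $U$, which is exactly what smoothness of $\tilde{i}_1^{-1}$ demands.

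I expect the only genuine obstacle to be the second case above, where the plot meets the glued locus $i_2(f(Y))$: this is the single place where the hypothesis that $f$ is a diffeomorphism, rather than merely a smooth injection, is used, since one must pull values back from $X_2$ to $X_1$ along the smooth map $f^{-1}$. The corresponding step for $\tilde{j}_1$ exploits that $\tilde{f}$ is a diffeomorphism onto its image in exactly the same manner, the fibrewise linearity of $\tilde{f}$ playing no role at this stage; everything else transcribes without change, completing the proof.
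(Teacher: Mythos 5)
Your proof is correct and follows essentially the same route as the paper's, which simply asserts that the inverses are smooth ``by the definition of the gluing diffeology as a pushforward one''; you have merely unpacked that assertion into the local lifting of plots to $X_1\sqcup X_2$ and the resulting two-case analysis. One small point in your favour: your careful treatment of the case where a plot lifts to a plot $p_2$ of $X_2$ landing in $f(Y)$ shows that the \emph{smoothness} of $f^{-1}$ (not merely its existence) is genuinely used there, which sharpens the paper's parenthetical remark that the diffeomorphism hypothesis ``is only needed for the existence of these inverses.''
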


\begin{proof}
That $\tilde{i}_1$ and $\tilde{j}_1$ are bijections with their respective images is immediately obvious. Furthermore, they are always smooth, since they are compositions of two smooth maps. Finally, 
their inverses are smooth by the definition of the gluing diffeology as a pushforward one (the assumption that $f$ and $\tilde{f}$ are diffeomorphisms is only needed for the existence of these inverses).
\end{proof}

We are now ready to prove the following.

\begin{prop}\label{space:of:sections:case:of:two:diffeo:prop}
If both $\tilde{f}$ and $f$ are diffeomorphisms of their domains with their images, the map $\mathcal{S}$ is a diffeomorphism
$$C^{\infty}(X_1,V_1)\times_{comp}C^{\infty}(X_2,V_2)\to C^{\infty}(X_1\cup_f X_2,V_1\cup_{\tilde{f}}V_2).$$
\end{prop}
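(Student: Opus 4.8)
The plan is to show that $\mathcal{S}$ is a diffeomorphism by exhibiting an explicit smooth inverse, exploiting the structure of the gluing diffeology together with the already-established smoothness of $\mathcal{S}$ (Theorem \ref{mathcal:S:is:smooth:thm}). Since $f$ and $\tilde{f}$ are diffeomorphisms, the two standard inductions $i_1,i_2$ (on the base) and $j_1,j_2$ (on the total space) have smooth inverses on their images, and by the preceding lemma the extended maps $\tilde{i}_1,\tilde{j}_1$ are diffeomorphisms with their images; these inverses are the building blocks for disassembling a section of the glued pseudo-bundle back into a compatible pair.

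First I would define the candidate inverse $\mathcal{T}:C^{\infty}(X_1\cup_f X_2,V_1\cup_{\tilde{f}}V_2)\to C^{\infty}(X_1,V_1)\times_{comp}C^{\infty}(X_2,V_2)$ as follows. Given a section $\sigma$ of $\pi_1\cup_{(\tilde{f},f)}\pi_2$, set $s_1=\tilde{j}_1^{-1}\circ\sigma\circ\tilde{i}_1$ and $s_2=j_2^{-1}\circ\sigma\circ i_2$; here $\tilde{i}_1$ is used (rather than $i_1$) so that $s_1$ is defined on \emph{all} of $X_1$, including the gluing domain $Y$, which is exactly what lets the pair be compatible in the sense $\tilde{f}\circ s_1=s_2\circ f$. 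I would then verify three things: (i) $s_1$ and $s_2$ are genuine smooth sections of $\pi_1$ and $\pi_2$ (this uses that $\tilde{j}_1,j_2$ are sections-preserving lifts of $\tilde{i}_1,i_2$, so the base-point is respected and the fibrewise structure is intact); (ii) the pair $(s_1,s_2)$ is $(f,\tilde{f})$-compatible, which follows by tracing the identifications defining the gluing — for $y\in Y$, the points $\tilde{i}_1(y)$ and $i_2(f(y))$ coincide in $X_1\cup_f X_2$, and correspondingly $\tilde{j}_1$ and $j_2$ identify the fibres via $\tilde{f}$; and (iii) $\mathcal{T}\circ\mathcal{S}=\mathrm{id}$ and $\mathcal{S}\circ\mathcal{T}=\mathrm{id}$, both of which reduce to unwinding the piecewise definition of $s_1\cup_{(f,\tilde{f})}s_2$ against the definitions of $i_1,i_2,j_1,j_2$ on the two covering pieces $i_1(X_1\setminus Y)$ and $i_2(X_2)$.

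The remaining and most delicate point is smoothness of $\mathcal{T}$. The plan is to use the characterization of the functional diffeology: a map into $C^{\infty}(X_1,V_1)\times_{comp}C^{\infty}(X_2,V_2)$ is a plot iff its two components are plots, and each component is a plot of the respective functional diffeology iff the associated evaluation map (jointly in the functional parameter and the base-space parameter) is smooth. So given a plot $r:U\to C^{\infty}(X_1\cup_f X_2,V_1\cup_{\tilde{f}}V_2)$, I would show that $u\mapsto\tilde{j}_1^{-1}\circ r(u)\circ\tilde{i}_1$ and $u\mapsto j_2^{-1}\circ r(u)\circ i_2$ are plots of the respective section spaces. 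This is where the lemma on $\tilde{i}_1,\tilde{j}_1$ being diffeomorphisms with their images does the real work: smoothness of $r$ as a plot translates, via the evaluation map, into smoothness of the composite with $\tilde{i}_1$ in the base argument, and then post-composition with the smooth inverse $\tilde{j}_1^{-1}$ (smooth by the lemma) keeps it a plot.

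The main obstacle I anticipate is precisely the smoothness of $\tilde{j}_1^{-1}$ and $j_2^{-1}$ on their images within the total space, and its interaction with the functional (evaluation-map) diffeology: one must confirm that post-composing a plot-of-sections with these inverses again yields a plot, i.e. that the inverses are smooth not merely as set maps but compatibly with how the functional diffeology is tested by the evaluation map. The cleanest route is to observe that because $\mathcal{S}$ is already known to be smooth (Theorem \ref{mathcal:S:is:smooth:thm}) and is a bijection (by steps (i)–(iii) above giving a two-sided set-theoretic inverse $\mathcal{T}$), it suffices to prove $\mathcal{T}$ smooth; and the latter follows by feeding an arbitrary plot through the explicit formula and invoking the diffeomorphism lemma componentwise. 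Once smoothness of $\mathcal{T}$ is secured, $\mathcal{S}$ is a smooth bijection with smooth inverse, hence a diffeomorphism, which is the assertion of Proposition \ref{space:of:sections:case:of:two:diffeo:prop}.
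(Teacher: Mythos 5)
Your proposal is correct and follows essentially the same route as the paper: the inverse is given by exactly the same formula $s\mapsto(\tilde{j}_1^{-1}\circ s\circ\tilde{i}_1,\,j_2^{-1}\circ s\circ i_2)$, compatibility is checked by tracing the identifications $\tilde{i}_1(y)=i_2(f(y))$ and $\tilde{f}\circ\tilde{j}_1^{-1}=j_2$ over the gluing locus, and the smoothness of the inverse rests on the preceding lemma that $\tilde{i}_1,\tilde{j}_1$ are diffeomorphisms with their images. Your treatment of the smoothness of $\mathcal{T}$ via the evaluation-map characterization of the functional diffeology is in fact more explicit than the paper's, which leaves that step implicit.
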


\begin{proof}
The inverse of $\mathcal{S}$ is obtained by assigning to each section $s\in C^{\infty}(X_1\cup_f X_2,V_1\cup_{\tilde{f}}V_2)$ the pair
$$(\tilde{j}_1^{-1}\circ s|_{\tilde{i}_1}(X_1)\circ\tilde{i}_1,j_2^{-1}\circ s|_{i_2(X_2)}\circ i_2),$$ where $\tilde{i}_1:X_1\to X_1\cup_f X_2$ and $\tilde{j}_1:V_1\to V_1\cup_{\tilde{f}}V_2$ are the just-mentioned 
inclusions of $X_1$ and $V_1$ into $X_1\cup_f X_2$ and $V_1\cup_{\tilde{f}}V_2$ respectively. It follows that $\tilde{j}_1^{-1}\circ s|_{\tilde{i}_1}(X_1)\circ\tilde{i}_1=:s_1\in C^{\infty}(X_1,V_1)$, 
whereas $j_2^{-1}\circ s|_{i_2(X_2)}\circ i_2=:s_2\in C^{\infty}(X_2,V_2)$ holds even without extra assumptions.

Let us formally check that $s_1$ and $s_2$ are compatible. Let $y\in Y$; then $\tilde{f}(s_1(y))=\tilde{f}(\tilde{j}_1^{-1}(s(\tilde{i}_1(y))))$, and $s_2(f(y))=j_2(s(i_2(f(y))))$. Since $\tilde{i}_1(y)=i_2(f(y))$ 
by construction, we have $\tilde{f}(s_1(y))=\tilde{f}(\tilde{j}_1^{-1}(s(i_2(f(y)))))$, and it suffices to note that $\tilde{f}\circ\tilde{j}_1^{-1}=j_2$ on the entire $\pi_2^{-1}(i_2(f(Y)))$.
\end{proof}

\subsubsection{The pseudo-bundle $\pi_1^{(\tilde{f},f)}:V_1^{\tilde{f}}\to X_1^f$ of $(\tilde{f},f)$-equivalence classes}

In the case when $f$ and $\tilde{f}$ are not diffeomorphisms, we need an auxiliary construction, that of the pseudo-bundle mentioned in the title of the section. Its base space and its total space are obtained 
from $X_1$ and $V_1$ respectively by natural quotientings, given by $f$ and $\tilde{f}$, and the pseudo-bundle projection is induced by $\pi_1$.

\paragraph{The spaces $X_1^f$ and $V_1^{\tilde{f}}$} The base space $X_1^f$ is defined as the diffeological quotient $X_1/_{\sim^f}$, where the equivalence relation $\sim^f$ is given by
$$y_1\sim^f y_2\Leftrightarrow f(y_1)=f(y_2).$$ Likewise, the space $V_1^{\tilde{f}}$ is the quotient of $V_1$ by the equivalence relation $\sim^{\tilde{f}}$ that is analogous to $\sim^f$ and is given by
$$v_1\sim^{\tilde{f}}v_2\Leftrightarrow\tilde{f}(v_1)=\tilde{f}(v_2).$$ The two quotient projections are denoted respectively by $\chi_1^f:X_1\to X_1^f$ and by $\chi_1^{\tilde{f}}:V_1\to V_1^{\tilde{f}}$. 
The space $X_1^f$ is endowed with the map $f_{\sim}:\chi_1^f(Y)\to X_2$, and the space $V_1^{\tilde{f}}$, with the map $\tilde{f}_{\sim}:\chi_1^{\tilde{f}}(\pi_1^{-1}(Y))\to V_2$. These are the pushforwards 
of, respectively, $f$ and $\tilde{f}$ by the quotient projections $\chi_1^f$ and $\chi_1^{\tilde{f}}$. If either of $f$, $\tilde{f}$ is a subduction then the corresponding induced map $f_{\sim}$ or $\tilde{f}_{\sim}$ 
is a diffeomorphism with its image.

\paragraph{The map $\pi_1^{(\tilde{f},f)}$} We now show that the induced pseudo-bundle projection $V_1^{\tilde{f}}\to X_1^f$ is indeed a pseudo-bundle.

\begin{lemma}\label{gluing:p-bundles:yields:p-bundle:lem}
There is a well-defined smooth map $\pi_1^{(\tilde{f},f)}:V_1^{\tilde{f}}\to X_1^f$ such that 
$$\chi_1^f\circ\pi_1=\pi_1^{(\tilde{f},f)}\circ\chi_1^{\tilde{f}}.$$ Furthermore, for any $x\in X_1^f$ the pre-image $(\pi_1^{(\tilde{f},f)})^{-1}(x)\subset V_1^{\tilde{f}}$ inherits from $V_1$ the structure of a 
diffeological vector space, with respect to which the corresponding restriction of $\tilde{f}_{\sim}$, when it is defined, is a linear map.
\end{lemma}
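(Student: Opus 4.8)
The plan is to construct $\pi_1^{(\tilde{f},f)}$ explicitly and then verify, in turn, that it is well-defined, smooth, and fibrewise linear. For the well-definedness, I would first observe that the linearity of $\tilde{f}$ on each fibre $\pi_1^{-1}(y)$ forces the equivalence relation $\sim^{\tilde{f}}$ to respect fibres in the following sense: if $v_1\sim^{\tilde{f}}v_2$, i.e.\ $\tilde{f}(v_1)=\tilde{f}(v_2)$, then both $v_1$ and $v_2$ lie over points of $Y$ that are identified by $f$ (since $\tilde{f}$ is a lift of $f$, equality of the $\tilde{f}$-images forces $f(\pi_1(v_1))=f(\pi_1(v_2))$), hence $\pi_1(v_1)\sim^f\pi_1(v_2)$. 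This is exactly the compatibility needed to descend $\chi_1^f\circ\pi_1$ through the quotient $\chi_1^{\tilde{f}}$: the composite $\chi_1^f\circ\pi_1:V_1\to X_1^f$ is constant on $\sim^{\tilde{f}}$-classes, so it factors uniquely as $\pi_1^{(\tilde{f},f)}\circ\chi_1^{\tilde{f}}$ for a set map $\pi_1^{(\tilde{f},f)}:V_1^{\tilde{f}}\to X_1^f$, giving the stated identity.

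Next I would verify smoothness. Since $V_1^{\tilde{f}}$ carries the quotient (pushforward) diffeology with respect to $\chi_1^{\tilde{f}}$, a map out of it is smooth precisely when its precomposition with $\chi_1^{\tilde{f}}$ is smooth. But $\pi_1^{(\tilde{f},f)}\circ\chi_1^{\tilde{f}}=\chi_1^f\circ\pi_1$ is a composition of two smooth maps ($\pi_1$ is a pseudo-bundle projection, hence smooth, and $\chi_1^f$ is a quotient projection, hence smooth), so $\pi_1^{(\tilde{f},f)}$ is smooth by the universal property of the quotient diffeology. This part is routine.

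The substantive part is to put a vector space structure on each fibre $(\pi_1^{(\tilde{f},f)})^{-1}(x)$ for $x\in X_1^f$ and check it is inherited from $V_1$ in a well-defined way. Writing $x=\chi_1^f(y)$, the fibre over $x$ is the image under $\chi_1^{\tilde{f}}$ of $\bigcup_{y'\sim^f y}\pi_1^{-1}(y')$. The key point is that $\tilde{f}$, being linear on each fibre $\pi_1^{-1}(y')$, identifies the various fibres over $\sim^f$-equivalent points \emph{linearly}: for $y'\sim^f y''$ in $Y$, two vectors $v'\in\pi_1^{-1}(y')$ and $v''\in\pi_1^{-1}(y'')$ become $\sim^{\tilde{f}}$-equivalent iff $\tilde{f}(v')=\tilde{f}(v'')$, and this relation is compatible with addition and scalar multiplication precisely because each restriction $\tilde{f}|_{\pi_1^{-1}(y')}$ is linear. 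Hence the quotient vector space structure is well-defined: I would define $[v']+[v'']:=[\tilde{v}'+\tilde{v}'']$ where $\tilde{v}',\tilde{v}''$ are chosen in a common fibre representing the classes, and check independence of choices using linearity of $\tilde{f}$. I expect the main obstacle to be exactly this verification of well-definedness of the fibrewise operations — one must show that adding representatives drawn from different fibres (but over $\sim^f$-equivalent base points) yields a $\sim^{\tilde{f}}$-class independent of the representatives, which is where the linearity hypothesis on $\tilde{f}$ is used essentially rather than incidentally.

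Finally, that the induced $\tilde{f}_{\sim}$ is linear on each fibre where defined follows formally: $\tilde{f}_{\sim}$ is characterized by $\tilde{f}_{\sim}\circ\chi_1^{\tilde{f}}=\tilde{f}$, and since the fibrewise operations on $V_1^{\tilde{f}}$ were defined to be compatible with those on $V_1$ via $\chi_1^{\tilde{f}}$, and $\tilde{f}$ is fibrewise linear, linearity of $\tilde{f}_{\sim}$ descends immediately. The smoothness of the fibrewise addition and scalar multiplication on $V_1^{\tilde{f}}$ (needed for it to be a genuine diffeological vector pseudo-bundle) would be checked by the same quotient-diffeology argument as above, precomposing with $\chi_1^{\tilde{f}}\times\chi_1^{\tilde{f}}$ (respectively $\mathrm{id}_{\matR}\times\chi_1^{\tilde{f}}$) and using smoothness of the corresponding operations on $V_1$.
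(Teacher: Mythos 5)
Your proposal follows the same route as the paper's (much terser) proof: descend $\chi_1^f\circ\pi_1$ through the quotient using the fact that $\tilde{f}(v_1)=\tilde{f}(v_2)$ forces $f(\pi_1(v_1))=f(\pi_1(v_2))$, hence $\pi_1(v_1)\sim^f\pi_1(v_2)$; get smoothness from the universal property of the pushforward diffeology; and obtain the fibrewise vector space structure together with the linearity of $\tilde{f}_{\sim}$ from the fibrewise linearity of $\tilde{f}$ --- the paper phrases this last point by identifying the fibre over a glued point with $\pi_1^{-1}(y)/\ker(\tilde{f}|_{\pi_1^{-1}(y)})$, which is exactly the quotient structure you construct by hand via representatives. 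The argument is correct and, if anything, more explicit than the original, which disposes of each step in a single sentence.
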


\begin{proof}
That $\pi_1^{(\tilde{f},f)}$ is uniquely defined by the condition given, follows from $\chi_1^{\tilde{f}}$ being surjective, and that it is well-defined follows from $\sim^{\tilde{f}}$ being a fibrewise equivalence 
relation. That the pre-image, in $V_1^{\tilde{f}}$, of any point $x\in X_1^f$ under the map $\pi_1^{(\tilde{f},f)}$ inherits from $V_1$ a (smooth) vector space structure is obvious from the following 
considerations: over a point not in $\chi_1^f(Y)$, it coincides with the corresponding fibre of $V_1$ itself, while over $x\in\chi_1^f(Y)$ it coincides with the quotient of $\pi_1^{-1}(x)\subset V_1$ over the 
kernel of $\tilde{f}|_{\pi_1^{-1}(x)}$. For the same reason, the induced map $\tilde{f}_{\sim}$ is linear on each fibre where it is defined, \emph{i.e.}, on $(\pi_1^{(\tilde{f},f)})^{-1}(x)$ with $x\in\chi_1^f(Y)$.
\end{proof}

The following is then an immediate consequence.

\begin{cor}\label{gluing:p-bundles:yields:p-bundle:cor}
The map $\pi_1^{(\tilde{f},f)}:V_1^{\tilde{f}}\to X_1^f$ is a diffeological vector pseudo-bundle, and the pair $(\tilde{f}_{\sim},f_{\sim})$ defines a gluing of it to the pseudo-bundle $\pi_2:V_2\to X_2$.
Furthermore, the pseudo-bundle
$$\pi_1^{(\tilde{f},f)}\cup_{(\tilde{f}_{\sim},f_{\sim})}\pi_2: V_1^{\tilde{f}}\cup_{\tilde{f}_{\sim}}V_2\to X_1^f\cup_{f_{\sim}}X_2$$ is diffeomorphic to the pseudo-bundle
$$\pi_1\cup_{(\tilde{f},f)}\pi_2:V_1\cup_{\tilde{f}}V_2\to X_1\cup_f X_2.$$ In particular, there is a diffeomorphism 
$$C^{\infty}(X_1\cup_f X_2,V_1\cup_{\tilde{f}}V_2)\cong C^{\infty}(X_1^f\cup_{f_{\sim}}X_2,V_1^{\tilde{f}}\cup_{\tilde{f}_{\sim}}V_2).$$
\end{cor}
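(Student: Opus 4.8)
The plan is to treat the three assertions in turn, deriving the first two directly from Lemma~\ref{gluing:p-bundles:yields:p-bundle:lem} and concentrating the real work on the middle claim, the existence of the pseudo-bundle diffeomorphism. That $\pi_1^{(\tilde{f},f)}:V_1^{\tilde{f}}\to X_1^f$ is a diffeological vector pseudo-bundle is immediate: Lemma~\ref{gluing:p-bundles:yields:p-bundle:lem} already supplies smoothness of the projection together with the fibrewise vector space structure, and since $\chi_1^{\tilde{f}}:V_1\to V_1^{\tilde{f}}$ is a subduction, the addition, the scalar multiplication, and the zero section descend to smooth maps for the quotient diffeologies. That $(\tilde{f}_{\sim},f_{\sim})$ defines a gluing is equally direct: $f_{\sim}$ and $\tilde{f}_{\sim}$ are the pushforwards of the smooth maps $f$ and $\tilde{f}$ and are hence smooth; $\tilde{f}_{\sim}$ is a lift of $f_{\sim}$ by the commutation relation $\chi_1^f\circ\pi_1=\pi_1^{(\tilde{f},f)}\circ\chi_1^{\tilde{f}}$; and its fibrewise linearity is exactly the final assertion of Lemma~\ref{gluing:p-bundles:yields:p-bundle:lem}.

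For the diffeomorphism I would realize both pseudo-bundles as quotients of the same disjoint unions and then compare the resulting equivalence relations. On the level of total spaces, $V_1\cup_{\tilde{f}}V_2$ is the quotient of $V_1\sqcup V_2$ by the relation $\approx$ that identifies $v\in\pi_1^{-1}(Y)$ with $\tilde{f}(v)\in V_2$; by transitivity $\approx$ already identifies any two $v,v'\in\pi_1^{-1}(Y)$ with $\tilde{f}(v)=\tilde{f}(v')$, so $\approx$ contains $\sim^{\tilde{f}}$. On the other side, $V_1^{\tilde{f}}\sqcup V_2$ is, as a diffeological space, the quotient of $V_1\sqcup V_2$ by $\sim^{\tilde{f}}$ acting on the first summand (a plot of the disjoint union with a quotient factor locally factors through $\chi_1^{\tilde{f}}$ composed with a plot of $V_1$, or through a plot of $V_2$), and $V_1^{\tilde{f}}\cup_{\tilde{f}_{\sim}}V_2$ is the further quotient identifying $\chi_1^{\tilde{f}}(v)$ with $\tilde{f}_{\sim}(\chi_1^{\tilde{f}}(v))=\tilde{f}(v)\in V_2$. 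Lifting this composite identification back to $V_1\sqcup V_2$ reproduces precisely the pair of conditions $v\sim\tilde{f}(v)$ and $v\sim^{\tilde{f}}v'$, which together constitute $\approx$. Thus both spaces are the same set-theoretic quotient of $V_1\sqcup V_2$, and carrying out the identical comparison for $f$ on $X_1\sqcup X_2$ identifies the two base spaces.

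It remains to match the diffeologies, and here I would invoke the standard fact that a composite of subductions is a subduction, equivalently that quotienting by one relation and then by a further relation yields the quotient diffeology of the composite relation. Applied to $V_1\sqcup V_2\to V_1^{\tilde{f}}\sqcup V_2\to V_1^{\tilde{f}}\cup_{\tilde{f}_{\sim}}V_2$, this shows the total composite carries the quotient diffeology by $\approx$, which is exactly the gluing diffeology of $V_1\cup_{\tilde{f}}V_2$; the same reasoning handles the base spaces. The resulting bijection intertwines the two projections, by the commutation relation of Lemma~\ref{gluing:p-bundles:yields:p-bundle:lem}, so it is a diffeomorphism of pseudo-bundles. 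Finally, the \emph{in particular} clause follows because any pseudo-bundle diffeomorphism $\Phi$ covering a base diffeomorphism $\varphi$ induces a diffeomorphism $s\mapsto\Phi\circ s\circ\varphi^{-1}$ of the spaces of sections, for the respective functional diffeologies.

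The main obstacle I anticipate is not the set-level identification of the equivalence relations, which is a short transitivity argument, but the clean verification that the iterated quotient diffeology coincides with the single gluing diffeology. This rests on the transitivity of subductions and on the (elementary but worth recording) identification of $V_1^{\tilde{f}}\sqcup V_2$ with a quotient of $V_1\sqcup V_2$; once these are in place, everything else is formal.
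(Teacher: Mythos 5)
Your proposal is correct and follows essentially the same route as the paper: the paper simply declares the identification $X_1^f\cup_{f_{\sim}}X_2\cong X_1\cup_f X_2$ (and its total-space analogue) ``evident from the definition of diffeological gluing'' and notes the map is fibre-to-fibre, while you spell out why it is evident, namely that both sides are the quotient of the same disjoint union by the same generated equivalence relation, with matching diffeologies by transitivity of subductions. The extra detail on the iterated quotient is a faithful elaboration rather than a different argument, and the derivation of the first two assertions from Lemma~\ref{gluing:p-bundles:yields:p-bundle:lem} matches the paper's ``immediate consequence'' framing.
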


\begin{proof}
It is evident from the definition of diffeological gluing that $X_1^f\cup_{f_{\sim}}X_2\cong X_1\cup_f X_2$; it remains to notice that the same kind of diffeomorphism between 
$V_1^{\tilde{f}}\cup_{\tilde{f}_{\sim}}V_2$ and $V_1\cup_{\tilde{f}}V_2$ is fibre-to-fibre relative to, respectively, the projections $\pi_1^{(\tilde{f},f)}\cup_{(\tilde{f}_{\sim},f_{\sim})}\pi_2$ and
$\pi_1\cup_{(\tilde{f},f)}\pi_2$.
\end{proof}

\paragraph{The space of sections $C^{\infty}(X_1^f\cup_{f_{\sim}}X_2,V_1^{\tilde{f}}\cup_{\tilde{f}_{\sim}}V_2)$} By the general definition, sections $s_1^f\in C^{\infty}(X_1^f,V_1^{\tilde{f}})$ and 
$s_2\in C^{\infty}(X_2,V_2)$ are compatible if
$$\tilde{f}_{\sim}(s_1^f(y))=s_2(f_{\sim}(y))\mbox{ for all }y\in\pi_1^f(Y).$$ The advantage of considering the reduced pair $(X_1^f,V_1^{\tilde{f}})$ lies in the presentation, resulting from Corollary 
\ref{gluing:p-bundles:yields:p-bundle:cor}, of the pseudo-bundle $\pi_1\cup_{(\tilde{f},f)}\pi_2:V_1\cup_{\tilde{f}}V_2\to X_1\cup_f X_2$ as one obtained by gluing along a pair of diffeomorphisms 
(thus always possible, as long as we assume that both $f$ and $\tilde{f}$ are subductions onto their respective images). The following is a consequence of 
Proposition\ref{space:of:sections:case:of:two:diffeo:prop} and the above corollary.

\begin{prop}\label{space:of:sections:glued:via:reduced:prop}
Assume that $f$ and $\tilde{f}$ are subductions. Then
$$C^{\infty}(X_1\cup_f X_2,V_1\cup_{\tilde{f}}V_2)\cong C^{\infty}(X_1^f,V_1^{\tilde{f}})\times_{comp}C^{\infty}(X_2,V_2),$$ where the compatibility is with respect to the maps $(f_{\sim},\tilde{f}_{\sim})$.
\end{prop}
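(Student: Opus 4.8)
The plan is to obtain the claimed diffeomorphism by chaining together two results already at hand: Corollary \ref{gluing:p-bundles:yields:p-bundle:cor} and Proposition \ref{space:of:sections:case:of:two:diffeo:prop}. First I would invoke Corollary \ref{gluing:p-bundles:yields:p-bundle:cor} to replace the pseudo-bundle $\pi_1\cup_{(\tilde{f},f)}\pi_2$ by its reduced counterpart $\pi_1^{(\tilde{f},f)}\cup_{(\tilde{f}_{\sim},f_{\sim})}\pi_2$; this gives the diffeomorphism of spaces of sections
$$C^{\infty}(X_1\cup_f X_2,V_1\cup_{\tilde{f}}V_2)\cong C^{\infty}(X_1^f\cup_{f_{\sim}}X_2,V_1^{\tilde{f}}\cup_{\tilde{f}_{\sim}}V_2).$$
The point of passing to the reduced pseudo-bundle is exactly that, under the standing assumption that $f$ and $\tilde{f}$ are subductions, the induced maps $f_{\sim}$ and $\tilde{f}_{\sim}$ are diffeomorphisms onto their images, as recorded in the paragraph constructing $X_1^f$ and $V_1^{\tilde{f}}$. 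Thus the gluing $(\tilde{f}_{\sim},f_{\sim})$ is a gluing along a pair of diffeomorphisms, which is precisely the hypothesis of Proposition \ref{space:of:sections:case:of:two:diffeo:prop}.

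Next I would apply Proposition \ref{space:of:sections:case:of:two:diffeo:prop} to the pseudo-bundles $\pi_1^{(\tilde{f},f)}:V_1^{\tilde{f}}\to X_1^f$ and $\pi_2:V_2\to X_2$ glued along $(\tilde{f}_{\sim},f_{\sim})$. Since these are diffeomorphisms, that proposition yields a diffeomorphism
$$C^{\infty}(X_1^f,V_1^{\tilde{f}})\times_{comp}C^{\infty}(X_2,V_2)\to C^{\infty}(X_1^f\cup_{f_{\sim}}X_2,V_1^{\tilde{f}}\cup_{\tilde{f}_{\sim}}V_2),$$
where the compatibility defining $\times_{comp}$ is taken with respect to $(f_{\sim},\tilde{f}_{\sim})$, matching the compatibility condition $\tilde{f}_{\sim}(s_1^f(y))=s_2(f_{\sim}(y))$ spelled out in the paragraph preceding the statement. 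Composing this diffeomorphism with the one from the previous step produces exactly the asserted diffeomorphism
$$C^{\infty}(X_1\cup_f X_2,V_1\cup_{\tilde{f}}V_2)\cong C^{\infty}(X_1^f,V_1^{\tilde{f}})\times_{comp}C^{\infty}(X_2,V_2),$$
and the proof is essentially complete.

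The one genuine thing to verify — and the only place where any real checking is needed — is that the two notions of compatibility line up, i.e.\ that $C^{\infty}(X_1^f,V_1^{\tilde{f}})\times_{comp}C^{\infty}(X_2,V_2)$ appearing as the domain from Proposition \ref{space:of:sections:case:of:two:diffeo:prop} is the \emph{same} object (as a diffeological space, with its subset diffeology) as the one occurring in the statement to be proved. This is immediate once one notes that both are defined by the condition $\tilde{f}_{\sim}\circ s_1^f=s_2\circ f_{\sim}$ on the overlap, so there is nothing to reconcile beyond recording the identification. I would also briefly remark that the finiteness/subduction hypotheses on $f$ and $\tilde{f}$ are used in precisely one place, namely to guarantee that $f_{\sim},\tilde{f}_{\sim}$ are diffeomorphisms so that Proposition \ref{space:of:sections:case:of:two:diffeo:prop} is applicable; without them the reduced gluing need not be along diffeomorphisms and the argument would not close. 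I do not anticipate any serious obstacle here — the content has been front-loaded into Corollary \ref{gluing:p-bundles:yields:p-bundle:cor} and Proposition \ref{space:of:sections:case:of:two:diffeo:prop}, and this final proposition is their formal composition.
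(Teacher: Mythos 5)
Your proposal is correct and follows essentially the same route as the paper: the paper likewise derives the statement by combining Corollary \ref{gluing:p-bundles:yields:p-bundle:cor} (passage to the reduced pseudo-bundle) with Proposition \ref{space:of:sections:case:of:two:diffeo:prop} (the gluing-along-diffeomorphisms case), using the fact that the subduction hypothesis makes $f_{\sim}$ and $\tilde{f}_{\sim}$ diffeomorphisms with their images. Your explicit remark about matching the two compatibility conditions is a reasonable bit of bookkeeping that the paper leaves implicit.
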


\subsubsection{The map $\mathcal{S}_1:C_{(f,\tilde{f})}(X_1,V_1)\to C_{(f_{\sim},\tilde{f}_{\sim})}^{\infty}(X_1^f,V_1^{\tilde{f}})$ and its properties}

To make use of Proposition \ref{space:of:sections:glued:via:reduced:prop}, we need to relate the space $C^{\infty}(X_1^f,V_1^{\tilde{f}})$ to the initial space $C_{(f,\tilde{f})}^{\infty}(X_1,V_1)$.
To do so, we consider the map
$$\mathcal{S}_1:C_{(f,\tilde{f})}^{\infty}(X_1,V_1)\to C^{\infty}(X_1^f,V_1^{\tilde{f}})$$ acting by
$$\mathcal{S}_1:C_{(f,\tilde{f})}^{\infty}(X_1,V_1)\ni s_1\mapsto s_1^f\in C^{\infty}(X_1^f,V_1^{\tilde{f}})\mbox{ such that }s_1^f\circ\chi_1^f=\chi_1^{\tilde{f}}\circ s_1.$$

\paragraph{The map $\mathcal{S}_1$ is well-defined} The definition of $\mathcal{S}_1$ that we have given above is an indirect one, so we must check that it is well given.

\begin{lemma}
For every $s_1\in C_{(f,\tilde{f})}^{\infty}(X_1,V_1)$ there exists and is unique $s_1^f\in C^{\infty}(X_1^f,V_1^{\tilde{f}})$ such that $s_1^f\circ\chi_1^f=\chi_1^{\tilde{f}}\circ s_1$.
\end{lemma}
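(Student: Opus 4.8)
The plan is to define $s_1^f$ pointwise by the only formula the required identity permits, and then to check in turn that this formula is well-defined, that it yields a section, that it is smooth, and that it is unique. Concretely, for $x\in X_1^f$ choose any $y\in X_1$ with $\chi_1^f(y)=x$ and set $s_1^f(x):=\chi_1^{\tilde{f}}(s_1(y))$; the demanded identity $s_1^f\circ\chi_1^f=\chi_1^{\tilde{f}}\circ s_1$ forces exactly this value, so this is the unique possible candidate and the whole lemma reduces to verifying that it behaves as claimed.

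First I would verify well-definedness, which is the one step where the hypothesis that $s_1$ is $(f,\tilde{f})$-invariant is genuinely used. Suppose $\chi_1^f(y)=\chi_1^f(y')$ with $y\neq y'$; by the definition of $\sim^f$ this forces $y,y'\in Y$ and $f(y)=f(y')$. Since $s_1$ is a section we have $\pi_1(s_1(y))=y\in Y$ and likewise for $y'$, so $s_1(y),s_1(y')\in\pi_1^{-1}(Y)$ and $\tilde{f}$ is defined on both; invariance then gives $\tilde{f}(s_1(y))=\tilde{f}(s_1(y'))$, i.e.\ $s_1(y)\sim^{\tilde{f}}s_1(y')$, whence $\chi_1^{\tilde{f}}(s_1(y))=\chi_1^{\tilde{f}}(s_1(y'))$. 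Thus $s_1^f(x)$ is independent of the chosen representative $y$.

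Next, $s_1^f$ is a section of $\pi_1^{(\tilde{f},f)}$. Using the identity $\chi_1^f\circ\pi_1=\pi_1^{(\tilde{f},f)}\circ\chi_1^{\tilde{f}}$ of Lemma \ref{gluing:p-bundles:yields:p-bundle:lem} together with $\pi_1\circ s_1=\mathrm{id}_{X_1}$, I compute $\pi_1^{(\tilde{f},f)}(s_1^f(\chi_1^f(y)))=\pi_1^{(\tilde{f},f)}(\chi_1^{\tilde{f}}(s_1(y)))=\chi_1^f(\pi_1(s_1(y)))=\chi_1^f(y)$, and since $\chi_1^f$ is surjective this is precisely $\pi_1^{(\tilde{f},f)}\circ s_1^f=\mathrm{id}_{X_1^f}$.

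Smoothness then comes for free from the universal property of the quotient diffeology. The composite $s_1^f\circ\chi_1^f=\chi_1^{\tilde{f}}\circ s_1$ is smooth, being a composition of the smooth section $s_1$ with the quotient projection $\chi_1^{\tilde{f}}$; because $X_1^f$ carries the pushforward (quotient) diffeology, $\chi_1^f$ is a subduction, and a map out of a subduction is smooth exactly when its precomposition with that subduction is smooth, so $s_1^f$ is smooth. Uniqueness is immediate, since any two maps satisfying $s_1^f\circ\chi_1^f=\chi_1^{\tilde{f}}\circ s_1$ agree after precomposition with the surjection $\chi_1^f$ and hence coincide. I expect the only real content to lie in the well-definedness step, as that is where the $(f,\tilde{f})$-invariance is indispensable and where one must distinguish the trivial case $y=y'$ from the case of two distinct glued representatives in $Y$; everything else is a formal consequence of the surjectivity and subduction properties of the two quotient projections.
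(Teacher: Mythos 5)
Your proof is correct and follows essentially the same route as the paper's: define $s_1^f$ pointwise by the forced formula, use the $(f,\tilde{f})$-invariance of $s_1$ for well-definedness, and deduce smoothness from the fact that $\chi_1^f$ is a subduction so that plots of $X_1^f$ locally lift to plots of $X_1$. You are in fact slightly more thorough than the paper, which does not explicitly verify the section property or spell out uniqueness, but these additions are routine and do not change the substance of the argument.
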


\begin{proof}
The definition of $s_1^f$ is as follows: for any given point $x\in X_1^f$, let $x'\in X_1^f$ be any point (existing by surjectivity of $\chi_1^f$) such that $\chi_1^f(x')=x$; define $s_1^f(x)$ to be 
$s_1^f(x)=\chi_1^{\tilde{f}}(s_1(x'))$. Let us show that the definition is well-posed; let $x''\in X_1$ be another point such that $\chi_1^f(x'')=x$. We need to show that 
$\chi_1^{\tilde{f}}(s_1(x'))=\chi_1^{\tilde{f}}(s_1(x''))$. Since $\chi_1^f(x'')=x=\chi_1^f(x')$ is equivalent to $f(x')=f(x'')$, by $(f,\tilde{f})$-invariance of $s_1$we obtain that $\tilde{f}(s_1(x'))=\tilde{f}(s_1(x''))$. 
This in turn is equivalent to $\chi_1^{\tilde{f}}(s_1(x'))=\chi_1^{\tilde{f}}(s_1(x''))$, therefore $s_1^f$ is well-defined (and is obviously a map that goes $X_1^f\to V_1^{\tilde{f}}$).

The smoothness of $s_1^f$ follows from the expression that defines it. Specifically, if $p^f:U\to X_1^f$ is a plot then, assuming that $U$ is small enough, there exists a plot $p:U\to X_1$ such that 
$p^f=\chi_1^f\circ p$. Therefore $s_1^f\circ p^f=\chi_1^{\tilde{f}}\circ s_1\circ p$. The latter is a plot of $V_1^{\tilde{f}}$, since $s_1$ is smooth as a map $X_1\to V_1$, and $\chi_1^{\tilde{f}}$ is smooth, 
because the diffeology of $V_1^{\tilde{f}}$ is the pushforward of that of $V_1$ by it.
\end{proof}

We thus obtain the following statement.

\begin{cor}
The map $\mathcal{S}_1$ is well-defined as a map $C_{(f,\tilde{f})}^{\infty}(X_1,V_1)\to C^{\infty}(X_1^f,V_1^{\tilde{f}})$.
\end{cor}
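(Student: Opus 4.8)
The plan is to derive the Corollary directly from the Lemma just established, since that Lemma already does essentially all the work. Recall that $\mathcal{S}_1$ is defined by the indirect prescription $s_1\mapsto s_1^f$, where $s_1^f$ is required to satisfy $s_1^f\circ\chi_1^f=\chi_1^{\tilde{f}}\circ s_1$. To say that $\mathcal{S}_1$ is a well-defined map from $C_{(f,\tilde{f})}^{\infty}(X_1,V_1)$ to $C^{\infty}(X_1^f,V_1^{\tilde{f}})$ is precisely to say that, for each admissible $s_1$, there exists exactly one object $s_1^f$ in the target satisfying this relation. First I would simply invoke the preceding Lemma, which asserts exactly this existence and uniqueness, so that the assignment $s_1\mapsto s_1^f$ is unambiguous and produces a genuine element of $C^{\infty}(X_1^f,V_1^{\tilde{f}})$.

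The one point I would make explicit, in order to confirm that $s_1^f$ lies in the space of \emph{sections} (and not merely among smooth maps $X_1^f\to V_1^{\tilde{f}}$), is the verification that $\pi_1^{(\tilde{f},f)}\circ s_1^f=\mathrm{id}_{X_1^f}$. This follows by composing the defining relation on the left with $\pi_1^{(\tilde{f},f)}$: using the commutation $\chi_1^f\circ\pi_1=\pi_1^{(\tilde{f},f)}\circ\chi_1^{\tilde{f}}$ from Lemma \ref{gluing:p-bundles:yields:p-bundle:lem}, together with $\pi_1\circ s_1=\mathrm{id}_{X_1}$, one obtains $\pi_1^{(\tilde{f},f)}\circ s_1^f\circ\chi_1^f=\chi_1^f$; since $\chi_1^f$ is surjective, cancelling it on the right yields the section property.

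I do not expect any serious obstacle here: all the genuine content --- the well-posedness of $s_1^f$ on $\sim^f$-equivalence classes (which rests on the $(f,\tilde{f})$-invariance of $s_1$) and its smoothness (which rests on $V_1^{\tilde{f}}$ carrying the pushforward diffeology of $V_1$ by $\chi_1^{\tilde{f}}$) --- has already been discharged in the preceding Lemma. The only mild subtlety worth flagging is bookkeeping: one must be sure that the target of $\mathcal{S}_1$ is really the space of smooth sections, which is why I include the short computation confirming the section identity; beyond that the Corollary is a formal restatement of the Lemma.
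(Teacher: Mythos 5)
Your proposal is correct and follows the paper's route exactly: the paper offers no separate argument for this Corollary, treating it as an immediate consequence of the preceding Lemma on the existence, uniqueness, and smoothness of $s_1^f$. Your additional verification that $\pi_1^{(\tilde{f},f)}\circ s_1^f=\mathrm{id}_{X_1^f}$ (via the commutation relation $\chi_1^f\circ\pi_1=\pi_1^{(\tilde{f},f)}\circ\chi_1^{\tilde{f}}$ and the surjectivity of $\chi_1^f$) is sound and fills in a bookkeeping point the paper leaves implicit.
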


\paragraph{The map $\mathcal{S}_1$ is linear} Recall that $C_{(f,\tilde{f})}^{\infty}(X_1,V_1)$ has the structure of a module over the ring of $f$-invariant functions; likewise, 
$C_{(f_{\sim},\tilde{f}_{\sim})}^{\infty}(X_1^f,V_1^{\tilde{f}})$ has the structure of a module over the ring of $f_{\sim}$-invariant functions. The map $\mathcal{S}_1$ respects these
two structures, as the next statement shows.

\begin{thm}\label{map:S-1:is:linear:thm}
The map $\mathcal{S}_1$ is additive, that is, for any two sections $s_1,s_1'\in C_{(f,\tilde{f})}^{\infty}(X_1,V_1)$ we have
$$\mathcal{S}_1(s_1+s_1')=\mathcal{S}_1(s_1)+\mathcal{S}_1(s_1').$$
Furthermore, if $h:X_1\to\matR$ is an $f$-invariant function and $h^f:X_1^f\to\matR$ is defined by $h=h^f\circ\chi_1^f$ then
$$\mathcal{S}_1(hs_1)=h^f\mathcal{S}_1(s_1).$$
\end{thm}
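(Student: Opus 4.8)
The plan is to reduce both assertions to the uniqueness clause of the preceding lemma, which characterizes $\mathcal{S}_1(t)$ as the \emph{unique} section $t^f\in C^{\infty}(X_1^f,V_1^{\tilde{f}})$ satisfying $t^f\circ\chi_1^f=\chi_1^{\tilde{f}}\circ t$. Thus, writing $s_1^f=\mathcal{S}_1(s_1)$ and $(s_1')^f=\mathcal{S}_1(s_1')$, to prove additivity it suffices to verify that the candidate $s_1^f+(s_1')^f$ satisfies the defining relation with $t=s_1+s_1'$, namely $(s_1^f+(s_1')^f)\circ\chi_1^f=\chi_1^{\tilde{f}}\circ(s_1+s_1')$; similarly, for the second claim it suffices to check that $h^f s_1^f$ satisfies the relation with $t=hs_1$. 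Uniqueness then immediately upgrades each such verification into the desired equality.

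The key ingredient in both verifications is that $\chi_1^{\tilde{f}}$ is linear on each fibre of $V_1$. This is precisely how the vector space structure on the fibres of $V_1^{\tilde{f}}$ was installed in Lemma \ref{gluing:p-bundles:yields:p-bundle:lem}: over a point outside $\chi_1^f(Y)$ the restriction of $\chi_1^{\tilde{f}}$ to a fibre is the identity, while over a point of $\chi_1^f(Y)$ it is the linear quotient projection modulo the kernel of $\tilde{f}$ on the relevant fibre. For additivity I would evaluate at an arbitrary $x'\in X_1$: using that addition in $C^{\infty}(X_1^f,V_1^{\tilde{f}})$ is fibrewise, followed by the two defining relations $s_1^f\circ\chi_1^f=\chi_1^{\tilde{f}}\circ s_1$ and $(s_1')^f\circ\chi_1^f=\chi_1^{\tilde{f}}\circ s_1'$, I obtain $\chi_1^{\tilde{f}}(s_1(x'))+\chi_1^{\tilde{f}}(s_1'(x'))$; the fibrewise linearity of $\chi_1^{\tilde{f}}$ lets me pull the sum inside to get $\chi_1^{\tilde{f}}((s_1+s_1')(x'))$, which is exactly $(\chi_1^{\tilde{f}}\circ(s_1+s_1'))(x')$. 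By uniqueness this yields $\mathcal{S}_1(s_1+s_1')=s_1^f+(s_1')^f$.

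For the scalar-multiplication claim the computation is the same in spirit. Evaluating $(h^f s_1^f)\circ\chi_1^f$ at $x'\in X_1$ and using $h=h^f\circ\chi_1^f$ together with the defining relation for $s_1^f$ gives $h(x')\cdot\chi_1^{\tilde{f}}(s_1(x'))$; homogeneity of $\chi_1^{\tilde{f}}$ on the fibre then produces $\chi_1^{\tilde{f}}(h(x')s_1(x'))=(\chi_1^{\tilde{f}}\circ(hs_1))(x')$, and uniqueness finishes the argument. I do not anticipate a genuine obstacle here; the only points requiring care are that the target operations (fibrewise addition, and multiplication by the pushforward $h^f$) make sense in the pseudo-bundle $V_1^{\tilde{f}}\to X_1^f$ of Corollary \ref{gluing:p-bundles:yields:p-bundle:cor}, and that $h^f$ is itself smooth and well-defined --- which holds because $h$ is $f$-invariant, so it descends along $\chi_1^f$, and because $\chi_1^f$ is a subduction, so smoothness of $h=h^f\circ\chi_1^f$ forces smoothness of $h^f$. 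All the real content is the fibrewise linearity of $\chi_1^{\tilde{f}}$ recorded in Lemma \ref{gluing:p-bundles:yields:p-bundle:lem}.
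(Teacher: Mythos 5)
Your argument is correct and is essentially the paper's own proof: both verify that the candidate section ($\mathcal{S}_1(s_1)+\mathcal{S}_1(s_1')$, respectively $h^f\mathcal{S}_1(s_1)$) satisfies the defining relation $(\cdot)\circ\chi_1^f=\chi_1^{\tilde{f}}\circ t$ using the fibrewise linearity of $\chi_1^{\tilde{f}}$, and then conclude equality --- you via the uniqueness clause of the preceding lemma, the paper via the surjectivity of $\chi_1^f$, which are the same step in different words. Your explicit attribution of the key linearity to the quotient structure recorded in Lemma \ref{gluing:p-bundles:yields:p-bundle:lem} is, if anything, slightly more careful than the paper's appeal to ``the linearity of $\tilde{f}$.''
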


\begin{proof}
Let $s_1,s_1'\in C_{(f,\tilde{f})}^{\infty}(X_1,V_1)$ be two sections. The images $\mathcal{S}_1(s_1)$, $\mathcal{S}_1(s_1')$, and $\mathcal{S}_1(s_1+s_1')$ are defined by the following
identities:
$$\mathcal{S}_1(s_1)\circ\chi_1^f=\chi_1^{\tilde{f}}\circ s_1,\,\,\,\mathcal{S}_1(s_1')\circ\chi_1^f=\chi_1^{\tilde{f}}\circ s_1',\,\,\,\mathcal{S}_1(s_1+s_1')\circ\chi_1^f=\chi_1^{\tilde{f}}\circ(s_1+s_1').$$
We obviously have
\begin{flushleft}
$\mathcal{S}_1(s_1+s_1')\circ\chi_1^f=\chi_1^{\tilde{f}}\circ(s_1+s_1')=\chi_1^{\tilde{f}}\circ s_1+\chi_1^{\tilde{f}}\circ s_1'=$
\end{flushleft}
\begin{flushright}
$=\mathcal{S}_1(s_1)\circ\chi_1^f+\mathcal{S}_1(s_1')\circ\chi_1^f=(\mathcal{S}_1(s_1)+\mathcal{S}_1(s_1'))\circ\chi_1^f$,
\end{flushright}
that is,
$$\mathcal{S}_1(s_1+s_1')\circ\chi_1^f=(\mathcal{S}_1(s_1)+\mathcal{S}_1(s_1'))\circ\chi_1^f.$$ Since $\chi_1^f$ is surjective, we have $\mathcal{S}_1(s_1+s_1')=\mathcal{S}_1(s_1)+\mathcal{S}_1(s_1')$, 
as wanted. As for the second statement of the theorem, using again $\mathcal{S}_1(s_1)\circ\chi_1^f=\chi_1^{\tilde{f}}\circ s_1$, and the linearity of $\tilde{f}$, we obtain
$$\mathcal{S}_1(hs_1)\circ\chi_1^f=\chi_1^{\tilde{f}}\circ(hs_1)=h(\chi_1^{\tilde{f}}\circ s_1)=(h^f\circ\chi_1^f)(\chi_1^{\tilde{f}}\circ s_1)=(h^f\circ\chi_1^f)(\mathcal{S}_1(s_1)\circ\chi_1^f)=
(h^f\mathcal{S}_1(s_1))\circ\chi_1^f.$$ Again by surjectivity of $\chi_1^f$, we obtain that $\mathcal{S}_1(hs_1)=h^f\mathcal{S}_1(s_1)$, which completes the proof.
\end{proof}

\paragraph{The map $\mathcal{S}_1$ is smooth} We finally show that the map $\mathcal{S}_1$ is smooth for the functional diffeologies on $C_{(f,\tilde{f})}^{\infty}(X_1,V_1)$ and
$C^{\infty}(X_1^f,V_1^{\tilde{f}})$.

\begin{thm}
The map $\mathcal{S}_1$ is a smooth map $C_{(f,\tilde{f})}^{\infty}(X_1,V_1)\to C^{\infty}(X_1^f,V_1^{\tilde{f}})$.
\end{thm}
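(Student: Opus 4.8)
The plan is to use the standard adjunction characterization of the functional diffeology together with the universal property of quotient (pushforward) diffeologies. Recall that a map $P:U\to C^{\infty}(X_1^f,V_1^{\tilde{f}})$ is a plot of the functional diffeology precisely when its adjoint $U\times X_1^f\to V_1^{\tilde{f}}$, sending $(u,\bar{x})\mapsto P(u)(\bar{x})$, is smooth for the product diffeology. Hence, to prove that $\mathcal{S}_1$ is smooth it suffices to take an arbitrary plot $P:U\to C_{(f,\tilde{f})}^{\infty}(X_1,V_1)$ and to show that the adjoint of $\mathcal{S}_1\circ P$ is smooth on $U\times X_1^f$.

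Since $C_{(f,\tilde{f})}^{\infty}(X_1,V_1)$ carries the subset diffeology inside $C^{\infty}(X_1,V_1)$, the plot $P$ is also a plot of $C^{\infty}(X_1,V_1)$, so its own adjoint $\alpha:U\times X_1\to V_1$, given by $\alpha(u,x)=P(u)(x)$, is smooth. Composing with the quotient projection $\chi_1^{\tilde{f}}$ yields a smooth map $\Phi:=\chi_1^{\tilde{f}}\circ\alpha:U\times X_1\to V_1^{\tilde{f}}$. By the defining identity $\mathcal{S}_1(P(u))\circ\chi_1^f=\chi_1^{\tilde{f}}\circ P(u)$ we have $\Phi(u,x)=(\mathcal{S}_1(P(u)))(\chi_1^f(x))$, so $\Phi$ factors as $\Phi=\Psi\circ(\mathrm{id}_U\times\chi_1^f)$, where $\Psi(u,\bar{x})=(\mathcal{S}_1(P(u)))(\bar{x})$ is exactly the adjoint of $\mathcal{S}_1\circ P$ that we wish to prove smooth.

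The heart of the argument is then to show that $q:=\mathrm{id}_U\times\chi_1^f:U\times X_1\to U\times X_1^f$ is a subduction, so that the universal property of the pushforward diffeology upgrades the smoothness of $\Phi=\Psi\circ q$ to that of $\Psi$. For this I would give the direct local-lifting argument: given a plot $r=(r_U,r_X):W\to U\times X_1^f$, the component $r_X$ is a plot of the quotient $X_1^f$, so by surjectivity of $\chi_1^f$ and the definition of the quotient diffeology it lifts locally, i.e.\ on a neighborhood $W'$ of any point there is a plot $\rho:W'\to X_1$ with $\chi_1^f\circ\rho=r_X|_{W'}$; then $(r_U|_{W'},\rho)$ is a plot of $U\times X_1$ with $q\circ(r_U|_{W'},\rho)=r|_{W'}$, which is precisely local liftability of plots through $q$. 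Consequently $\Psi\circ r$ agrees locally with $\Phi\circ(r_U|_{W'},\rho)$, a plot of $V_1^{\tilde{f}}$; by the sheaf condition $\Psi\circ r$ is a plot, and since $r$ was arbitrary $\Psi$ is smooth.

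Putting the pieces together, $\Psi$ smooth means $\mathcal{S}_1\circ P$ is a plot of $C^{\infty}(X_1^f,V_1^{\tilde{f}})$, and as $P$ was an arbitrary plot this gives the smoothness of $\mathcal{S}_1$. I expect the only genuinely nontrivial point to be the claim that $q=\mathrm{id}_U\times\chi_1^f$ is a subduction; everything else is formal manipulation of adjoints and of the defining relation of $\mathcal{S}_1$. This claim is the standard fact that a product of a subduction with an identity (more generally, a product of subductions) is again a subduction, and the local-lifting argument above is exactly its proof in the case at hand.
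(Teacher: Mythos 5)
Your proposal is correct and follows essentially the same route as the paper: both arguments rest on the defining relation $\mathcal{S}_1(s)\circ\chi_1^f=\chi_1^{\tilde{f}}\circ s$, the smoothness of $\chi_1^{\tilde{f}}$ for the pushforward diffeology, and the local liftability of plots of $X_1^f$ through the subduction $\chi_1^f$. The only difference is packaging --- you phrase the functional diffeology via the adjoint on $U\times X_1^f$ and the subduction $\mathrm{id}_U\times\chi_1^f$, whereas the paper evaluates directly on a (locally lifted) plot $p^f=\chi_1^f\circ p$ of $X_1^f$ --- and the two formulations are interchangeable here.
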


\begin{proof}
Observe that the functional diffeology on $C_{(f,\tilde{f})}^{\infty}(X_1,V_1)$ is the subset diffeology with respect to its inclusion into $C^{\infty}(X_1,V_1)$. Let $q:U\to C_{(f,\tilde{f})}^{\infty}(X_1,V_1)$ be 
a plot; recall that this means that for any plot $p:U'\to X_1$ the map $(u,u')\mapsto q(u)(p(u'))$ is smooth as a map from $U\times U'$ to $V_1$ (that is, it is a plot of $V_1$). Let us consider the composition 
$\mathcal{S}_1\circ q$; we need to show that it is a plot of $C^{\infty}(X_1^f,V_1^{\tilde{f}})$, and so it should satisfy the analogous condition.

Let $p^f:U'\to X_1^f$ be a plot of $X_1^f$; by the definition of the diffeology of the latter, there is a plot $p$ of $X_1$ such that $p^f=\chi_1^f\circ p$. Thus, we have
$$(u,u')\mapsto(\mathcal{S}_1\circ q)(u)(p(u'))=(\mathcal{S}_1(q(u))\circ\chi_1^f)(p(u')=(\chi_1^{\tilde{f}}\circ q(u))(p(u'))=\chi_1^{\tilde{f}}(q(u)(p(u'))),$$ and since $(u,u')\mapsto q(u)(p(u'))$ is a plot of $V_1$ 
by assumption, the resulting map $(u,u')\mapsto\chi_1^{\tilde{f}}(q(u)(p(u')))$ is a plot of $V_1^{\tilde{f}}$ by the definition of its diffeology, whence the claim.
\end{proof}

In what follows we will show that, while $\mathcal{S}_1$ may not be injective, it is always surjective, which allows the space of sections $C^{\infty}(X_1^f,V_1^{\tilde{f}})$ to act as a substitute for the space 
$C_{(f,\tilde{f})}^{\infty}(X_1,V_1)$.

\subsubsection{The quotient pseudo-bundle $\pi_1^{V_1/Ker(\tilde{f})}:V_1(\tilde{f})\to X_1$}

This auxiliary pseudo-bundle allows to consider the surjectivity of $\mathcal{S}_1$; this reasoning is straightforward. Let $\mbox{Ker}(\tilde{f})$ be the sub-bundle of $V_1$ formed by the union of the 
following subspaces in $\pi_1^{-1}(x)$: the subspace $\mbox{ker}(\tilde{f}|_{\pi_1^{-1}(x)})$ if $x\in Y$, and the zero subspace otherwise. It is endowed with the subset diffeology relative to the inclusion 
$\mbox{Ker}(\tilde{f})\subseteq V_1$ and with the restriction $\pi_1|_{\mbox{Ker}(\tilde{f})}=:\pi_1^{ker}$ of $\pi_1$; with respect to these it is a diffeological vector pseudo-bundle (see \cite{pseudobundles}). 
The sub-bundle thus obtained is called the \textbf{kernel of $\tilde{f}$}.

Consider the corresponding quotient pseudo-bundle with the total space $V_1/\mbox{Ker}(\tilde{f})=:V_1(\tilde{f})$, the base space $X_1$, and the induced pseudo-bundle projection denoted by 
$\pi_1^{V_1/Ker(\tilde{f})}:V_1(\tilde{f})\to X_1$. The usual quotient projection $V_1\to V_1/\mbox{Ker}(\tilde{f})=V_1(\tilde{f})$ will be denoted by $\chi_1^{V_1(\tilde{f})}$. This quotient projection covers 
the identity map on $X_1$, that is,
$$\pi_1=\pi_1^{V_1/Ker(\tilde{f})}\circ\chi_1^{V_1(\tilde{f})}.$$

\begin{lemma}
There is a smooth surjective pseudo-bundle map $\chi_1^0:V_1(\tilde{f})\to V_1^{\tilde{f}}$ covering the map $\chi_1^f:X_1\to X_1^f$.
\end{lemma}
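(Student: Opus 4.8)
The plan is to obtain $\chi_1^0$ as the map induced on the quotient $V_1(\tilde{f})=V_1/\mbox{Ker}(\tilde{f})$ by the projection $\chi_1^{\tilde{f}}:V_1\to V_1^{\tilde{f}}$, that is, as the unique map characterized by $\chi_1^0\circ\chi_1^{V_1(\tilde{f})}=\chi_1^{\tilde{f}}$. Everything then follows from universal properties of the quotient (pushforward) diffeologies, so no genuinely deep obstacle arises; the one point that carries actual content is the comparison of the two equivalence relations, which I would treat first. Recall that $V_1(\tilde{f})$ is formed by collapsing the fibrewise kernel $\mbox{Ker}(\tilde{f})$, while $V_1^{\tilde{f}}$ is formed by $\sim^{\tilde{f}}$, i.e. $v_1\sim^{\tilde{f}}v_2\Leftrightarrow\tilde{f}(v_1)=\tilde{f}(v_2)$. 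I would show that the former relation refines the latter: if $\chi_1^{V_1(\tilde{f})}(v)=\chi_1^{V_1(\tilde{f})}(w)$, then $v,w$ lie in a common fibre $\pi_1^{-1}(x)$ with $v-w\in\mbox{Ker}(\tilde{f})$; for $x\notin Y$ this forces $v=w$, while for $x\in Y$ we have $v-w\in\mbox{ker}(\tilde{f}|_{\pi_1^{-1}(x)})$, whence $\tilde{f}(v)=\tilde{f}(w)$ by the fibrewise linearity of $\tilde{f}$, so $v\sim^{\tilde{f}}w$. This refinement is exactly what is needed for the universal property of the quotient $V_1(\tilde{f})=V_1/\mbox{Ker}(\tilde{f})$ to produce a well-defined set map $\chi_1^0$ with the stated factorization.

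Smoothness and surjectivity are then formal. Since $V_1(\tilde{f})$ carries the pushforward (quotient) diffeology along the subduction $\chi_1^{V_1(\tilde{f})}$, a map out of $V_1(\tilde{f})$ is smooth precisely when its composite with $\chi_1^{V_1(\tilde{f})}$ is smooth; here that composite is $\chi_1^{\tilde{f}}$, which is smooth by construction, so $\chi_1^0$ is smooth. For surjectivity I would simply note that $\chi_1^{\tilde{f}}=\chi_1^0\circ\chi_1^{V_1(\tilde{f})}$ is onto (being a quotient projection), and hence $\chi_1^0$ is onto as well.

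It remains to verify that $\chi_1^0$ is a pseudo-bundle map covering $\chi_1^f$, which is a diagram chase combining the present factorization with $\pi_1=\pi_1^{V_1/Ker(\tilde{f})}\circ\chi_1^{V_1(\tilde{f})}$ and with the relation $\chi_1^f\circ\pi_1=\pi_1^{(\tilde{f},f)}\circ\chi_1^{\tilde{f}}$ established in Lemma~\ref{gluing:p-bundles:yields:p-bundle:lem}. Composing and substituting gives $\pi_1^{(\tilde{f},f)}\circ\chi_1^0\circ\chi_1^{V_1(\tilde{f})}=\chi_1^f\circ\pi_1^{V_1/Ker(\tilde{f})}\circ\chi_1^{V_1(\tilde{f})}$, and cancelling the surjection $\chi_1^{V_1(\tilde{f})}$ yields $\pi_1^{(\tilde{f},f)}\circ\chi_1^0=\chi_1^f\circ\pi_1^{V_1/Ker(\tilde{f})}$, i.e. $\chi_1^0$ covers $\chi_1^f$. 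Finally, over a point $x=\chi_1^f(\bar{x})$ the restriction of $\chi_1^0$ to the fibre $(\pi_1^{V_1/Ker(\tilde{f})})^{-1}(\bar{x})=\pi_1^{-1}(\bar{x})/\mbox{Ker}(\tilde{f})$ is the linear map induced by the fibrewise-linear restriction $\chi_1^{\tilde{f}}|_{\pi_1^{-1}(\bar{x})}$, so $\chi_1^0$ is fibrewise linear and is therefore a morphism of diffeological vector pseudo-bundles, completing the argument.
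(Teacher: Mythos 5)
Your argument is correct and is essentially the paper's own: both rest on the observation that the kernel-collapsing relation defining $V_1(\tilde{f})$ refines the relation $\sim^{\tilde{f}}$, so that $V_1^{\tilde{f}}$ is a further quotient of $V_1(\tilde{f})$ and $\chi_1^0$ is the induced projection. The paper states this more tersely (via the pushforward $\tilde{f}_0$ of $\tilde{f}$ to $V_1(\tilde{f})$), while you spell out the smoothness, surjectivity, covering, and fibrewise-linearity checks that it leaves implicit.
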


\begin{proof}
This follows from the construction of $V_1(\tilde{f})$ and that of $V_1^{\tilde{f}}$. Indeed, $V_1^{\tilde{f}}$ is the quotient of $V_1(\tilde{f})$ by the following equivalence relation. Let $\tilde{f}_0$ be the 
pushforward of $\tilde{f}$ to the quotient $V_1(\tilde{f})$. The space $(V_1(\tilde{f}))^{\tilde{f}_0}$, defined as the quotient of $V_1(\tilde{f})$ by the equivalence relation 
$v_1\sim^0 v_2\Leftrightarrow\tilde{f}_0(v_1)=\tilde{f}_0(v_2)$, is then precisely the space $V_1^{\tilde{f}}$.
\end{proof}

\begin{rem}
The pseudo-bundle map $(\chi_1^{\tilde{f}},\chi_1^f)$ filters through the pseudo-bundle map $(\chi_1^{V_1(\tilde{f})},\mbox{Id}_{X_1})$, that is, we have
$$\chi_1^{\tilde{f}}=\chi_1^0\circ\chi_1^{V_1(\tilde{f})}.$$ Furthermore, there is an induced gluing of $\pi_1^{V_1/Ker(\tilde{f})}:V_1(\tilde{f})\to X_1$ to $\pi_2:V_2\to X_2$
along the maps $(\tilde{f}_0,f)$ that yields the pseudo-bundle
$$\pi_1^{V_1/Ker(\tilde{f})}\cup_{(\tilde{f}_0,f)}\pi_2:V_1(\tilde{f})\cup_{\tilde{f}_0}V_2\to X_1\cup_f X_2.$$
\end{rem}

\subsubsection{$\mathcal{S}_1$ may not be injective}

We now indicate the reason why $\mathcal{S}_1$ may not be injective, although for reasons of brevity we do not provide a complete treatment of any specific example (which is easy to find anyway).

\begin{oss}
Let $s_1$ and $s_1'$ be two sections in $C^{\infty}(X_1,V_1)$ such that $\chi_1^{\tilde{f}}\circ s_1=\chi_1^{\tilde{f}}\circ s_1'$; suppose that there exists $x\in X_1$ such that $s_1(x)\neq s_1'(x)$. Since 
by assumption $\chi_1^{\tilde{f}}(s_1(x))=\chi_1^{\tilde{f}}(s_1'(x))$, and $\chi_1^{\tilde{f}}$ is injective outside of $\pi_1^{-1}(Y)$, we conclude that $x\in Y$, and that $s_1(x)-s_1'(x)$ belongs to the fibre 
at $x$ of $\mbox{Ker}(\tilde{f})$. This is easily seen to be a \emph{vice versa}.
\end{oss}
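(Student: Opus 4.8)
The plan is to read the Observation as a biconditional and prove it pointwise in $x$. Concretely, for two sections $s_1,s_1'\in C^{\infty}(X_1,V_1)$ I want to show that $\chi_1^{\tilde{f}}\circ s_1=\chi_1^{\tilde{f}}\circ s_1'$ holds if and only if, at every $x\in X_1$, either $s_1(x)=s_1'(x)$ or else $x\in Y$ and $s_1(x)-s_1'(x)$ lies in the fibre at $x$ of $\mbox{Ker}(\tilde{f})$. Since $s_1$ and $s_1'$ are sections, I would note at the very start that for each fixed $x$ the values $s_1(x)$ and $s_1'(x)$ lie in the \emph{same} fibre $\pi_1^{-1}(x)$; this is what makes the difference $s_1(x)-s_1'(x)$ meaningful and keeps it inside $\pi_1^{-1}(x)$.

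First I would record the two structural facts that do all the work. By construction $\chi_1^{\tilde{f}}$ is the quotient of $V_1$ by the relation $v_1\sim^{\tilde{f}}v_2\Leftrightarrow\tilde{f}(v_1)=\tilde{f}(v_2)$; since $\tilde{f}$ is defined only on $\pi_1^{-1}(Y)$, this relation identifies no two distinct points outside $\pi_1^{-1}(Y)$, so $\chi_1^{\tilde{f}}$ is injective on $V_1\setminus\pi_1^{-1}(Y)$, whereas on $\pi_1^{-1}(Y)$ one has $\chi_1^{\tilde{f}}(v_1)=\chi_1^{\tilde{f}}(v_2)$ exactly when $\tilde{f}(v_1)=\tilde{f}(v_2)$. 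The second fact is the fibrewise linearity of $\tilde{f}$, which is built into the definition of a gluing, and which lets me rewrite $\tilde{f}(s_1(x))=\tilde{f}(s_1'(x))$ as $\tilde{f}(s_1(x)-s_1'(x))=0$, i.e. as the membership $s_1(x)-s_1'(x)\in\mbox{ker}(\tilde{f}|_{\pi_1^{-1}(x)})$, which by definition is the fibre at $x$ of $\mbox{Ker}(\tilde{f})$.

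For the forward direction I would assume $\chi_1^{\tilde{f}}\circ s_1=\chi_1^{\tilde{f}}\circ s_1'$ and pick $x$ with $s_1(x)\neq s_1'(x)$. Then $s_1(x)$ and $s_1'(x)$ are distinct points of $\pi_1^{-1}(x)$ with the same image under $\chi_1^{\tilde{f}}$. If $x\notin Y$, then both values lie outside $\pi_1^{-1}(Y)$, where $\chi_1^{\tilde{f}}$ is injective, forcing $s_1(x)=s_1'(x)$, a contradiction; hence $x\in Y$. Both values then lie in $\pi_1^{-1}(Y)$, so $\tilde{f}(s_1(x))=\tilde{f}(s_1'(x))$, and the linearity step places $s_1(x)-s_1'(x)$ in $\mbox{Ker}(\tilde{f})$. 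For the converse I would run these implications backwards: where $s_1(x)=s_1'(x)$ the two images agree trivially, and where $x\in Y$ and $s_1(x)-s_1'(x)\in\mbox{ker}(\tilde{f}|_{\pi_1^{-1}(x)})$ the linearity gives $\tilde{f}(s_1(x))=\tilde{f}(s_1'(x))$, hence $s_1(x)\sim^{\tilde{f}}s_1'(x)$ and $\chi_1^{\tilde{f}}(s_1(x))=\chi_1^{\tilde{f}}(s_1'(x))$; collecting over all $x$ yields $\chi_1^{\tilde{f}}\circ s_1=\chi_1^{\tilde{f}}\circ s_1'$.

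There is no real obstacle: the statement is a direct unwinding of the definition of $\chi_1^{\tilde{f}}$ together with the fibrewise linearity of $\tilde{f}$. The only points deserving care are that section values at a fixed $x$ automatically share the fibre $\pi_1^{-1}(x)$, and that $\sim^{\tilde{f}}$ is vacuous outside $\pi_1^{-1}(Y)$ because $\tilde{f}$ is undefined there. It is worth flagging the payoff, which is the point of the Observation: since $\mathcal{S}_1(s_1)=\mathcal{S}_1(s_1')$ is equivalent to $\chi_1^{\tilde{f}}\circ s_1=\chi_1^{\tilde{f}}\circ s_1'$ (via $\mathcal{S}_1(s_1)\circ\chi_1^f=\chi_1^{\tilde{f}}\circ s_1$ and the surjectivity of $\chi_1^f$), this characterization shows that the failure of injectivity of $\mathcal{S}_1$ is governed exactly by the sub-bundle $\mbox{Ker}(\tilde{f})$.
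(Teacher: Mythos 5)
Your argument is correct and follows exactly the route of the paper's own Observation: the forward direction uses the injectivity of $\chi_1^{\tilde{f}}$ outside $\pi_1^{-1}(Y)$ to force $x\in Y$ and the fibrewise linearity of $\tilde{f}$ to place $s_1(x)-s_1'(x)$ in the fibre of $\mbox{Ker}(\tilde{f})$, and the converse is the same computation run backwards, which is what the paper dismisses as ``easily seen to be a \emph{vice versa}.'' Your version merely makes explicit the two structural facts (the relation $\sim^{\tilde{f}}$ identifies nothing outside $\pi_1^{-1}(Y)$, and section values at a fixed $x$ share the fibre $\pi_1^{-1}(x)$) that the paper leaves implicit.
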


A concrete example could be obtained by assuming that $\mbox{Ker}(\tilde{f})$ splits off as a smooth direct summand in the pseudo-bundle $V_1$ and is such that there exists a smooth non-zero section 
$s_0:X\to\mbox{Ker}(\tilde{f})$. These assumptions suffice for $\mathcal{S}_1$ to be non-injective. More precisely, for any section $s_1\in C_{(f,\tilde{f})}^{\infty}(X_1,V_1)$ we have that 
$s_1+s_0\in C_{(f,\tilde{f})}^{\infty}(X_1,V_1)$.

Indeed, if $y,y'\in Y\subset X_1$ are such that $f(y)=f(y')$, then by assumption and by linearity of $\tilde{f}$ 
$$\tilde{f}(s_1(f(y)))=\tilde{f}(s_1(f(y')))\Rightarrow\tilde{f}(s_1(f(y))+s_0(f(y)))=\tilde{f}(s_1(f(y))).$$ Similarly,
$$\tilde{f}(s_1(f(y'))+s_0(f(y')))=\tilde{f}(s_1(f(y')))\Leftrightarrow\tilde{f}((s_1+s_0)(f(y)))=\tilde{f}((s_1+s_0)(f(y')));$$ in particular, $\mathcal{S}_1(s_1+s_0)$ is well-defined. It remains to observe that
by Theorem \ref{map:S-1:is:linear:thm}
$$\mathcal{S}_1(s_1+s_0)=\mathcal{S}_1(s_1)+\mathcal{S}_1(s_0),$$ and since $\mathcal{S}_1(s_0)$ is the zero section, this is equal to $\mathcal{S}_1(s_1)$. Since $s_1+s_0\neq s_1$ by
the choice of $s_0$, we see that $\mathcal{S}_1$ is not injective.

\subsubsection{Surjectivity of $\mathcal{S}_1$: the case of the trivial $\mbox{Ker}(\tilde{f})$}

We treat the case of the trivial $\mbox{Ker}(\tilde{f})$ separately, since for obvious reasons it is possible to obtain stronger statements in this case. Indeed, the assumption that $\mbox{Ker}(\tilde{f})$ is trivial 
implies that $V_1(\tilde{f})=V_1$, and allows to define, for any given section $s\in C^{\infty}(X_1^f,V_1^{\tilde{f}})$, its pullback via the map$\mathcal{S}_1$ to a well-defined and unique section $X_1\to V_1$. 
This pullback, denoted by $\mathcal{S}_1^{-1}(s)$, is given by the following formula:
$$\mathcal{S}_1^{-1}(s)(x)=\left\{\begin{array}{ll}
(\chi_1^{\tilde{f}})^{-1}(s(\chi_1^f(x))) & \mbox{for }x\in X_1\setminus Y \\
(\chi_1^{\tilde{f}}|_{\chi_1^{-1}(x)})^{-1}(s(\chi_1^f(x))) & \mbox{for }x\in Y.
\end{array}\right.$$
Since under the present assumption the restriction of $\tilde{f}$ on each individual fibre in its domain is injective, the restriction of $\chi_1^{\tilde{f}}$ on any fibre in $V_1$ is injective as well. It is also 
obvious that the map $\mathcal{S}_1^{-1}(s)$ thus obtained is $(f,\tilde{f})$-invariant. We need to verify is that it is smooth as a map $X_1\to V_1$.

\begin{lemma}\label{inverse:of:S-1:well-defined:lem}
The map $\mathcal{S}_1^{-1}(s):X_1\to V_1$ is smooth for every $s\in C^{\infty}(X_1^f,V_1^{\tilde{f}})$.
\end{lemma}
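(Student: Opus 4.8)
The plan is to verify smoothness straight from the definition: I must show that for every plot $p:U\to X_1$ the composite $\mathcal{S}_1^{-1}(s)\circ p:U\to V_1$ is a plot of $V_1$. Since being a plot is a local condition, I may shrink $U$ around any chosen point. Put $q:=s\circ\chi_1^f\circ p:U\to V_1^{\tilde{f}}$; this is a plot of $V_1^{\tilde{f}}$ because $p$, $\chi_1^f$ and $s$ are smooth. By construction $\mathcal{S}_1^{-1}(s)\circ p$ is the unique map $U\to V_1$ lying fibrewise over $p$, i.e. with $\pi_1\circ(\mathcal{S}_1^{-1}(s)\circ p)=p$, and satisfying $\chi_1^{\tilde{f}}\circ(\mathcal{S}_1^{-1}(s)\circ p)=q$; uniqueness uses the triviality of $\mbox{Ker}(\tilde{f})$, which makes $\chi_1^{\tilde{f}}$ injective on each fibre. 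The task is thus to show that this prescribed fibrewise lift of $q$ is smooth.

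First I would produce a local lift of $q$ with no constraint on the base. By definition of the diffeology of $V_1^{\tilde{f}}$ as the pushforward of that of $V_1$ along $\chi_1^{\tilde{f}}$, after shrinking $U$ there is a plot $r:U\to V_1$ with $\chi_1^{\tilde{f}}\circ r=q$. Comparing base points gives $\chi_1^f\circ(\pi_1\circ r)=\pi_1^{(\tilde{f},f)}\circ q=\chi_1^f\circ p$, so $\pi_1(r(u))\sim^f p(u)$ for every $u$. Wherever $p(u)\notin Y$ this forces $\pi_1(r(u))=p(u)$, hence $r(u)=\mathcal{S}_1^{-1}(s)(p(u))$, so on that portion the map already coincides with the plot $r$. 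The essential difficulty is concentrated over the gluing domain.

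On the portion where $p(u)\in Y$ the lift $r$ may sit over a different fibre (one has $\pi_1(r(u))\sim^f p(u)$ but possibly $\pi_1(r(u))\neq p(u)$, which is exactly what occurs when $f$ is not injective), and $r(u)$ must be transported back to the fibre over $p(u)$ while preserving its $\chi_1^{\tilde{f}}$-value, that is $\mathcal{S}_1^{-1}(s)(p(u))=(\chi_1^{\tilde{f}}|_{\pi_1^{-1}(p(u))})^{-1}(q(u))$. To organise this I would pass to $V_2$: since $\tilde{f}$ is a subduction the induced map $\tilde{f}_{\sim}$ is a diffeomorphism onto its image, so on the gluing domain one may rewrite $\mathcal{S}_1^{-1}(s)(x)=(\tilde{f}|_{\pi_1^{-1}(x)})^{-1}(s_2^0(f(x)))$, where $s_2^0:=\tilde{f}_{\sim}\circ s\circ f_{\sim}^{-1}$ is a smooth section of $V_2$ over $f(Y)$. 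The whole lemma then reduces to the assertion that the fibrewise inverse of $\tilde{f}$ depends smoothly on the base, equivalently that the smooth bijection $(\pi_1,\chi_1^{\tilde{f}}):V_1\to(\chi_1^f)^*V_1^{\tilde{f}}$ (bijective onto the pullback pseudo-bundle precisely because $\mbox{Ker}(\tilde{f})$ is trivial and $\chi_1^{\tilde{f}}$ is fibrewise onto each fibre of $V_1^{\tilde{f}}$) has smooth inverse; on the gluing domain this is the statement that $(\pi_1,\tilde{f}):\pi_1^{-1}(Y)\to f^*V_2$ is a diffeomorphism onto its image.

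The main obstacle is exactly this last reduction. The lift furnished by the pushforward diffeology does not control the base fibre, and $Y$ need not be D-open, so one cannot naively split $U$ into the loci $\{p(u)\in Y\}$ and $\{p(u)\notin Y\}$ and argue separately. I would overcome it by proving that $(\pi_1,\chi_1^{\tilde{f}})$ is a \emph{subduction} onto $(\chi_1^f)^*V_1^{\tilde{f}}$: a bijective subduction automatically has a smooth inverse, so together with the bijectivity already noted this yields the desired diffeomorphism and hence the smoothness of $\mathcal{S}_1^{-1}(s)$. Establishing this subduction property onto the fibre product from the facts that $\pi_1$ and $\tilde{f}$ are separately subductions and that all fibres are finite-dimensional is the genuinely delicate step; once it is secured, $\mathcal{S}_1^{-1}(s)$ is displayed everywhere as a composite of smooth maps and the lemma follows.
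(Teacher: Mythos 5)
Your proposal does not complete the proof: everything is ultimately delegated to the claim that $(\pi_1,\chi_1^{\tilde{f}}):V_1\to(\chi_1^f)^*V_1^{\tilde{f}}$ is a subduction (equivalently, that $(\pi_1,\tilde{f}):\pi_1^{-1}(Y)\to f^*V_2$ is a diffeomorphism onto its image), and you explicitly leave that step unproven. Moreover, this is not really a reduction: a plot of the pullback pseudo-bundle is precisely a pair $(p,q)$ with $\chi_1^f\circ p=\pi_1^{(\tilde{f},f)}\circ q$, and lifting it through $(\pi_1,\chi_1^{\tilde{f}})$ means producing a plot of $V_1$ lying over $p$ whose $\chi_1^{\tilde{f}}$-image is $q$ --- which is exactly the statement of the lemma for $q=s\circ\chi_1^f\circ p$. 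So the ``genuinely delicate step'' you defer is the lemma itself in different clothing, and neither the finite-dimensionality of the fibres nor the fact that $\pi_1$ and $\tilde{f}$ are separately subductions is brought to bear on it. What is missing is an actual mechanism for transporting the local lift $r$ (which sits over $\pi_1(r(u))\sim^f p(u)$) back to the fibre over $p(u)$, e.g.\ $u\mapsto(\tilde{f}|_{\pi_1^{-1}(p(u))})^{-1}(\tilde{f}(r(u)))$, together with a proof that this transport is smooth in $u$; that is the crux and it is absent.

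That said, your diagnosis of where the difficulty lies is accurate and, if anything, more careful than the paper's own argument. The paper simply asserts that, for $U$ small enough, $\mathcal{S}_1^{-1}(s)\circ p$ being a plot of $V_1$ is \emph{equivalent} to $\chi_1^{\tilde{f}}\circ\mathcal{S}_1^{-1}(s)\circ p$ being a plot of $V_1^{\tilde{f}}$, and then computes $\chi_1^{\tilde{f}}\circ\mathcal{S}_1^{-1}(s)\circ p=s\circ\chi_1^f\circ p$. Only one direction of that equivalence is immediate from the definition of the pushforward diffeology; the direction actually needed requires the local lift furnished by that definition to agree with the prescribed fibrewise lift, which (as you correctly observe) can fail over the gluing locus when $f$ is not injective, since $\chi_1^{\tilde{f}}$ identifies points lying in distinct fibres over $\sim^f$-equivalent base points. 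Your splitting into the loci $p(u)\notin Y$ (where the lift is forced) and $p(u)\in Y$ (where it is not) is the right way to see the issue, and your warning that $Y$ need not be D-open so the two loci cannot be treated as separate plots is also well taken. But identifying the obstacle is not the same as overcoming it, and as written the argument stops exactly where the work begins.
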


\begin{proof}
Let $p:U\to X_1$ be a plot of $X_1$; we need to show that $u\mapsto\mathcal{S}_1^{-1}(s)(p(u))$ is a plot of $V_1$. By definition of a pushforward diffeology, this is equivalent, for $U$ small enough, 
to $u\mapsto\chi_1^{\tilde{f}}(\mathcal{S}_1^{-1}(s)(p(u)))$ being a plot of $V_1^{\tilde{f}}$. By an easy calculation we obtain
$$\chi_1^{\tilde{f}}(\mathcal{S}_1^{-1}(s)(p(u)))=s(\chi_1^f(p(u))).$$ Since $\chi_1^f$ is smooth by construction, $\chi_1^f\circ p$ is a plot of $X_1^f$, and since $s$ is smooth by assumption,
$s\circ\chi_1^f\circ p$ is a plot of $V_1^{\tilde{f}}$, whence the claim.
\end{proof}

Lemma \ref{inverse:of:S-1:well-defined:lem} yields a well-defined inverse map 
$$\mathcal{S}_1^{-1}:C^{\infty}(X_1^f,V_1^{\tilde{f}})\to C_{(f,\tilde{f})}^{\infty}(X_1,V_1).$$ Moreover, we have the following statement.

\begin{thm}\label{when:mathcal:S:is:smoothly:invertible:thm}
Let $\pi_1:V_1\to X_1$ and $\pi_2:V_2\to X_2$ be two diffeological vector pseudo-bundles, and let $(\tilde{f},f)$ be a gluing between them such that $\tilde{f}$ is injective on each fibre in its domain of 
definition. Then $\mathcal{S}_1^{-1}$ is smooth as a map $C^{\infty}(X_1^f,V_1^{\tilde{f}})\to C_{(f,\tilde{f})}^{\infty}(X_1,V_1)$.
\end{thm}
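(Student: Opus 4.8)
The plan is to unravel both functional diffeologies and reduce the assertion to a single plot-checking condition, which is then dispatched by the parametrized analogue of the computation already carried out in Lemma \ref{inverse:of:S-1:well-defined:lem}. Recall that the functional diffeology on $C_{(f,\tilde{f})}^{\infty}(X_1,V_1)$ is the subset diffeology inherited from $C^{\infty}(X_1,V_1)$, so it suffices to prove that $\mathcal{S}_1^{-1}$ is smooth as a map into $C^{\infty}(X_1,V_1)$. I would therefore take an arbitrary plot $r:U\to C^{\infty}(X_1^f,V_1^{\tilde{f}})$ and show that $\mathcal{S}_1^{-1}\circ r$ is a plot of $C^{\infty}(X_1,V_1)$. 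By the defining property of the functional diffeology on the target, this reduces to checking that for every plot $p:U'\to X_1$ the evaluation map
\[
\Phi:U\times U'\to V_1,\qquad \Phi(u,u')=\mathcal{S}_1^{-1}(r(u))(p(u')),
\]
is a plot of $V_1$.

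The key computation is the two-variable version of the identity underlying Lemma \ref{inverse:of:S-1:well-defined:lem}. Since $\mathcal{S}_1^{-1}(r(u))$ is by definition the unique section satisfying $\chi_1^{\tilde{f}}\circ\mathcal{S}_1^{-1}(r(u))=r(u)\circ\chi_1^f$, composing $\Phi$ with $\chi_1^{\tilde{f}}$ gives
\[
(\chi_1^{\tilde{f}}\circ\Phi)(u,u')=r(u)\bigl(\chi_1^f(p(u'))\bigr).
\]
Now $\chi_1^f$ is smooth, so $\chi_1^f\circ p$ is a plot of $X_1^f$; since $r$ is a plot of the functional diffeology of $C^{\infty}(X_1^f,V_1^{\tilde{f}})$, the map $(u,u')\mapsto r(u)(\chi_1^f(p(u')))$ is a plot of $V_1^{\tilde{f}}$. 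Hence $\chi_1^{\tilde{f}}\circ\Phi$ is a plot of $V_1^{\tilde{f}}$, and moreover $\pi_1\circ\Phi=p$ holds identically, because $\mathcal{S}_1^{-1}(r(u))$ is a genuine section of $\pi_1$.

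It remains to pass from \emph{$\chi_1^{\tilde{f}}\circ\Phi$ is a plot of $V_1^{\tilde{f}}$} back to \emph{$\Phi$ is a plot of $V_1$}, and this descent across the quotient $\chi_1^{\tilde{f}}$ is the step where the hypothesis that $\mbox{Ker}(\tilde{f})$ is trivial, i.e.\ that $\tilde{f}$ is injective on each fibre, is indispensable; it is the main obstacle. The diffeology of $V_1^{\tilde{f}}$ being the pushforward of that of $V_1$ by $\chi_1^{\tilde{f}}$, after shrinking $U\times U'$ around any chosen point one may lift the plot $\chi_1^{\tilde{f}}\circ\Phi$ to a plot $\sigma$ of $V_1$ with $\chi_1^{\tilde{f}}\circ\sigma=\chi_1^{\tilde{f}}\circ\Phi$. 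The delicate point is that the pushforward lift $\sigma$ is a priori only guaranteed to land over a base point $\sim^f$-equivalent to $p(u')$, not over $p(u')$ itself; the triviality of $\mbox{Ker}(\tilde{f})$ makes $\chi_1^{\tilde{f}}$ injective on each fibre, so that $\Phi(u,u')$ is the \emph{unique} element of $\pi_1^{-1}(p(u'))$ carrying the prescribed image, and the equality of base points $\pi_1\circ\Phi=p$ then pins the correct fibre and identifies $\Phi$ with the (fibre-corrected) lift locally, whence $\Phi$ is a plot. When $f$ is itself injective this fibre-matching is immediate; in general it is carried out exactly by the local argument of Lemma \ref{inverse:of:S-1:well-defined:lem} applied in the two variables $(u,u')$, and ensuring that the correction is smooth, rather than merely defined, is the only real work.
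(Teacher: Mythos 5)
Your proposal is correct and follows essentially the same route as the paper's proof: reduce smoothness to the evaluation map $(u,u')\mapsto\mathcal{S}_1^{-1}(r(u))(p(u'))$, post-compose with $\chi_1^{\tilde{f}}$ to obtain $r(u)(\chi_1^f(p(u')))$, which is a plot of $V_1^{\tilde{f}}$ by the functional-diffeology hypothesis on $r$, and then descend along the pushforward $\chi_1^{\tilde{f}}$ using the fibrewise injectivity of $\tilde{f}$. You are in fact somewhat more explicit than the paper about why that last descent step (matching the local lift to $\Phi$ fibre by fibre) is where the triviality of $\mathrm{Ker}(\tilde{f})$ is used; the paper dispatches it by the same appeal to Lemma \ref{inverse:of:S-1:well-defined:lem} and the definition of the pushforward diffeology.
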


\begin{proof}
Let $q:U\to C^{\infty}(X_1^f,V_1^{\tilde{f}})$ be a plot of $C^{\infty}(X_1^f,V_1^{\tilde{f}})$; recall that, as for any functional diffeology, this means that for any plot $p^f:U'\to X_1^f$ of $X_1^f$
the corresponding evaluation map $(u,u')\mapsto q(u)(p^f(u'))$ is a plot of $V_1^{\tilde{f}}$. Let us show that the evaluation map corresponding to $\mathcal{S}_1^{-1}\circ q$ is a plot of $V_1$.

Let $p:U'\to X_1$ be a plot of $X_1$. As in the previous proof, up to restricting $U$ and $U'$ as necessary, it would be sufficient to prove that
$(u,u')\mapsto\chi_1^{\tilde{f}}((\mathcal{S}_1^{-1}\circ q)(u)(p(u')))$ is a plot of $V_1^{\tilde{f}}$. By definition of $\mathcal{S}_1^{-1}$ we have
$$\chi_1^{\tilde{f}}((\mathcal{S}_1^{-1}\circ q)(u)(p(u')))=q(u)(\chi_1^f(p(u'))).$$ Since $\chi_1^f\circ p$ is a plot of $X_1^f$ by construction, the resulting map is a plot of $V_1^{\tilde{f}}$
by the assumption on $q$, whence the claim.
\end{proof}

The following is now an obvious conclusion.

\begin{cor}\label{quotient:to:reduced:map:surjective:cor}
Under the assumptions of Theorem \ref{when:mathcal:S:is:smoothly:invertible:thm}, the spaces of sections $C_{(f,\tilde{f})}^{\infty}(X_1,V_1)$ and $C^{\infty}(X_1^f,V_1^{\tilde{f}})$ are diffeomorphic.
\end{cor}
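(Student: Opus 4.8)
The plan is to exhibit $\mathcal{S}_1$ and $\mathcal{S}_1^{-1}$ as a pair of mutually inverse smooth maps, so that each is a diffeomorphism onto the other's domain. Most of the work has in fact already been done. The hypothesis that $\tilde{f}$ is injective on each fibre in its domain is precisely the statement that $\mbox{Ker}(\tilde{f})$ is trivial, which is the standing assumption of the present subsubsection and guarantees, via Lemma \ref{inverse:of:S-1:well-defined:lem}, that $\mathcal{S}_1^{-1}:C^{\infty}(X_1^f,V_1^{\tilde{f}})\to C_{(f,\tilde{f})}^{\infty}(X_1,V_1)$ is a well-defined map into the correct space. Moreover $\mathcal{S}_1$ is smooth by the smoothness theorem preceding this subsubsection, and $\mathcal{S}_1^{-1}$ is smooth by Theorem \ref{when:mathcal:S:is:smoothly:invertible:thm}. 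Thus it remains only to verify that the two maps are genuinely inverse to one another.

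First I would check $\mathcal{S}_1^{-1}\circ\mathcal{S}_1=\mbox{Id}$ on $C_{(f,\tilde{f})}^{\infty}(X_1,V_1)$. Given $s_1$, its image $s_1^f:=\mathcal{S}_1(s_1)$ satisfies $s_1^f\circ\chi_1^f=\chi_1^{\tilde{f}}\circ s_1$ by construction, so $s_1^f(\chi_1^f(x))=\chi_1^{\tilde{f}}(s_1(x))$ for every $x\in X_1$. Substituting this into the defining formula for $\mathcal{S}_1^{-1}$, over $x\in X_1\setminus Y$ one obtains $(\chi_1^{\tilde{f}})^{-1}(\chi_1^{\tilde{f}}(s_1(x)))=s_1(x)$, using that $\chi_1^{\tilde{f}}$ is injective outside $\pi_1^{-1}(Y)$, while over $x\in Y$ one inverts $\chi_1^{\tilde{f}}$ restricted to the fibre $\pi_1^{-1}(x)$, which is legitimate because the injectivity hypothesis on $\tilde{f}$ makes $\chi_1^{\tilde{f}}$ injective on each such fibre, and again recovers $s_1(x)$. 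Hence $\mathcal{S}_1^{-1}(\mathcal{S}_1(s_1))=s_1$.

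For the reverse composition $\mathcal{S}_1\circ\mathcal{S}_1^{-1}=\mbox{Id}$ on $C^{\infty}(X_1^f,V_1^{\tilde{f}})$, I would start from the identity $\chi_1^{\tilde{f}}(\mathcal{S}_1^{-1}(s)(x))=s(\chi_1^f(x))$ already extracted in the proof of Lemma \ref{inverse:of:S-1:well-defined:lem}, that is, $\chi_1^{\tilde{f}}\circ\mathcal{S}_1^{-1}(s)=s\circ\chi_1^f$. By the very definition of $\mathcal{S}_1$, the section $\mathcal{S}_1(\mathcal{S}_1^{-1}(s))$ is the unique map $t$ with $t\circ\chi_1^f=\chi_1^{\tilde{f}}\circ\mathcal{S}_1^{-1}(s)=s\circ\chi_1^f$; since $\chi_1^f$ is surjective this forces $t=s$, as desired.

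With both compositions equal to the identity and both maps already known to be smooth, $\mathcal{S}_1$ is a diffeomorphism with inverse $\mathcal{S}_1^{-1}$, which yields the claimed diffeomorphism between the two spaces of sections. I do not anticipate a genuine obstacle: the construction is essentially an assembly of the smoothness results established above, and the only points demanding care are the well-definedness of the fibrewise inverse over $Y$ in the formula for $\mathcal{S}_1^{-1}$ — exactly what the injectivity of $\tilde{f}$ on fibres supplies — and the appeal to surjectivity of $\chi_1^f$ in the second composition.
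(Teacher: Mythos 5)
Your proposal is correct and follows the same route the paper intends: the corollary is presented there as an immediate consequence of the smoothness of $\mathcal{S}_1$, the well-definedness and smoothness of $\mathcal{S}_1^{-1}$ (Lemma \ref{inverse:of:S-1:well-defined:lem} and Theorem \ref{when:mathcal:S:is:smoothly:invertible:thm}), and the fact that the two maps are mutually inverse. You merely spell out the mutual-inverse verification that the paper leaves implicit, and that verification is accurate.
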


\subsubsection{Surjectivity of $\mathcal{S}_1$ in the case when $\mbox{Ker}(\tilde{f})$ is non-trivial}

We first assume for simplicity that $f$ is injective, so $X_1^f=X_1$ and $V_1(\tilde{f})=V_1^{\tilde{f}}$. Notice also that under this assumption $\mathcal{S}_1$ is determined by the simpler condition
$\mathcal{S}_1(s')=\chi_1^{\tilde{f}}\circ s'$.

\begin{prop}\label{quotient:map:surjective:sections:prop}
Let $(\tilde{f},f)$ be such that $f$ is injective. Then for every smooth section $s:X_1\to V_1(\tilde{f})$ there exists a smooth $(f,\tilde{f})$-invariant section $s':X_1\to V_1$ such that $\mathcal{S}_1(s')=s$.
\end{prop}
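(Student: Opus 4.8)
The plan is to exploit that, since $f$ is injective, the $(f,\tilde{f})$-invariance of $s'$ is automatic: the condition in Definition \ref{f-tilde_f:invariant:sections:defn} that $f(y)=f(y')$ imply $\tilde{f}(s'(y))=\tilde{f}(s'(y'))$ is vacuous, because $f(y)=f(y')$ already forces $y=y'$. Thus every section $X_1\to V_1$ is $(f,\tilde{f})$-invariant, and the entire content of the statement is to produce a \emph{smooth} section $s':X_1\to V_1$ with $\mathcal{S}_1(s')=s$. Under the present hypotheses $\mathcal{S}_1(s')=\chi_1^{\tilde{f}}\circ s'$, and since $f$ is injective we have $V_1(\tilde{f})=V_1^{\tilde{f}}$, so that $\chi_1^{\tilde{f}}=\chi_1^{V_1(\tilde{f})}$ is precisely the quotient projection $V_1\to V_1/\mbox{Ker}(\tilde{f})$ covering $\mbox{Id}_{X_1}$. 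In other words, I must lift the given smooth section $s$ of the quotient pseudo-bundle through the fibrewise-linear subduction $\chi_1^{\tilde{f}}$.

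First I would split $X_1$ into its two natural pieces. Over $X_1\setminus Y$ the kernel sub-bundle is trivial, so $\chi_1^{\tilde{f}}$ restricts there to a fibrewise isomorphism and the lift is \emph{forced}: $s'(x)=(\chi_1^{\tilde{f}})^{-1}(s(x))$, which is smooth on this part by the argument already used in Lemma \ref{inverse:of:S-1:well-defined:lem} (its composition with a plot is recovered by applying the fibrewise inverse to a local lift). Over $y\in Y$ the fibre $(\chi_1^{\tilde{f}})^{-1}(s(y))$ is a nonempty affine subspace of $\pi_1^{-1}(y)$ modelled on $\ker(\tilde{f}|_{\pi_1^{-1}(y)})$, and I would choose $s'(y)$ inside it. The crux is then smoothness. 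Given a plot $p:U\to X_1$, the map $s\circ p$ is a plot of $V_1^{\tilde{f}}=V_1/\mbox{Ker}(\tilde{f})$, so by the definition of the pushforward (quotient) diffeology it has, locally, the form $\chi_1^{\tilde{f}}\circ q$ for some plot $q:U'\to V_1$; moreover $q$ automatically lifts $p$, since $\chi_1^{\tilde{f}}$ covers the identity and $s$ is a section. Now $s'\circ p$ and $q$ both project onto $s\circ p$ and lie fibrewise over the same points, so their difference $\delta:=s'\circ p-q$ takes values in $\mbox{Ker}(\tilde{f})$ and vanishes wherever $p(u)\notin Y$; once $\delta$ is known to be a plot of $V_1$, the equality $s'\circ p=q+\delta$ exhibits $s'\circ p$ as a plot by smoothness of fibrewise addition.

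The main obstacle is exactly to arrange the pointwise choice over $Y$ so that this correction $\delta$ is smooth for \emph{every} plot $p$ — that is, to produce a globally coherent lift across the locus where the fibre dimension of $V_1^{\tilde{f}}$ jumps and over the ``interior'' of $Y$, where the lift is genuinely non-unique. I would reformulate this as the construction of a smooth section of the pulled-back affine bundle $s^*V_1=\{(x,v)\,|\,\chi_1^{\tilde{f}}(v)=s(x)\}\to X_1$, which is a single point over $X_1\setminus Y$ (yielding the forced section) and is affine over $\mbox{Ker}(\tilde{f})$ over $Y$. Along plots where $X_1\setminus Y$ is dense, the value over $Y$ is pinned down by smoothness of the local lift $q$, so coherence is automatic; the delicate case is that of plots carried entirely by $Y$, where a consistent selection must be made. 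Here I would use the additivity and module structure of $\mathcal{S}_1$ from Theorem \ref{map:S-1:is:linear:thm} together with the sub-bundle structure of $\mbox{Ker}(\tilde{f})$ to reduce the remaining piece, where possible, to the trivial-kernel situation already settled in Corollary \ref{quotient:to:reduced:map:surjective:cor}; I note that a splitting $V_1\cong V_1(\tilde{f})\oplus\mbox{Ker}(\tilde{f})$ would make $\delta$ smooth at once, so the real work lies precisely in producing this coherent smooth lift over $Y$ without assuming such a splitting. Everything outside this point is a routine verification through the defining plots.
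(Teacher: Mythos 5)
There is a genuine gap: your proposal stops precisely at the point where the proof has to be carried out. You correctly observe that $(f,\tilde{f})$-invariance is vacuous for injective $f$, you correctly reduce the problem to lifting $s$ through the quotient projection $\chi_1^{\tilde{f}}:V_1\to V_1/\mbox{Ker}(\tilde{f})$, and you correctly isolate the difficulty in the kernel-valued correction $\delta=s'\circ p-q$ over $Y$. But you then declare that ``the real work lies precisely in producing this coherent smooth lift over $Y$ without assuming such a splitting'' and do not produce it; the appeal to the additivity of $\mathcal{S}_1$ and to a reduction ``where possible'' to the trivial-kernel case is not an argument. As it stands the proposal proves the statement only over $X_1\setminus Y$ and along plots where the lift is pinned down by density, which is exactly the easy part.

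For comparison, the paper's proof makes the choice you were looking for in the bluntest possible way: since the fibres are finite-dimensional, one fixes an \emph{arbitrary} fibrewise vector-space complement $V_1^0$ with $V_1=V_1^0\oplus\mbox{Ker}(\tilde{f})$ --- explicitly allowing this decomposition to be non-smooth --- and defines $s'(x)=t(s)(x)$ to be the unique element of $V_1^0$ lying in the coset $s(x)$. Smoothness of $t(s)$ is then argued not by controlling your $\delta$ but directly from the identity $\chi_1^{\tilde{f}}\circ t(s)=s$ together with the description of the quotient diffeology on $V_1(\tilde{f})$: for a plot $q$ of $X_1$, the composite $\chi_1^{\tilde{f}}\circ t(s)\circ q=s\circ q$ is a plot of $V_1(\tilde{f})$ because $s$ is smooth, and the paper asserts that this (locally) suffices for $t(s)\circ q$ to be a plot of $V_1$. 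So the lesson relative to your plan is that no smooth splitting and no coherence argument over $Y$ is attempted; the entire weight is placed on the passage from ``$\chi_1^{\tilde{f}}\circ(t(s)\circ q)$ is a plot of the quotient'' back to ``$t(s)\circ q$ is a plot of $V_1$''. (You may note that this last equivalence is itself delicate --- the quotient diffeology only guarantees that \emph{some} local lift is a plot, not that the particular lift $t(s)\circ q$ is one --- so your instinct that something must be checked here is sound; but a review of your proposal must record that you identified the key step and left it unproved, whereas the paper supplies a concrete construction and a verification.)
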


\begin{proof}
Since by assumption $V_1$ has finite-dimensional fibres only, we can choose an arbitrary direct sum decomposition $V_1=V_1^0\oplus\mbox{Ker}(\tilde{f})$. The direct sum complement
$V_1^0$ thus chosen is also a sub-bundle, but if the decomposition is not smooth then the diffeology of $V_1$ is coarser than the respective direct sum diffeology. Also, having fixed such a
decomposition, for every section $s:X_1\to V_1(\tilde{f})$ there is a well-defined pullback of it to a (non-smooth \emph{a priori}) section $X_1\to V_1$.

Since $V_1(\tilde{f})=V_1/\mbox{Ker}(\tilde{f})$, we can write its elements as cosets $v+\mbox{Ker}(\tilde{f})$. The map $\chi_1^{\tilde{f}}$ then has form $\chi_1^{\tilde{f}}(v)=v+\mbox{Ker}(\tilde{f})$,
and every plot of $V_1(\tilde{f})$ has form $\chi_1^{\tilde{f}}\circ p$ for some plot $p$ of $V_1$. Now, if $s:X_1\to V_1(\tilde{f})$ is a smooth section, then for any given $x\in X_1$ we can
denote by $t(s)(x)$ the unique element of $V_1^0$ contained in the coset $s(x)$. The map $t(s)$ thus defined is a section $X_1\to V_1$.

To show that $t(s)$ is smooth as a map $X_1\to V_1$, let $q:U\to X_1$ be a plot of $X_1$. We need to show that $u\mapsto t(s)(q(u))$ is a plot of $V_1$. This is equivalent to showing that
there exists a sub-domain $U'$ of $U$ such that on this sub-domain $u\mapsto\chi_1^{\tilde{f}}(t(s)(q(u)))$ is a plot of $V_1(\tilde{f})$. But we have by construction that
$\chi_1^{\tilde{f}}(t(s)(q(u)))=s(q(u))$ on the whole $U$. Since by assumption $s$ is smooth as a map $X_1\to V_1(\tilde{f})$, we have that $u\mapsto s(q(u))$ is a plot of $V_1(\tilde{f})$.
The map $t(s)$ is thus the section $s'$ we were looking for; in particular, it is clearly $(f,\tilde{f})$-invariant.
\end{proof}

\begin{example}
Let $V_1=\matR\times\matR^2$, with the first factor carrying the standard diffeology and the second, the vector space diffeology generated by $u\mapsto|u|(e_y+e_z)$; let $X_1$ be the standard
$\matR$ identified with the first factor, so the second factor is the fibre. Let $\tilde{f}$ be defined over the whole $X_1$ (so on the entire $V_1$), and let it act by $(x,y,z)\mapsto(x,0,z)$; we may
assume it to take values in some $V_2=\matR\times\matR$, where the first factor is the standard $\matR$ identified with the corresponding base space and the second is $\matR$ with the vector
space diffeology generated by $u\mapsto|u|e_z$. Thus, $\tilde{f}$ is smooth, and $V_1(\tilde{f})$ can actually be identified with $V_2$. It is convenient to represent both of them by the subset
$\{(x,0,z)\}$ of $\matR^3$.

Observe that every section $X_1\to V_1(\tilde{f})$ is a linear combination with coefficients that are usual smooth functions in $x$ of sections of form $x\mapsto(x,0,|g(x)|)$ (where again $g$ is a
usual smooth function). It is then obvious that every such map lifts to the section $X_1\to V_1$ that is given by $x\mapsto(x,|g(x)|,|g(x)|)$.
\end{example}

Corollary \ref{quotient:to:reduced:map:surjective:cor} and Proposition \ref{quotient:map:surjective:sections:prop} allow to show that $\mathcal{S}_1$ is always surjective.

\begin{thm}\label{mathcal:S:is:surjective:thm}
Let $\pi_1:V_1\to X_1$ and $\pi_2:V_2\to X_2$ be two diffeological vector pseudo-bundles, and let $(\tilde{f},f)$ be a gluing of the former to the latter such that $f$ and $\tilde{f}$ are subductions onto their
respective images. Then the map $\mathcal{S}_1$ is surjective as a map $C_{(f,\tilde{f})}^{\infty}(X_1,V_1)\to C^{\infty}(X_1^f,V_1^{\tilde{f}})$.
\end{thm}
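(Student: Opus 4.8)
The plan is to factor $\mathcal{S}_1$ through the intermediate pseudo-bundle $\pi_1^{V_1/Ker(\tilde{f})}:V_1(\tilde{f})\to X_1$, using the identity $\chi_1^{\tilde{f}}=\chi_1^0\circ\chi_1^{V_1(\tilde{f})}$ recorded in the Remark after Lemma \ref{gluing:p-bundles:yields:p-bundle:lem}. Write $\tilde{f}_0$ for the pushforward of $\tilde{f}$ to $V_1(\tilde{f})$, so that $\tilde{f}=\tilde{f}_0\circ\chi_1^{V_1(\tilde{f})}$; recall that $(\tilde{f}_0,f)$ is a gluing of $\pi_1^{V_1/Ker(\tilde{f})}$ to $\pi_2$, that $(V_1(\tilde{f}))^{\tilde{f}_0}=V_1^{\tilde{f}}$, and that the reduced base is again $X_1^f$. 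Denote by $\mathcal{S}_1^{(\tilde{f}_0)}:C_{(f,\tilde{f}_0)}^{\infty}(X_1,V_1(\tilde{f}))\to C^{\infty}(X_1^f,V_1^{\tilde{f}})$ the section map attached to this gluing, characterized by $\mathcal{S}_1^{(\tilde{f}_0)}(\bar{s})\circ\chi_1^f=\chi_1^0\circ\bar{s}$, and let $\Phi$ send $s'\in C_{(f,\tilde{f})}^{\infty}(X_1,V_1)$ to $\chi_1^{V_1(\tilde{f})}\circ s'$. Since $\mathcal{S}_1(s')\circ\chi_1^f=\chi_1^{\tilde{f}}\circ s'=\chi_1^0\circ(\chi_1^{V_1(\tilde{f})}\circ s')$ and $\chi_1^f$ is surjective, one has $\mathcal{S}_1=\mathcal{S}_1^{(\tilde{f}_0)}\circ\Phi$ (the well-definedness of $\Phi$ into the $(f,\tilde{f}_0)$-invariant sections following from $\tilde{f}=\tilde{f}_0\circ\chi_1^{V_1(\tilde{f})}$). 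It therefore suffices to prove that each of $\mathcal{S}_1^{(\tilde{f}_0)}$ and $\Phi$ is surjective.

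First I would treat $\mathcal{S}_1^{(\tilde{f}_0)}$. Since $V_1(\tilde{f})=V_1/\mbox{Ker}(\tilde{f})$ is obtained precisely by collapsing the fibrewise kernels of $\tilde{f}$, the induced map $\tilde{f}_0$ is injective on every fibre in its domain. Hence the gluing $(\tilde{f}_0,f)$ meets the hypothesis of Theorem \ref{when:mathcal:S:is:smoothly:invertible:thm}, and Corollary \ref{quotient:to:reduced:map:surjective:cor}, applied to $\pi_1^{V_1/Ker(\tilde{f})}$, shows that $\mathcal{S}_1^{(\tilde{f}_0)}$ is in fact a diffeomorphism, in particular surjective. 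Thus, given a target $s\in C^{\infty}(X_1^f,V_1^{\tilde{f}})$, there is an $(f,\tilde{f}_0)$-invariant section $\bar{s}:X_1\to V_1(\tilde{f})$ with $\chi_1^0\circ\bar{s}=s\circ\chi_1^f$.

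It remains to lift $\bar{s}$ across $\chi_1^{V_1(\tilde{f})}$, i.e. to prove $\Phi$ surjective; here I would reuse the construction in the proof of Proposition \ref{quotient:map:surjective:sections:prop}. Choosing a direct-sum decomposition $V_1=V_1^0\oplus\mbox{Ker}(\tilde{f})$, which exists since the fibres are finite-dimensional, and letting $s'(x)$ be the unique representative in $V_1^0$ of the coset $\bar{s}(x)$, produces a section $s':X_1\to V_1$ with $\chi_1^{V_1(\tilde{f})}\circ s'=\bar{s}$; its smoothness is verified exactly as in that proof, from the identity $\chi_1^{V_1(\tilde{f})}(s'(q(u)))=\bar{s}(q(u))$ for plots $q$ of $X_1$. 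The point requiring genuine care, and the one I expect to be the main obstacle, is that $s'$ be $(f,\tilde{f})$-invariant: this is no longer vacuous once $f$ is allowed to be non-injective, but for $y,y'\in Y$ with $f(y)=f(y')$ it follows from
$$\tilde{f}(s'(y))=\tilde{f}_0(\chi_1^{V_1(\tilde{f})}(s'(y)))=\tilde{f}_0(\bar{s}(y))=\tilde{f}_0(\bar{s}(y'))=\tilde{f}(s'(y')),$$
the central equality being the $(f,\tilde{f}_0)$-invariance of $\bar{s}$. Therefore $s'\in C_{(f,\tilde{f})}^{\infty}(X_1,V_1)$ and $\Phi(s')=\bar{s}$, so that $\mathcal{S}_1(s')\circ\chi_1^f=\chi_1^0\circ\bar{s}=s\circ\chi_1^f$; surjectivity of $\chi_1^f$ yields $\mathcal{S}_1(s')=s$, as required.
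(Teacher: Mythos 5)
Your proposal is correct and follows essentially the same route as the paper: factoring $\mathcal{S}_1$ through the quotient pseudo-bundle $V_1(\tilde{f})$ via $\chi_1^{\tilde{f}}=\chi_1^0\circ\chi_1^{V_1(\tilde{f})}$, invoking Corollary \ref{quotient:to:reduced:map:surjective:cor} for the second factor (since $\tilde{f}_0$ is fibrewise injective by construction), and lifting sections across $\chi_1^{V_1(\tilde{f})}$ by the direct-sum construction of Proposition \ref{quotient:map:surjective:sections:prop}. Your explicit verification of the $(f,\tilde{f})$-invariance of the lift via $\tilde{f}=\tilde{f}_0\circ\chi_1^{V_1(\tilde{f})}$ is exactly the point the paper also singles out, just spelled out in slightly more detail.
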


\begin{proof}
Recall that the pseudo-bundle map $(\chi_1^{\tilde{f}}:V_1\to V_1^{\tilde{f}},\chi_1^f:X_1\to X_1^f)$ filters through the pseudo-bundle maps
$(\chi_1^{V_1(\tilde{f})}:V_1\to V_1(\tilde{f}),\mbox{Id}_{X_1}:X_1\to X_1)$ and $(\chi_1^0:V_1(\tilde{f})\to V_1^{\tilde{f}},\chi_1^f:X_1\to X_1^f)$. Accordingly, $\mathcal{S}_1$ decomposes into
the following composition of maps.

Let $\mathcal{S}_1^{V_1(\tilde{f})}:C_{(f,\tilde{f})}^{\infty}(X_1,V_1)\to C^{\infty}(X_1,V_1(\tilde{f}))$ be the map defined by
$$\mathcal{S}_1^{V_1(\tilde{f})}(s)=\chi_1^{V_1(\tilde{f})}\circ s$$ (it coincides with $\mathcal{S}_1$ if $f$ and $\tilde{f}$ are such that $V_1(\tilde{f})=V_1^{\tilde{f}}$). Let
$\mathcal{S}_1^0:C_{(f,\tilde{f}_0)}^{\infty}(X_1,V_1(\tilde{f}))\to C^{\infty}(X_1^f,V_1^{\tilde{f}})$ be the map defined by
$$\mathcal{S}_1^0(s)\circ\chi_1^f=\chi_1^0\circ s.$$ We claim, first of all, that
$$\mathcal{S}_1=\mathcal{S}_1^0\circ\mathcal{S}_1^{V_1(\tilde{f})}.$$

Indeed,
$$\left(\mathcal{S}_1^0\circ\mathcal{S}_1^{V_1(\tilde{f})}\right)(s)=\mathcal{S}_1^0\left(\chi_1^{V_1(\tilde{f})}\circ s\right),$$ and the latter satisfies the identity
$$\mathcal{S}_1^0\left(\chi_1^{V_1(\tilde{f})}\circ s\right)\circ\chi_1^f=\chi_1^0\circ\chi_1^{V_1(\tilde{f})}\circ s=\chi_1^{\tilde{f}}\circ s$$ by Lemma 4.22. Since
$\chi_1^{\tilde{f}}\circ s=\mathcal{S}_1(s)\circ\chi_1^f$, we get that $\mathcal{S}_1(s)=(\mathcal{S}_1^0\circ\mathcal{S}_1^{V_1(\tilde{f})})(s)$ for any $(f,\tilde{f})$-invariant section
$s:X_1\to V_1$.

Now, by Corollary \ref{quotient:to:reduced:map:surjective:cor} the map $\mathcal{S}_1^0$ is a diffeomorphism between $C_{(f,\tilde{f}_0)}(X_1,V_1(\tilde{f}))$ and $C^{\infty}(X_1^f,V_1^{\tilde{f}})$.
It thus suffices to show that $\mathcal{S}_1^{V_1(\tilde{f})}$ maps $C_{(f,\tilde{f})}^{\infty}(X_1,V_1)$ onto $C_{(f,\tilde{f}_0)}^{\infty}(X_1,V_1(\tilde{f}))$. This is obtained by first applying
Proposition \ref{quotient:map:surjective:sections:prop} where instead of $f$ we consider $\mbox{Id}_{X_1}$ and instead of $\tilde{f}$, the quotient map $\chi_1^{V_1(\tilde{f})}$. The proposition then
guarantees that every section $X_1\to V_1(\tilde{f})$ pulls back to a $(\mbox{Id}_{X_1},\chi_1^{V_1(\tilde{f})})$-invariant section $X_1\to V_1$. We thus need to check that any
$(f,\tilde{f}_0)$-invariant section admits a pullback that is $(f,\tilde{f})$-invariant; and this easily follows from $\tilde{f}=\tilde{f}_0\circ\chi_1^{V_1(\tilde{f})}$, \emph{i.e.},
from the very definition of $\tilde{f}_0$. Thus, as a map $C_{(f,\tilde{f})}^{\infty}(X_1,V_1)\to C_{(f,\tilde{f}_0)}^{\infty}(X_1,V_1(\tilde{f}))$, the map $\mathcal{S}_1^{V_1(\tilde{f})}$ is onto,
which completes the proof.
\end{proof}

\subsubsection{$\mathcal{S}_1$ is a subduction}

We have just seen (Theorem \ref{mathcal:S:is:surjective:thm}) that if $\tilde{f}$ and $f$ are subductions then $\mathcal{S}_1$ is surjective. We now show that a stronger statement is true: under the same 
assumption, $\mathcal{S}_1$ is a subduction itself. 

\begin{thm}\label{mathcal:S_1:is:subduction:thm}
Let $\pi_1:V_1\to X_1$ and $\pi_2:V_2\to X_2$ be two diffeological vector pseudo-bundles, and let $(\tilde{f},f)$ be a gluing of the former to that latter such that both $\tilde{f}$ and $f$ are subsections 
onto their images. Then the map $\mathcal{S}_1$ is a subduction of $C_{(f,\tilde{f})}^{\infty}(X_1,V_1)$ onto $C_{(f_{\sim},\tilde{f}_{\sim})}^{\infty}(X_1^f,V_1^{\tilde{f}})$.
\end{thm}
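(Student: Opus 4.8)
The plan is to verify the three defining features of a subduction separately: smoothness, surjectivity, and the pushforward (local-lifting) property. The first two are already available --- smoothness is the content of the smoothness theorem for $\mathcal{S}_1$ proved just above, and surjectivity is exactly Theorem~\ref{mathcal:S:is:surjective:thm}. Hence the entire weight of the argument falls on showing that the functional diffeology of $C_{(f_{\sim},\tilde{f}_{\sim})}^{\infty}(X_1^f,V_1^{\tilde{f}})$ is the pushforward by $\mathcal{S}_1$ of the functional diffeology of $C_{(f,\tilde{f})}^{\infty}(X_1,V_1)$; equivalently, that every plot of the target lifts, locally on its domain, to a plot of the source along $\mathcal{S}_1$.

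First I would reuse the factorization $\mathcal{S}_1=\mathcal{S}_1^0\circ\mathcal{S}_1^{V_1(\tilde{f})}$ obtained in the proof of Theorem~\ref{mathcal:S:is:surjective:thm}. By Corollary~\ref{quotient:to:reduced:map:surjective:cor}, applied to the pseudo-bundle $\pi_1^{V_1/Ker(\tilde{f})}:V_1(\tilde{f})\to X_1$ glued along $(\tilde{f}_0,f)$ --- for which $\tilde{f}_0$ is fibrewise injective by construction --- the map $\mathcal{S}_1^0$ is a diffeomorphism of $C_{(f,\tilde{f}_0)}^{\infty}(X_1,V_1(\tilde{f}))$ onto $C_{(f_{\sim},\tilde{f}_{\sim})}^{\infty}(X_1^f,V_1^{\tilde{f}})$; the invariance subscript on the target is in fact vacuous here, since $\tilde{f}$ being a subduction forces $f_{\sim}$ to be a diffeomorphism with its image and hence injective. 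As diffeomorphisms are subductions and subductions compose, it then suffices to prove that $\mathcal{S}_1^{V_1(\tilde{f})}:C_{(f,\tilde{f})}^{\infty}(X_1,V_1)\to C_{(f,\tilde{f}_0)}^{\infty}(X_1,V_1(\tilde{f}))$ is a subduction. This isolates the genuinely fibrewise quotient $\chi_1^{V_1(\tilde{f})}:V_1\to V_1(\tilde{f})=V_1/\mbox{Ker}(\tilde{f})$, which covers the identity on $X_1$.

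For that reduced problem I would lift plots exactly as in Proposition~\ref{quotient:map:surjective:sections:prop}. Given a plot $q:U\to C_{(f,\tilde{f}_0)}^{\infty}(X_1,V_1(\tilde{f}))$, fix a (possibly non-smooth) fibrewise decomposition $V_1=V_1^0\oplus\mbox{Ker}(\tilde{f})$ into finite-dimensional fibres and set $\tilde{q}(u)=t(q(u))$, where $t$ sends a section of $V_1(\tilde{f})$ to its unique $V_1^0$-valued representative; each $\tilde{q}(u)$ is a smooth $(f,\tilde{f})$-invariant section with $\chi_1^{V_1(\tilde{f})}\circ\tilde{q}(u)=q(u)$. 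To see $\tilde{q}$ is a plot, I would test it against an arbitrary plot $p:U'\to X_1$ and examine the evaluation $E(u,u')=\tilde{q}(u)(p(u'))$, which is $V_1^0$-valued; composing with the quotient projection gives $\chi_1^{V_1(\tilde{f})}\circ E=q(u)(p(u'))$, a plot of $V_1(\tilde{f})$ because $q$ is a plot of the functional diffeology. Since the diffeology of $V_1(\tilde{f})$ is the pushforward of that of $V_1$, the core step of Proposition~\ref{quotient:map:surjective:sections:prop} --- a $V_1^0$-valued map whose projection to $V_1(\tilde{f})$ is a plot is itself a plot of $V_1$ --- applies verbatim with $U\times U'$ in place of the base domain, so $E$ is a plot of $V_1$. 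Thus $\tilde{q}$ is a plot lifting $q$, $\mathcal{S}_1^{V_1(\tilde{f})}$ is a subduction, and composing with $\mathcal{S}_1^0$ completes the argument.

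The main obstacle I anticipate is precisely this last verification: the representative-choosing lift $t$ is built from a decomposition that need not be smooth, so $t$ is not a priori compatible with the functional diffeology and $\chi_1^{V_1(\tilde{f})}$ cannot simply be inverted. What rescues the argument is that the pushforward characterization of the diffeology of $V_1(\tilde{f})$ is insensitive to the number of parameters: I only ever need to recognize the single two-parameter map $E$ on $U\times U'$ as a plot of $V_1$, which is the family version of the fibrewise lifting already settled for a fixed section in Proposition~\ref{quotient:map:surjective:sections:prop}. I would therefore state that proposition's key lemma for an arbitrary plot of $V_1(\tilde{f})$ (rather than only for the evaluation of a fixed section), after which the present statement follows without further computation.
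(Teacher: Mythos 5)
Your proposal is correct and follows essentially the same route as the paper: the paper's proof likewise lifts a plot of $C_{(f_{\sim},\tilde{f}_{\sim})}^{\infty}(X_1^f,V_1^{\tilde{f}})$ by applying the right inverse of $\mathcal{S}_1$ constructed in Proposition \ref{quotient:map:surjective:sections:prop} from a fibrewise direct-sum complement of $\mbox{Ker}(\tilde{f})$, and then recognizes the lifted evaluation map as a plot of $V_1$ by composing with $\chi_1^{\tilde{f}}$ and invoking the pushforward description of the quotient diffeology. Your explicit factorization $\mathcal{S}_1=\mathcal{S}_1^0\circ\mathcal{S}_1^{V_1(\tilde{f})}$ only makes visible the two-stage structure that the paper reuses from Theorem \ref{mathcal:S:is:surjective:thm} and leaves implicit here, and both arguments rest on exactly the family version of the lifting step of Proposition \ref{quotient:map:surjective:sections:prop} that you flag at the end.
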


\begin{proof}
We need to show that every plot $q^{f,\tilde{f}}$ of $C_{(f_{\sim},\tilde{f}_{\sim})}^{\infty}(X_1^f,V_1^{\tilde{f}})$ locally has form $\mathcal{S}_1\circ q$ for some plot $q$ of
$C_{(f,\tilde{f})}^{\infty}(X_1,V_1)$. Thus, let $q^{f,\tilde{f}}:U\to C_{(f_{\sim},\tilde{f}_{\sim})}^{\infty}(X_1^f,V_1^{\tilde{f}})$ (we will assume that $U$ is small enough, as needed);
this means that for any plot $p^f:U'\to X_1^f$ of $X_1^f$ the usual evaluation map $(u,u')\mapsto q^{f,\tilde{f}}(u)(p^f(u'))$ is a plot of $V_1^{\tilde{f}}$. Now we also assume that
$U'$ is small enough so that $p^f=\chi_1^f\circ p$ for some plot $p$ of $X_1$.

As shown in the proof of Proposition \ref{quotient:map:surjective:sections:prop}, the map $\mathcal{S}_1$ admits a right inverse, depending on the choice of a decomposition
of $V$ into a direct sum with $\mbox{Ker}(\tilde{f})$. Let $(\mathcal{S}_1)^{-1}$ be any fixed choice of a right inverse; define a map $q:U\to C_{(f,\tilde{f})}^{\infty}(X_1,V_1)$
by setting $q=(\mathcal{S}_1)^{-1}\circ q^{f,\tilde{f}}\circ\chi_1^f$. By the usual definition, this is a plot if, up to further restricting $U$, we have that, for any given plot $p:U'\to X_1$
of $X_1$, the following is a plot of $V_1$:
$$(u,u')\mapsto(\mathcal{S}_1)^{-1}(q^{f,\tilde{f}}(u))(p(u')).$$ Now, if we assume $U$ and $U'$ to be small enough, this is a plot of $V_1$ if and only if the following is a plot of $V_1^{\tilde{f}}$:
$$(u,u')\mapsto\chi_1^{\tilde{f}}((\mathcal{S}_1)^{-1}(q^{f,\tilde{f}}(u))(p(u'))).$$

Recalling now the definition of $(\mathcal{S}_1)^{-1}$, we get that
$$\chi_1^{\tilde{f}}((\mathcal{S}_1)^{-1}(q^{f,\tilde{f}}(u))(p(u')))=q^{f,\tilde{f}}(u)(\chi_1^f(p(u'))),$$ which is the value of the evaluation of $q^{f,\tilde{f}}(u)$ on the plot $\chi_1^f\circ p$ of
$X_1^f$. Therefore it is a plot of $V_1^{\tilde{f}}$, due to $q^{f,\tilde{f}}$ being a plot of $C_{(f_{\sim},\tilde{f}_{\sim})}^{\infty}(X_1^f,V_1^{\tilde{f}})$, so we conclude that $q$ is
indeed a plot of $C_{(f,\tilde{f})}^{\infty}(X_1,V_1)$. Since $q^{f,\tilde{f}}=\mathcal{S}_1\circ q$ by construction, and it was arbitrarily chosen, we obtain the claim.
\end{proof}

\subsubsection{$\mathcal{S}_1$ preserves compatibility}

The only item that is still lacking for relating the pseudo-bundle $\pi_1\cup_{(\tilde{f},f)}\pi_2:V_1\cup_{\tilde{f}}V_2\to X_1\cup_f X_2$ to its reduced version 
$\pi_1^{(\tilde{f},f)}\cup_{(\tilde{f}_{\sim},f_{\sim})}\pi_2:V_1^{\tilde{f}}\cup_{\tilde{f}_{\sim}}V_2\to X_1\cup_{f_{\sim}}X_2$ is a description of the interaction of the map $\mathcal{S}_1$ with the two 
compatibility conditions (one relative to $(\tilde{f},f)$ and the other to $(\tilde{f}_{\sim},f_{\sim})$). We provide it in this section.

\begin{prop}\label{mathcal:S_1:preserves:compatibility:part1:prop}
For a given gluing $(\tilde{f},f)$ of a pseudo-bundle $\pi_1:V_1\to X_2$ to another pseudo-bundle $\pi_2:V_2\to X_2$, assume that both $\tilde{f}$ and $f$ are subductions, and let $s_i\in C^{\infty}(X_i,V_i)$ 
for $i=1,2$. If $s_1$ and $s_2$ are $(f,\tilde{f})$-compatible then $\mathcal{S}_1(s_1)$ and $s_2$ are $(f_{\sim},\tilde{f}_{\sim})$-compatible.
\end{prop}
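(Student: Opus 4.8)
The plan is to carry out a direct diagram chase using the three defining relations at hand: the identity $\mathcal{S}_1(s_1)\circ\chi_1^f=\chi_1^{\tilde{f}}\circ s_1$ that characterizes $\mathcal{S}_1(s_1)$, together with the fact that $f_{\sim}$ and $\tilde{f}_{\sim}$ are the pushforwards of $f$ and $\tilde{f}$, i.e. $f_{\sim}\circ\chi_1^f=f$ on $Y$ and $\tilde{f}_{\sim}\circ\chi_1^{\tilde{f}}=\tilde{f}$ on $\pi_1^{-1}(Y)$. Writing $s_1^f:=\mathcal{S}_1(s_1)$, the goal is to establish $\tilde{f}_{\sim}(s_1^f(\bar{y}))=s_2(f_{\sim}(\bar{y}))$ for every $\bar{y}\in\chi_1^f(Y)$, which is exactly the $(f_{\sim},\tilde{f}_{\sim})$-compatibility of $s_1^f$ and $s_2$.

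First I would fix such a $\bar{y}$ and choose a lift $y\in Y$ with $\chi_1^f(y)=\bar{y}$ (possible by surjectivity of $\chi_1^f$). Then I would evaluate the left-hand side: by the defining identity for $s_1^f$ we have $s_1^f(\bar{y})=s_1^f(\chi_1^f(y))=\chi_1^{\tilde{f}}(s_1(y))$. Since $y\in Y$ forces $s_1(y)\in\pi_1^{-1}(y)\subseteq\pi_1^{-1}(Y)$, the element $\chi_1^{\tilde{f}}(s_1(y))$ lies in the domain $\chi_1^{\tilde{f}}(\pi_1^{-1}(Y))$ of $\tilde{f}_{\sim}$, so applying $\tilde{f}_{\sim}$ and using $\tilde{f}_{\sim}\circ\chi_1^{\tilde{f}}=\tilde{f}$ gives $\tilde{f}_{\sim}(s_1^f(\bar{y}))=\tilde{f}(s_1(y))$.

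Next I would evaluate the right-hand side: using $f_{\sim}\circ\chi_1^f=f$ we get $s_2(f_{\sim}(\bar{y}))=s_2(f_{\sim}(\chi_1^f(y)))=s_2(f(y))$. Finally, invoking the hypothesis that $s_1$ and $s_2$ are $(f,\tilde{f})$-compatible, namely $\tilde{f}(s_1(y))=s_2(f(y))$ for all $y\in Y$, I would conclude that $\tilde{f}_{\sim}(s_1^f(\bar{y}))=\tilde{f}(s_1(y))=s_2(f(y))=s_2(f_{\sim}(\bar{y}))$, as wanted.

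There is no real obstacle here beyond bookkeeping; the only point that deserves a line of care is that the computation must not secretly depend on the choice of lift $y$, but this is automatic since $\mathcal{S}_1(s_1)$ and the pushforwards $f_{\sim},\tilde{f}_{\sim}$ have already been shown to be well-defined. Equivalently, one can phrase the whole argument as the equality of the two maps $\tilde{f}_{\sim}\circ s_1^f\circ\chi_1^f$ and $s_2\circ f_{\sim}\circ\chi_1^f$ on $Y$, reducing the pointwise check to precomposition with the surjection $\chi_1^f$; I would likely present it in that lift-free form to make the independence of the choice of representative manifest.
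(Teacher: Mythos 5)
Your proposal is correct and follows essentially the same route as the paper's own proof: both arguments reduce the claim to the three defining identities $\mathcal{S}_1(s_1)\circ\chi_1^f=\chi_1^{\tilde{f}}\circ s_1$, $f_{\sim}\circ\chi_1^f=f$, and $\tilde{f}_{\sim}\circ\chi_1^{\tilde{f}}=\tilde{f}$, and then invoke the $(f,\tilde{f})$-compatibility of $s_1$ and $s_2$. The only cosmetic difference is that you start from a point $\bar{y}$ of the quotient and choose a lift, whereas the paper phrases the check directly at points of the form $\chi_1^f(y)$ for $y\in Y$; the computations are identical.
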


Recall that $s_1$ being $(f,\tilde{f})$-compatible with some $s_2$ implies it being $(f,\tilde{f})$-invariant, so the expression $\mathcal{S}_1(s_1)$ makes sense.

\begin{proof}
The $(f,\tilde{f})$-compatibility of $s_1$ and $s_2$ means precisely that for all $y\in Y$ we have $\tilde{f}(s_1(y))=s_2(f(y))$; we need to show that
$\tilde{f}_{\sim}(\mathcal{S}_1(s_1)(\chi_1^f(y)))=s_2(f_{\sim}(\chi_1^f(y)))$. By definition $f_{\sim}(\chi_1^f(y))=f(y)$ and $\mathcal{S}_1(s_1)\circ\chi_1^f=\chi_1^{\tilde{f}}\circ s_1$,
so the desired condition is equivalent to $\tilde{f}_{\sim}(\chi_1^{\tilde{f}}(s_1(y)))=s_2(f(y))$. It remains to notice that $\tilde{f}_{\sim}(\chi_1^{\tilde{f}}(s_1(y)))=\tilde{f}(s_1(y))$ by
definition of $\tilde{f}_{\sim}$, so the $(f_{\sim},\tilde{f}_{\sim})$-compatibility does follow from the $(f,\tilde{f})$-compatibility of $s_1$ and $s_2$.
\end{proof}

The inverse of Proposition \ref{mathcal:S_1:preserves:compatibility:part1:prop} is true as well.

\begin{prop}\label{mathcal:S_1:preserves:compatibility:part2:prop}
Let $s_1\in C_{(f,\tilde{f})}^{\infty}(X_1,V_1)$ and $s_2\in C^{\infty}(X_2,V_2)$ be two sections such that $\mathcal{S}_1(s_1)$ and $s_2$ are $(f_{\sim},\tilde{f}_{\sim})$-compatible.
Then $s_1$ and $s_2$ are $(f,\tilde{f})$-compatible.
\end{prop}

\begin{proof}
The proof is the same as the previous one, just going in the opposite direction. Let $\chi_1^f(y)$ be a point in the domain of $f_{\sim}$; the assumption of $(f_{\sim},\tilde{f}_{\sim})$-compatibility
means precisely that
$$\tilde{f}_{\sim}(\mathcal{S}_1(s_1)(\chi_1^f(y)))=s_2(f_{\sim}(\chi_1^f(y))).$$ Recall that $f_{\sim}\circ\chi_1^f=f$ by definition, so the right-hand side coincides with $s_2(f(y))$. Since
$\mathcal{S}_1$ is defined by the identity $\mathcal{S}_1(s_1)\circ\chi_1^f=\chi_1^{\tilde{f}}\circ s_1$, the left-hand side becomes $\tilde{f}_{\sim}(\chi_1^{\tilde{f}}(s_1(y)))$. Since
furthermore $\tilde{f}_{\sim}\circ\chi_1^{\tilde{f}}=\tilde{f}$ (by the definition of the map $\tilde{f}_{\sim}$), the left-hand side is then equal to $\tilde{f}_{\sim}(s_1(y))$. We thus have
$$\tilde{f}_{\sim}(s_1(y))=\tilde{f}_{\sim}(\mathcal{S}_1(s_1)(\chi_1^f(y)))=s_2(f_{\sim}(\chi_1^f(y)))=s_2(f(y)),$$ \emph{i.e.}, that $s_1$ and $s_2$ are $(f,\tilde{f})$-compatible.
\end{proof}

Putting the two propositions together, we obtain the following result.

\begin{cor}\label{mathcal:S_1:preserves:compatibility:cor}
Suppose that both $\tilde{f}$ and $f$ are subductions. Then $(\mathcal{S}_1,\mbox{Id}_{C^{\infty}(X_2,V_2)})$ is well-defined and surjective as a map
$C_{(f,\tilde{f})}^{\infty}(X_1,V_1)\times_{comp}C^{\infty}(X_2,V_2)\to C_{(f_{\sim},\tilde{f}_{\sim})}^{\infty}(X_1^f,V_1^{\tilde{f}})\times_{comp}C^{\infty}(X_2,V_2)$.
\end{cor}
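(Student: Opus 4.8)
The plan is to assemble this corollary directly from the two preceding propositions together with the surjectivity statement of Theorem \ref{mathcal:S:is:surjective:thm}; there is essentially no new computation to perform, only a careful matching of the compatibility subsets on the two sides. First I would address well-definedness. Let $(s_1,s_2)$ be an element of $C_{(f,\tilde{f})}^{\infty}(X_1,V_1)\times_{comp}C^{\infty}(X_2,V_2)$, which by definition means that $s_1$ and $s_2$ are $(f,\tilde{f})$-compatible. The image under $(\mathcal{S}_1,\mbox{Id})$ is the pair $(\mathcal{S}_1(s_1),s_2)$, and Proposition \ref{mathcal:S_1:preserves:compatibility:part1:prop} tells us exactly that $\mathcal{S}_1(s_1)$ and $s_2$ are $(f_{\sim},\tilde{f}_{\sim})$-compatible. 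By the analogue of Lemma \ref{s_1:in:compatible:is:ff-invt:lem} applied to the reduced gluing $(\tilde{f}_{\sim},f_{\sim})$, this compatibility forces $\mathcal{S}_1(s_1)$ to be $(f_{\sim},\tilde{f}_{\sim})$-invariant, so that the pair genuinely lands in the target subset $C_{(f_{\sim},\tilde{f}_{\sim})}^{\infty}(X_1^f,V_1^{\tilde{f}})\times_{comp}C^{\infty}(X_2,V_2)$. This establishes that the map is well-defined onto the claimed target.

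For surjectivity I would argue as follows. Fix an arbitrary compatible pair $(\sigma,s_2)$ in the target, so $\sigma\in C_{(f_{\sim},\tilde{f}_{\sim})}^{\infty}(X_1^f,V_1^{\tilde{f}})$, $s_2\in C^{\infty}(X_2,V_2)$, and $\sigma$ and $s_2$ are $(f_{\sim},\tilde{f}_{\sim})$-compatible. Since $f$ and $\tilde{f}$ are subductions, Theorem \ref{mathcal:S:is:surjective:thm} guarantees a section $s_1\in C_{(f,\tilde{f})}^{\infty}(X_1,V_1)$ with $\mathcal{S}_1(s_1)=\sigma$. It then remains to verify that the pair $(s_1,s_2)$ is itself $(f,\tilde{f})$-compatible, for only then does $(s_1,s_2)$ lie in the source subset. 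But this is precisely the content of Proposition \ref{mathcal:S_1:preserves:compatibility:part2:prop}: from the $(f_{\sim},\tilde{f}_{\sim})$-compatibility of $\mathcal{S}_1(s_1)=\sigma$ and $s_2$ we deduce the $(f,\tilde{f})$-compatibility of $s_1$ and $s_2$. Hence $(s_1,s_2)$ belongs to $C_{(f,\tilde{f})}^{\infty}(X_1,V_1)\times_{comp}C^{\infty}(X_2,V_2)$ and is mapped by $(\mathcal{S}_1,\mbox{Id})$ to $(\sigma,s_2)$, proving surjectivity.

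The argument is thus a clean composition of already-proven facts, and I do not anticipate a genuine obstacle. The one point requiring minor care --- and the only place where the proof could go wrong if stated too casually --- is the bookkeeping in the well-definedness step: one must confirm that the target of $(\mathcal{S}_1,\mbox{Id})$ is the invariant-restricted product $C_{(f_{\sim},\tilde{f}_{\sim})}^{\infty}(X_1^f,V_1^{\tilde{f}})\times_{comp}C^{\infty}(X_2,V_2)$ and not merely the full product $C^{\infty}(X_1^f,V_1^{\tilde{f}})\times C^{\infty}(X_2,V_2)$. This is handled by invoking the implication \emph{compatible} $\Rightarrow$ \emph{invariant} in the reduced setting, exactly as Lemma \ref{s_1:in:compatible:is:ff-invt:lem} does in the original setting, after which the two propositions supply the forward and backward directions with no further work.
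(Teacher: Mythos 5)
Your proposal is correct and follows exactly the route the paper intends: the paper's own "proof" is just the remark that the corollary follows by putting Propositions \ref{mathcal:S_1:preserves:compatibility:part1:prop} and \ref{mathcal:S_1:preserves:compatibility:part2:prop} together, and your argument spells this out, using Proposition \ref{mathcal:S_1:preserves:compatibility:part1:prop} for well-definedness, Theorem \ref{mathcal:S:is:surjective:thm} to lift $\sigma$ to some $s_1$, and Proposition \ref{mathcal:S_1:preserves:compatibility:part2:prop} to see that $(s_1,s_2)$ lies in the compatible subset of the source.
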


\subsubsection{The space $C^{\infty}(X_1\cup_f X_2,V_1\cup_{\tilde{f}}V_2)$ is a smooth surjective image of $C_{(f,\tilde{f})}^{\infty}(X_1,V_1)\times_{comp}C^{\infty}(X_2,V_2)
\subseteq C^{\infty}(X_1,V_1)\times C^{\infty}(X_2,V_2)$}

We now collect the results of the current section into the final statement, which is as follows.

\begin{thm}\label{subduction:product-comp:onto:glued:thm}
Let $\pi_1:V_1\to X_1$ and $\pi_2:V_2\to X_2$ be two diffeological vector pseudo-bundles, and let $(\tilde{f},f)$ be a gluing of the former pseudo-bundle to the latter, such that both $\tilde{f}$ and $f$ are 
subductions onto their respective images. The map $\mathcal{S}$ is a subduction of $C_{(f,\tilde{f})}^{\infty}(X_1,V_1)\times_{comp}C^{\infty}(X_2,V_2)$ onto $C^{\infty}(X_1\cup_f X_2,V_1\cup_{\tilde{f}}V_2)$.
\end{thm}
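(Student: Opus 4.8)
The plan is to exhibit $\mathcal{S}$ as a composition of a single subduction with two diffeomorphisms, and then invoke the fact that such a composition is again a subduction (so that $\mathcal{S}$, which is smooth by Theorem~\ref{mathcal:S:is:smooth:thm}, is a subduction). The two diffeomorphisms are already at hand. First, Corollary~\ref{gluing:p-bundles:yields:p-bundle:cor} supplies a diffeomorphism
$$\Phi:C^{\infty}(X_1^f\cup_{f_{\sim}}X_2,V_1^{\tilde{f}}\cup_{\tilde{f}_{\sim}}V_2)\to C^{\infty}(X_1\cup_f X_2,V_1\cup_{\tilde{f}}V_2),$$
induced by the fibre-to-fibre pseudo-bundle diffeomorphism of that corollary. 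Second, since $f$ and $\tilde{f}$ are subductions, the induced maps $f_{\sim}$ and $\tilde{f}_{\sim}$ are diffeomorphisms onto their images, so Proposition~\ref{space:of:sections:case:of:two:diffeo:prop}, applied to the reduced pseudo-bundles $\pi_1^{(\tilde{f},f)}$ and $\pi_2$ glued along $(\tilde{f}_{\sim},f_{\sim})$, gives a diffeomorphism
$$\mathcal{S}^{\sim}:C_{(f_{\sim},\tilde{f}_{\sim})}^{\infty}(X_1^f,V_1^{\tilde{f}})\times_{comp}C^{\infty}(X_2,V_2)\to C^{\infty}(X_1^f\cup_{f_{\sim}}X_2,V_1^{\tilde{f}}\cup_{\tilde{f}_{\sim}}V_2).$$

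Next I would verify the factorization $\mathcal{S}=\Phi\circ\mathcal{S}^{\sim}\circ(\mathcal{S}_1,\mbox{Id})$, where $(\mathcal{S}_1,\mbox{Id})$ is the map of Corollary~\ref{mathcal:S_1:preserves:compatibility:cor}. This is a direct check on the defining formulas: for a compatible pair $(s_1,s_2)$, the section $\mathcal{S}^{\sim}(\mathcal{S}_1(s_1),s_2)=\mathcal{S}_1(s_1)\cup_{(f_{\sim},\tilde{f}_{\sim})}s_2$ equals $\mathcal{S}_1(s_1)$ over $i_1(X_1^f\setminus\chi_1^f(Y))$ and $s_2$ over $i_2(X_2)$; applying the fibre-to-fibre diffeomorphism $\Phi$ and using $\mathcal{S}_1(s_1)\circ\chi_1^f=\chi_1^{\tilde{f}}\circ s_1$ (which outside the gluing domain is just $s_1$, as $\chi_1^{\tilde{f}}$ is injective there) recovers exactly $s_1\cup_{(f,\tilde{f})}s_2=\mathcal{S}(s_1,s_2)$.

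The substantive step is to show that $(\mathcal{S}_1,\mbox{Id})$ is a subduction onto the compatible subspace $C_{(f_{\sim},\tilde{f}_{\sim})}^{\infty}(X_1^f,V_1^{\tilde{f}})\times_{comp}C^{\infty}(X_2,V_2)$. Its surjectivity is exactly Corollary~\ref{mathcal:S_1:preserves:compatibility:cor}. For the plot-lifting property, let $(q_1^{f,\tilde{f}},q_2)$ be a plot of the target, so its first component is a plot of $C_{(f_{\sim},\tilde{f}_{\sim})}^{\infty}(X_1^f,V_1^{\tilde{f}})$, its second a plot of $C^{\infty}(X_2,V_2)$, and the two are compatible at each parameter value. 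Since $\mathcal{S}_1$ is a subduction by Theorem~\ref{mathcal:S_1:is:subduction:thm}, after shrinking the parameter domain I can lift $q_1^{f,\tilde{f}}$ to a plot $q_1$ of $C_{(f,\tilde{f})}^{\infty}(X_1,V_1)$ with $\mathcal{S}_1\circ q_1=q_1^{f,\tilde{f}}$. The pair $(q_1,q_2)$ is then a plot of the ambient product, and it remains only to check that it lands in the compatible subspace. This is precisely where the converse direction of compatibility preservation, Proposition~\ref{mathcal:S_1:preserves:compatibility:part2:prop}, enters: at each parameter value $\mathcal{S}_1(q_1(u))=q_1^{f,\tilde{f}}(u)$ is compatible with $q_2(u)$, hence $q_1(u)$ is compatible with $q_2(u)$, so $(q_1,q_2)$ is a plot of the compatible subspace mapping onto $(q_1^{f,\tilde{f}},q_2)$.

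Granting these, the conclusion is immediate: as the composition $\Phi\circ\mathcal{S}^{\sim}\circ(\mathcal{S}_1,\mbox{Id})$ of a subduction followed by two diffeomorphisms, $\mathcal{S}$ is a subduction of $C_{(f,\tilde{f})}^{\infty}(X_1,V_1)\times_{comp}C^{\infty}(X_2,V_2)$ onto $C^{\infty}(X_1\cup_f X_2,V_1\cup_{\tilde{f}}V_2)$. I expect the main obstacle to be the plot-lifting verification in the substantive step, specifically ensuring that the lift produced by the subduction $\mathcal{S}_1$ remains inside the compatible subspace rather than merely inside the full product; this is guaranteed by Proposition~\ref{mathcal:S_1:preserves:compatibility:part2:prop}, and keeping the pointwise compatibility under control along the whole parameter domain is the only delicate bookkeeping.
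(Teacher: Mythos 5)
Your proposal is correct and follows essentially the same route as the paper: the identical factorization $\mathcal{S}=\tilde{F}\circ\mathcal{S}^{(f,\tilde{f})}\circ(\mathcal{S}_1,\mbox{Id})$ through the reduced pseudo-bundle, with the two outer maps being diffeomorphisms (Corollary~\ref{gluing:p-bundles:yields:p-bundle:cor} and Proposition~\ref{space:of:sections:case:of:two:diffeo:prop}) and the subduction property carried by $\mathcal{S}_1$ via Theorem~\ref{mathcal:S_1:is:subduction:thm}. If anything, you are slightly more explicit than the paper in checking that the lifted plot of the first factor stays inside the compatible subspace (via Proposition~\ref{mathcal:S_1:preserves:compatibility:part2:prop}), a point the paper leaves implicit in Corollary~\ref{mathcal:S_1:preserves:compatibility:cor}.
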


\begin{proof}
It suffices to recall the diffeomorphism $C^{\infty}(X_1^f\cup_{f_{\sim}}X_2,V_1^{\tilde{f}}\cup_{\tilde{f}_{\sim}}V_2)\cong C^{\infty}(X_1\cup_f X_2,V_1\cup_{\tilde{f}}V_2)$ of Proposition 
\ref{space:of:sections:glued:via:reduced:prop}, which for the moment we denote by $\tilde{F}$. By Theorem \ref{mathcal:S:is:smooth:thm} we have two versions of the map $\mathcal{S}$, one for the 
original pseudo-bundle, and one for its restricted version: 
$$\mathcal{S}:C_{(f,\tilde{f})}^{\infty}(X_1,V_1)\times_{comp}C^{\infty}(X_2,V_2)\to C^{\infty}(X_1\cup_f X_2,V_1\cup_{\tilde{f}}V_2)\,\,\,\mbox{ and}$$ 
$$\mathcal{S}^{(f,\tilde{f})}:C^{\infty}(X_1^f,V_1^{\tilde{f}})\times_{comp}C^{\infty}(X_2,V_2)\to C^{\infty}(X_1^f\cup_{f_{\sim}}X_2,V_1^{\tilde{f}}\cup_{\tilde{f}_{\sim}}V_2),$$ that by the same theorem are 
smooth. By Corollary \ref{mathcal:S_1:preserves:compatibility:cor} there is a well-defined and factor-to-factor map
$$(\mathcal{S}_1,\mbox{Id}_{C^{\infty}(X_2,V_2)}):C_{(f,\tilde{f})}^{\infty}(X_1,V_1)\times_{comp}C^{\infty}(X_2,V_2)\to C^{\infty}(X_1^f,V_1^{\tilde{f}})\times_{comp}C^{\infty}(X_2,V_2),$$ 
\emph{i.e.}, one that acts as $\mathcal{S}_1$ on the first factor and as the identity map on the second factor. Observing now that 
$$\mathcal{S}=\tilde{F}\circ\mathcal{S}^{(f,\tilde{f})}\circ(\mathcal{S}_1,\mbox{Id}_{C^{\infty}(X_2,V_2)}),$$ it follows from Propostion \ref{space:of:sections:case:of:two:diffeo:prop}, implying that 
$\mathcal{S}^{(f,\tilde{f})}$ is a diffeomorphism, and Theorem \ref{mathcal:S_1:is:subduction:thm} that $\mathcal{S}$ is a subduction, which completes the proof.
\end{proof}

\section{Diffeological connections: the {\tt verbatim} extension}

One can define a diffeological connection by the minimal possible extension of the standard definition of a Riemannian connection. The resulting notion is then as follows.

\begin{defn}
Let $\pi:V\to X$ be a finite-dimensional diffeological vector pseudo-bundle, and let $C^{\infty}(X,V)$ be the space of its smooth sections. A \textbf{connection} on this pseudo-bundle
is a smooth linear operator
$$\nabla:C^{\infty}(X,V)\to C^{\infty}(X,\Lambda^1(X)\otimes V),$$ which satisfies the Leibntz rule, that is, for every function $f\in C^{\infty}(X,\matR)$ and for every section
$s\in C^{\infty}(X,V)$ we have $$\nabla(fs)=df\otimes s+f\nabla s.$$
\end{defn}

We need to explain first of all why this definition is well-posed. The meaning of the question is as follows. Although, as already mentioned, the differentials of functions are well-defined in the diffeological 
context, they are elements of $\Omega^1(X)$, while for the statement of the Leibniz rule we need them to be sections of $\Lambda^1(X)$. For this reason the meaning of $df$ is one of the section 
given by 
$$df:x\mapsto\pi^{\Omega,\Lambda}(x,df)$$ (we keep the same symbol for both $df$ an element of $\Omega^1(X)$ and $df$ a section of $\Lambda^1(X)$). Having specified this, the definition 
is well-posed.

\subsection{An example for a nonstandard pseudo-bundle}\label{diffeological:connection:ex:sect}

Let us describe first of all an example of a diffeological connection that is not a standard connection on a smooth manifold.

\paragraph{The pseudo-bundle and its gluing presentation} We consider the pseudo-bundle $\pi:V\to X$, where $X$ and $V$ are the following subsets of $\matR^3$:
$$X=\{xy=0,z=0\},\,\,\,\,V=\{xy=0\},$$ and $\pi$ is the restriction to $V$ of the standard projection of $\matR^3$ onto $xy$-coordinate plane. Each fibre $\pi^{-1}(x,y,0)$ of $V$
is endowed with the vector space structure of the usual $\matR$ relative to the third coordinate (keeping the first two fixed):
$$(x,y,z_1)+(x,y,z_2)=(x,y,z_1+z_2),\,\,\,\,\lambda(x,y,z)=(x,y,\lambda z)\mbox{ for }\lambda\in\matR.$$ The diffeologies on $V$ and $X$ are gluing diffeologies coming from their
presentations as
$$X=X_1\cup_f X_2,\,\,X_1=\{y=z=0\},\,\,X_2=\{x=z=0\}\,\,\,\,\mbox{ and }\,\,\,\,V=V_1\cup_{\tilde{f}}V_2,\,\,V_1=\{y=0\},\,\,V_2=\{x=0\},$$ where the gluing maps $f$ and $\tilde{f}$ are the
restrictions of the identity map $\matR^3\to\matR^3$ to their domains of definition; these domains of definition are, the origin $\{(0,0,0)\}$ for $f$, and the $z$-axis $\{(0,0,z)\}$ for
$\tilde{f}$. The four spaces $X_1,X_2,V_1,V_2$ carry the subset diffeology relative to their inclusions into $\matR^3$, and the gluing diffeologies on $X$ and $V$ correspond to those;
notice that these gluing diffeologies are strictly weaker than their subset diffeologies relative to $\matR^3$ (see \cite{watts}). We denote the restrictions of $\pi$ to $V_1$ and to $V_2$
by $\pi_1$ and $\pi_2$ respectively.

\paragraph{Pseudo-metrics on $\pi:V\to X$} We obtain a pseudo-metric on $\pi:V\to X$ by gluing two compatible pseudo-metrics on $\pi_1:V_1\to X_1$ and $\pi_2:V_2\to X_2$
respectively. We denote them by $g_1$ and $g_2$ respectively and define them to be $g_1(x,0,0)=h_1(x)dz^2$ and $g_2(0,y,0)=h_2(y)dz^2$, where $h_1,h_2:\matR\to\matR$ are
usual smooth functions; they obviously need to be everywhere positive. The compatibility condition for them takes form $h_1(0)=h_2(0)$. Assuming this, we obtain a pseudo-metric
$\tilde{g}$ on $V$ defined by
$$\tilde{g}(x,y,0)=\left\{\begin{array}{ll} h_1(x)dz^2, & \mbox{if } y=0,\\ h_2(y)dz^2, & \mbox{if }x=0.\end{array}\right.$$

\paragraph{The standard connections on the factors} The two factors $\pi_1:V_1\to X_1$ and $\pi_2:V_2\to X_2$ are both diffeomorphic to the standard trivial bundle $\matR^2\to\matR$,
and so can be seen as the usual tangent bundles $TX_1\cong T\matR\cong TX_2$. Thus, $g_1$ and $g_2$ are Riemannian metrics on them, and we can consider the usual
Levi-Civita connections $\nabla^1$ and $\nabla^2$ on them. Their Christoffel symbols are $\Gamma_{11}^1(g_1)=\frac{h_1'(x)}{2h_1(x)}$ and $\Gamma_{11}^1(g_2)=\frac{h_2'(y)}{2h_2(y)}$.
The formulae for $\nabla^1$ and $\nabla^2$ therefore are
$$\nabla^1(x,0,1)=\frac{h_1'(x)}{2h_1(x)}dx\otimes(x,0,1)\,\,\,\mbox{ and }\,\,\,\nabla^2(0,y,1)=\frac{h_2'(y)}{2h_2(y)}dy\otimes(0,y,1) $$
and in full form
$$\nabla^1(x,0,s_1(x))=\frac{h_1'(x)(s_1'(x)+s_1(x))}{2h_1(x)}dx\otimes(x,0,1),\,\,\,\,\nabla^2(0,y,s_2(y))=\frac{h_2'(y)(s_2'(y)+s_2(y))}{2h_2(y)}dy\otimes(0,y,1).$$

\paragraph{The resulting connection} In this specific case it is actually quite straightforward to assemble a connection on $V$ out of $\nabla^1$ and $\nabla^2$. The explicit formula
is as follows:
$$\nabla(x,y,s(x,y))=\left\{\begin{array}{ll}
\frac{h_1'(x)(\frac{\partial s}{\partial x}(x,0)+s(x,0))}{2h_1(x)}dx\otimes(x,0,1) & \mbox{if }y=0, \\
\frac{h_2'(y)(\frac{\partial s}{\partial y}(0,y)+s(0,y))}{2h_2(y)}dy\otimes(0,y,1) & \mbox{if }x=0, \\
\left(\frac{h_1'(0)(\frac{\partial s}{\partial x}(0,0)+s(0,0))}{2h_1(0)}dx+\frac{h_2'(0)(\frac{\partial s}{\partial y}(0,0)+s(0,0))}{2h_2(0)}dy\right)\otimes(0,0,1) & \mbox{if }x=y=0.
\end{array}\right.$$ Here $s$ can be, in particular, any smooth two-variable function; however, more generally it is a formal pair of functions $s_1$ (in variable $x$) and $s_2$ (in variable $y$)
such that $s_1(0)=s_2(0)$.

\paragraph{Observations on the example} The example just made give a rough idea of how one can obtain a connection on $V_1\cup_{\tilde{f}}V_2$ out of two given connections
on $V_1$ and $V_2$. On the other hand, it does not give a complete picture; indeed, the simplicity of the domain of gluing on the base spaces ensures that we do not have to impose
any conditions on $\nabla^1$ and $\nabla^2$, although later on we will see that a certain compatibility condition is needed.

\subsection{Covariant derivatives}

The usual notion of the covariant derivative of a section $s\in C^{\infty}(M,E)$ along a smooth vector field $X\in C^{\infty}(M,TM)$ extends easily to smooth sections $s\in C^{\infty}(X,V)$
of a diffeological vector pseudo-bundle. It suffices to specify that such derivatives are  with respect to smooth sections of the pseudo-bundle $(\Lambda^1(X))^*$.

\begin{defn}
Let $\pi:V\to X$ be a finite-dimensional diffeological vector pseudo-bundle, let $\nabla:C^{\infty}(X,V)\to C^{\infty}(X,\Lambda^1(X)\otimes V)$ be a diffeological connection on it, and let 
$t\in C^{\infty}(X,(\Lambda^1(X))^*)$ be a smooth section of the dual pseudo-bundle $(\Lambda^1(X))^*$. Let $s\in C^{\infty}(X,V)$; the \textbf{covariant derivative} of $s$ along $t$ is the section 
$\nabla s(t)=\nabla_t s$.
\end{defn}

\begin{lemma}
For any $t\in C^{\infty}(X,(\Lambda^1(X))^*)$ and for any $s\in C^{\infty}(X,V)$ we have $\nabla_t  s\in C^{\infty}(X,V)$.
\end{lemma}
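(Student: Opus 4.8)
The plan is to verify directly that $\nabla_t s$, which by construction is the fibrewise contraction of the smooth section $t$ against the smooth section $\nabla s$, is a smooth section of $\pi:V\to X$. First I would make the pointwise definition explicit: for $x\in X$ the value $(\nabla s)(x)$ lies in $\Lambda_x^1(X)\otimes V_x$ and $t(x)$ lies in $(\Lambda_x^1(X))^*$, so applying $t(x)$ to the $\Lambda^1$-factor yields the contraction $(\nabla_t s)(x)=\langle t(x),(\nabla s)(x)\rangle\in V_x$, where $\langle\cdot,\cdot\rangle:(\Lambda_x^1(X))^*\otimes(\Lambda_x^1(X)\otimes V_x)\to V_x$ is induced by the dual pairing on the first factor. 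In particular $\pi\circ\nabla_t s=\mbox{Id}_X$, so $\nabla_t s$ is indeed a section and the only issue is smoothness.

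To prove smoothness it suffices to fix an arbitrary plot $p:U\to X$ and show that $u\mapsto(\nabla_t s)(p(u))$ is a plot of $V$. Since $\nabla$ is smooth and $\nabla s\in C^{\infty}(X,\Lambda^1(X)\otimes V)$, the map $u\mapsto(\nabla s)(p(u))$ is a plot of $\Lambda^1(X)\otimes V$; likewise, since $t$ is smooth, $u\mapsto t(p(u))$ is a plot of $(\Lambda^1(X))^*$. The heart of the argument is then that the fibrewise contraction carries such a pair of plots to a plot of $V$. I would phrase this as the smoothness of the pseudo-bundle morphism $C_t:\Lambda^1(X)\otimes V\to V$ covering $\mbox{Id}_X$ and acting fibrewise by $\omega\otimes v\mapsto t(x)(\omega)v$; once $C_t$ is smooth, $\nabla_t s=C_t\circ\nabla s$ is smooth as the composition of a smooth section with a smooth pseudo-bundle map.

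For the smoothness of $C_t$ I would use the description of the tensor-product diffeology as a quotient of the free product: up to restricting $U$, the plot $u\mapsto(\nabla s)(p(u))$ of $\Lambda^1(X)\otimes V$ lifts to, and hence can be written as, a finite sum $u\mapsto\sum_{i=1}^k\alpha_i(u)\otimes\beta_i(u)$, where $\alpha_i:U\to\Lambda^1(X)$ and $\beta_i:U\to V$ are plots all lying over the common base plot $p$, finiteness being guaranteed by the finite-dimensionality of the fibres. Applying $C_t$ gives $u\mapsto\sum_{i=1}^k\big(t(p(u))(\alpha_i(u))\big)\,\beta_i(u)$. By the very definition of the diffeology on the dual pseudo-bundle $(\Lambda^1(X))^*$, the pairing of the plot $u\mapsto t(p(u))$ with $\alpha_i$ is smooth on the set of base-matching pairs; restricting to the diagonal shows that each scalar map $u\mapsto t(p(u))(\alpha_i(u))$ is smooth. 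Each $\beta_i$ is a plot of $V$, so by smoothness of scalar multiplication and of fibrewise addition in $V$ the whole finite sum is a plot of $V$, which is exactly what is needed.

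The step I expect to be the main obstacle is the local decomposition of a plot of $\Lambda^1(X)\otimes V$ as a finite sum of decomposable plots $\alpha_i\otimes\beta_i$ over a common base plot: this is where one must invoke the definition of the tensor-product diffeology as the quotient of the free product and check both that such a lift exists after shrinking $U$ and that the number of terms can be taken finite, using that the fibres of $V$ and of $\Lambda^1(X)$ are finite-dimensional. Everything after that decomposition --- the smoothness of the dual pairing and the closure of plots of $V$ under scalar multiplication and finite fibrewise sums --- is immediate from the definitions recalled earlier in the paper.
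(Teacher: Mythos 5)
Your argument is correct and follows essentially the same route as the paper, whose entire proof is the one-line observation that the dual pseudo-bundle diffeology on $(\Lambda^1(X))^*$ is defined precisely so that the evaluation maps $x\mapsto t(x)(\alpha^i(x))$ are smooth. What you have added --- the local decomposition of a plot of $\Lambda^1(X)\otimes V$ into a finite sum of decomposable plots over a common base plot, followed by the pairing and the fibrewise operations --- is exactly the detail the paper's notation $\alpha^i$ tacitly assumes, so your write-up is a faithful (and more complete) expansion of the intended proof.
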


\begin{proof}
This is obvious, since the diffeology on $(\Lambda^1(X))^*$, as on any dual pseudo-bundle, is defined so that the  evaluation functions $x\mapsto t(x)(\alpha^i(x))$ be smooth.
\end{proof}

We thus conclude that if $\nabla$ and $t$ are as above, $\nabla_t$ is well-defined as an operator $C^{\infty}(X,V)\to C^{\infty}(X,V)$. We furthermore have the following.

\begin{thm}
For any $t\in C^{\infty}(X,(\Lambda^1(X))^*)$ the map $\nabla_t:C^{\infty}(X,V)\to C^{\infty}(X,V)$ given by $s\mapsto\nabla_t s$ is smooth for the functional diffeology on $C^{\infty}(X,V)$.
\end{thm}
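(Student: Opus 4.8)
The plan is to realize $\nabla_t$ as a composition of two smooth maps and then invoke the fact that a composition of smooth maps between diffeological spaces is smooth. Concretely, write $\nabla_t=\mathcal{C}_t\circ\nabla$, where $\nabla:C^{\infty}(X,V)\to C^{\infty}(X,\Lambda^1(X)\otimes V)$ is the connection operator, which is smooth by the very definition of a diffeological connection, and $\mathcal{C}_t:C^{\infty}(X,\Lambda^1(X)\otimes V)\to C^{\infty}(X,V)$ is the operator of contraction against the fixed section $t$. Here $\mathcal{C}_t(\sigma)$ is the section whose value at $x$ is the element of $V_x$ obtained by evaluating $t(x)\in(\Lambda_x^1(X))^*$ on the $\Lambda_x^1(X)$-factor of $\sigma(x)\in\Lambda_x^1(X)\otimes V_x$; with this notation $\nabla_t s=\mathcal{C}_t(\nabla s)=\nabla s(t)$, as required by the definition of the covariant derivative. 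It therefore suffices to prove that $\mathcal{C}_t$ is smooth.

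To verify the smoothness of $\mathcal{C}_t$ I would work directly with the functional diffeologies. Let $q:U\to C^{\infty}(X,\Lambda^1(X)\otimes V)$ be a plot; one must show that $\mathcal{C}_t\circ q$ is a plot of $C^{\infty}(X,V)$, which by definition means that for every plot $p:U'\to X$ the map $(u,u')\mapsto\mathcal{C}_t(q(u))(p(u'))$ is a plot of $V$. By the definition of the functional diffeology, $(u,u')\mapsto q(u)(p(u'))$ is a plot of $\Lambda^1(X)\otimes V$; and since $t$ is a smooth section and $p$ is a plot, $u'\mapsto t(p(u'))$ is a plot of $(\Lambda^1(X))^*$ lying over the same base point $p(u')$. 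The value $\mathcal{C}_t(q(u))(p(u'))$ is exactly the fibrewise contraction of these two plots, so the whole question reduces to showing that the fibrewise contraction $(\Lambda^1(X))^*\times_X(\Lambda^1(X)\otimes V)\to V$ is smooth.

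This last point is where I expect the main (though modest) effort to lie. The key ingredient is the defining property of the dual pseudo-bundle diffeology recalled in Section 1: the evaluation pairing $(\Lambda^1(X))^*\times_X\Lambda^1(X)\to\matR$ is smooth precisely because the diffeology of $(\Lambda^1(X))^*$ is built so that, for any plot of $(\Lambda^1(X))^*$ and any plot of $\Lambda^1(X)$ over matching base points, the pointwise evaluation is an ordinary smooth real function. To pass from $\Lambda^1(X)$ to $\Lambda^1(X)\otimes V$ I would use that the fibrewise tensor product carries a quotient diffeology, so that its quotient projection is a subduction: restricting to a sub-domain small enough, the given plot $(u,u')\mapsto q(u)(p(u'))$ of $\Lambda^1(X)\otimes V$ lifts to a plot of the free product of $\Lambda^1(X)$ and $V$, where it is expressed through plots of $\Lambda^1(X)$ and plots of $V$ over the common base point $p(u')$. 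Contracting with $t$ then turns each $\Lambda^1(X)$-entry into a smooth scalar function, via the evaluation pairing above, and the result is assembled from these scalars and the $V$-plots using the smoothness of scalar multiplication and addition in the fibres of $V$; hence $(u,u')\mapsto\mathcal{C}_t(q(u))(p(u'))$ is a plot of $V$. As $p$ and $q$ were arbitrary, $\mathcal{C}_t$ is smooth, and composing it with the smooth operator $\nabla$ gives the smoothness of $\nabla_t$.
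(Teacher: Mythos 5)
Your proof is correct and follows essentially the same route as the paper: both factor $\nabla_t$ as the smooth operator $\nabla$ followed by contraction against the fixed section $t$, the paper phrasing the second step as evaluation of the plot $u\mapsto\nabla p(u)$ of $C^{\infty}(X,\Lambda^1(X)\otimes V)$ on the constant plot with value $t$. The only difference is that the paper dispatches the smoothness of that contraction in a single sentence, whereas you justify it via the defining property of the dual pseudo-bundle diffeology and the quotient (subduction) structure of the tensor product diffeology, which is a legitimate filling-in of a detail the paper leaves implicit.
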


\begin{proof}
Let $p:U\to C^{\infty}(X,V)$ be a plot of $C^{\infty}(X,V)$. By the properties of a functional diffeology, the map $U\times X\to V$ given by $(u,x)\mapsto p(u)(x)$ is smooth, which also implies that for any plot 
$q:U'\to X$ the map $U\times U'\to V$ acting by $(u,u')\mapsto p(u)(q(u'))$ is a plot of $V$.

In order to prove that $\nabla_t$ is smooth, we need to show that $u\mapsto\nabla_t p(u)$ is a plot of $C^{\infty}(X,V)$. Since $\nabla$ is smooth as a map 
$C^{\infty}(X,V)\to C^{\infty}(X,\Lambda^1(X)\otimes V)$, its composition with any given plot $p$ of $C^{\infty}(X,V)$ is a plot of $C^{\infty}(X,\Lambda^1(X)\otimes V)$. This composition has form 
$u\mapsto\nabla p(u)$. It remains to notice that $u\mapsto \nabla_t(p(u))$ is the evaluation of it on the constant plot of $(\Lambda^1(X))^*$ with value $t$, which implies that it is a plot of $C^{\infty}(X,V)$, 
as wanted.
\end{proof}

There are also the expected linearity properties, stated below.

\begin{thm}
The operator $t\mapsto\nabla_t$ is $C^{\infty}(X,\matR)$-linear, that is, $\nabla_{t_1+t_2}=\nabla_{t_1}+\nabla_{t_2}$ and $\nabla_{f\cdot t}=f\nabla_t$ for any smooth function $f:X\to\matR$.
\end{thm}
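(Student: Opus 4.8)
The plan is to reduce both identities to pointwise statements in the individual fibres, where they follow from the bilinearity of the natural contraction. First I would observe that, by the preceding results, every operator appearing in the statement is a well-defined map $C^{\infty}(X,V)\to C^{\infty}(X,V)$: the sections $t_1+t_2$ and $f\cdot t$ lie in $C^{\infty}(X,(\Lambda^1(X))^*)$ because the space of sections of any pseudo-bundle is a module over $C^{\infty}(X,\matR)$, and $f\nabla_t s$ is again a smooth section for the same reason. Since two sections of $V$ coincide precisely when they agree at every point, it suffices to fix an arbitrary $s\in C^{\infty}(X,V)$ and a point $x\in X$ and to check equality of the values in the fibre $V_x$.

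Next I would recall the pointwise meaning of the covariant derivative: by definition $\nabla_t s(x)$ is the image of $\nabla s(x)\in\Lambda^1_x(X)\otimes V_x$ under the natural contraction with $t(x)\in(\Lambda^1_x(X))^*$, that is, if $\nabla s(x)=\sum_i\omega_i\otimes v_i$ then $\nabla_t s(x)=\sum_i t(x)(\omega_i)\,v_i$. This contraction $(\Lambda^1_x(X))^*\otimes(\Lambda^1_x(X)\otimes V_x)\to V_x$ is bilinear; in particular it is linear in the functional argument. Everything thereafter is a direct consequence of this linearity together with the fact that the vector-space operations on sections are computed fibrewise.

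For the additivity I would use that addition of sections of $(\Lambda^1(X))^*$ is pointwise, so $(t_1+t_2)(x)=t_1(x)+t_2(x)$ in $(\Lambda^1_x(X))^*$; linearity of the contraction in the functional slot then gives $\nabla_{t_1+t_2}s(x)=\sum_i(t_1(x)+t_2(x))(\omega_i)v_i=\nabla_{t_1}s(x)+\nabla_{t_2}s(x)$. For the scalar identity I would use that scalar multiplication in the module $C^{\infty}(X,(\Lambda^1(X))^*)$ is likewise pointwise, so $(f\cdot t)(x)=f(x)\,t(x)$; pulling the scalar $f(x)$ out of the contraction yields $\nabla_{f\cdot t}s(x)=f(x)\sum_i t(x)(\omega_i)v_i=f(x)\,\nabla_t s(x)=(f\nabla_t s)(x)$. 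As $x$ and $s$ were arbitrary, the two operator identities follow.

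I do not expect a genuine obstacle here: the statement is an algebraic identity rather than a diffeological one, and the only point requiring care is bookkeeping---making the contraction pairing precise and confirming that the operations on sections are evaluated fibrewise. All the smoothness has already been handled by the preceding theorem, which shows that each $\nabla_t$ is a smooth endomorphism of $C^{\infty}(X,V)$, so nothing beyond the fibrewise bilinearity of the contraction is needed.
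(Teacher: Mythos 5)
Your argument is correct and matches the paper, whose entire proof of this theorem is the single sentence that it is a direct consequence of the definitions; your fibrewise-contraction computation is precisely what that sentence leaves implicit. Nothing further is needed.
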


\begin{proof}
This is a direct consequence of the definitions.
\end{proof}

\subsection{Compatibility with a pseudo-metric}

The usual notion of compatibility of a connection with a Riemannian metric extends trivially to the diffeological context. Let $\pi:V\to X$ be a diffeological vector pseudo-bundle that admits a pseudo-metric; 
let $g$ be a choice of a pseudo-metric on $V$, and let $\nabla$ be a connection on $V$.

\begin{defn}
The connection $\nabla$ is said to be \textbf{compatible with the pseudo-metric $g$} if for every two smooth sections $s,t$ of $\pi:V\to X$ we have that
$$d(g(s,t))=g(\nabla s,t)+g(s,\nabla t),$$ where for every $1$-form $\omega\in\Lambda^1(X)$ we set by definition 
$g(\omega\otimes s,t)=g(s,\omega\otimes t)=\omega\cdot g(s,t)$.
\end{defn}

The differential $d(g(s,t))$ in the above definition is meant as a section of $\Lambda^1(X)$ (as opposed to a form in $\Omega^1(X)$).

\subsection{Pseudo-bundle operations and diffeological connections}

The usual connections are well-behaved with respect to the standard operations, such those of direct sum, tensor product, or taking dual, on smooth vector bundles. In this section we show that the same 
is true of diffeological connections in the case of direct sums and tensor products, while the situation is more complicated for dual pseudo-bundles.

\subsubsection{Direct sum}

Let $\pi_1:V_1\to X$ and $\pi_2:V_2\to X$ be two diffeological vector pseudo-bundles over the same base space $X$. Suppose that each of them can be endowed with a connection; let
$$\nabla^1:C^{\infty}(X,V_1)\to C^{\infty}(X,\Lambda^1(X)\otimes V_1)\,\,\,\mbox{ and }\,\,\,\nabla^2:C^{\infty}(X,V_2)\to C^{\infty}(X,\Lambda^1(X)\otimes V_2).$$ Consider the direct sum pseudo-bundle
$\pi_1\oplus\pi_2:V_1\oplus V_2\to X$; let 
$$\mbox{pr}_{V_1}:V_1\oplus V_2\to V_1\,\,\,\mbox{ and }\,\,\,\mbox{pr}_{V_2}:V_1\oplus V_2\to V_2$$ be the standard direct sum projections, and let
$$\mbox{Incl}_{V_1}:V_1\cong V_1\oplus\{0\}\hookrightarrow V_1\oplus V_2\,\,\,\mbox{ and }\,\,\,\mbox{Incl}_{V_2}:V_2\cong\{0\}\oplus V_2\hookrightarrow V_1\oplus V_2$$ be the obvious inclusions. 
These maps are smooth by the definition of the diffeology on a direct sum of pseudo-bundles (see \cite{vincent}).

\begin{defn}
The \textbf{direct sum of the connections $\nabla^1$ and $\nabla^2$} is the operator 
$$\nabla^1\oplus\nabla^2:C^{\infty}(X,V_1\oplus V_2)\to C^{\infty}(X,\Lambda^1(X)\otimes(V_1\oplus V_2))$$ defined as follows. Let $s\in C^{\infty}(X,V_1\oplus V_2)$ be an arbitrary section; denote by 
$s_1:=\mbox{pr}_{V_1}\circ s$ and $s_2:=\mbox{pr}_{V_2}\circ s$. We define
\emph{$$(\nabla^1\oplus\nabla^2)s=(\mbox{Id}_{\Lambda^1(X)}\otimes\mbox{Incl}_{V_1})\circ(\nabla^1s_1)+(\mbox{Id}_{\Lambda^1(X)}\otimes\mbox{Incl}_{V_2})\circ(\nabla^2s_2).$$}
\end{defn} 

The final sum is of course taken in $\Lambda^1(X)\otimes(V_1\oplus V_2)$. The following then is an easy analogue of the standard fact.

\begin{prop}\label{direct:sum:of:connections:is:connection:prop}
Let $X$ be a diffeological space, let $\pi_1:V_1\to X$ and $\pi_2:V_2\to X$ be two diffeological vector pseudo-bundles over it, and let $\nabla^1$ and $\nabla^2$ be connections on $V_1$ and $V_2$ 
respectively. Then $\nabla^1\oplus\nabla^2$ is well-defined and is a connection on $V_1\oplus V_2$.
\end{prop}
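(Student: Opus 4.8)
The plan is to verify the three defining properties of a connection in turn --- that $\nabla^1\oplus\nabla^2$ actually lands in $C^{\infty}(X,\Lambda^1(X)\otimes(V_1\oplus V_2))$ and is $\matR$-linear, that it is smooth for the two functional diffeologies, and that it obeys the Leibniz rule --- after first recording that every auxiliary map appearing in the definition is smooth. The projections $\mbox{pr}_{V_i}$ and the inclusions $\mbox{Incl}_{V_i}$ are smooth by the definition of the direct-sum diffeology, and hence so are the maps $\mbox{Id}_{\Lambda^1(X)}\otimes\mbox{Incl}_{V_i}$ by the definition of the tensor-product diffeology; since post-composition with a smooth map is a smooth operation for functional diffeologies, each of these induces a smooth map on the relevant spaces of sections.

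First I would check well-definedness together with linearity. For a section $s\in C^{\infty}(X,V_1\oplus V_2)$ the components $s_i=\mbox{pr}_{V_i}\circ s$ are smooth sections of $V_i$, so each $\nabla^i s_i$ is a smooth section of $\Lambda^1(X)\otimes V_i$; applying the smooth map $\mbox{Id}_{\Lambda^1(X)}\otimes\mbox{Incl}_{V_i}$ and summing (fibrewise addition being smooth) yields a smooth section of $\Lambda^1(X)\otimes(V_1\oplus V_2)$, so the operator is well-defined. Additivity and $\matR$-homogeneity follow at once, because $s\mapsto s_i$, each $\nabla^i$, each $\mbox{Id}_{\Lambda^1(X)}\otimes\mbox{Incl}_{V_i}$, and the final addition are all linear. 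Smoothness of the operator itself is then immediate: on the level of section spaces it is a composition of the smooth maps $s\mapsto s_i$, the smooth $\nabla^i$, the smooth $\mbox{Id}_{\Lambda^1(X)}\otimes\mbox{Incl}_{V_i}$, and the smooth addition of sections, so any plot of $C^{\infty}(X,V_1\oplus V_2)$ is carried to a plot of the target.

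The Leibniz rule is the computational heart. Given $f\in C^{\infty}(X,\matR)$, the components of $fs$ are $f s_i$ (the projections commute with fibrewise scalar multiplication), so I would expand each summand by the Leibniz rule for $\nabla^i$, writing $\nabla^i(fs_i)=df\otimes s_i+f\nabla^i s_i$, and then push $\mbox{Id}_{\Lambda^1(X)}\otimes\mbox{Incl}_{V_i}$ through both terms. Collecting the $df\otimes(\cdot)$ contributions produces $df\otimes\left(\mbox{Incl}_{V_1}(s_1)+\mbox{Incl}_{V_2}(s_2)\right)$, and the key identity $\mbox{Incl}_{V_1}(s_1)+\mbox{Incl}_{V_2}(s_2)=s$ (valid since $(s_1,s_2)$ is exactly the direct-sum decomposition of $s$) turns this into $df\otimes s$; the remaining contributions assemble into $f(\nabla^1\oplus\nabla^2)s$, which gives the rule.

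I do not expect a genuine obstacle here, as the statement is the verbatim analogue of the classical fact; the only points demanding care are bookkeeping ones --- ensuring that the tensor-product and direct-sum diffeologies make each $\mbox{Id}_{\Lambda^1(X)}\otimes\mbox{Incl}_{V_i}$ smooth and that post-composition transports this smoothness to the section spaces, and correctly tracking the two tensor factors when applying the Leibniz expansion, so that the $df$-terms recombine into $df\otimes s$ through the decomposition identity.
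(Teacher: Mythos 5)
Your proposal is correct and follows essentially the same route as the paper: decompose $s$ into its components $s_i=\mbox{pr}_{V_i}\circ s$, apply each $\nabla^i$, push forward by the smooth maps $\mbox{Id}_{\Lambda^1(X)}\otimes\mbox{Incl}_{V_i}$, and sum, with smoothness of the operator obtained from smoothness of post-composition on section spaces. The paper explicitly omits the linearity and Leibniz verifications as standard (which you spell out correctly) and phrases well-definedness via a local plot decomposition $s\circ p=q_1\oplus q_2$ rather than your slightly more abstract composition argument, but these are cosmetic differences.
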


\begin{proof}
The linearity property and the Leibnitz rule are established exactly as in the standard case, so we do not spell that out. What we really need to prove is that $\nabla^1\oplus\nabla^2$ is well-defined as 
a map into $C^{\infty}(X,\Lambda^1(X)\otimes(V_1\oplus V_2))$, and that it is smooth as a map $C^{\infty}(X,V_1\oplus V_2)\to C^{\infty}(X,\Lambda^1(X)\otimes(V_1\oplus V_2))$ for the respective 
functional diffeologies. Now, the former amounts to showing that for every section $s\in C^{\infty}(X,V_1\oplus V_2)$ we have that $(\nabla^1\oplus\nabla^2)s$ is a smooth map 
$X\to\Lambda^1(X)\otimes(V_1\oplus V_2)$. 

Let $p:U\to X$ be a plot of $X$; then $s\circ p$ is a plot of $V_1\oplus V_2$. We can assume that $U$ is small enough so that $s\circ p=q_1\oplus q_2$ for $q_i:U\to V_i$ a plot of $V_i$. Observe also that 
$s_i=\mbox{pr}_{V_i}\circ s\in C^{\infty}(X,V_i)$ and that by construction $s\circ p=q_1\oplus q_2=(s_1\circ p)\oplus (s_2\circ p)$. This allows us to obtain the following form for 
$(\nabla^1\oplus\nabla^2)s\circ p$: 
$$(\nabla^1\oplus\nabla^2)s\circ p=(\mbox{Id}_{\Lambda^1(X)}\otimes\mbox{Incl}_{V_1})\circ(\nabla^1s_1)p+(\mbox{Id}_{\Lambda^1(X)}\otimes\mbox{Incl}_{V_2})\circ(\nabla^2s_2)p.$$ Now, 
$(\nabla^1s_1)p$ is a plot of $\Lambda^1(X)\otimes V_1$ and $(\nabla^2s_2)p$ is a plot of $\Lambda^1(X)\otimes V_2$, hence $(\nabla^1\oplus\nabla^2)s\circ p$ is a plot of 
$\Lambda^1(X)\otimes(V_1\oplus V_2)$, and since $p$ is (a restriction of) any plot, this means that $(\nabla^1\oplus\nabla^2)s$ is smooth.

We now need to show that $\nabla^1\oplus\nabla^2:C^{\infty}(X,V_1\oplus V_2)\to C^{\infty}(X,\Lambda^1(X)\otimes(V_1\oplus V_2))$ is smooth. Let $q:U\to C^{\infty}(X,V_1\oplus V_2)$ be a plot for 
the functional diffeology on $C^{\infty}(X,V_1\oplus V_2)$. Observe that each $u\mapsto\mbox{pr}_{V_i}\circ q(u)$ is a plot of $C^{\infty}(X,V_i)$ for $i=1,2$; write $q_i$ for $\mbox{pr}_{V_i}\circ q$. We 
then have by construction that 
$$(\nabla^1\oplus\nabla^2)q(u)=(\mbox{Id}_{\Lambda^1(X)}\otimes\mbox{Incl}_{V_1})\circ(\nabla^1q_1(u))+(\mbox{Id}_{\Lambda^1(X)}\otimes\mbox{Incl}_{V_2})\circ(\nabla^2q_2(u)).$$
It remains to notice that each $u\mapsto\nabla^i q_i(u)$ is by assumption plots of $C^{\infty}(X,\Lambda^1(X)\otimes V_i)$, and that the post-composition with the fixed map 
$\mbox{Id}_{\Lambda^1(X)}\otimes\mbox{Incl}_{V_i}$ induces a smooth map $C^{\infty}(X,\Lambda^1(X)\otimes V_i)\to C^{\infty}(X,\Lambda^1(X)\otimes(V_1\oplus V_2))$, to conclude that 
$u\mapsto(\nabla^1\oplus\nabla^2)q(u)$ is a plot of the latter, which yields the final claim. 
\end{proof}

\subsubsection{Tensor product}

The case of the tensor product is analogous. Let again $X$ be a diffeological space, and let $\pi_1:V_1\to X$ and $\pi_2:V_2\to X$ be two diffeological vector pseudo-bundles over it. Consider the 
corresponding tensor product pseudo-bundle $\pi_1\otimes\pi_2:V_1\otimes V_2\to X$. Let also $\nabla^1$ and $\nabla^2$ be connections on $V_1$ and $V_2$ respectively. 

\begin{defn}
The \textbf{tensor product of the connections $\nabla^1$ and $\nabla^2$} is the operator 
$$\nabla^{\otimes}:C^{\infty}(X,V_1\otimes V_2)\to C^{\infty}(X,\Lambda^1(X)\otimes V_1\otimes V_2)$$ given by 
$$\nabla^{\otimes}:=\nabla^1\otimes\mbox{Id}_{C^{\infty}(X,V_2)}+\mbox{Id}_{C^{\infty}(X,V_1)}\otimes\nabla^2.$$
\end{defn}

The following then is a complete analogue of both the standard statement and of Proposition \ref{direct:sum:of:connections:is:connection:prop}, so we omit the proof.

\begin{prop}\label{tensor:product:of:connections:is:connection:prop}
Let $X$ be a diffeological space, let $\pi_1:V_1\to X$ and $\pi_2:V_2\to X$ be two diffeological vector pseudo-bundles over it, and let $\nabla^1$ and $\nabla^2$ be connections on $V_1$ and $V_2$ 
respectively. Then $\nabla^{\otimes}$ is well-defined and is a connection on $V_1\otimes V_2$.
\end{prop}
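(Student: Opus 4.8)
The plan is to mirror the proof of Proposition \ref{direct:sum:of:connections:is:connection:prop} verbatim, since the two statements have identical structure. Concretely I must verify three things: that $\nabla^{\otimes}$ is well-defined as a map into $C^{\infty}(X,\Lambda^1(X)\otimes V_1\otimes V_2)$ (i.e. sends each section to a \emph{smooth} section), that it is smooth for the functional diffeologies on the two section spaces, and that it is $C^{\infty}(X,\matR)$-linear and obeys the Leibniz rule. The formula $\nabla^{\otimes}=\nabla^1\otimes\mbox{Id}_{C^{\infty}(X,V_2)}+\mbox{Id}_{C^{\infty}(X,V_1)}\otimes\nabla^2$ is first pinned down on a decomposable section $s_1\otimes s_2$ (the section $x\mapsto s_1(x)\otimes s_2(x)$, with $s_i\in C^{\infty}(X,V_i)$) by
$$\nabla^{\otimes}(s_1\otimes s_2)=(\nabla^1 s_1)\otimes s_2+s_1\otimes(\nabla^2 s_2),$$
where the $\Lambda^1(X)$-factor of each term is understood to be brought to the front so that the value lies in $\Lambda^1(X)\otimes V_1\otimes V_2$; the map is then extended by $C^{\infty}(X,\matR)$-linearity.

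The main obstacle — and the one genuinely new feature relative to the direct-sum case — is well-definedness on a general section. In the direct-sum proof one had the projections $\mbox{pr}_{V_i}$, which turn any section of $V_1\oplus V_2$ into an honest pair of sections of the factors; here there are no such projections, and a section of $V_1\otimes V_2$ need not be a (even local) finite sum of decomposable sections, because diffeological pseudo-bundles are not required to be locally trivial. I would resolve this using the fact that $V_1\otimes V_2$ carries the tensor product diffeology, which is a quotient diffeology (Section 1): any plot of $V_1\otimes V_2$ lifts, over a small enough domain, to a plot of the free product, hence is locally a finite sum $\sum_k q_1^{(k)}\otimes q_2^{(k)}$ of decomposable plots. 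One then checks that the defining formula descends through the tensor relations. The crucial computation, which simultaneously establishes well-definedness and the Leibniz rule, is that for $f\in C^{\infty}(X,\matR)$ the two sections $(fs_1)\otimes s_2$ and $s_1\otimes(fs_2)$, which coincide by fibrewise $\matR$-bilinearity, receive the same value:
$$\nabla^{\otimes}\big((fs_1)\otimes s_2\big)=\nabla^{\otimes}\big(s_1\otimes(fs_2)\big)=df\otimes(s_1\otimes s_2)+f\nabla^{\otimes}(s_1\otimes s_2),$$
both equalities following from the Leibniz rules for $\nabla^1$ and $\nabla^2$ after reordering $df$ to the front. Thus $\nabla^{\otimes}$ respects the relations defining the fibrewise tensor product, so the local definitions agree on overlaps and glue to a single well-defined (a priori set-theoretic) section.

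Once well-posedness is in hand, smoothness follows exactly as in Proposition \ref{direct:sum:of:connections:is:connection:prop}. For the first half, given a plot $p:U\to X$ I would shrink $U$ until $s\circ p$ decomposes as $\sum_k(s_1^{(k)}\circ p)\otimes(s_2^{(k)}\circ p)$, whereupon
$$(\nabla^{\otimes}s)\circ p=\sum_k\big((\nabla^1 s_1^{(k)})p\big)\otimes(s_2^{(k)}\circ p)+\sum_k(s_1^{(k)}\circ p)\otimes\big((\nabla^2 s_2^{(k)})p\big);$$
each $(\nabla^i s_i^{(k)})p$ is a plot of $\Lambda^1(X)\otimes V_i$, so the right-hand side is a plot of $\Lambda^1(X)\otimes V_1\otimes V_2$, and hence $\nabla^{\otimes}s$ is smooth. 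For the second half, given a plot $q:U\to C^{\infty}(X,V_1\otimes V_2)$ I would decompose it locally and use that $\nabla^1$ and $\nabla^2$ are smooth operators, together with the fact that tensoring with a fixed section and post-composing with the fixed reordering bundle map send plots of $C^{\infty}(X,\Lambda^1(X)\otimes V_i)$ to plots of $C^{\infty}(X,\Lambda^1(X)\otimes V_1\otimes V_2)$ — exactly the role played by $\mbox{Id}_{\Lambda^1(X)}\otimes\mbox{Incl}_{V_i}$ in the direct-sum argument.

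Finally, the algebraic properties are inherited: $C^{\infty}(X,\matR)$-linearity of $\nabla^{\otimes}$ is immediate from the linearity of each summand, and the Leibniz rule on arbitrary sections follows by extending the displayed identity of the second paragraph by linearity. This confirms that $\nabla^{\otimes}$ is a connection on $V_1\otimes V_2$. I expect the only place requiring real care to be the well-definedness step, since it is there that the lack of local triviality forces one to argue through the quotient structure of the tensor product diffeology rather than through projections as in the direct-sum case.
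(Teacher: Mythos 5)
The paper gives no proof of this proposition at all: it simply declares the statement ``a complete analogue'' of Proposition \ref{direct:sum:of:connections:is:connection:prop} and omits the argument. You are therefore right --- and more careful than the source --- to single out well-definedness as the genuinely new difficulty: without the projections $\mbox{pr}_{V_i}$ there is no canonical way to turn a section of $V_1\otimes V_2$ into data on which $\nabla^1$ and $\nabla^2$ can act.

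However, your resolution of that difficulty has a gap. The quotient description of the tensor product diffeology tells you that a plot of $V_1\otimes V_2$ --- in particular $s\circ p$ for a plot $p:U\to X$ --- locally lifts to a finite sum $\sum_k q_1^{(k)}\otimes q_2^{(k)}$ with $q_i^{(k)}:U'\to V_i$ plots of the factors. It does not tell you that these plots have the form $s_i^{(k)}\circ p$ for smooth sections $s_i^{(k)}\in C^{\infty}(X,V_i)$: the $q_i^{(k)}$ are maps out of $U'$ lying over $p$, and there is no reason they should factor through $X$, let alone that one choice of sections works for all plots simultaneously, which is what a genuine local decomposition of $s$ into decomposable sections would require. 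Since $\nabla^1$ and $\nabla^2$ act on sections, not on plots of the total spaces, both your well-definedness step and your smoothness computation $(\nabla^{\otimes}s)\circ p=\sum_k\bigl((\nabla^1 s_1^{(k)})p\bigr)\otimes(s_2^{(k)}\circ p)+\ldots$ break down at exactly this point. A second, smaller issue: even on the submodule of $C^{\infty}(X,V_1\otimes V_2)$ generated by decomposable sections, your descent check only verifies the $C^{\infty}(X,\matR)$-bilinearity relations $(fs_1)\otimes s_2=s_1\otimes(fs_2)$; well-definedness requires the stronger implication that $\sum_k s_1^{(k)}\otimes s_2^{(k)}=0$ \emph{pointwise} forces $\sum_k(\nabla^1 s_1^{(k)})\otimes s_2^{(k)}+s_1^{(k)}\otimes(\nabla^2 s_2^{(k)})=0$, which classically is deduced from local frames and is unavailable here. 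The honest conclusion is that $\nabla^{\otimes}$ is well-defined on the span of decomposable sections modulo the bilinearity relations, and that extending it to all of $C^{\infty}(X,V_1\otimes V_2)$ needs an additional hypothesis or argument that neither you nor the paper supplies.
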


\section{Diffeological gluing and connections}

Let $\pi_1:V_1\to X_1$ and $\pi_2:V_2\to X_2$ be two diffeological vector pseudo-bundles, and let $(\tilde{f},f)$ be a gluing between them. Suppose furthermore that each of them can be endowed 
with a diffeological connection; let $\nabla^1$ and $\nabla^2$ be connections on $V_1$ and $V_2$ respectively. In this section we consider how, under specific assumptions on these connections, 
we can obtain a connection on $V_1\cup_{\tilde{f}}V_2$; the necessary assumptions take form, once again, of an appropriate compatibility notion, for which a preliminary construction is needed.

\subsection{The pullback map $f_{\Lambda}^*$ between the sub-bundles of $\Lambda^1(X_2)$ and $\Lambda^1(X_1)$}

Any connection on $V_1\cup_{\tilde{f}}V_2$ has the form of an operator 
$C^{\infty}(X_1\cup_f X_2,V_1\cup_{\tilde{f}}V_2)\to C^{\infty}(X_1\cup_f X_2,\Lambda^1(X_1\cup_f X_2)\otimes(V_1\cup_{\tilde{f}}V_2))$. In order to describe such an operator in terms of two operators 
of form $C^{\infty}(X_1,V_1)\to C^{\infty}(X_1,\Lambda^1(X_1)\otimes V_1)$ and $C^{\infty}(X_2,V_2)\to C^{\infty}(X_2,\Lambda^1(X_2)\otimes V_2)$, we are going to need an appropriate notion of a 
pullback map between certain subsets of $\Lambda^1(X_2)$ and $\Lambda^1(X_1)$.

\subsubsection{$f_{\Lambda}^*:\Lambda^1(X_2)\to\Lambda^1(X_1)$ for a smooth map $f:X_1\to X_2$}

The case when $f$ is defined on the entire $X_1$ is the simpler one; we consider it first. Recall that we already have the notion of a pullback map $f^*:\Omega^1(X_2)\to\Omega^1(X_1)$. together with 
$f^{-1}$ it gives a pseudo-bundle map between the trivial bundles $X_2\times\Omega^1(X_2)$ and $X_1\times\Omega^1(X_1)$, which acts (in an obvious manner) by 
$$(f^{-1},f^*)(x_2,\omega_2)=(f^{-1}(x_1),f^*\omega_2).$$

\begin{prop}\label{pullback:map:between:lambda:relative:diffeo:prop}
Let $X_1$ and $X_2$ be two diffeological spaces, and let $f:X_1\to X_2$ be a diffeomorphism. Then $f^*:\Omega^1(X_2)\to\Omega^1(X_1)$ induces a well-defined pullback map 
$\Lambda^1(X_2)\to\Lambda^1(X_1)$.
\end{prop}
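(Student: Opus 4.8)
The plan is to exhibit the induced map explicitly and then verify the two things that ``well-defined'' requires: that the assignment on representatives descends to the quotient defining $\Lambda^1$, and, since we work in the diffeological category, that the resulting map is smooth. Recall that $\Lambda^1(X_i)=\left(X_i\times\Omega^1(X_i)\right)/\left(\bigcup_{x}\{x\}\times\Omega_x^1(X_i)\right)$, with quotient projection $\pi_i^{\Omega,\Lambda}$. Since $f$ is a diffeomorphism, the pair $(f^{-1},f^*)$ is already a morphism of the trivial bundles $X_2\times\Omega^1(X_2)\to X_1\times\Omega^1(X_1)$, acting by $(x_2,\omega_2)\mapsto(f^{-1}(x_2),f^*\omega_2)$. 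The candidate induced map is
$$\Lambda^1(X_2)\ni\left(x_2,\omega_2+\Omega_{x_2}^1(X_2)\right)\mapsto\left(f^{-1}(x_2),f^*\omega_2+\Omega_{f^{-1}(x_2)}^1(X_1)\right)\in\Lambda^1(X_1),$$
and the whole statement reduces to showing that $(f^{-1},f^*)$ carries the sub-bundle $\bigcup_{x_2}\{x_2\}\times\Omega_{x_2}^1(X_2)$ into $\bigcup_{x_1}\{x_1\}\times\Omega_{x_1}^1(X_1)$; equivalently, that $f^*\left(\Omega_{f(x_1)}^1(X_2)\right)\subseteq\Omega_{x_1}^1(X_1)$ for every $x_1\in X_1$.

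The key step is precisely this inclusion, and I would prove it straight from the definition of a form vanishing at a point. Fix $x_1\in X_1$, set $x_2=f(x_1)$, and let $\omega_2\in\Omega_{x_2}^1(X_2)$. To see that $f^*\omega_2$ vanishes at $x_1$, take any plot $p$ of $X_1$ whose domain contains $0$ and with $p(0)=x_1$. By the definition of the pullback, $(f^*\omega_2)(p)=\omega_2(f\circ p)$; moreover $f\circ p$ is a plot of $X_2$ (as $f$ is smooth) with $(f\circ p)(0)=f(x_1)=x_2$. Since $\omega_2$ vanishes at $x_2$, the form $\omega_2(f\circ p)$ is the zero section, and hence so is $(f^*\omega_2)(p)$; as $p$ was an arbitrary plot through $x_1$, this says $f^*\omega_2\in\Omega_{x_1}^1(X_1)$. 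Well-definedness on cosets then follows from the linearity of $f^*$: if $\omega_2+\Omega_{x_2}^1(X_2)=\omega_2'+\Omega_{x_2}^1(X_2)$, then $f^*\omega_2-f^*\omega_2'=f^*(\omega_2-\omega_2')\in\Omega_{x_1}^1(X_1)$, so the two images agree in $\Lambda^1(X_1)$.

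For smoothness I would exploit the pushforward nature of the quotient diffeologies. The map $(f^{-1},f^*)$ is smooth, because $f^{-1}$ is smooth ($f$ being a diffeomorphism) and $f^*:\Omega^1(X_2)\to\Omega^1(X_1)$ is smooth for the functional diffeologies (a standard fact, cf. 6.38 in \cite{iglesiasBook}). Composing with $\pi_1^{\Omega,\Lambda}$ and using the inclusion just established, the map $\pi_1^{\Omega,\Lambda}\circ(f^{-1},f^*)$ is constant on the fibres of $\pi_2^{\Omega,\Lambda}$, hence descends to a map $\Lambda^1(X_2)\to\Lambda^1(X_1)$ that is smooth because $\pi_2^{\Omega,\Lambda}$ is a subduction. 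By construction this induced map is fibrewise linear and covers $f^{-1}$, so it is genuinely a pullback map of the pseudo-bundles, as claimed.

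I expect no serious obstacle: the content is the plot-composition computation of the second paragraph, and the only thing to keep straight is the contravariance bookkeeping, namely that base points are transported by $f^{-1}$ while forms are transported by $f^*$. The diffeomorphism hypothesis enters in exactly two places: the invertibility of $f$ is what sends the base point $x_2$ to a well-defined $f^{-1}(x_2)$, and the smoothness of $f^{-1}$ is what makes the induced map smooth. For a non-invertible $f$ one cannot expect a map defined on all of $\Lambda^1(X_2)$, which is precisely why the companion constructions in the paper (the maps $\tilde{\rho}_1^{\Lambda}$ and $\tilde{\rho}_2^{\Lambda}$) are defined only on suitable sub-bundles.
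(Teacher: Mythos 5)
Your proposal is correct and follows essentially the same route as the paper: the heart of both arguments is the plot-composition computation $(f^*\omega_2)(p)=\omega_2(f\circ p)$ showing that forms vanishing at $x_2$ pull back to forms vanishing at $f^{-1}(x_2)$, after which the map descends to the quotients defining $\Lambda^1$. Your treatment of smoothness via the subduction $\pi_2^{\Omega,\Lambda}$ is slightly more explicit than the paper's, but the argument is the same in substance.
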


\begin{proof}
Let $x_2\in X_2$, and let $\omega_2\in\Omega^1(X_2)$ be a form vanishing at $x_2$. We wish to know whether $f^*\omega_2$ vanishes at $f^{-1}(x_2)$. Consider a plot $p$ of $X_1$ centered at this 
point; then trivially $f\circ p$ is a plot of $X_2$ centered at $x_2$. Furthermore, we have
$$f^*\omega_2(p)(f^{-1}(x_2))=\omega_2(f\circ p)(x_2)=0.$$ Since $p$ and $x_2$ are arbitrary, we conclude that $f^*(\Omega_{x_2}^1(X_2))\subseteq\Omega_{f^{-1}(x_2)}^1(X_1)$. In fact, since $f$ 
is a diffeomorphism, we can apply the analogous reasoning to $f^{-1}$, obtaining $$f^*(\Omega_{x_2}^1(X_2))=\Omega_{f^{-1}(x_2)}^1(X_1).$$

It follows from what has been established in the previous paragraph it is obvious that $f^*$ yields a pseudo-bundle map between the two sub-bundles (of $X_2\times\Omega^1(X_2)$ and 
$X_1\times\Omega^1(X_1)$ respectively) consisting of vanishing forms:
$$X_2\times\Omega^1(X_2)\supseteq\left(\bigcup_{x_2\in X_2}\{x_2\}\times\Omega_{x_2}^1(X_2)\right)\to
\left(\bigcup_{x_1\in X_1}\{x_1\}\times\Omega_{x_1}^1(X_1)\right)\subseteq X_1\times\Omega^1(X_1);$$ this pseudo-bundle map covers $f^{-1}$. Therefore $f^*$ descends to a well-defined map on the 
quotient pseudo-bundles
$$(X_2\times\Omega^1(X_2))/\left(\bigcup_{x_2\in X_2}\{x_2\}\times\Omega_{x_2}^1(X_2)\right)\to(X_1\times\Omega^1(X_1))/\left(\bigcup_{x_1\in X_1}\{x_1\}\times\Omega_{x_1}^1(X_1)\right).$$
It remains to recall our prior observation that these quotients are precisely the corresponding $\Lambda^1$-bundles, that is,
$$(X_2\times\Omega^1(X_2))/\left(\bigcup_{x_2\in X_2}\{x_2\}\times\Omega_{x_2}^1(X_2)\right)\cong\Lambda^1(X_2),\,\,\,\,
(X_1\times\Omega^1(X_1))/\left(\bigcup_{x_1\in X_1}\{x_1\}\times\Omega_{x_1}^1(X_1)\right)\cong\Lambda^1(X_1),$$ whence the claim.
\end{proof} 

The final conclusion is that the pullback map is well-defined as a map $f_{\Lambda}^*:\Lambda^1(X_2)\to\Lambda^1(X_1)$; we do not introduce a separate notation for it, since it will always be clear from the 
context whether we mean the pullback map defined between the $\Omega^1(X_i)$'s or the $\Lambda^1(X_i)$'s.

\subsubsection{The map $f_{\Lambda}^*$ for $f:X_1\supset Y\to f(Y)\subseteq X_2$}

Let us now consider the general case. Suppose that $f$ is defined on a proper subset of $X_1$, and that its image is an \emph{a priori} proper subset of $X_2$. There is of course again a well-defined 
pullback map $f^*$ but it is not defined on the whole $\Lambda^1(X_2)$. We shall relate the domain and the range of $f^*$ to certain subsets of $\Lambda^1(X_2)$ and $\Lambda^1(X_1)$, and use it 
for an alternative description of the compatibility of elements of $\Lambda^1(X_i)$, in a form suitable for defining subsequently the compatibility of connections on pseudo-bundles over $X_1$ and $X_2$.

\paragraph{The properties of the pullback map $f_{\Lambda}^*:\Lambda^1(f(Y))\to\Lambda^1(Y)$} Considering $Y$ and $f(Y)$ as diffeological spaces for their natural subset diffeologies, it follows from 
Proposition \ref{pullback:map:between:lambda:relative:diffeo:prop} that there is the pullback map 
$$f_{\Lambda}^*:\Lambda^1(f(Y))\to\Lambda^1(Y);$$ its precursor is the pullback map $f^*:\Omega^1(f(Y))\to\Omega^1(Y)$. There is a natural commutativity between these two versions of $f^*$ expressed by
$$\pi_Y^{\Omega,\Lambda}\circ(f^{-1},f^*)=f_{\Lambda}^*\circ\pi_{f(Y)}^{\Omega,\Lambda},$$ where 
\begin{itemize} 
\item $\pi_Y^{\Omega,\Lambda}:Y\times\Omega^1(Y)\to\Lambda^1(Y)$ is the defining projection of $\Lambda^1(Y)$, and 
\item $\pi_{f(Y)}^{\Omega,\Lambda}:f(Y)\times\Omega^1(f(Y))\to\Lambda^1(f(Y))$ is the defining projection of $\Lambda^1(f(Y))$. 
\end{itemize}
The two compositions are defined on $f(Y)\times\Omega^1(f(Y))$.

\paragraph{The pullback map $f_{\Lambda}^*$ and the compatibility of elements of $\Lambda^1(X_1)$ and $\Lambda^1(X_2)$} Consider now the natural inclusions $i:Y\hookrightarrow X_1$ and 
$j:f(Y)\hookrightarrow X_2$; these give rise to the pullback maps $i^*:\Omega^1(X_1)\to\Omega^1(Y)$ and $j^*:\Omega^1(X_2)\to\Omega^1(f(Y))$ (note that in general they may not be surjective). 

\begin{lemma}\label{dual:inclusion:on:lambda:left:lem}
The map $(i^{-1},i^*):i(Y)\times\Omega^1(X_1)\to Y\times\Omega^1(Y)$ descends to a well-defined map $i_{\Lambda}^*:\Lambda^1(X_1)\supset(\pi_1^{\Lambda})^{-1}(Y)\to\Lambda^1(Y)$; in particular,
$$\pi_Y^{\Omega,\Lambda}\circ(i^{-1},i^*)=i_{\Lambda}^*\circ\pi_1^{\Omega,\Lambda}|_{i(Y)\times\Omega^1(X_1)}.$$
\end{lemma}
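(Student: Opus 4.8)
The plan is to follow the quotient-descent argument from the proof of Proposition \ref{pullback:map:between:lambda:relative:diffeo:prop}, adapting it to the fact that $i$ is merely a (smooth) inclusion rather than a diffeomorphism onto $X_1$. Accordingly I expect to obtain, and to need, only an inclusion of the subspaces of vanishing forms, instead of the equality available in the diffeomorphism case.

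The first and crucial step is to establish, for every $y\in Y$ with $x=i(y)$, the containment
$$i^*(\Omega_x^1(X_1))\subseteq\Omega_y^1(Y).$$
To prove it, take $\omega\in\Omega_x^1(X_1)$ and an arbitrary plot $q:U\to Y$ centered at $y$ (so $0\in U$ and $q(0)=y$). By the definition of the pullback, $(i^*\omega)(q)=\omega(i\circ q)$; and $i\circ q$ is a plot of $X_1$ centered at $i(y)=x$, since $i$ is smooth for the subset diffeology of $Y$. Hence $\omega(i\circ q)$ is the zero section by the assumption $\omega\in\Omega_x^1(X_1)$, so $(i^*\omega)(q)$ vanishes too; as $q$ is arbitrary this yields $i^*\omega\in\Omega_y^1(Y)$. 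This is the step I expect to be the heart of the argument; everything afterwards is formal bookkeeping with quotients.

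Next I would descend the map. Recall that $(\pi_1^{\Lambda})^{-1}(Y)$ is the image of $i(Y)\times\Omega^1(X_1)$ under $\pi_1^{\Omega,\Lambda}$, and that two pairs $(x,\omega)$ and $(x,\omega')$ with $x\in i(Y)$ have equal image in $\Lambda^1(X_1)$ exactly when $\omega-\omega'\in\Omega_x^1(X_1)$. Applying $(i^{-1},i^*)$ and then $\pi_Y^{\Omega,\Lambda}$ sends these two pairs to $\pi_Y^{\Omega,\Lambda}(i^{-1}(x),i^*\omega)$ and $\pi_Y^{\Omega,\Lambda}(i^{-1}(x),i^*\omega')$, which coincide precisely when $i^*(\omega-\omega')\in\Omega_{i^{-1}(x)}^1(Y)$ --- and this holds by the containment just proved. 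Thus $\pi_Y^{\Omega,\Lambda}\circ(i^{-1},i^*)$ is constant on the fibres of $\pi_1^{\Omega,\Lambda}|_{i(Y)\times\Omega^1(X_1)}$, so it factors through this projection, giving a well-defined map $i_{\Lambda}^*:(\pi_1^{\Lambda})^{-1}(Y)\to\Lambda^1(Y)$ for which the stated identity $\pi_Y^{\Omega,\Lambda}\circ(i^{-1},i^*)=i_{\Lambda}^*\circ\pi_1^{\Omega,\Lambda}|_{i(Y)\times\Omega^1(X_1)}$ holds by construction.

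Finally I would record smoothness. Since $\pi_1^{\Lambda}\circ\pi_1^{\Omega,\Lambda}=\mbox{pr}_{X_1}$, the preimage $(\pi_1^{\Omega,\Lambda})^{-1}((\pi_1^{\Lambda})^{-1}(Y))$ equals the saturated set $i(Y)\times\Omega^1(X_1)$, so the restriction $\pi_1^{\Omega,\Lambda}|_{i(Y)\times\Omega^1(X_1)}$ is a subduction onto $(\pi_1^{\Lambda})^{-1}(Y)$ (the restriction of a subduction to a saturated subset is again a subduction). As $i^{-1}:i(Y)\to Y$ is a diffeomorphism of subset diffeologies and the pullback $i^*:\Omega^1(X_1)\to\Omega^1(Y)$ is smooth for the functional diffeologies, the composite $\pi_Y^{\Omega,\Lambda}\circ(i^{-1},i^*)$ is smooth; factoring it through the subduction then forces $i_{\Lambda}^*$ to be smooth, by the universal property of the pushforward (quotient) diffeology. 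This completes the plan, the only genuine obstacle being the single containment established in the first step.
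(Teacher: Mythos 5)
Your proposal is correct and follows essentially the same route as the paper: the heart in both cases is the containment $i^*(\Omega_{i(y)}^1(X_1))\subseteq\Omega_y^1(Y)$, proved by the identical calculation $(i^*\omega)(q)(0)=\omega(i\circ q)(0)=0$ for a plot $q$ of $Y$ centered at $y$. The paper stops there with ``whence the claim,'' whereas you additionally spell out the quotient descent and the smoothness via the subduction property; these extra details are sound and consistent with the paper's intent.
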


\begin{proof}
It suffices to show that $i^*$ preserves the vanishing of $1$-forms. Let $y\in Y$ and let $\omega_1\in\Omega_y^1(X_1)$. We need to show that $i^*\omega_1$ vanishes at $y$, so let $p:U\to Y$ be a plot 
centered at $y$, $p(0)=y$. Let us calculate $(i^*\omega_1)(p)(0)=\omega_1(i\circ p)(0)=0$, because by assumption $\omega_1$ vanishes at $y$/$i(y)$ and $i\circ p$ is obviously a plot of $X_1$ centered 
at $y$. Thus, $i^*(\Omega_y^1(X_1))\subseteq\Omega_y^1(Y)$, whence the claim.    
\end{proof}

A completely analogous statement is also true for the other factor.

\begin{lemma}\label{dual:inclusion:on:lambda:right:lem}
The map $(j^{-1},j^*):j(f(Y))\times\Omega^1(X_2)\to f(Y)\times\Omega^1(f(Y))$ descends to a well-defined map $j_{\Lambda}^*:\Lambda^1(X_2)\supset(\pi_2^{\Lambda})^{-1}(f(Y))\to\Lambda^1(f(Y))$ 
such that
$$\pi_{f(Y)}^{\Omega,\Lambda}\circ(j^{-1},j^*)=j_{\Lambda}^*\circ\pi_2^{\Omega,\Lambda}|_{j(f(Y))\times\Omega^1(X_2)}.$$
\end{lemma}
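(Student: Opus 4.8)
The plan is to mirror the proof of Lemma~\ref{dual:inclusion:on:lambda:left:lem} verbatim, replacing the inclusion $i:Y\hookrightarrow X_1$ by $j:f(Y)\hookrightarrow X_2$. The map $(j^{-1},j^*)$ is already a pseudo-bundle map between the trivial bundles $j(f(Y))\times\Omega^1(X_2)$ and $f(Y)\times\Omega^1(f(Y))$, so the only thing to establish is that it descends to the quotient pseudo-bundles that define the respective $\Lambda^1$'s. By the construction of $\Lambda^1$ as exactly such a quotient, this descent is equivalent to $j^*$ carrying forms vanishing at a point to forms vanishing at the same point, that is, to the inclusion $j^*(\Omega_{y'}^1(X_2))\subseteq\Omega_{y'}^1(f(Y))$ for every $y'\in f(Y)$.

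First I would fix $y'\in f(Y)$ and a form $\omega_2\in\Omega_{y'}^1(X_2)$, and take an arbitrary plot $p:U\to f(Y)$ (for the subset diffeology of $f(Y)$) centered at $y'$, so that $p(0)=y'$. Then $j\circ p$ is a plot of $X_2$ centered at $j(y')=y'$, and by definition of the pullback we have $(j^*\omega_2)(p)=\omega_2(j\circ p)$, which is the zero section because $\omega_2$ vanishes at $y'$. Since $p$ was arbitrary, $j^*\omega_2$ vanishes at $y'$, which is precisely the desired inclusion.

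Once this is in place, the map $(j^{-1},j^*)$ sends the sub-bundle $\bigcup_{y'\in f(Y)}\{j(y')\}\times\Omega_{y'}^1(X_2)$ of vanishing forms into the corresponding sub-bundle $\bigcup_{y'\in f(Y)}\{y'\}\times\Omega_{y'}^1(f(Y))$, and therefore passes to a well-defined map on the quotients, namely $j_{\Lambda}^*:(\pi_2^{\Lambda})^{-1}(f(Y))\to\Lambda^1(f(Y))$. The stated commutativity identity $\pi_{f(Y)}^{\Omega,\Lambda}\circ(j^{-1},j^*)=j_{\Lambda}^*\circ\pi_2^{\Omega,\Lambda}|_{j(f(Y))\times\Omega^1(X_2)}$ is then simply the defining property of this descended map and holds automatically.

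Since the argument is a line-by-line transcription of the already-proved left-hand version, I do not anticipate any genuine obstacle. The only points that call for minor care are bookkeeping ones: keeping track of the subset diffeology on $f(Y)$, and observing that, because $j$ is an honest inclusion, the composite $j\circ p$ is automatically a plot of $X_2$, so that the vanishing hypothesis on $\omega_2$ applies to it directly.
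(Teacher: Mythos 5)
Your proposal is correct and is exactly the intended argument: the paper states this lemma without proof, remarking only that it is ``completely analogous'' to Lemma \ref{dual:inclusion:on:lambda:left:lem}, and your line-by-line transcription of that proof (checking $j^*(\Omega_{y'}^1(X_2))\subseteq\Omega_{y'}^1(f(Y))$ by evaluating $(j^*\omega_2)(p)=\omega_2(j\circ p)$ on plots $p$ of $f(Y)$ centered at $y'$, then passing to the quotients) is precisely what is meant.
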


Recall now (\cite{forms-gluing}) that $\omega_1$ and $\omega_2$ are compatible if and only if $$i^*\omega_1=f^*(j^*\omega_2).$$ Let $y\in Y$ be arbitrary, and let
$\alpha_1=\omega_1+\Omega_y^1(X_1)\in\Lambda_y^1(X_1)$ and $\alpha_2=\omega_2+\Omega_{f(y)}^1(X_2)\in\Lambda_{f(y)}^1(X_2)$ be two compatible elements of $\Lambda^1(X_1)$ and
$\Lambda^1(X_2)$. The compatibility condition for such elements means that any pair $(\omega_1',\omega_2')$, where $\omega_1'\in\alpha_1$ and $\omega_2'\in\alpha_2$, is a compatible one, that is, 
by the aforementioned criterion 
$$i^*(\omega_1')=f^*(j^*(\omega_2'))\,\,\mbox{ for all }\omega_1'\in\alpha_1\,\mbox{ and }\,\omega_2'\in\alpha_2.$$

\begin{prop}\label{criterion:of:compatibility:of:elements:of:lambda:prop}
Two elements $\alpha_1\in\Lambda_y^1(X_1)$ and $\alpha_2\in\Lambda_{f(y)}^1(X_2)$ are compatible if and only if the following is true: $$i_{\Lambda}^*\alpha_1=f_{\Lambda}^*(j_{\Lambda}^*\alpha_2).$$
\end{prop}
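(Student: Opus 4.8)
The plan is to push everything down to the level of representative forms, where both sides of the asserted equivalence become explicit equalities in $\Omega^1(Y)$ (or in its quotient), and then to read the result back in $\Lambda_y^1(Y)=\Omega^1(Y)/\Omega_y^1(Y)$. Fix representatives, writing $\alpha_1=\omega_1+\Omega_y^1(X_1)$ and $\alpha_2=\omega_2+\Omega_{f(y)}^1(X_2)$ with $\omega_1\in\Omega^1(X_1)$ and $\omega_2\in\Omega^1(X_2)$.

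First I would compute $i_\Lambda^*\alpha_1$ and $f_\Lambda^*(j_\Lambda^*\alpha_2)$ in terms of these representatives, using the three commutativity relations already established. From Lemma \ref{dual:inclusion:on:lambda:left:lem}, the relation $\pi_Y^{\Omega,\Lambda}\circ(i^{-1},i^*)=i_\Lambda^*\circ\pi_1^{\Omega,\Lambda}$ gives $i_\Lambda^*\alpha_1=i^*\omega_1+\Omega_y^1(Y)$; from Lemma \ref{dual:inclusion:on:lambda:right:lem}, $j_\Lambda^*\alpha_2=j^*\omega_2+\Omega_{f(y)}^1(f(Y))$; and from the commutativity $\pi_Y^{\Omega,\Lambda}\circ(f^{-1},f^*)=f_\Lambda^*\circ\pi_{f(Y)}^{\Omega,\Lambda}$ recorded before Lemma \ref{dual:inclusion:on:lambda:left:lem}, this yields $f_\Lambda^*(j_\Lambda^*\alpha_2)=f^*(j^*\omega_2)+\Omega_y^1(Y)$. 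Consequently the proposed identity $i_\Lambda^*\alpha_1=f_\Lambda^*(j_\Lambda^*\alpha_2)$ is literally the assertion that $i^*\omega_1-f^*(j^*\omega_2)\in\Omega_y^1(Y)$, that is, that $i^*\omega_1$ and $f^*(j^*\omega_2)$ determine the same class in $\Lambda_y^1(Y)$. All of these coset formulas are representative-independent precisely because of the containments $i^*(\Omega_y^1(X_1))\subseteq\Omega_y^1(Y)$ and $f^*(j^*(\Omega_{f(y)}^1(X_2)))\subseteq\Omega_y^1(Y)$ furnished by the two lemmas together with Proposition \ref{pullback:map:between:lambda:relative:diffeo:prop} (the latter giving moreover that $f^*$ carries $\Omega_{f(y)}^1(f(Y))$ isomorphically onto $\Omega_y^1(Y)$, since $f$ restricts to a diffeomorphism $Y\to f(Y)$).

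Next I would translate compatibility of $\alpha_1$ and $\alpha_2$ into the same language, using the criterion that two forms $\eta_1,\eta_2$ are compatible iff $i^*\eta_1=f^*(j^*\eta_2)$. Applying it to $\eta_1=\omega_1+\omega_1'$ and $\eta_2=\omega_2+\omega_2'$, Definition \ref{comp:elements:of:lambda:defn} says that $\alpha_1$ and $\alpha_2$ are compatible exactly when $i^*(\omega_1+\omega_1')=f^*(j^*(\omega_2+\omega_2'))$ for all $\omega_1'\in\Omega_y^1(X_1)$ and $\omega_2'\in\Omega_{f(y)}^1(X_2)$. The forward implication of the proposition is then immediate: specializing to $\omega_1'=0$, $\omega_2'=0$ gives $i^*\omega_1=f^*(j^*\omega_2)$ in $\Omega^1(Y)$, which a fortiori projects to the required equality in $\Lambda_y^1(Y)$.

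The reverse implication is the crux, and where I expect the real work to lie. Starting only from $i^*\omega_1-f^*(j^*\omega_2)\in\Omega_y^1(Y)$, one must recover the equalities $i^*(\omega_1+\omega_1')=f^*(j^*(\omega_2+\omega_2'))$ for arbitrary $\omega_1'$ and $\omega_2'$. The idea is to absorb the ambiguity terms $i^*\omega_1'\in i^*(\Omega_y^1(X_1))$ and $f^*(j^*\omega_2')\in f^*(j^*(\Omega_{f(y)}^1(X_2)))$ into $\Omega_y^1(Y)$ through the containments above, so that the equation only has to be checked modulo vanishing forms; the remaining point is to pin down the interaction of the three vanishing subspaces $i^*(\Omega_y^1(X_1))$, $f^*(j^*(\Omega_{f(y)}^1(X_2)))$ and $\Omega_y^1(Y)$, and it is exactly here that the standing hypotheses $\calD_1^\Omega=\calD_2^\Omega$ and the extendibility condition $i^*(\Omega^1(X_1))=(f^*j^*)(\Omega^1(X_2))$ enter, with the isomorphism property of $f^*$ on the vanishing subspaces from Proposition \ref{pullback:map:between:lambda:relative:diffeo:prop} as the key geometric input. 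Once this compatibility of subspaces is in hand, the passage between the single equality in $\Lambda_y^1(Y)$ and the family of equalities in $\Omega^1(Y)$ closes the argument.
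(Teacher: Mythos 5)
Your computation of $i_{\Lambda}^*\alpha_1=\pi_Y^{\Omega,\Lambda}(y,i^*\omega_1)$ and $f_{\Lambda}^*(j_{\Lambda}^*\alpha_2)=\pi_Y^{\Omega,\Lambda}(y,f^*(j^*\omega_2))$ via the commutativity relations of Lemmas \ref{dual:inclusion:on:lambda:left:lem} and \ref{dual:inclusion:on:lambda:right:lem} is exactly the first half of the paper's argument, and your forward implication is complete and correct. But the reverse implication is a genuine gap: you announce that "the real work" lies there, sketch a plan about "the interaction of the three vanishing subspaces," gesture at the hypotheses $\calD_1^{\Omega}=\calD_2^{\Omega}$ and $i^*(\Omega^1(X_1))=(f^*j^*)(\Omega^1(X_2))$ as the place where they "enter," and then stop. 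That is a statement of where a proof would be needed, not a proof. Concretely, compatibility of the cosets demands the \emph{exact} identities $i^*(\omega_1+\omega_1')=f^*(j^*(\omega_2+\omega_2'))$ in $\Omega^1(Y)$ for every pair of representatives, whereas your starting point is only the single coset identity $i^*\omega_1-f^*(j^*\omega_2)\in\Omega_y^1(Y)$; your proposed strategy of "absorbing the ambiguity terms into $\Omega_y^1(Y)$ so that the equation only has to be checked modulo vanishing forms" cannot bridge this, because compatibility is not an equality modulo $\Omega_y^1(Y)$.

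For comparison: the paper's proof does not perform any analysis of the vanishing subspaces or invoke the extendibility hypotheses at this point. After the same representative-level computation, it observes that the displayed formulas hold for \emph{every} choice of $\omega_i\in\alpha_i$, and reads the equivalence directly off Definition \ref{comp:elements:of:lambda:defn} together with the $\Omega$-level criterion $i^*\omega_1=f^*(j^*\omega_2)$ --- that is, it treats the equality "for all choices of representatives" as the content of both sides simultaneously. Whether one finds that final step fully rigorous or somewhat terse, it is a different (and much shorter) closing move than the subspace-interaction argument you propose but do not carry out; to complete your version you would have to actually establish the relations among $i^*(\Omega_y^1(X_1))$, $f^*(j^*(\Omega_{f(y)}^1(X_2)))$ and $\Omega_y^1(Y)$ that you only name.
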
 

\begin{proof}
This follows from Lemmas \ref{dual:inclusion:on:lambda:left:lem} and \ref{dual:inclusion:on:lambda:left:lem}. Indeed, by construction there exist $\omega_1\in\Omega^1(X_1)$ and 
$\omega_2\in\Omega^1(X_2)$ such that $\alpha_1=\pi_1^{\Omega,\Lambda}(y,\omega_1)$ and $\alpha_2=\pi_2^{\Omega,\Lambda}(f(y),\omega_2)$; furthermore, $\alpha_1$ and $\alpha_2$ are 
compatible if and only if any two such $\omega_1$ and $\omega_2$ are compatible. By Lemma \ref{dual:inclusion:on:lambda:left:lem}, Lemma \ref{dual:inclusion:on:lambda:left:lem}, and the construction 
of the pullback map $f^*:\Omega^1(f(Y))\to\Omega^1(Y)$ we then have 
$$i_{\Lambda}^*\alpha_1=\pi_Y^{\Omega,\Lambda}(y,i^*\omega_1)\,\,\,\mbox{ and }\,\,\,
f_{\Lambda}^*(j_{\Lambda}^*\alpha_2)=f^*(\pi_{f(Y)}^{\Omega,\Lambda}(f(y),j^*\omega_2))=\pi_Y^{\Omega,\Lambda}(y,f^*(j^*\omega_2)).$$ These expressions are equal for all choices of 
$\omega_i\in\alpha_i$ if and only if $\alpha_1^*$ and $\alpha_2^*$ are compatible, by the definition of compatibility of elements of $\Lambda^1(X_1)$ and $\Lambda^1(X_2)$, and the aforementioned 
criterion of compatibility of elements of $\Omega^1(X_1)$ and $\Omega^1(X_2)$; this completes the proof.
\end{proof}

Proposition \ref{criterion:of:compatibility:of:elements:of:lambda:prop} provides our main criterion for compatibility of elements in $\Lambda^1(X_1)$ and $\Lambda^1(X_2)$, in the form suitable for defining 
compatible connections (which is one of our main goals); we do this in the section immediately below.

\subsection{The induced connection on $V_1\cup_{\tilde{f}}V_2$}

Let $\pi_1:V_1\to X_1$ and $\pi_2:V_2\to X_2$ be two diffeological vector pseudo-bundles, and let $(\tilde{f},f)$ be a gluing between them such that both $\tilde{f}$ and $f$ are diffeomorphisms of their 
domains with their images. Given a connection $\nabla^1$ on $V_1$ and a connection $\nabla^2$ on $V_2$, we might be able to obtain out of them an induced connection on $V_1\cup_{\tilde{f}}V_2$; 
for this to be feasible, the two connections must be subject to some restrictions, which are expressed by the appropriate compatibility notion. After describing this notion, we provide the construction of 
the induced connection, proving that it is indeed a connection.

\subsubsection{The definition of compatible connections}

The idea behind the compatibility notion for connections $\nabla^1$ and $\nabla^2$ on $V_1$ and $V_2$ is as follows. Let $s_1\in C^{\infty}(X_1,V_1)$ and $s_2\in C^{\infty}(X_2,V_2)$; let $y\in Y$. Then 
$(\nabla^1s_1)(y)=\sum\alpha^i\otimes v_i$ for some $\alpha^i\in\Lambda_y^1(X_1)$ and $v_i\in V_1$; likewise, $(\nabla^2s_2)(f(y))=\sum_j\beta^j\otimes w_j$ for $\beta_j\in\Lambda_{f(y)}^1(X_2)$ and 
$w_j\in V_2$. Now, $(\nabla^1s_1)(y)$ and $(\nabla^2s_2)(f(y))$ can be easily identified with certain elements of 
$$\left(\Lambda_y^1(X_1)\oplus\Lambda_{f(y)}^1(X_2)\right)\otimes\left(V_1\cup_{\tilde{f}}V_2\right)_{i_2(f(y))};$$ this direct sum contains the corresponding fibre of 
$\Lambda^1(X_1\cup_f X_2)\otimes(V_1\cup_{\tilde{f}}V_2)$ as a (generally proper) subspace.

\begin{defn}\label{compatible:connections:defn}
Let $\pi_1:V_1\to X_1$ and $\pi_2:V_2\to X_2$ be two diffeological vector pseudo-bundles, let $f$ and $\tilde{f}$ be maps defining a gluing of the former to the latter, each of which is a diffeomorphism 
of its domain with its image, and let $Y$ be the domain of definition of $f$. Let $\nabla^1$ be a connection on $V_1$, and let $\nabla^2$ be a connection on $V_2$. We say that $\nabla^1$ and $\nabla^2$ 
are \textbf{compatible} if for any pair $s_1\in C^{\infty}(X_1,V_1)$ and $s_2\in C^{\infty}(X_2,V_2)$ of compatible sections, and for any $y\in Y$, we have
$$\left((i_{\Lambda}^*\otimes\tilde{f})\circ(\nabla^1s_1)\right)(y)=\left(((f_{\Lambda}^*j_{\Lambda}^*)\otimes\mbox{Id}_{V_2})\circ(\nabla^2s_2)\right)(f(y)).$$
\end{defn}

We can now better formulate our reason for defining the compatibility of connections in the way we just, by stating the following.

\begin{prop}\label{sum:of:images:compatible:sections:is:in:glued:prop}
Let $\nabla^1$ and $\nabla^2$ be compatible connections on $V_1$ and $V_2$ respectively. Then for any compatible sections $s_1\in C^{\infty}(X_1,V_1)$ and $s_2\in C^{\infty}(X_2,V_2)$
and for any $y\in Y$ we have
$$\left((\mbox{Id}_{\Lambda_y^1(X_1)}\otimes\tilde{f})\circ(\nabla^1s_1)\right)(y)\oplus(\nabla^2s_2)(f(y))\in\left(\Lambda_y^1(X_1)\oplus_{comp}\Lambda_{f(y)}^1(X_2)\right)\otimes V_2.$$
\end{prop}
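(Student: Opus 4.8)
The plan is to unwind the definition of the compatible direct sum tensored with $V_2$, reduce membership in it to a single linear (kernel) condition, and then verify that condition directly from the compatibility of the connections. Write $A=\Lambda_y^1(X_1)$, $B=\Lambda_{f(y)}^1(X_2)$, and let $C$ denote the fibre of $V_2$ at $f(y)$; abbreviate $\psi_A=i_{\Lambda}^*\colon A\to\Lambda_y^1(Y)$ and $\psi_B=f_{\Lambda}^*j_{\Lambda}^*\colon B\to\Lambda_y^1(Y)$, both of which land in the same space $\Lambda_y^1(Y)$ by the construction of these maps. First I would record the concrete form of the element in question: expanding $(\nabla^1s_1)(y)=\sum_i\alpha^i\otimes v_i$ with $\alpha^i\in A$ and $v_i\in\pi_1^{-1}(y)$, and $(\nabla^2s_2)(f(y))=\sum_j\beta^j\otimes w_j$ with $\beta^j\in B$ and $w_j\in C$, the element under consideration is
$$\xi=\Big(\sum_i\alpha^i\otimes\tilde{f}(v_i)\Big)\oplus\Big(\sum_j\beta^j\otimes w_j\Big)\in(A\otimes C)\oplus(B\otimes C)\cong(A\oplus B)\otimes C,$$
where $\tilde{f}(v_i)\in C$ because $\tilde{f}$ restricts to a linear map $\pi_1^{-1}(y)\to\pi_2^{-1}(f(y))$, so that both summands share the second factor $C$.

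The key structural step is the identification of the target subspace as a kernel. By Proposition~\ref{criterion:of:compatibility:of:elements:of:lambda:prop} a pair $(\alpha_1,\alpha_2)\in A\oplus B$ is compatible precisely when $\psi_A(\alpha_1)=\psi_B(\alpha_2)$, so $A\oplus_{comp}B=\ker\phi$ for the linear map $\phi\colon A\oplus B\to\Lambda_y^1(Y)$ given by $\phi(\alpha_1,\alpha_2)=\psi_A(\alpha_1)-\psi_B(\alpha_2)$. Since we work over the field $\matR$ and $C$ is a finite-dimensional vector space, tensoring the short exact sequence $0\to\ker\phi\to A\oplus B\to\mathrm{im}\,\phi\to0$ with $C$ remains exact, which yields
$$(A\oplus_{comp}B)\otimes C=(\ker\phi)\otimes C=\ker(\phi\otimes\mathrm{Id}_C).$$
Thus it suffices to prove $(\phi\otimes\mathrm{Id}_C)(\xi)=0$.

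Finally I would compute $(\phi\otimes\mathrm{Id}_C)(\xi)$ under the isomorphism $(A\oplus B)\otimes C\cong(A\otimes C)\oplus(B\otimes C)$, where it acts as $(\psi_A\otimes\mathrm{Id}_C)$ on the first summand minus $(\psi_B\otimes\mathrm{Id}_C)$ on the second. Applying these termwise gives $\sum_i\psi_A(\alpha^i)\otimes\tilde{f}(v_i)-\sum_j\psi_B(\beta^j)\otimes w_j$, which is exactly $\big((i_{\Lambda}^*\otimes\tilde{f})\circ(\nabla^1s_1)\big)(y)-\big(((f_{\Lambda}^*j_{\Lambda}^*)\otimes\mathrm{Id}_{V_2})\circ(\nabla^2s_2)\big)(f(y))$. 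By Definition~\ref{compatible:connections:defn}, applied to the compatible pair $s_1,s_2$ at the point $y\in Y$, these two terms coincide, so their difference vanishes; hence $\xi\in\ker(\phi\otimes\mathrm{Id}_C)=(A\oplus_{comp}B)\otimes C$, which is the assertion.

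I expect the main obstacle to be the passage $(\ker\phi)\otimes C=\ker(\phi\otimes\mathrm{Id}_C)$, i.e. justifying that forming the compatible direct sum commutes with tensoring by $V_2$; this is the only genuinely non-formal point and rests on flatness (indeed freeness) of $C$ over $\matR$, together with the bookkeeping needed to ensure the representatives $\alpha^i,v_i,\beta^j,w_j$ are chosen in the correct fibres so that $i_{\Lambda}^*$, $f_{\Lambda}^*j_{\Lambda}^*$, and $\tilde{f}$ are all applied fibre-consistently. Everything else is a direct unwinding of Definition~\ref{compatible:connections:defn} and Proposition~\ref{criterion:of:compatibility:of:elements:of:lambda:prop}.
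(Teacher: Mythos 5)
Your proof is correct and follows essentially the same route as the paper's: the paper's own argument is a one-line appeal to Proposition~\ref{criterion:of:compatibility:of:elements:of:lambda:prop} together with Definition~\ref{compatible:connections:defn}, which is exactly the combination you use. The only difference is that you make explicit the linear-algebra step the paper leaves implicit --- identifying $\Lambda_y^1(X_1)\oplus_{comp}\Lambda_{f(y)}^1(X_2)$ with $\ker\phi$ and using exactness of $-\otimes C$ over $\matR$ to get $(\ker\phi)\otimes C=\ker(\phi\otimes\mathrm{Id}_C)$ --- which is a genuine (and welcome) clarification rather than a different method.
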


\begin{proof}
The statement of the proposition expresses the fact that two elements $\alpha_1\in\Lambda_y^1(X_1)$ and $\alpha_2\in\Lambda_{f(y)}^1(X_2)$ are compatible if and only if
$i_{\Lambda}^*\alpha_1=f_{\Lambda}^*(j_{\Lambda}^*\alpha_2)$, and this is the content of Proposition \ref{criterion:of:compatibility:of:elements:of:lambda:prop}.
\end{proof}

\subsubsection{The induced connection $\nabla^{\cup}$}

Let the two pseudo-bundles $V_1$ and $V_2$ be endowed with connections $\nabla^1$ and $\nabla^2$, and assume that these connections are compatible in the sense of Definition 
\ref{compatible:connections:defn}. We shall first describe the connection on $V_1\cup_{\tilde{f}}V_2$ induced by them and then prove that it is, indeed, a connection. Recall that all throughout we assume 
that all bluings are along diffeomorphisms.

\paragraph{The definition of $\nabla^{\cup}$} Let $x\in i_2(f(Y))$, and recall that the map $\tilde{i}_1:X_1\to X_1\cup_f X_2$ defined as the composition of the natural inclusion 
$X_1\hookrightarrow X_1\sqcup X_2$ with the defining quotient projection $X_1\sqcup X_2\to X_1\cup_f X_2$ is an inclusion if $f$ is a diffeomorphism. The connection $\nabla^{\cup}$ is then defined as 
follows.

\begin{defn}\label{induced:connection:on:result:of:gluing:defn}
Let $\pi_1:V_1\to X_1$ and $\pi_2:V_2\to X_2$ be two diffeological vector pseudo-bundles, let $(\tilde{f},f)$ be a gluing between them such that $\tilde{f}$ is a diffeomorphism and $f$, also a diffeomorphism, 
is such that $\calD_1^{\Omega}=\calD_2^{\Omega}$, and let $\nabla^1$ and $\nabla^2$ be compatible connections on $V_1$ and $V_2$ respectively. The \textbf{induced connection $\nabla^{\cup}$} on 
$V_1\cup_{\tilde{f}}V_2$ is the operator defined as follows. Let $s\in C^{\infty}(X_1\cup_f X_2,V_1\cup_{\tilde{f}}V_2)$ be a section. Since $f$ and $\tilde{f}$ are diffeomorphisms, it has a unique presentation 
of form $s=s_1\cup_{(f,\tilde{f})}s_2$ for $s_1\in C^{\infty}(X_1,V_1)$ and $s_2\in C^{\infty}(X_2,V_2)$. Then
\begin{flushleft}
$(\nabla^{\cup}s)(x)=$
\end{flushleft}
$$\left\{\begin{array}{cl}
\left((\tilde{\rho}_1^{\Lambda})^{-1}\otimes j_1\right)\left((\nabla^1s_1)(i_1^{-1}(x))\right) & \mbox{for }x\in i_1(X_1\setminus Y), \\
((\tilde{\rho}_1^{\Lambda}\oplus\tilde{\rho}_2^{\Lambda})^{-1}\otimes\mbox{Id}_{V_1\cup_{\tilde{f}}V_2})
(\left(\mbox{Id}_{\Lambda^1(X_1)}\otimes(j_2\circ\tilde{f})\right)\left((\nabla^1s_1)(\tilde{i}_1^{-1}(x))\right)\oplus & \\
\oplus\left(\mbox{Id}_{\Lambda^1(X_2)}\otimes j_2\right)\left((\nabla^2s_2)(i_2^{-1}(x))\right)) & 
\mbox{for }x\in i_2(f(Y)), \\ 
\left((\tilde{\rho}_2^{\Lambda})^{-1}\otimes j_2\right)\left((\nabla^2s_2)(i_2^{-1}(x))\right) & \mbox{for }x\in i_2(X_2\setminus f(Y)).
\end{array}\right.$$
\end{defn}

\paragraph{Proof that $\nabla^{\cup}$ is a connection} Two items need to be checked: one, that $\nabla^{\cup}$ is well-defined as a map
$$C^{\infty}(X_1\cup_f X_2,V_1\cup_{\tilde{f}}V_2)\to C^{\infty}(X_1\cup_f X_2,\Lambda^1(X_1\cup_f X_2)\otimes(V_1\cup_{\tilde{f}}V_2)),$$ and two, that it is smooth for the functional
diffeologies on these two spaces.

\begin{lemma}\label{nabla-s:is:map:into:lambda-glued:lem}
For every section $s\in C^{\infty}(X_1\cup_f X_2,V_1\cup_{\tilde{f}}V_2)$ and for every $x\in X_1\cup_f X_2$ we have $(\nabla^{\cup}s)(x)\in\Lambda^1(X_1\cup_f X_2)\otimes(V_1\cup_{\tilde{f}}V_2)$.
\end{lemma}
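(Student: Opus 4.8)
The plan is to verify the asserted membership separately on each of the three regions of $X_1\cup_f X_2$ that appear in Definition \ref{induced:connection:on:result:of:gluing:defn}, checking in each case both that the inverse map occurring there is actually applicable (its argument lies in the relevant domain) and that the end result lands in the fibre $\Lambda_x^1(X_1\cup_f X_2)\otimes(V_1\cup_{\tilde{f}}V_2)_x$. Throughout I would simply track the fibre over $x$ through the maps, using that $f$ and $\tilde{f}$ are diffeomorphisms and invoking Theorem \ref{decomposition:lambda:f:diffeo:thm}.

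For $x\in i_1(X_1\setminus Y)$, set $x_1=i_1^{-1}(x)$; then $(\nabla^1s_1)(x_1)\in\Lambda_{x_1}^1(X_1)\otimes(V_1)_{x_1}$. The first item of Theorem \ref{decomposition:lambda:f:diffeo:thm} says $\tilde{\rho}_1^{\Lambda}$ restricts to a diffeomorphism onto $(\pi_1^{\Lambda})^{-1}(X_1\setminus Y)$, so $(\tilde{\rho}_1^{\Lambda})^{-1}$ is defined on $\Lambda_{x_1}^1(X_1)$ and carries it onto $\Lambda_x^1(X_1\cup_f X_2)$; since $x_1\notin Y$, the induction $j_1$ carries $(V_1)_{x_1}$ onto $(V_1\cup_{\tilde{f}}V_2)_x$. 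Hence the tensor $(\tilde{\rho}_1^{\Lambda})^{-1}\otimes j_1$ sends the value into the desired fibre. The region $x\in i_2(X_2\setminus f(Y))$ is handled symmetrically via the second item of Theorem \ref{decomposition:lambda:f:diffeo:thm} and the induction $j_2$.

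The essential case is $x\in i_2(f(Y))$. Here I would set $y=\tilde{i}_1^{-1}(x)\in Y$, so $f(y)=i_2^{-1}(x)$, and observe that the argument of $(\tilde{\rho}_1^{\Lambda}\oplus\tilde{\rho}_2^{\Lambda})^{-1}\otimes\mbox{Id}$ is precisely $\mbox{Id}\otimes j_2$ applied to the $V_2$-valued element
$$\left((\mbox{Id}_{\Lambda_y^1(X_1)}\otimes\tilde{f})\circ(\nabla^1s_1)\right)(y)\oplus(\nabla^2s_2)(f(y)),$$
since the definition puts $j_2\circ\tilde{f}$ on the first factor and $j_2$ on the second. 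This is exactly where the compatibility hypothesis on $\nabla^1,\nabla^2$ is used: by Proposition \ref{sum:of:images:compatible:sections:is:in:glued:prop} this element lies in $(\Lambda_y^1(X_1)\oplus_{comp}\Lambda_{f(y)}^1(X_2))\otimes V_2$, that is, its $\Lambda$-part is a \emph{compatible} pair. Applying $\mbox{Id}\otimes j_2$ leaves the $\Lambda$-part unchanged and moves the vector part into $(V_1\cup_{\tilde{f}}V_2)_x$.

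It then remains to apply $(\tilde{\rho}_1^{\Lambda}\oplus\tilde{\rho}_2^{\Lambda})^{-1}\otimes\mbox{Id}$. By the third item of Theorem \ref{decomposition:lambda:f:diffeo:thm}, $\tilde{\rho}_1^{\Lambda}\oplus\tilde{\rho}_2^{\Lambda}$ is a diffeomorphism whose fibre over $x$ is exactly $\Lambda_y^1(X_1)\oplus_{comp}\Lambda_{f(y)}^1(X_2)$; since the previous step showed the $\Lambda$-part to lie in this compatible direct sum, the inverse is applicable and maps it onto $\Lambda_x^1(X_1\cup_f X_2)$, while $\mbox{Id}$ fixes the vector factor. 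This yields $(\nabla^{\cup}s)(x)\in\Lambda_x^1(X_1\cup_f X_2)\otimes(V_1\cup_{\tilde{f}}V_2)_x$. I expect the whole difficulty to be concentrated in this middle case, and specifically in the verification that the $\Lambda$-part is compatible: without compatibility the element would only lie in the full direct sum $\Lambda_y^1(X_1)\oplus\Lambda_{f(y)}^1(X_2)$, on which $(\tilde{\rho}_1^{\Lambda}\oplus\tilde{\rho}_2^{\Lambda})^{-1}$ is not defined, so $\nabla^{\cup}$ would fail to take values in $\Lambda^1(X_1\cup_f X_2)\otimes(V_1\cup_{\tilde{f}}V_2)$ at all. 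The two outer cases are pure fibrewise bookkeeping through the diffeomorphisms of Theorem \ref{decomposition:lambda:f:diffeo:thm}.
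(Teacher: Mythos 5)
Your proof is correct and follows essentially the same route as the paper's: the same three-case split, with the outer cases handled by fibrewise bookkeeping through the maps $(\tilde{\rho}_i^{\Lambda})^{-1}\otimes j_i$ and the middle case reduced to Proposition \ref{sum:of:images:compatible:sections:is:in:glued:prop}. You merely make explicit what the paper leaves implicit, namely that the third item of Theorem \ref{decomposition:lambda:f:diffeo:thm} identifies the compatible direct sum as the precise domain on which $(\tilde{\rho}_1^{\Lambda}\oplus\tilde{\rho}_2^{\Lambda})^{-1}$ is defined.
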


\begin{proof}
We shall consider separately the cases when $x\in i_1(X_1\setminus Y)$, $x\in i_2(X_2\setminus f(Y))$, and $x\in i_2(f(Y))$; the former two are actually analogous, so it suffices to treat just one of them. 
Let $x\in i_1(X_1\setminus Y)$. Then by construction 
$$(\nabla^{\cup}s)(x)=\left((\tilde{\rho}_1^{\Lambda})^{-1}\otimes j_1\right)\left((\nabla^1s_1)(i_1^{-1}(x))\right).$$ Since $\nabla^1$ is a connection on $V_1$, we have that 
$(\nabla^1s_1)(i_1^{-1}(x))\in\Lambda^1(X_1)\otimes V_1$. Its image under the map $(\tilde{\rho}_1^{\Lambda})^{-1}\otimes j_1$ belongs to $\Lambda^1(X_1\cup_f X_2)\otimes(V_1\cup_{\tilde{f}}V_2)$ 
by the definition of this map. As just mentioned, the case of $x\in i_2(X_2\setminus f(Y))$ is completely analogous.

Let $x\in i_2(f(Y))$. To abbreviate the lengthy expression for $(\nabla^{\cup}s)(x)$, let us write $y:=\tilde{i}_1^{-1}(x)$ and $y'=i_2^{-1}(x)$. Since the expression for $(\nabla^{\cup}s)(x)$ involves both 
$\nabla^1s_1$ and $\nabla^2s_2$, and $s_1$ and $s_2$ are compatible, we can draw the desired conclusion from Proposition \ref{sum:of:images:compatible:sections:is:in:glued:prop}.
\end{proof}

Thus, $\nabla^{\cup}s$ is always well-defined as a map $X_1\cup_f X_2\to\Lambda^1(X_1\cup_f X_2)\otimes(V_1\cup_{\tilde{f}}V_2)$. Next, we need to show that it is actually smooth. 

\begin{lemma}\label{nabla:on:section:of:glued:is:smooth:lem}
For every section $s\in C^{\infty}(X_1\cup_f X_2,V_1\cup_{\tilde{f}}V_2)$ the section $\nabla^{\cup}s:X_1\cup_f X_2\to\Lambda^1(X_1\cup_f X_2)\otimes(V_1\cup_{\tilde{f}}V_2)$ is smooth, that is, 
$\nabla^{\cup}s\in C^{\infty}(X_1\cup_f X_2,\Lambda^1(X_1\cup_f X_2)\otimes(V_1\cup_{\tilde{f}}V_2))$.
\end{lemma}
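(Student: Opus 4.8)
The plan is to reduce the smoothness of $\nabla^{\cup}s$ on the glued base to the smoothness of its two precompositions with the standard inclusions, and then to recognize each precomposition, after post-composition with the characteristic maps $\tilde\rho_i^{\Lambda}$, as a composite of manifestly smooth pieces; well-definedness is already settled by Lemma \ref{nabla-s:is:map:into:lambda-glued:lem}, so only smoothness is at issue. First I would use that the diffeology of $X_1\cup_f X_2$ is the pushforward of the disjoint-union diffeology under the quotient projection $\pi:X_1\sqcup X_2\to X_1\cup_f X_2$, so that $\pi$ is a subduction. A map out of a space carrying a pushforward diffeology is smooth if and only if its precomposition with the subduction is smooth, and smoothness on $X_1\sqcup X_2$ is tested component by component; hence $\nabla^{\cup}s$ is smooth if and only if both $(\nabla^{\cup}s)\circ\tilde i_1$ and $(\nabla^{\cup}s)\circ i_2$ are smooth maps into $\Lambda^1(X_1\cup_f X_2)\otimes(V_1\cup_{\tilde f}V_2)$. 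These two cases are symmetric, so it suffices to treat $G:=(\nabla^{\cup}s)\circ\tilde i_1$, noting that by construction $G$ takes values over $\tilde i_1(X_1)=i_1(X_1\setminus Y)\cup i_2(f(Y))$, precisely the locus on which $\tilde\rho_1^{\Lambda}$ is defined.

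Next I would invoke the characterization of the diffeology of $\Lambda^1(X_1\cup_f X_2)$ as the coarsest one making $\tilde\rho_1^{\Lambda}$ and $\tilde\rho_2^{\Lambda}$ smooth, transported to the tensor product: to verify that $G\circ p_1$ is a plot for an arbitrary plot $p_1:U\to X_1$, it is enough to post-compose with the $\tilde\rho_i^{\Lambda}$ (tensored with suitable maps on the $V$-factor) on the regions where they are defined. The key identities are that $\tilde\rho_1^{\Lambda}\circ(\tilde\rho_1^{\Lambda}\oplus\tilde\rho_2^{\Lambda})^{-1}$ and $\tilde\rho_2^{\Lambda}\circ(\tilde\rho_1^{\Lambda}\oplus\tilde\rho_2^{\Lambda})^{-1}$ are the two projections of the compatible direct sum, while on the $V$-factor the inductions $j_1$ and $j_2\circ\tilde f$ both agree with the single diffeomorphism-with-image $\tilde j_1:V_1\to V_1\cup_{\tilde f}V_2$ (off, respectively on, $\pi_1^{-1}(Y)$). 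Combining these, I would show that $(\tilde\rho_1^{\Lambda}\otimes\tilde j_1^{-1})\circ G=\nabla^1 s_1$ holds uniformly on all of $X_1$, the piecewise distinction between $X_1\setminus Y$ and $Y$ collapsing exactly because of these identities; hence $(\tilde\rho_1^{\Lambda}\otimes\tilde j_1^{-1})\circ G\circ p_1=(\nabla^1 s_1)\circ p_1$ is a plot of $\Lambda^1(X_1)\otimes V_1$ since $\nabla^1 s_1$ is smooth. Likewise, over the locus where $p_1$ lands in $Y$, $(\tilde\rho_2^{\Lambda}\otimes j_2^{-1})\circ G=(\nabla^2 s_2)\circ f$, again a plot. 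That the combined element over a gluing point genuinely lies in $\left(\Lambda_y^1(X_1)\oplus_{comp}\Lambda_{f(y)}^1(X_2)\right)\otimes V_2$, i.e. in the domain of the third diffeomorphism of Theorem \ref{decomposition:lambda:f:diffeo:thm}, is exactly the content of Proposition \ref{sum:of:images:compatible:sections:is:in:glued:prop}, which is where the compatibility of $\nabla^1$ and $\nabla^2$ (Definition \ref{compatible:connections:defn}) enters.

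I expect the main obstacle to be the behaviour across the boundary between $Y$ and its complement: since $Y$ need not be D-open or D-closed, a plot $p_1$ may weave arbitrarily between the two regions, and the two branches of the definition of $\nabla^{\cup}$ use different characteristic maps with different domains, namely $\tilde\rho_1^{\Lambda}$ alone over $i_1(X_1\setminus Y)$ versus $\tilde\rho_1^{\Lambda}\oplus\tilde\rho_2^{\Lambda}$ over $i_2(f(Y))$. The device that overcomes this is to avoid testing smoothness branch by branch and instead post-compose with the globally defined maps $\tilde\rho_1^{\Lambda}$, $\tilde\rho_2^{\Lambda}$ and $\tilde j_1$, under which the two branches fuse into the single smooth expressions $\nabla^1 s_1$ and $\nabla^2 s_2\circ f$; since over the gluing region both $\tilde\rho_i^{\Lambda}$-composites are then plots (and $\tilde\rho_1^{\Lambda}\oplus\tilde\rho_2^{\Lambda}$ is a diffeomorphism there by Theorem \ref{decomposition:lambda:f:diffeo:thm}), while over the complement only $\tilde\rho_1^{\Lambda}$ applies and is a diffeomorphism, the defining property of the diffeology of $\Lambda^1(X_1\cup_f X_2)$ forces $G\circ p_1$ to be a plot. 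The smoothness of the sections $\nabla^i s_i$, guaranteed by the $\nabla^i$ being connections, supplies the needed input, and the symmetric argument for $(\nabla^{\cup}s)\circ i_2$ completes the proof.
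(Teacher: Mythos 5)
Your proposal is correct and follows essentially the same route as the paper's proof: reduce to plots lifting to $X_1$ or $X_2$, post-compose with $\tilde{\rho}_1^{\Lambda}\otimes(\cdot)$ and $\tilde{\rho}_2^{\Lambda}\otimes(\cdot)$ so that the two branches of the definition of $\nabla^{\cup}$ fuse into $\nabla^1 s_1$ (via the single induction $\tilde{j}_1$) and $(\nabla^2 s_2)\circ f$, and handle the latter with respect to the subset diffeology on $p_1^{-1}(Y)$. The only difference is cosmetic (you phrase the first reduction via the subduction $X_1\sqcup X_2\to X_1\cup_f X_2$ rather than via local lifts of plots, and you compose with $\tilde{j}_1^{-1}$ on the fibre factor where the paper keeps $\mathrm{Id}_{V_1\cup_{\tilde{f}}V_2}$), so no further comment is needed.
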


\begin{proof}
Showing that $\nabla^{\cup}s$ is smooth amounts to showing that for any arbitrary plot $p:U\to X_1\cup_f X_2$ of $X_1\cup_f X_2$ the composition $(\nabla^{\cup}s)\circ p$ is a plot of 
$\Lambda^1(X_1\cup_f X_2)\otimes(V_1\cup_{\tilde{f}}V_2)$. As usual, we can assume that $U$ is connected, so that $p$ lifts to either a plot $p_1$ of $X_1$ or to a plot $p_2$ of $X_2$; accordingly, 
for any $u\in U$ either
$$p(u)=\left\{\begin{array}{ll} i_1(p_1(u)) & \mbox{if }p_1(u)\in X_1\setminus Y, \\ i_2(f(p_1(u))) & \mbox{if }p_1(u)\in Y  \end{array}\right.\,\,\,\mbox{ or }\,\,\,p(u)=i_2(p_2(u)).$$ 

Assume first that $p$ lifts to $p_1$. Then
$$(\nabla^{\cup}s)(p(u))=\left\{\begin{array}{cl} 
\left((\tilde{\rho}_1^{\Lambda})^{-1}\otimes j_1\right)\left((\nabla^1s_1)(p_1(u))\right) & \mbox{if }p_1(u)\in X_1\setminus Y, \\
((\tilde{\rho}_1^{\Lambda}\oplus\tilde{\rho}_2^{\Lambda})^{-1}\otimes\mbox{Id}_{V_1\cup_{\tilde{f}V_2}})
(\left(\mbox{Id}_{\Lambda^1(X_1)}\otimes(j_2\circ\tilde{f})\right)\left((\nabla^1s_1)(p_1(u))\right)\oplus & \\
\oplus\left(\mbox{Id}_{\Lambda^1(X_2)}\otimes j_2\right)\left((\nabla^2s_2)(f(p_1(u)))\right)) & \mbox{if }p_1(u)\in Y.
\end{array}\right.$$ By Theorem 1.9 and the definition of the tensor product of diffeological vector pseudo-bundles, to check that this is a plot of $\Lambda^1(X_1\cup_f X_2)\otimes(V_1\cup_{\tilde{f}}V_2)$, 
it suffices to check that its composition with $\tilde{\rho}_1^{\Lambda}\otimes\mbox{Id}_{V_1\cup_{\tilde{f}}V_2}$ is a plot of $\Lambda^1(X_1)\otimes(V_1\cup_{\tilde{f}}V_2)$ and that the composition with
$\tilde{\rho}_2^{\Lambda}\otimes\mbox{Id}_{V_1\cup_{\tilde{f}}V_2}$, where defined, is smooth as a map into $\Lambda^1(X_2)\otimes(V_1\cup_{\tilde{f}}V_2)$, for the \emph{subset} diffeology on 
$p_1^{-1}(Y)\subseteq U$. 

The composition of $(\nabla^{\cup}s)\circ p$ with $\tilde{\rho}_1^{\Lambda}\otimes\mbox{Id}_{V_1\cup_{\tilde{f}}V_2}$ has form $(\mbox{Id}_{\Lambda^1(X_1)}\otimes j_1)\circ(\nabla^1s_1)\circ p_1$ at 
points of $p_1^{-1}(X_1\setminus Y)$. Since over $i_2(f(Y))$ the map $\tilde{\rho}_1^{\Lambda}$ acts by the projection of the direct sum $\Lambda_y^1(X_1)\oplus\Lambda_{f(y)}^1(X_2)$ onto its first 
factor, for $u\in p_1^{-1}(Y)$ this composition has form
\begin{flushleft}
$\left(\tilde{\rho}_1^{\Lambda}\otimes\mbox{Id}_{V_1\cup_{\tilde{f}}V_2}\right)\circ(\nabla^{\cup}s)\circ p=$
\end{flushleft}
\begin{flushright}
$=\left(\tilde{\rho}_1^{\Lambda}\otimes\mbox{Id}_{V_1\cup_{\tilde{f}}V_2}\right)\circ\left(\mbox{Incl}_{\Lambda_{f^{-1}(i_2^{-1}(x))}^1(X_1)}\otimes(j_2\circ\tilde{f})\right)\circ(\nabla^1s_1)\circ p_1
=(\mbox{Id}_{\Lambda^1(X_1)}\otimes(j_2\circ\tilde{f}))\circ(\nabla^1s_1)\circ p_1$.
\end{flushright} Thus, the complete form of the composition under consideration is
$$\left(\tilde{\rho}_1^{\Lambda}\otimes\mbox{Id}_{V_1\cup_{\tilde{f}}V_2}\right)\circ(\nabla^{\cup}s)\circ p=\left\{\begin{array}{ll} 
(\mbox{Id}_{\Lambda^1(X_1)}\otimes j_1)\circ(\nabla^1s_1)\circ p_1 & \mbox{for }u\mbox{ such that }p_1(u)\in X_1\setminus Y, \\
(\mbox{Id}_{\Lambda^1(X_1)}\otimes(j_2\circ\tilde{f}))\circ(\nabla^1s_1)\circ p_1 & \mbox{for }u\mbox{ such that }p_1(u)\in Y.
\end{array}\right.$$ Since $(\nabla^1s_1)\circ p_1$ is a plot of $\Lambda^1(X_1)\otimes V_1$ by assumption, it suffices to recall that $\left\{\begin{array}{l} j_1 \\ j_2\circ\tilde{f} \end{array}\right.$ is a 
smooth inclusion of $V_1$ into $V_1\cup_{\tilde{f}}V_2$.

Let us now consider the composition $(\tilde{\rho}_2^{\Lambda}\otimes\mbox{Id}_{V_1\cup_{\tilde{f}}V_2})\circ(\nabla^{\cup}s)\circ p$. This is defined only for $u$ such that $p_1(u)\in Y$; using the defintiion 
of $\tilde{\rho}_2^{\Lambda}$, the restriction of this composition to $p_1^{-1}(Y)\subseteq U$ has form
$$\left((\tilde{\rho}_2^{\Lambda}\otimes\mbox{Id}_{V_1\cup_{\tilde{f}}V_2})\circ(\nabla^{\cup}s)\circ p\right)|_{p_1^{-1}(Y)}=
(\tilde{\rho}_2^{\Lambda}\otimes\mbox{Id}_{V_1\cup_{\tilde{f}}V_2})\circ(\mbox{I}_{\Lambda^1(X_2)}\otimes j_2)\circ(\nabla^2s_2)\circ(f\circ p_1).$$ We need to show that this is a plot relative 
to the subset diffeology on $p_1^{-1}(Y)$, that is, if $p_1':U'\to U$ is a usual smooth map whose range is contained in $p_1^{-1}(Y)$ then 
$(\tilde{\rho}_2^{\Lambda}\otimes\mbox{Id}_{V_1\cup_{\tilde{f}}V_2})\circ(\nabla^{\cup}s)\circ (p\circ p_1')$ must be a plot of $\Lambda^1(X_2)\otimes(V_1\cup_{\tilde{f}}V_2)$. We then have 
$$(\tilde{\rho}_2^{\Lambda}\otimes\mbox{Id}_{V_1\cup_{\tilde{f}}V_2})\circ(\nabla^{\cup}s)\circ (p\circ p_1')=
(\tilde{\rho}_2^{\Lambda}\otimes\mbox{Id}_{V_1\cup_{\tilde{f}}V_2})\circ(\mbox{Id}_{\Lambda^1(X_2)}\otimes j_2)\circ(\nabla^2s_2)\circ(f\circ p_1\circ p_1'),$$ and it suffices to observe that 
$f\circ p_1\circ p_1'$ is a plot of $X_2$, since by assumption $f$ is smooth, $p_1\circ p_1'$ is a plot of $X_1$ by the axioms of diffeology, and its range is contained in $Y$ by construction. Thus, 
it follows from the assumption on $\nabla^2$ that $(\tilde{\rho}_2^{\Lambda}\otimes\mbox{Id}_{V_1\cup_{\tilde{f}}V_2})\circ(\nabla^{\cup}s)\circ (p\circ p_1')$ is indeed a plot of 
$\Lambda^1(X_2)\otimes(V_1\cup_{\tilde{f}}V_2)$, as wanted, which completes the treatment of the case when $p$ lifts to a plot of $X_1$.

If $p$ lifts to a plot $p_2$ of $X_2$, the proof is completely analogous, so we avoid spelling it out, ending the proof with this remark.
\end{proof}

We shall check next the standard linearity properties of $\nabla^{\cup}$.

\begin{lemma}\label{nabla:satisfies:leibnitz:lem}
The operator $\nabla^{\cup}$ is linear and satisfies the Leibnz rule.
\end{lemma}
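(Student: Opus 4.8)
The plan is to verify the two defining properties of a connection --- additivity/linearity and the Leibniz rule --- directly from the piecewise formula in Definition \ref{induced:connection:on:result:of:gluing:defn}, reducing each to the already-known linearity and Leibniz properties of $\nabla^1$ and $\nabla^2$. The key structural fact I would exploit is that every section $s\in C^{\infty}(X_1\cup_f X_2,V_1\cup_{\tilde{f}}V_2)$ has a \emph{unique} presentation $s=s_1\cup_{(f,\tilde{f})}s_2$ (since $f,\tilde{f}$ are diffeomorphisms, by Proposition \ref{space:of:sections:case:of:two:diffeo:prop}), so that the assignment $s\mapsto(s_1,s_2)$ is itself additive and respects multiplication by functions in a controlled way; this turns statements about $\nabla^{\cup}$ into statements about the pair $(\nabla^1s_1,\nabla^2s_2)$.

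First I would treat additivity. Given $s=s_1\cup_{(f,\tilde{f})}s_2$ and $s'=s_1'\cup_{(f,\tilde{f})}s_2'$, I would use the earlier lemma showing $(s_1+s_1')\cup_{(f,\tilde{f})}(s_2+s_2')=s_1\cup_{(f,\tilde{f})}s_2+s_1'\cup_{(f,\tilde{f})}s_2'$ to identify the components of $s+s'$ as $(s_1+s_1',s_2+s_2')$. Then on each of the three pieces of the domain ($i_1(X_1\setminus Y)$, $i_2(X_2\setminus f(Y))$, $i_2(f(Y))$) the formula for $\nabla^{\cup}$ is built entirely from the linear maps $(\tilde{\rho}_1^{\Lambda})^{-1}\otimes j_1$, $(\tilde{\rho}_2^{\Lambda})^{-1}\otimes j_2$, $(\tilde{\rho}_1^{\Lambda}\oplus\tilde{\rho}_2^{\Lambda})^{-1}\otimes\mbox{Id}$, $\mbox{Id}\otimes(j_2\circ\tilde{f})$, etc., precomposed with $\nabla^1$ or $\nabla^2$. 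Since all these maps are linear on each fibre and $\nabla^1,\nabla^2$ are additive, I get $(\nabla^{\cup}(s+s'))(x)=(\nabla^{\cup}s)(x)+(\nabla^{\cup}s')(x)$ pointwise on each piece, hence everywhere. Scalar (constant) linearity is identical.

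Next I would establish the Leibniz rule. For $h\in C^{\infty}(X_1\cup_f X_2,\matR)$ and $s=s_1\cup_{(f,\tilde{f})}s_2$, I would first observe that $h$ decomposes as $h_1\cup_f h_2$ with $h_i=h\circ i_i$ (restriction to each side), and that $hs$ then has components $(h_1s_1,h_2s_2)$ via the product formula $(h_1\cup_f h_2)(s_1\cup_{(f,\tilde{f})}s_2)=(h_1s_1)\cup_{(f,\tilde{f})}(h_2s_2)$ recalled from \cite{pseudometric-pseudobundle} (Proposition 4.7). On the piece $i_1(X_1\setminus Y)$ I would apply the Leibniz rule for $\nabla^1$, namely $\nabla^1(h_1s_1)=dh_1\otimes s_1+h_1\nabla^1s_1$, and push it through $(\tilde{\rho}_1^{\Lambda})^{-1}\otimes j_1$; the only point to check is that $(\tilde{\rho}_1^{\Lambda})^{-1}$ carries $dh_1\in\Lambda^1(X_1)$ to the correct differential $d(h)$ in $\Lambda^1(X_1\cup_f X_2)$ at that point, which follows from $\tilde{\rho}_1^{\Lambda}$ being the pullback $\tilde{i}_1^*$ on forms together with $h_1=h\circ\tilde{i}_1$. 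The analogous computation handles $i_2(X_2\setminus f(Y))$.

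The main obstacle --- and the step I would spend the most care on --- is the Leibniz rule over the gluing locus $i_2(f(Y))$, where the formula mixes both $\nabla^1s_1$ and $\nabla^2s_2$ through $(\tilde{\rho}_1^{\Lambda}\oplus\tilde{\rho}_2^{\Lambda})^{-1}$. Here I would apply the Leibniz rule to each of $\nabla^1(h_1s_1)$ and $\nabla^2(h_2s_2)$ separately, obtaining two form-parts $dh_1\otimes(\cdots)$ and $dh_2\otimes(\cdots)$. The crux is that these two differentials must assemble, under $(\tilde{\rho}_1^{\Lambda}\oplus\tilde{\rho}_2^{\Lambda})^{-1}$, into the single well-defined differential $d(h)$ as an element of $\Lambda^1_{x}(X_1\cup_f X_2)$; this is exactly where the compatibility hypothesis $\calD_1^{\Omega}=\calD_2^{\Omega}$ and Theorem \ref{decomposition:lambda:f:diffeo:thm} enter, guaranteeing that $dh_1$ and $dh_2$ are compatible elements (so that their image lands in the $\oplus_{comp}$ summand) and that $(\tilde{\rho}_1^{\Lambda}\oplus\tilde{\rho}_2^{\Lambda})^{-1}$ recovers $d(h)$. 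I would verify the compatibility of $dh_1,dh_2$ using the criterion $i_{\Lambda}^*(dh_1)=f_{\Lambda}^*(j_{\Lambda}^*(dh_2))$ of Proposition \ref{criterion:of:compatibility:of:elements:of:lambda:prop}, which holds because $h_1$ and $h_2$ agree along $Y$ (both being restrictions of the single function $h$), so that $i^*(dh_1)=d(h|_Y)=f^*j^*(dh_2)$. Granting this, the $h$-multiplied terms reassemble into $h\,\nabla^{\cup}s$ by the already-proved additivity and the fibrewise identity $(\tilde{\rho}_1^{\Lambda}\oplus\tilde{\rho}_2^{\Lambda})^{-1}(h_1\alpha_1\oplus h_2\alpha_2)=h\cdot(\tilde{\rho}_1^{\Lambda}\oplus\tilde{\rho}_2^{\Lambda})^{-1}(\alpha_1\oplus\alpha_2)$ (valid since $h_1(y)=h_2(f(y))=h(x)$ on the gluing locus), completing $\nabla^{\cup}(hs)=dh\otimes s+h\,\nabla^{\cup}s$.
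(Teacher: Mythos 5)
Your proof is correct and follows the paper's own strategy in all essentials: the decomposition $h=h_1\cup_f h_2$, $s=s_1\cup_{(f,\tilde{f})}s_2$, the product formula $(h_1\cup_f h_2)(s_1\cup_{(f,\tilde{f})}s_2)=(h_1s_1)\cup_{(f,\tilde{f})}(h_2s_2)$, reduction to the Leibniz rules for $\nabla^1,\nabla^2$, and the identification of differentials via the fact that $\tilde{i}_1^*$ lifts $\tilde{\rho}_1^{\Lambda}$ together with $h_1=h\circ\tilde{i}_1$. The one place you diverge is over the gluing locus $i_2(f(Y))$: you first verify that $dh_1$ and $dh_2$ are compatible elements (via Proposition \ref{criterion:of:compatibility:of:elements:of:lambda:prop}) so that $dh_1\oplus dh_2$ lies in the $\oplus_{comp}$ summand, and then invoke Theorem \ref{decomposition:lambda:f:diffeo:thm} to say $(\tilde{\rho}_1^{\Lambda}\oplus\tilde{\rho}_2^{\Lambda})^{-1}$ ``recovers'' $dh$. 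The paper instead simply observes that the pullback identity $\tilde{\rho}_1^{\Lambda}(dh(\tilde{i}_1(x)))=dh_1(x)$, which you already use off the gluing locus, holds on \emph{all} of $X_1$ (and likewise $\tilde{\rho}_2^{\Lambda}(dh(i_2(x)))=dh_2(x)$ on all of $X_2$), from which the gluing-locus identity $dh(x)=(\tilde{\rho}_1^{\Lambda}\oplus\tilde{\rho}_2^{\Lambda})^{-1}(dh_1\oplus dh_2)(x)$ is immediate since $\tilde{\rho}_1^{\Lambda}\oplus\tilde{\rho}_2^{\Lambda}$ is a bijection on that fibre. Be aware that your compatibility check, by itself, only shows that $(dh_1,dh_2)$ lies in the \emph{image} of $\tilde{\rho}_1^{\Lambda}\oplus\tilde{\rho}_2^{\Lambda}$; to conclude that its preimage is $dh(x)$ rather than some other element of $\Lambda^1_x(X_1\cup_f X_2)$ you still need the pullback identity on $Y$ --- which you do have in hand, so this is a matter of stating one more line rather than a gap, and it renders the compatibility verification logically redundant.
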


\begin{proof}
All maps, as well as operations, involved in the construction of $\nabla^{\cup}$ are fibrewise additive, so the additivity of $\nabla^{\cup}$ is obvious. Let us check that $\nabla^{\cup}$ satisfies the Leibniz rule. 
Let $h\in C^{\infty}(X_1\cup_f X_2,\matR)$, and let $s\in C^{\infty}(X_1\cup_f X_2,V_1\cup_{\tilde{f}}V_2)$. Define $h_1\in C^{\infty}(X_1,\matR)$ and $h_2\in C^{\infty}(X_2,\matR)$ by
$$h_1(x_1)=\left\{\begin{array}{ll} h(i_1(x_1)) & \mbox{if }x_1\in X_1\setminus Y, \\ h(i_2(f(x_1))) & \mbox{if }x_1\in Y \end{array}\right.,\,\,\mbox{ and }\,\,h_2(x_2)=h(i_2(x_2))\mbox{ for all }x_2\in X_2.$$ 
Notice that this corresponds to the presentation of $h$ as $h=h_1\cup_f h_2$, already mentioned in Section 2.2.4. Recall also that by Theorem 2.29 $s$ admits a presentation as $s=s_1\cup_{(f,\tilde{f})}s_2$ 
for some $s_1\in C^{\infty}(X_1,V_1)$ and $s_2\in C^{\infty}(X_2,V_2)$, that in our present case (of gluing along two diffeomorphisms) are also uniquely defined. Finally, recall from Section 2.2.4 that 
$$hs=(h_1\cup_f h_2)(s_1\cup_{(f,\tilde{f})}s_2)=(h_1s_1)\cup_{(f,\tilde{f})}(h_2s_2).$$ 

By assumption $\nabla^1$ is a connection, so we have that $\nabla^1(h_1s_1)=dh_1\otimes s_1+h_1(\nabla^1s_1)$, and likewise, $\nabla^2$ being a connection as well, we have that 
$\nabla^2(h_2s_2)=dh_2\otimes s_2+h_2(\nabla^2s_2)$. Thus, it suffices to check that 
$$(\tilde{\rho}_1^{\Lambda})^{-1}(dh_1(x))=dh(i_1(x))\mbox{ for all }x\in X_1\setminus Y\,\,\,\mbox{ and }\,\,\,(\tilde{\rho}_2^{\Lambda})^{-1}(dh_2(x))=dh(i_2(x))$$ to obtain the desired equality 
$\nabla^{\cup}(hs)=dh\otimes s+h(\nabla^{\cup}s)$. Let us consider the first of these equalities, in its equivalent form $\tilde{\rho}_1^{\Lambda}(dh(i_1(x)))=dh_1(x)$.

Recall that, as a section of $\Lambda^1(X_1\cup_f X_2)$, the differential $dh$ is defined by $dh(x)=\pi^{\Omega,\Lambda}(x,dh)$ for all $x\in X_1\cup_f X_2$, where $dh$ on the right stands for the element of 
$\Omega^1(X_1\cup_f X_2)$ given by $dh(p)=d(h\circ p)$ for any plot $p$ of $X_1\cup_f X_2$. Thus, we can also write $dh(x)=dh+\Omega_x^1(X_1\cup_f X_2)$. Likewise, $dh_1$, as a section of 
$\Lambda^1(X_1)$, is given by $dh_1(x_1)=\pi_1^{\Omega,\Lambda}(x_1,dh_1)$, with, on the right, $dh_1\in\Omega^1(X_1)$ being given by $dh_1(p_1)=d(h_1\circ p_1)$ for any plot $p_1$ of $X_1$, and
equivalently, $dh_1:X_1\to\Lambda^1(X_1)$ is given by $dh_1(x_1)=dh_1+\Omega_{x_1}^1(X_1)$. 

Corresponding to the inclusion $\tilde{i}_1$ is the pullback map $\tilde{i}_1^*:\Omega^1(X_1\cup_f X_2)\to\Omega^1(X_1)$. It is easily seen that $\tilde{i}_1^*$ is a lift of $\tilde{\rho}_1^{\Lambda}$,
 \emph{i.e.}, that the following is true:
$$\tilde{\rho}_1^{\Lambda}\circ\pi^{\Omega,\Lambda}=\pi_1^{\Omega,\Lambda}\circ(\tilde{i}_1^{-1},\tilde{i}_1^*)$$ wherever this expression makes sense, that is, on the direct product 
$X_1\times\Omega^1(X_1\cup_f X_2)$. Putting everything together, we obtain
$$\tilde{\rho}_1^{\Lambda}(dh(\tilde{i}_1(x)))=\tilde{\rho}_1^{\Lambda}(\pi^{\Omega,\Lambda}(\tilde{i}_1(x),dh))=\pi_1^{\Omega,\Lambda}(x,\tilde{i}_1^*(dh))=\pi_1^{\Omega,\Lambda}(x,dh_1)=
dh_1(x),$$ where we only need to check the equality $(\tilde{i}_1^*(dh))=dh_1$, where $dh\in\Omega^1(X_1\cup_f X_2)$ and $dh_1\in\Omega^1(X_1)$. Indeed, let $p_1$ be a plot of $X_1$; then 
$(\tilde{i}_1^*(dh))(p_1)=dh(\tilde{i}_1\circ p_1)=d(h\circ\tilde{i}_1\circ p_1)$ by definition. Since $dh_1(p_1)=d(h_1\circ p_1)$ and $h\circ\tilde{i}_1=h_1$, we immediately obtain the desired conclusion. 
We have in fact obtained slightly more, namely, that the equalities stated hold on the entire domain of definition of $\tilde{\rho}_1^{\Lambda}$, that is, we have
$$\tilde{\rho}_1^{\Lambda}(dh(\tilde{i}_1(x)))=dh_1(x)\mbox{ for all }x\in X_1.$$ Observe furthermore that the case of $i_2(x)$ for $x\in X_2$ is treated in exactly the same manner, so we have that 
$$\tilde{\rho}_2^{\Lambda}(dh(i_2(x)))=dh_2(x)\mbox{ for all }x\in X_2.$$ 

Let us now confront the two sides of the equality in the Leibniz rule, considering 
$$\nabla^{\cup}(hs)=\nabla^{\cup}\left((h_1\cup_f h_2)(s_1\cup_{(f,\tilde{f})}s_2)\right).$$ Let $x\in X_1\cup_f X_2$; between the cases $x\in i_1(X_1\setminus Y)$ and $x\in i_2(X_2\setminus f(Y))$ it suffices 
to consider one, as they are symmetric. Let us consider $x\in i_1(X_1\setminus Y)$:
\begin{flushleft}
$(\nabla^{\cup}(hs))(x)=\left((\tilde{\rho}_1^{\Lambda})^{-1}\otimes j_1\right)\left((\nabla^1(h_1s_1))(i_1^{-1}(x))\right)=
\left((\tilde{\rho}_1^{\Lambda})^{-1}\otimes j_1\right)\left((dh_1\otimes s_1+h_1\nabla^1s_1)(i_1^{-1}(x))\right)=$
\end{flushleft}
\begin{flushright}
$=\left((\tilde{\rho}_1^{\Lambda})^{-1}(dh_1)\otimes s\right)(x)+\left((\tilde{\rho}_1^{\Lambda})^{-1}\otimes j_1\right)\left((h_1\nabla^1s_1)(i_1^{-1}(x))\right)=(dh\otimes s)(x)+(h\nabla^{\cup}s)(x)$, 
\end{flushright} as wanted. It thus remains to consider a point in $i_2(f(Y))$.

Let $x\in i_2(f(Y))$. Consider
$$(\nabla^{\cup}(hs))(x)=$$
\begin{flushleft}
$=((\tilde{\rho}_1^{\Lambda}\oplus\tilde{\rho}_2^{\Lambda})^{-1}\otimes\mbox{Id}_{V_1\cup_{\tilde{f}V_2}})
(\left(\mbox{Id}_{\Lambda^1(X_1)}\otimes(j_2\circ\tilde{f})\right)\left((\nabla^1(h_1s_1))(\tilde{i}_1^{-1}(x))\right)\oplus$
\end{flushleft}
\begin{flushright}
$\oplus\left(\mbox{Id}_{\Lambda^1(X_2)}\otimes j_2\right)\left((\nabla^2(h_2s_2))(i_2^{-1}(x))\right))=$
\end{flushright}
\begin{flushleft}
$=((\tilde{\rho}_1^{\Lambda}\oplus\tilde{\rho}_2^{\Lambda})^{-1}\otimes\mbox{Id}_{V_1\cup_{\tilde{f}V_2}})
(\left(\mbox{Id}_{\Lambda^1(X_1)}\otimes(j_2\circ\tilde{f})\right)\left((dh_1\otimes s_1+h_1\nabla^1s_1)(\tilde{i}_1^{-1}(x))\right)\oplus$
\end{flushleft}
\begin{flushright}
$\oplus\left(\mbox{Id}_{\Lambda^1(X_2)}\otimes j_2\right)\left((dh_2\otimes s_2+h_2\nabla^2s_2)(i_2^{-1}(x))\right))=$
\end{flushright} 
$$=\left(\tilde{\rho}_1^{\Lambda}\oplus\tilde{\rho}_2^{\Lambda}\right)^{-1}(dh_1+dh_2))(x)\otimes s(x)+(h\nabla^{\cup}s)(x).$$ It thus remains to check that 
for any $x\in i_2(f(Y))$ we have
$$dh(x)=\left((\tilde{\rho}_1^{\Lambda}\oplus\tilde{\rho}_2^{\Lambda})^{-1}(dh_1+dh_2)\right)(x).$$ This is equivalent to 
$$\tilde{\rho}_1^{\Lambda}(dh(x))=dh_1(f^{-1}(i_2^{-1}(x)))\,\,\mbox{ and }\tilde{\rho}_2^{\Lambda}(dh(x))=dh_2(i_2^{-1}(x)),$$ and this has already been established above, which completes the proof. 
\end{proof}

From the proof just finished, we can extract the following description of the differential of a function $h\in C^{\infty}(X_1\cup_f X_2,\matR)$ in terms of the differentials of its factors.

\begin{cor}\label{dh:under:gluing:cor}
The following is true:
$$(d(h_1\cup_f h_2))(x)=\left\{\begin{array}{cl}
(\tilde{\rho}_1^{\Lambda})^{-1}(dh_1(i_1^{-1}(x))) & \mbox{if }x\in i_1(X_1\setminus Y), \\ 
(\tilde{\rho}_1^{\Lambda}\oplus\tilde{\rho}_2^{\Lambda})^{-1}(dh_1(\tilde{i}_1^{-1}(x))\oplus dh_2(i_2^{-1}(x))) & \mbox{if }x\in i_2(f(Y)), \\ 
(\tilde{\rho}_2^{\Lambda})^{-1}(dh_2(i_2^{-1}(x))) & \mbox{if }x\in i_2(X_2\setminus f(Y)). \end{array}\right.$$
\end{cor}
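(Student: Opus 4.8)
The plan is to observe that the formula is essentially the inverse of the two commutation relations already derived inside the proof of Lemma \ref{nabla:satisfies:leibnitz:lem}, namely $\tilde{\rho}_1^{\Lambda}(dh(\tilde{i}_1(x)))=dh_1(x)$ for all $x\in X_1$ and $\tilde{\rho}_2^{\Lambda}(dh(i_2(x)))=dh_2(x)$ for all $x\in X_2$. First I would recall the defining description of each differential as a section of the relevant $\Lambda^1$-bundle: $(d(h_1\cup_f h_2))(x)=\pi^{\Omega,\Lambda}(x,dh)$, with $dh\in\Omega^1(X_1\cup_f X_2)$ acting by $dh(p)=d(h\circ p)$, and analogously $dh_i(x_i)=\pi_i^{\Omega,\Lambda}(x_i,dh_i)$ on each factor.

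The key ingredient is the lift identity $\tilde{\rho}_1^{\Lambda}\circ\pi^{\Omega,\Lambda}=\pi_1^{\Omega,\Lambda}\circ(\tilde{i}_1^{-1},\tilde{i}_1^*)$, valid on $X_1\times\Omega^1(X_1\cup_f X_2)$, together with the pointwise computation $\tilde{i}_1^*(dh)=dh_1$ in $\Omega^1(X_1)$, which follows from $h\circ\tilde{i}_1=h_1$ by evaluating on an arbitrary plot $p_1$ of $X_1$. Substituting gives $\tilde{\rho}_1^{\Lambda}(dh(\tilde{i}_1(x)))=dh_1(x)$; the same argument with $i_2$ and $h\circ i_2=h_2$ gives $\tilde{\rho}_2^{\Lambda}(dh(i_2(x)))=dh_2(x)$.

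It then remains to invert these relations piecewise using Theorem \ref{decomposition:lambda:f:diffeo:thm}. Over $i_1(X_1\setminus Y)$ and over $i_2(X_2\setminus f(Y))$ the maps $\tilde{\rho}_1^{\Lambda}$ and $\tilde{\rho}_2^{\Lambda}$ are, respectively, diffeomorphisms onto the corresponding single factors, so applying the inverse of the appropriate map to $dh_1(i_1^{-1}(x))$, resp.\ $dh_2(i_2^{-1}(x))$, yields the first and third cases directly. Over $i_2(f(Y))$ the direct sum $\tilde{\rho}_1^{\Lambda}\oplus\tilde{\rho}_2^{\Lambda}$ is a diffeomorphism onto $(\pi_1^{\Lambda})^{-1}(Y)\oplus_{comp}(\pi_2^{\Lambda})^{-1}(f(Y))$, so the two relations together give $(\tilde{\rho}_1^{\Lambda}\oplus\tilde{\rho}_2^{\Lambda})(dh(x))=dh_1(\tilde{i}_1^{-1}(x))\oplus dh_2(i_2^{-1}(x))$, and inverting gives the middle case.

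The main point requiring attention is precisely the middle case: one must check that the element $dh_1(\tilde{i}_1^{-1}(x))\oplus dh_2(i_2^{-1}(x))$ really lies in the compatible direct sum $\oplus_{comp}$, so that $(\tilde{\rho}_1^{\Lambda}\oplus\tilde{\rho}_2^{\Lambda})^{-1}$ is defined on it. This is automatic here, since both $dh_1$ and $dh_2$ are pullbacks of the single form $dh\in\Omega^1(X_1\cup_f X_2)$ along $\tilde{i}_1$ and $i_2$ respectively; by the criterion of Proposition \ref{criterion:of:compatibility:of:elements:of:lambda:prop} (that is, $i_{\Lambda}^*\alpha_1=f_{\Lambda}^*(j_{\Lambda}^*\alpha_2)$), such simultaneous restrictions of a common form are always compatible, so no extra hypothesis is needed. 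Thus the corollary follows simply by reorganizing and inverting the identities already obtained en route to Lemma \ref{nabla:satisfies:leibnitz:lem}.
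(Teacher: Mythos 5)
Your proposal is correct and follows essentially the same route as the paper, which presents this corollary as a direct extraction of the identities $\tilde{\rho}_1^{\Lambda}(dh(\tilde{i}_1(x)))=dh_1(x)$ and $\tilde{\rho}_2^{\Lambda}(dh(i_2(x)))=dh_2(x)$ already established in the proof of Lemma \ref{nabla:satisfies:leibnitz:lem}, inverted piecewise via Theorem \ref{decomposition:lambda:f:diffeo:thm}. Your extra check that $dh_1(\tilde{i}_1^{-1}(x))\oplus dh_2(i_2^{-1}(x))$ lands in the compatible direct sum is a worthwhile detail the paper leaves implicit, and your justification of it is sound.
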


\begin{prop}\label{nabla:is:smooth:as:operator:prop}
The operator $\nabla^{\cup}$ is smooth as a map
$$\nabla^{\cup}:C^{\infty}(X_1\cup_f X_2,V_1\cup_{\tilde{f}}V_2)\to C^{\infty}(X_1\cup_f X_2,\Lambda^1(X_1\cup_f X_2)\otimes(V_1\cup_{\tilde{f}}V_2))$$ for the usual functional diffeologies on the two 
spaces.
\end{prop}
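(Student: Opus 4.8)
The plan is to reduce the smoothness of $\nabla^{\cup}$ as an \emph{operator} to the already-established smoothness of $\nabla^1$ and $\nabla^2$ as operators, by exploiting the fact that, since $f$ and $\tilde{f}$ are diffeomorphisms, the gluing map $\mathcal{S}$ is itself a diffeomorphism (Proposition \ref{space:of:sections:case:of:two:diffeo:prop}). Concretely, I would let $q:U\to C^{\infty}(X_1\cup_f X_2,V_1\cup_{\tilde{f}}V_2)$ be an arbitrary plot of the source space and aim to show that $u\mapsto\nabla^{\cup}(q(u))$ is a plot of the target space; by the definition of the functional diffeology this amounts to checking that, for every plot $p:U'\to X_1\cup_f X_2$, the evaluation map $(u,u')\mapsto\nabla^{\cup}(q(u))(p(u'))$ is a plot of $\Lambda^1(X_1\cup_f X_2)\otimes(V_1\cup_{\tilde{f}}V_2)$.

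Since $\mathcal{S}$ is a diffeomorphism, the composition $\mathcal{S}^{-1}\circ q$ is a plot of $C^{\infty}(X_1,V_1)\times_{comp}C^{\infty}(X_2,V_2)$, hence a pair $(q_1,q_2)$ of plots $q_i:U\to C^{\infty}(X_i,V_i)$ whose values are compatible, with $q(u)=q_1(u)\cup_{(f,\tilde{f})}q_2(u)$ for all $u$. Because each $\nabla^i$ is a smooth operator (it is so by definition of a connection), $u\mapsto\nabla^i(q_i(u))$ is a plot of $C^{\infty}(X_i,\Lambda^1(X_i)\otimes V_i)$; consequently, for any plot $r_i$ of $X_i$, the map $(u,u')\mapsto(\nabla^i(q_i(u)))(r_i(u'))$ is a plot of $\Lambda^1(X_i)\otimes V_i$. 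The task then is to reassemble these two families into a plot of the glued tensor-product bundle.

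For the reassembly I would restrict $U'$ so that $p$ lifts to a plot $p_1$ of $X_1$ or $p_2$ of $X_2$, following the local description of the plots of $X_1\cup_f X_2$, and then substitute $s_i=q_i(u)$ into the pointwise formula of Definition \ref{induced:connection:on:result:of:gluing:defn}. This reproduces verbatim the case analysis carried out in the proof of Lemma \ref{nabla:on:section:of:glued:is:smooth:lem}, now carrying the extra parameter $u$. Over $i_1(X_1\setminus Y)$ and $i_2(X_2\setminus f(Y))$ the expression is simply the image of a plot of $\Lambda^1(X_i)\otimes V_i$ under the fixed smooth map $(\tilde{\rho}_i^{\Lambda})^{-1}\otimes j_i$, so there is nothing further to check. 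To reach a plot of the target bundle one invokes that the maps $\tilde{\rho}_i^{\Lambda}$ are subductions and applies the tensor-product criterion exactly as in Lemma \ref{nabla:on:section:of:glued:is:smooth:lem}: one verifies that composing with $\tilde{\rho}_1^{\Lambda}\otimes\mbox{Id}$ and with $\tilde{\rho}_2^{\Lambda}\otimes\mbox{Id}$ (the latter on the subset diffeology of $p_1^{-1}(Y)$) yields plots of $\Lambda^1(X_1)\otimes(V_1\cup_{\tilde{f}}V_2)$ and $\Lambda^1(X_2)\otimes(V_1\cup_{\tilde{f}}V_2)$.

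The main obstacle is, as expected, the gluing locus $i_2(f(Y))$, where the formula for $\nabla^{\cup}$ involves $(\tilde{\rho}_1^{\Lambda}\oplus\tilde{\rho}_2^{\Lambda})^{-1}$ and the compatible direct sum $\oplus_{comp}$. Here the essential inputs are Proposition \ref{sum:of:images:compatible:sections:is:in:glued:prop}, which (using the compatibility of $\nabla^1$ and $\nabla^2$) guarantees that the sum $(\mbox{Id}_{\Lambda^1(X_1)}\otimes(j_2\circ\tilde{f}))((\nabla^1q_1(u))(p_1(u')))\oplus(\mbox{Id}_{\Lambda^1(X_2)}\otimes j_2)((\nabla^2q_2(u))(f(p_1(u'))))$ genuinely lands in $(\Lambda_y^1(X_1)\oplus_{comp}\Lambda_{f(y)}^1(X_2))\otimes V_2$ for every parameter value, together with the subduction property of $\tilde{\rho}_1^{\Lambda}\oplus\tilde{\rho}_2^{\Lambda}$. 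Since the two component maps are plots in $(u,u')$ by the second paragraph, and all the bundle maps involved are smooth and fixed, the resulting evaluation is a plot of the target bundle over $p_1^{-1}(Y)$. The case in which $p$ lifts to a plot of $X_2$ is symmetric, completing the verification for an arbitrary plot $p$ and hence establishing the smoothness of $\nabla^{\cup}$.
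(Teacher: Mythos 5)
Your proposal is correct and follows essentially the same route as the paper's proof: reduce to the evaluation-map criterion for the functional diffeology, split the plot of sections into component plots of $C^{\infty}(X_1,V_1)$ and $C^{\infty}(X_2,V_2)$, and rerun the case analysis of Lemma \ref{nabla:on:section:of:glued:is:smooth:lem} carrying the extra parameter $u$, with the subset-diffeology precaution for $f\circ p_1$ on $p_1^{-1}(Y)$ handled by precomposition with smooth maps into the domain of the evaluation map. The only cosmetic difference is that you obtain the component plots from $\mathcal{S}$ being a diffeomorphism (Proposition \ref{space:of:sections:case:of:two:diffeo:prop}), whereas the paper cites Theorem \ref{subduction:product-comp:onto:glued:thm}; both are available here since $f$ and $\tilde{f}$ are diffeomorphisms.
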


\begin{proof}
Let $p:U\to C^{\infty}(X_1\cup_f X_2,V_1\cup_{\tilde{f}}V_2)$ be a plot of $C^{\infty}(X_1\cup_f X_2,V_1\cup_{\tilde{f}}V_2)$; we need to check that $u\mapsto\nabla^\cup(p(u))$ is a plot of 
$C^{\infty}(X_1\cup_f X_2,\Lambda^1(X_1\cup_f X_2)\otimes(V_1\cup_{\tilde{f}}V_2))$. Since the latter has functional diffeology, we need to check that for any plot $q:U'\to X_1\cup_f X_2$ of the base space 
$X_1\cup_f X_2$, the evaluation map 
$$\epsilon_{p,q}:(u,u')\mapsto\left(\nabla^{\cup}(p(u))\right)(q(u'))$$ is a plot of $\Lambda^1(X_1\cup_f X_2)\otimes(V_1\cup_{\tilde{f}}V_2)$. As usual, it suffices to assume that both $U$ and $U'$ are 
connected. This in particular implies that $q$ lifts to either a plot $q_1$ of $X_1$ or a plot $q_2$ of $X_2$.

Assume first that $q$ lifts to $q_1$ and consider $\epsilon_{p,q}(u,u')$ for an arbitrary point $(u,u')\in U\times U'$. Recall as a preliminary consideration that, since by assumption $p$ is a plot of 
$C^{\infty}(X_1\cup_f X_2,V_1\cup_{\tilde{f}}V_2)$, the following map (the corresponding version of the evaluation map) is smooth:
$$(u,u')\mapsto(p(u))(\tilde{i}_1^{-1}(q(u'))) =(p(u))(q_1(u')).$$ Since for each $u\in U$ the image $p(u)$ is a smooth section of $V_1\cup_{\tilde{f}}V_2$, it decomposes as $p(u)=p(u)_1\cup_{(f,\tilde{f})}p(u)_2$, 
where $p(u)_1=\tilde{j}_1^{-1}\circ p(u)\circ\tilde{i}_1\in C^{\infty}(X_1,V_1)$ and $p(u)_2=j_2^{-1}\circ p(u)\circ i_2\in C^{\infty}(X_2,V_2)$. 

We have by construction
$$\epsilon_{p,q}(u,u')=\left\{\begin{array}{cl}
\left((\tilde{\rho}_1^{\Lambda})^{-1}\otimes j_1\right)\left((\nabla^1p(u)_1)(q_1(u'))\right) & \mbox{if }q_1(u')\in X_1\setminus Y, \\
((\tilde{\rho}_1^{\Lambda}\oplus\tilde{\rho}_2^{\Lambda})^{-1}\otimes\mbox{Id}_{V_1\cup_{\tilde{f}V_2}})
(\left(\mbox{Id}_{\Lambda^1(X_1)}\otimes(j_2\circ\tilde{f})\right)\left((\nabla^1p(u)_1)(q_1(u'))\right)\oplus & \\
\oplus\left(\mbox{Id}_{\Lambda^1(X_2)}\otimes j_2\right)\left((\nabla^2p(u)_2)(f(q_1(u')))\right)) & \mbox{if }q_1(u')\in Y.
\end{array}\right.$$ Recall that by Theorem \ref{subduction:product-comp:onto:glued:thm} the two assignments $u\mapsto p(u)_1$ and $u\mapsto p(u)_2$ defined shortly above are plots of 
$C^{\infty}(X_1,V_1)$ and $C^{\infty}(X_2,V_2)$ respectively. In particular, by assumption we have that $(u,u')\mapsto(\nabla^1p(u)_1)(q_1(u'))$ is smooth as a map into $\Lambda^1(X_1)\otimes V_1$ onto 
the set of all pairs $(u,u')$ such that the expression $(\nabla^1p(u)_1)(q_1(u'))$ makes sense.

On the other hand, we cannot immediately make a similar claim regarding $(\nabla^2p(u)_2)(f(q_1(u')))$; indeed, $f$ is smooth for the subset diffeology on $Y$, to which $q_1|_{q_1^{-1}(Y)}$ might not belong. 
To draw the desired conclusion nonetheless, consider a plot $h:U''\to\mbox{Domain}(\epsilon_{p,q})\subset U\times U'$, which is just an ordinary smooth function. We need to show that $\epsilon_{p,q}\circ h$ 
is a plot of $\Lambda^1(X_1\cup_f X_2)\otimes(V_1\cup_{\tilde{f}}V_2)$. 

To do so, present $h$ as a pair of smooth functions $(h_U,h_{U'})$, where $h_U$ is the composition of $h$ with the projection of its range on $U$ and likewise $h_{U'}$ is its composition with the projection 
on $U'$. The composition $\epsilon_{p,q}\circ h$ is then the evaluation of $p\circ h_U$ on $q\circ h_{U'}$. It then remains to notice that $q\circ h_{U'}$ also lifts to a plot $(q\circ h_{U'})_1$ of $X_1$, and 
this lift is a plot for the subset diffeology on $Y$. Thus,
\begin{flushleft}
$(\epsilon_{p,q}\circ h)(u'')=((\tilde{\rho}_1^{\Lambda}\oplus\tilde{\rho}_2^{\Lambda})^{-1}\otimes\mbox{Id}_{V_1\cup_{\tilde{f}V_2}})
(\left(\mbox{Id}_{\Lambda^1(X_1)}\otimes(j_2\circ\tilde{f})\right)\left((\nabla^1p(h_U(u''))_1)((q_1\circ h_{U'})(u'')))\right)\oplus$
\end{flushleft}
\begin{flushright}
$\oplus\left(\mbox{Id}_{\Lambda^1(X_2)}\otimes j_2\right)\left((\nabla^2p(h_U(u''))_2)(f((q_1\circ h_{U'})(u'')))\right))$,
\end{flushright} and in particular $u''\mapsto(\nabla^2p(h_U(u''))_2)(f((q_1\circ h_{U'})(u'')))$ is now smooth by assumption on $\nabla^2$. We can therefore conclude that $\epsilon_{p,q}\circ h$ is indeed 
smooth, which completes the consideration of the case when $q$ lifts to a plot of $X_1$.

The treatment of the case when $q$ lifts to a plot $q_2$ of $X_2$ is completely analogous, so we omit it.
\end{proof}

This sequence of statements now trivially yields the following.

\begin{cor}\label{nabla:is:connection:cor}
The operator $\nabla^{\cup}$ is a connection on $V_1\cup_{\tilde{f}}V_2$.
\end{cor}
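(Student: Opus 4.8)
The plan is to verify that $\nabla^{\cup}$, as introduced in Definition \ref{induced:connection:on:result:of:gluing:defn}, satisfies every clause of the definition of a diffeological connection, namely that it is a well-defined, smooth, linear operator
$$C^{\infty}(X_1\cup_f X_2,V_1\cup_{\tilde{f}}V_2)\to C^{\infty}(X_1\cup_f X_2,\Lambda^1(X_1\cup_f X_2)\otimes(V_1\cup_{\tilde{f}}V_2))$$
obeying the Leibniz rule. Since each of these clauses has already been isolated as a separate statement among the preceding lemmas and propositions, the argument consists purely in assembling them, and I would make the match explicit clause by clause.

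First I would settle well-definedness, which splits into two parts. On the one hand, Lemma \ref{nabla-s:is:map:into:lambda-glued:lem} guarantees that for every section $s$ and every point $x$ the value $(\nabla^{\cup}s)(x)$ genuinely lies in the fibre of $\Lambda^1(X_1\cup_f X_2)\otimes(V_1\cup_{\tilde{f}}V_2)$, rather than merely in the \emph{a priori} larger direct sum appearing in Definition \ref{induced:connection:on:result:of:gluing:defn}; this is precisely the step where the compatibility of $\nabla^1$ and $\nabla^2$ enters, through Proposition \ref{sum:of:images:compatible:sections:is:in:glued:prop}. On the other hand, Lemma \ref{nabla:on:section:of:glued:is:smooth:lem} shows that the resulting map $\nabla^{\cup}s$ is itself a smooth section. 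Taken together, these two lemmas yield $\nabla^{\cup}s\in C^{\infty}(X_1\cup_f X_2,\Lambda^1(X_1\cup_f X_2)\otimes(V_1\cup_{\tilde{f}}V_2))$, so that $\nabla^{\cup}$ is a well-defined map into the correct space of sections.

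It then remains to invoke the algebraic and the smoothness properties. Lemma \ref{nabla:satisfies:leibnitz:lem} supplies linearity and the Leibniz rule, while Proposition \ref{nabla:is:smooth:as:operator:prop} supplies smoothness of $\nabla^{\cup}$ as an operator for the functional diffeologies on the two spaces of sections. Combining these with the well-definedness established just above gives exactly the list of properties demanded by the definition of a diffeological connection, whence $\nabla^{\cup}$ is a connection on $V_1\cup_{\tilde{f}}V_2$.

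I do not expect any genuine obstacle at the level of the corollary itself: all the real content has already been absorbed into the four cited statements, and the corollary is a pure bookkeeping assembly. The hard part lies upstream --- specifically in the proof of Lemma \ref{nabla:on:section:of:glued:is:smooth:lem}, where one must check plot-by-plot that the piecewise formula glues smoothly across the seam $i_2(f(Y))$ by testing against $\tilde{\rho}_1^{\Lambda}$ and $\tilde{\rho}_2^{\Lambda}$, and in Proposition \ref{nabla:is:smooth:as:operator:prop}, where the decomposition of an arbitrary plot of sections into compatible pairs relies on the subduction property of $\mathcal{S}$ from Theorem \ref{subduction:product-comp:onto:glued:thm}. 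Once those are in hand, the corollary is immediate.
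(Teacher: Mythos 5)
Your proposal is correct and coincides with the paper's own argument: the corollary is proved there exactly as you describe, by citing Lemmas \ref{nabla-s:is:map:into:lambda-glued:lem}, \ref{nabla:on:section:of:glued:is:smooth:lem}, \ref{nabla:satisfies:leibnitz:lem} and Proposition \ref{nabla:is:smooth:as:operator:prop}, with all the substance residing in those earlier statements. Your clause-by-clause matching of each cited result to the corresponding requirement in the definition of a connection is accurate, so nothing further is needed.
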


\begin{proof}
This is a consequence of Lemmas \ref{nabla-s:is:map:into:lambda-glued:lem}, \ref{nabla:on:section:of:glued:is:smooth:lem}, \ref{nabla:satisfies:leibnitz:lem}, and of Proposition 
\ref{nabla:is:smooth:as:operator:prop}.
\end{proof}

\begin{thm}
Let $\pi_1:V_1\to X_1$ and $\pi_2:V_2\to X_2$ be two diffeological vector pseudo-bundles, let $(\tilde{f},f)$ be a gluing between them such that both $\tilde{f}$ and $f$ are differmorphisms of their domains 
with their images, and $f$ is furthermore such that $\calD_1^{\Omega}=\calD_2^{\Omega}$, and let $\nabla^1$ and $\nabla^2$ be compatible connections on $V_1$ and $V_2$ respectively. Then 
$V_1\cup_{\tilde{f}}V_2$ can be endowed with a connection, that over $i_1(X_1\setminus Y)$ is naturally equivalent to $\nabla^1$ and over $i_2(X_2\setminus f(Y))$, to $\nabla^2$.
\end{thm}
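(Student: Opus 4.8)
The plan is to take $\nabla^{\cup}$ from Definition \ref{induced:connection:on:result:of:gluing:defn} as the desired connection. That $\nabla^{\cup}$ is well-defined, smooth, and satisfies the Leibniz rule --- hence is a genuine connection on $V_1\cup_{\tilde{f}}V_2$ --- is exactly the content of Corollary \ref{nabla:is:connection:cor}, which collects Lemmas \ref{nabla-s:is:map:into:lambda-glued:lem}, \ref{nabla:on:section:of:glued:is:smooth:lem}, \ref{nabla:satisfies:leibnitz:lem} and Proposition \ref{nabla:is:smooth:as:operator:prop}. So the only part requiring a genuinely new argument is the claim of natural equivalence over the two non-overlapping pieces of the base.

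For the equivalence over $i_1(X_1\setminus Y)$, I would first record the two identifications that make the word ``equivalence'' precise. On the base side, $i_1$ is an induction identifying $X_1\setminus Y$ with $i_1(X_1\setminus Y)$; on the total-space side, $j_1$ is the corresponding induction of $V_1\setminus\pi_1^{-1}(Y)$ with its image, so that sections of $V_1\cup_{\tilde{f}}V_2$ over $i_1(X_1\setminus Y)$ correspond bijectively, via $s\mapsto j_1^{-1}\circ s\circ i_1$, to sections of $V_1$ over $X_1\setminus Y$. On the $1$-form side the key input is Theorem \ref{decomposition:lambda:f:diffeo:thm}: its first item states that $\tilde{\rho}_1^{\Lambda}$ restricts to a diffeomorphism $(\pi^{\Lambda})^{-1}(i_1(X_1\setminus Y))\to(\pi_1^{\Lambda})^{-1}(X_1\setminus Y)$, so that $\tilde{\rho}_1^{\Lambda}\otimes\mbox{Id}$ identifies the relevant fibres of $\Lambda^1(X_1\cup_f X_2)\otimes(V_1\cup_{\tilde{f}}V_2)$ with those of $\Lambda^1(X_1)\otimes V_1$. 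It is precisely here that the hypothesis $\calD_1^{\Omega}=\calD_2^{\Omega}$ is used, since it is what licenses the appeal to Theorem \ref{decomposition:lambda:f:diffeo:thm}.

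With these identifications fixed, the equivalence is read off directly from the defining formula: over $i_1(X_1\setminus Y)$, Definition \ref{induced:connection:on:result:of:gluing:defn} gives $(\nabla^{\cup}s)(x)=((\tilde{\rho}_1^{\Lambda})^{-1}\otimes j_1)((\nabla^1 s_1)(i_1^{-1}(x)))$, which says precisely that $\nabla^{\cup}$, transported back through $\tilde{\rho}_1^{\Lambda}\otimes\mbox{Id}$ and $j_1$, coincides with $\nabla^1$ applied to the corresponding section $s_1=\tilde{j}_1^{-1}\circ s\circ\tilde{i}_1$. The case of $i_2(X_2\setminus f(Y))$ is entirely symmetric, using the second item of Theorem \ref{decomposition:lambda:f:diffeo:thm} together with $i_2$, $j_2$, and the third branch of the defining formula.

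The main (and really the only) obstacle here is expository rather than mathematical: one must make precise what ``naturally equivalent'' is to mean, i.e.\ specify the pair of induced diffeomorphisms --- via $\tilde{\rho}_i^{\Lambda}$ on the $1$-form factor and via $i_i$, $j_i$ on the base and fibre --- under which the two operators are to be compared. Once this dictionary is written down there is no computation left, because the outer branches of the formula defining $\nabla^{\cup}$ were constructed exactly as the push-forwards of $\nabla^1$ and $\nabla^2$; the asserted equivalence is then immediate from the shape of that formula.
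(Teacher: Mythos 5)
Your proposal is correct and follows exactly the paper's own route: the theorem is proved by citing Corollary \ref{nabla:is:connection:cor} for the fact that $\nabla^{\cup}$ is a connection, with the equivalence over $i_1(X_1\setminus Y)$ and $i_2(X_2\setminus f(Y))$ read off from the outer branches of Definition \ref{induced:connection:on:result:of:gluing:defn}. In fact you supply more detail than the paper does --- the paper merely asserts that $\nabla^{\cup}$ ``satisfies the claim of the theorem,'' whereas you make the identifications via $i_1,j_1,\tilde{\rho}_1^{\Lambda}$ (and their counterparts on the other piece) explicit, which is a welcome clarification rather than a deviation.
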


\begin{proof}
This is the content of Corollary \ref{nabla:is:connection:cor}; the operator $\nabla^{\cup}$ corresponding to $\nabla^1$ and $\nabla^2$ is a connection and satisfies the claim of the theorem.
\end{proof}

\subsection{Compatibility of the induced connection $\nabla^{\cup}$ with the induced pseudo-metric $\tilde{g}$}

Assume now that the two pseudo-bundles $\pi_1:V_1\to X_1$ and $\pi_2:V_2\to X_2$ are endowed with pseudo-metrics $g_1$ and $g_2$ respectively, and that these pseudo-metrics are compatible with 
the gluing along $(\tilde{f},f)$:
$$g_1(y)(\cdot,\cdot)=g_2(f(y))(\tilde{f}(\cdot),\tilde{f}(\cdot))\,\,\mbox{ for all }\,\,y\in Y.$$ Let $\nabla^1$ be a connection on $V_1$ compatible with $g_1$, and let $\nabla^2$ be a connection on $V_2$ 
compatible with $g_2$. We can then consider the pseudo-metric $\tilde{g}$ on $V_1\cup_{\tilde{f}}V_2$ obtained by gluing together $g_1$ and $g_2$, and the connection
$\nabla^{\cup}$ on it. We wish to show that $\nabla^{\cup}$ is compatible with $\tilde{g}$.

Recall first that $\tilde{g}$ is defined by
$$\tilde{g}(x)(\cdot,\cdot)=\left\{\begin{array}{ll} g_1(i_1^{-1}(x))(j_1^{-1}(\cdot),j_1^{-1}(\cdot)) & \mbox{if }x\in i_1(X_1\setminus Y) \\
g_2(i_2^{-1}(x))(j_2^{-1}(\cdot),j_2^{-1}(\cdot)) & \mbox{if }x\in i_2(X_2).
\end{array}\right.$$ Thus, at least over $i_1(X_1\setminus Y)$ and $i_2(X_2\setminus f(Y))$ the compatibility would follow from the assumption on $\nabla^1$ and $\nabla^2$ respectively.

\begin{thm}\label{induced:connection:is:compatible:with:induced:pseudo-metric:thm}
Let $\pi_1:V_1\to X_1$ and $\pi_2:V_2\to X_2$ be two diffeological vector pseudo-bundles, let $(\tilde{f},f)$ be a gluing between them such that both $\tilde{f}$ and $f$ are differmorphisms of their domains 
with their images, and $\calD_1^{\Omega}=\calD_2^{\Omega}$ is satisfied, and let $\nabla^1$ and $\nabla^2$ be compatible connections on $V_1$ and $V_2$ respectively. Suppose furthermore that 
$V_1$ and $V_2$ are endowed with pseudo-metrics $g_1$ and $g_2$ that are compatible with the gluing along $f$ and $\tilde{f}$. Assume finally that $\nabla^1$ is compatible with $g_1$, and $\nabla^2$ 
is compatible with $g_2$. Then the induced connection $\nabla^{\cup}$ on $V_1\cup_{\tilde{f}}V_2$ is compatible with the induced pseudo-metric $\tilde{g}$.
\end{thm}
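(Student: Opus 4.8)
The plan is to verify the defining identity $d(\tilde{g}(s,t))=\tilde{g}(\nabla^{\cup}s,t)+\tilde{g}(s,\nabla^{\cup}t)$ pointwise on $X_1\cup_f X_2$, exploiting the fact that, since $f$ and $\tilde{f}$ are diffeomorphisms, every section decomposes uniquely as $s=s_1\cup_{(f,\tilde{f})}s_2$ and $t=t_1\cup_{(f,\tilde{f})}t_2$ (Proposition \ref{space:of:sections:case:of:two:diffeo:prop}). My first preliminary step is to identify the scalar function $\tilde{g}(s,t)$ as a glued function: from the definition of $\tilde{g}$ one reads off $\tilde{g}(s,t)=h_1\cup_f h_2$ with $h_1=g_1(s_1,t_1)$ and $h_2=g_2(s_2,t_2)$, the required matching $h_1(y)=h_2(f(y))$ for $y\in Y$ being exactly the consequence of combining the compatibility of $g_1,g_2$ with the section-compatibility relations $\tilde{f}(s_1(y))=s_2(f(y))$ and $\tilde{f}(t_1(y))=t_2(f(y))$. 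Consequently $d(\tilde{g}(s,t))$ is computed by the three-case formula of Corollary \ref{dh:under:gluing:cor}, which I will then match against $\nabla^{\cup}$ stratum by stratum.

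Over the two open strata $i_1(X_1\setminus Y)$ and $i_2(X_2\setminus f(Y))$ the verification is routine. There $\nabla^{\cup}$ is, by Definition \ref{induced:connection:on:result:of:gluing:defn}, the transport of $\nabla^1$ (respectively $\nabla^2$) through the fibrewise isomorphisms $(\tilde{\rho}_i^{\Lambda})^{-1}\otimes j_i$, while $\tilde{g}$ is the transport of $g_i$ through $j_i^{-1}$; likewise Corollary \ref{dh:under:gluing:cor} says $d(\tilde{g}(s,t))$ equals $(\tilde{\rho}_i^{\Lambda})^{-1}(dh_i)$ there. Hence the identity at such points is precisely $(\tilde{\rho}_i^{\Lambda})^{-1}$ applied to the compatibility of $\nabla^i$ with $g_i$, i.e. to $dh_i=g_i(\nabla^i s_i,t_i)+g_i(s_i,\nabla^i t_i)$.

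The substantive case is the gluing locus $i_2(f(Y))$, and this is where I expect the main obstacle. Fix $x=i_2(f(y))$ and write $(\nabla^1s_1)(y)=\sum_i\alpha^i\otimes v_i$ and $(\nabla^2s_2)(f(y))=\sum_j\beta^j\otimes w_j$. By Definition \ref{induced:connection:on:result:of:gluing:defn} the value $(\nabla^{\cup}s)(x)$ is $(\tilde{\rho}_1^{\Lambda}\oplus\tilde{\rho}_2^{\Lambda})^{-1}\otimes\mbox{Id}$ applied to the element $\sum_i\alpha^i\otimes j_2(\tilde{f}(v_i))\oplus\sum_j\beta^j\otimes j_2(w_j)$ of $(\Lambda_y^1(X_1)\oplus_{comp}\Lambda_{f(y)}^1(X_2))\otimes V_2$, a well-defined element by Lemma \ref{nabla-s:is:map:into:lambda-glued:lem}. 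I then pair with $t(x)=j_2(t_2(f(y)))$ using $\tilde{g}(\omega\otimes v,w)=\omega\,\tilde{g}(v,w)$. The pairing acts only on the $V_2$-factor and so commutes with the form-level map $(\tilde{\rho}_1^{\Lambda}\oplus\tilde{\rho}_2^{\Lambda})^{-1}$; the key computation is that the contribution coming from the $X_1$-forms is
\[
\sum_i\alpha^i\,\tilde{g}(j_2(\tilde{f}(v_i)),j_2(t_2(f(y))))=\sum_i\alpha^i\,g_2(f(y))(\tilde{f}(v_i),\tilde{f}(t_1(y)))=\sum_i\alpha^i\,g_1(y)(v_i,t_1(y)),
\]
where the last equality is the compatibility of $g_1$ and $g_2$, after substituting $t_2(f(y))=\tilde{f}(t_1(y))$. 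Thus the $X_1$-part collapses to $g_1(\nabla^1s_1,t_1)(y)$ and the $X_2$-part to $g_2(\nabla^2s_2,t_2)(f(y))$, giving
\[
\tilde{g}(\nabla^{\cup}s,t)(x)=(\tilde{\rho}_1^{\Lambda}\oplus\tilde{\rho}_2^{\Lambda})^{-1}\big(g_1(\nabla^1s_1,t_1)(y)\oplus g_2(\nabla^2s_2,t_2)(f(y))\big).
\]

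Adding the symmetric expression for $\tilde{g}(s,\nabla^{\cup}t)(x)$ and invoking the compatibility of $\nabla^i$ with $g_i$ turns each summand into $dh_i$, yielding $(\tilde{\rho}_1^{\Lambda}\oplus\tilde{\rho}_2^{\Lambda})^{-1}(dh_1(y)\oplus dh_2(f(y)))$, which is exactly $d(\tilde{g}(s,t))(x)$ by Corollary \ref{dh:under:gluing:cor}. The delicate points to treat with care are (i) that the metric pairing genuinely commutes with the compatible-direct-sum isomorphism, so that one never has to apply $(\tilde{\rho}_1^{\Lambda}\oplus\tilde{\rho}_2^{\Lambda})^{-1}$ to an individually non-compatible pair such as $\alpha^i\oplus0$ — the manipulation must stay at the level of the full compatible element, whose membership in $\oplus_{comp}$ is guaranteed by Lemma \ref{nabla-s:is:map:into:lambda-glued:lem} (ultimately by the compatibility hypothesis on the connections, Definition \ref{compatible:connections:defn} and Proposition \ref{sum:of:images:compatible:sections:is:in:glued:prop}); and (ii) the clean conversion of the $g_2$-pairing against $\tilde{f}$-images back into a $g_1$-pairing, which is the one place where both the section-compatibility and the pseudo-metric compatibility are used simultaneously.
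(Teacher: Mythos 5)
Your proof is correct and follows essentially the same route as the paper's: decompose $s,t$ into compatible pairs, write $\tilde{g}(s,t)$ as the glued function $h_{g_1,s_1,t_1}\cup_f h_{g_2,s_2,t_2}$, apply Corollary \ref{dh:under:gluing:cor}, and reduce each of the three strata to the compatibility of $\nabla^i$ with $g_i$. The only difference is that on the gluing locus you spell out the conversion of the $g_2$-pairing of $\tilde{f}$-images back into a $g_1$-pairing (and the care needed with the compatible direct sum), a step the paper's proof leaves as ``explicit from the construction of $\nabla^{\cup}$''.
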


\begin{proof}
Let $s,t\in C^{\infty}(X_1\cup_f X_2,V_1\cup_{\tilde{f}}V_2)$ be two sections. We need to prove the following:
$$d(\tilde{g}(s,t))=\tilde{g}(\nabla^{\cup}s,t)+\tilde{g}(s,\nabla^{\cup}t).$$ Consider the usual splittings of $s$ and $t$ as $s=s_1\cup_{(f,\tilde{f})}s_2$ and $t=t_1\cup_{(f,\tilde{f})}t_2$, where 
$s_1,t_1\in C^{\infty}(X_1,V_1)$ and $s_2,t_2\in C^{\infty}(X_2,V_2)$. For these splittings, we have by assumption
$$d(g_1(s_1,t_1))=g_1(\nabla^1s_1,t_1)+g_1(s_1,\nabla^1t_1)\,\,\,\mbox{ and }\,\,\,d(g_2(s_2,t_2))=g_2(\nabla^2s_2,t_2)+g_2(s_2,\nabla^2t_2).$$ Since the differential is involved, and by 
Corollary \ref{dh:under:gluing:cor}, we need to consider three cases, those of a point in $i_1(X_1\setminus Y)$, a point in $i_2(f(Y))$, and one in $i_2(X_2\setminus f(Y))$, although the definition of $\tilde{g}$ 
only has two parts. We also express the function 
$$h_{\tilde{g},s,t}:X_1\cup_f X_2\ni x\mapsto\tilde{g}(x)(s(x),t(x))\in\matR$$ as the result of gluing of the following two functions:
$$h_{g_1,s_1,t_1}:X_1\ni x_1\mapsto g_1(x_1)(s_1(x_1),t_1(x_1))\,\,\mbox{ and }\,\,h_{g_2,s_2,t_2}:X_2\ni x_2\mapsto g_2(x_2)(s_2(x_2),t_2(x_2)).$$ It is then trivial to check that the gluing of these 
two functions along $f$ is well-defined (that is, that they are compatible with $f$, which in turn follows from the compatibility of $g_1$ with $g_2$), and that 
$$h_{\tilde{g},s,t}=h_{g_1,s_1,t_1}\cup_f h_{g_2,s_2,t_2}.$$

Consider now the first case, $x\in i_1(X_1\setminus Y)$. Then by Corollary \ref{dh:under:gluing:cor} and the observation just made
$$d(\tilde{g}(s,t))(x)=(\tilde{\rho}_1^{\Lambda})^{-1}(d(g_1(s_1,t_1)(i_1^{-1}(x))))=(\tilde{\rho}_1^{\Lambda})^{-1}((g_1(\nabla^1s_1,t_1)+g_1(s_1,\nabla^1t_1))(i_1^{-1}(x))).$$ It is thus sufficient to show that 
at a point $x\in i_1(X_1\setminus Y)$ we have 
$$(\tilde{\rho}_1^{\Lambda})^{-1}((g_1(\nabla^1s_1,t_1)(i_1^{-1}(x)))=\tilde{g}(\nabla^{\cup}s,t)(x),$$ and this is a direct consequence of the construction of $\nabla^{\cup}$. The completely analogous reasoning 
holds also in the case of $x\in i_2(X_2\setminus f(Y))$.

It thus remains to consider the case of $x\in i_2(f(Y))$. For such an $x$ we have, first of all,
$$d(\tilde{g}(s,t))(x)=
(\tilde{\rho}_1^{\Lambda}\oplus\tilde{\rho}_2^{\Lambda})^{-1}
(\mbox{Id}_{\Lambda^1(X_1)}\oplus\mbox{Id}_{\Lambda^1(X_2)})((dh_{g_1,s_1,t_1})(\tilde{i}_1^{-1}(x))+
(dh_{g_2,s_2,t_2})(i_2^{-1}(x))).$$ As follows from the assumptions on $\nabla^i$, and the linearity properties, what we now need to check is that for any $x\in i_2(f(Y))$ we have
$$(\tilde{\rho}_1^{\Lambda}\oplus\tilde{\rho}_2^{\Lambda})^{-1}
(\mbox{Id}_{\Lambda^1(X_1)}\oplus\mbox{Id}_{\Lambda^1(X_2)})(g_1(\nabla^1s_1,t_1)(\tilde{i}_1^{-1}(x))+g_2(\nabla^2s_2,t_2)(i_2^{-1}(x)))=
\tilde{g}(\nabla^{\cup}s,t).$$ This is also explicit from the construction of $\nabla^{\cup}$, which completes the proof.
\end{proof}

\begin{rem}
One might also consider the potential interplay between the two compatibility notions, one for connections and the other for pseudo-metrics, along the lines of whether one would imply the other (likely, the 
former, the latter). The proof just given indeed strongly suggests this possibility, at least as long as there are local bases. However, since in general diffeological pseudo-bundles do not have to have them, we 
do not follow through on this issue.
\end{rem}

\vspace{1cm}

\noindent University of Pisa \\
Department of Mathematics \\
Via F. Buonarroti 1C\\
56127 PISA -- Italy\\
\ \\
ekaterina.pervova@unipi.it\\

\end{document}